\theoremstyle{plain}
\newtheorem{theorem}{Theorem}[subsection]
\newtheorem{bigtheorem}{Theorem}[section]
\newtheorem{proposition}[theorem]{Proposition}
\newtheorem{lemma}[theorem]{Lemma}
\newtheorem{corollary}[theorem]{Corollary}
\theoremstyle{definition}
\newtheorem{definition}[theorem]{Definition}
\theoremstyle{remark}
\newtheorem{remark}[theorem]{Remark}
\numberwithin{equation}{subsection}
\newcommand{\map}[1]{\xrightarrow{#1}}
\newcommand{\iso}{\cong}
\newcommand{\define}{\stackrel{\mathrm{def}}{=}}
\newcommand{\Gal}{\mathrm{Gal}}
\newcommand{\Hom}{\mathrm{Hom}}
\newcommand{\Aut}{\mathrm{Aut}}
\newcommand{\End}{\mathrm{End}}
\newcommand{\Spec}{\mathrm{Spec}}
\newcommand{\Q}{\mathbb Q}
\newcommand{\Z}{\mathbb Z}
\newcommand{\R}{\mathbb R}
\newcommand{\C}{\mathbb C}
\newcommand{\F}{\mathbb F}
\newcommand{\A}{\mathbb A}
\newcommand{\co}{\mathcal O}
\newcommand{\alg}{\mathrm{alg}}
\newcommand{\Lie}{\mathrm{Lie}}
\newcommand{\Fil}{\mathrm{Fil}}
\newcommand{\Pap}{\mathrm{Pap}}
\newcommand{\Kra}{\mathrm{Kra}}
\newcommand{\Falt}{\mathrm{Falt}}
\newcommand{\Pic}{\mathrm{Pic}}
\newcommand{\ord}{\mathrm{ord}}
\newcommand{\GL}{\mathrm{GL}}
\newcommand{\GU}{\mathrm{GU}}
\newcommand{\SO}{\mathrm{SO}}
\newcommand{\SL}{\mathrm{SL}}
\newcommand{\GSpin}{\mathrm{GSpin}}
\newcommand{\kk}{{\bm{k}}}
\newcommand{\dR}{\mathrm{dR}}
\newcommand{\zxz}[4]{\begin{pmatrix} #1 & #2 \\ #3 & #4 \end{pmatrix}}
\newcommand{\calO}{\mathcal{O}}
\newcommand{\tot}{\mathrm{tot}}
\newcommand{\reg}{\mathrm{reg}}
\newcommand{\bs}{\backslash}
\newcommand{\Cha}{\widehat{\operatorname{Ch}}}
\newcommand{\scal}{\text{\rm sc}}
\newcommand{\tr}{\operatorname{tr}}
\newcommand{\CL}{\operatorname{CL}}
\author[J.~Bruinier]{Jan H.~Bruinier}
\address{Fachbereich Mathematik, Technische Universit\"at Darmstadt,  
D-64289 Darmstadt, Germany}
\email{bruinier@mathematik.tu-darmstadt.de}
\author[B.~Howard]{Benjamin Howard}
\address{Department of Mathematics, Boston College, 140 Commonwealth Ave, Chestnut Hill, MA 02467, USA}
\email{howardbe@bc.edu}
\author[S.~Kudla]{Stephen S.~Kudla}
\address{Department of Mathematics, University of Toronto, 40 St. George St., BA6290, Toronto, ON M5S 2E4, Canada}
\email{skudla@math.toronto.edu}
\author[M.~Rapoport]{Michael Rapoport}
\address{
Mathematisches Institut der Universit\"at Bonn, Endenicher Allee 60, 53115 Bonn, Germany, and 
 Department of Mathematics, University of Maryland, College Park, MD 20742, USA 
}
\email{rapoport@math.uni-bonn.de}
\author[T.~Yang]{Tonghai Yang}
\address{Department of Mathematics, University of Wisconsin Madison, Van Vleck Hall, Madison, WI 53706, USA}
\email{thyang@math.wisc.edu}
\title[Modularity of unitary generating series  II]{Modularity of generating series of divisors on  unitary Shimura varieties II: arithmetic applications}
\begin{document}

 \begin{abstract}
We prove two formulas  in the style of the Gross-Zagier theorem, relating derivatives of $L$-functions to arithmetic intersection pairings on a unitary Shimura variety.  We also prove a special case of Colmez's conjecture on the Faltings heights of abelian varieties with complex multiplication. These results are derived from the authors' earlier results on the modularity of generating series of divisors on unitary Shimura varieties.
\end{abstract}


\subjclass{14G35, 14G40,  11F55, 11F27, 11G18}
\keywords{Shimura varieties, Borcherds products, arithmetic intersection theory}

\thanks{J.B.~was  supported in part by DFG grant BR-2163/4-2. 
B.H.~was supported in part by NSF grants DMS-1501583 and DMS-1801905. 
M.R.~was supported in part by the Deutsche Forschungsgemeinschaft through the grant SFB/TR 45. 
S.K.~was supported by an NSERC Discovery Grant. 
T.Y.~was  supported in part by NSF grant DMS-1500743.}

\maketitle

\setcounter{tocdepth}{1}
\tableofcontents



\section{Introduction}


Fix  an integer $n\ge 3$, and a quadratic imaginary field $\kk\subset \C$  of odd discriminant $\mathrm{disc}(\kk)=-D$.  Let $\chi_\kk : \A^\times \to \{ \pm 1\}$ be the associated quadratic character,  let  $\mathfrak{d}_\kk\subset \co_\kk$ denote the different of $\kk$,  let $h_\kk$ be  the class number of $\kk$, and let $w_\kk$ be the number of roots of unity in $\kk$.

By a \emph{hermitian $\co_\kk$-lattice} we mean a projective $\co_\kk$-module of finite rank endowed with a nondegenerate hermitian form.


\subsection{Arithmetic theta lifts}


Suppose  we are given a pair $(\mathfrak{a}_0,\mathfrak{a})$ in which
\begin{itemize}
\item
$\mathfrak{a}_0$ is a  self-dual hermitian $\co_\kk$-lattice of signature $(1,0)$,
\item
$\mathfrak{a}$ is a  self-dual hermitian  $\co_\kk$-lattice of signature $(n-1,1)$.
\end{itemize}
This pair determines hermitian $\kk$-spaces $W_0 =  \mathfrak{a}_{ 0 \Q} $ and $W=\mathfrak{a}_\Q$.

From this data we constructed in \cite{BHKRY} a  smooth Deligne-Mumford stack  $\mathrm{Sh}(G,\mathcal{D})$ of dimension $n-1$ over $\kk$  with complex points
\[
\mathrm{Sh}(G,\mathcal{D})(\C) = G(\Q) \backslash \mathcal{D} \times G(\A_f)/K.
\]
The reductive group
$G\subset \GU(W_0 )\times \GU(W)$
is the largest subgroup on which the two similitude characters agree, and   $K\subset G(\A_f)$ is the largest subgroup stabilizing the $\widehat{\Z}$-lattices    $\widehat{\mathfrak{a}}_0 \subset W_0(\A_f)$ and $\widehat{\mathfrak{a} } \subset W(\A_f)$.

We also defined in \cite[\S 2.3]{BHKRY} an integral model
\begin{equation}\label{kramer inclusion}
\mathcal{S}_\Kra \subset \mathcal{M}_{(1,0)} \times_{\co_\kk} \mathcal{M}_{(n-1,1)}^\Kra
\end{equation}
of $\mathrm{Sh}(G,\mathcal{D})$.  It is regular and flat over $\co_\kk$, and admits a canonical toroidal compactification $\mathcal{S}_\Kra \hookrightarrow \mathcal{S}_\Kra^*$ whose boundary is a smooth divisor.

The main result of \cite{BHKRY} is the construction of a formal generating series of arithmetic divisors
\begin{equation}\label{modular divisors}
\widehat{\phi} (\tau) = \sum_{m\ge 0} \widehat{\mathcal{Z}}_\Kra^\mathrm{total}(m)\cdot q^m \in \widehat{\mathrm{Ch}}^1_\Q(\mathcal{S}_\Kra^*) [[q]]
\end{equation}
valued in the Gillet-Soul\'e codimension one arithmetic Chow group with rational coefficients, extended to allow log-log Green functions at the boundary as in  \cite{BKK,BBK}, and the proof that this generating series is  modular of weight $n$, level $\Gamma_0(D)$,  and character $\chi_\kk^n$.     The modularity result implies that the coefficients span a finite-dimensional subspace of the arithmetic Chow group
 \cite[Remark 7.1.2]{BHKRY}.

After passing to the arithmetic Chow group with complex coefficients,  for any classical modular form
\[
g  \in S_n(\Gamma_0(D) , \chi_\kk^n )
\]
we may form the Petersson inner product
\[
\langle  \widehat{\phi} ,g \rangle_{\mathrm{Pet}} = \int_{ \Gamma_0(D) \backslash \mathcal{H} }
 \overline{ g (\tau)}   \cdot \widehat{\phi} (\tau) \, \frac{du\, dv}{ v^{2-n}}
\]
where $\tau=u+iv$.
As in  \cite{Ku04}, define   the \emph{arithmetic theta lift}
\begin{equation}\label{ATL}
\widehat{\theta}(g ) = \langle  \widehat{\phi} , g  \rangle_\mathrm{Pet} \in \widehat{\mathrm{Ch}}^1_\C( \mathcal{S}_\Kra^* )  .
\end{equation}

Armed with the construction of the arithmetic theta lift (\ref{ATL}), we are now able to complete the program of \cite{Ho1, Ho2, BHY} to prove Gross-Zagier style formulas relating arithmetic intersections to derivatives of $L$-functions.

The Shimura variety $\mathcal{S}_\Kra^*$ carries different families of codimension $n-1$ cycles constructed from complex multiplication points, and our results show that the arithmetic intersections of these families with  arithmetic lifts are related to central derivatives of $L$-functions.


\subsection{Central derivatives and small CM points}


  In \S \ref{s:small GZ}  we construct an \'etale and proper Deligne-Mumford stack $\mathcal{Y}_\mathrm{sm}$ over $\co_\kk$, along with a morphism
  \[
  \mathcal{Y}_\mathrm{sm} \to \mathcal{S}_\Kra^*.
  \]
This is  the \emph{small CM cycle}.
 Intersecting  arithmetic divisors against $\mathcal{Y}_\mathrm{sm}$ defines    a  linear functional
\[
[ \, -   :  \mathcal{Y}_\mathrm{sm}] : \widehat{\mathrm{Ch}}^1_\C( \mathcal{S}_\Kra^* ) \to \C,
\]
and our first main result computes the image of the arithmetic theta lift (\ref{ATL}) under this linear functional.

The statement involves the  convolution $L$-function $L( \tilde{g},\theta_\Lambda,s)$ of two modular forms
\[
\tilde{g}\in S_n( \overline{\omega}_L ) ,\qquad \theta_\Lambda \in M_{n-1}( \omega_\Lambda^\vee )
\]
valued in finite-dimensional representations of $\SL_2(\Z)$.  We refer the reader to \S \ref{ss:convolution} for the precise definitions.
Here we note only that $\tilde{g}$ is the image of $g$ under an induction map
\begin{equation}\label{induction map}
S_n(\Gamma_0(D) , \chi^n_\kk ) \to S_n( \overline{\omega}_L )
\end{equation}
from scalar-valued forms to vector-valued forms, that $\theta_\Lambda$ is the theta function attached to a quadratic space $\Lambda$ over $\Z$ of signature $(2n-2,0)$, and that the $L$-function $L( \tilde{g} ,\theta_\Lambda,s)$ vanishes at its center of symmetry $s=0$.

\begin{bigtheorem}\label{Theorem A}
The arithmetic theta lift (\ref{ATL}) satisfies
\[
[ \widehat{\theta}(g ) : \mathcal{Y}_\mathrm{sm} ] = -  \deg_\C (\mathcal{Y}_\mathrm{sm}) \cdot  \frac{d}{ds} L( \tilde{g} , \theta_\Lambda,s) \big|_{s=0}.
\]
Here we have defined
\[
\deg_\C (\mathcal{Y}_\mathrm{sm})   = \sum_{ y\in  \mathcal{Y}_\mathrm{sm}(\C) }  \frac{1}{| \Aut(y) |},
\]
where the sum is over the finitely many isomorphism classes of the groupoid of complex points of $\mathcal{Y}_\mathrm{sm}$, viewed as an $\co_\kk$-stack.
\end{bigtheorem}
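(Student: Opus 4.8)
The plan is to compute both sides of the identity by unwinding the definition of the arithmetic theta lift and its pairing against the small CM cycle, reducing everything to an evaluation of a derivative of an incoherent Eisenstein series. First I would pull back the generating series $\widehat{\phi}(\tau)$ along the morphism $\mathcal{Y}_\mathrm{sm} \to \mathcal{S}_\Kra^*$. Since $\mathcal{Y}_\mathrm{sm}$ is étale and proper over $\co_\kk$ of the complementary dimension, intersecting against it produces, for each $m \ge 0$, a number $[\widehat{\mathcal{Z}}_\Kra^\mathrm{total}(m) : \mathcal{Y}_\mathrm{sm}]$ which decomposes into archimedean and non-archimedean contributions. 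The resulting generating series $\sum_m [\widehat{\mathcal{Z}}_\Kra^\mathrm{total}(m) : \mathcal{Y}_\mathrm{sm}] \, q^m$ should, after the modularity theorem of \cite{BHKRY}, be identified with (a multiple of) the restriction to a smaller Shimura datum of a vector-valued Eisenstein series; concretely it is the central derivative of an \emph{incoherent} Eisenstein series $E'(\tau, 0)$ attached to the quadratic space underlying the small CM cycle.

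The next step is to make this identification precise. On the CM cycle $\mathcal{Y}_\mathrm{sm}$, which parametrizes abelian varieties with CM by a field containing $\kk$, the special divisors $\widehat{\mathcal{Z}}_\Kra^\mathrm{total}(m)$ restrict to arithmetic $0$-cycles whose degrees can be computed by deformation-theoretic methods à la Gross and Kudla-Rapoport-Yang: the non-archimedean part is a sum of local intersection multiplicities governed by the arithmetic of the relevant quadratic spaces at each finite place, and the archimedean part comes from the Green function. I would invoke the local computations — these are exactly the local Whittaker function evaluations in the Kudla program — to match the coefficient of $q^m$ with the $m$-th Fourier coefficient of $\frac{d}{ds} E(\tau, s)\big|_{s=0}$ for the appropriate genus-one incoherent Eisenstein series. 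This is the step where one must be careful about the scaling factor $\deg_\C(\mathcal{Y}_\mathrm{sm})$: the normalization of the CM cycle and the choice of Haar measures enter here, and tracking them is what produces the precise constant (and sign) in the theorem.

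With the pulled-back series identified as $\deg_\C(\mathcal{Y}_\mathrm{sm}) \cdot E'(\tau,0)$ (up to sign), the final step is the Rankin-Selberg / doubling unfolding. Taking the Petersson inner product of $\widehat{\phi}(\tau)$ against a cusp form $g$ and then pairing with $\mathcal{Y}_\mathrm{sm}$ is the same, by linearity and the modularity theorem, as taking $\langle E'(\tau,0), \tilde g\rangle_\mathrm{Pet}$ after transporting $g$ to its vector-valued avatar $\tilde g$ via the induction map (\ref{induction map}). By the standard Siegel-Weil and doubling identities, $E(\tau,s)$ is built from the theta integral of $\theta_\Lambda$, and the Rankin-Selberg integral $\langle E(\tau,s), \tilde g\rangle$ equals (up to elementary gamma and normalizing factors) the convolution $L$-function $L(\tilde g, \theta_\Lambda, s)$; differentiating in $s$ at $s = 0$ — legitimate because the inner product converges, $g$ being cuspidal — gives the right-hand side. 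The main obstacle I expect is precisely the bookkeeping in the middle step: matching the archimedean coefficient of the arithmetic intersection against $\mathcal{Y}_\mathrm{sm}$ (which involves the derivative of the Green function restricted to CM points) with the derivative of the archimedean Whittaker function, and simultaneously controlling all the normalization constants so that the degree factor $\deg_\C(\mathcal{Y}_\mathrm{sm})$ and the overall minus sign come out exactly as stated. Everything else is an assembly of results already available in \cite{BHKRY} and the surrounding literature.
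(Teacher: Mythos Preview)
Your strategy is viable in outline but takes a substantially different and longer route than the paper. The paper's proof is very short: it produces (Proposition~\ref{prop:good harmonic choice}) a $\Delta$-invariant harmonic Maass form $f\in H_{2-n}(\omega_L)$ with $\xi(f)=\tilde g$ and
\[
\widehat{\theta}(g)=\widehat{\mathcal{Z}}(f)+c_f^+(0,0)\cdot\widehat{\mathcal{Z}}_\Kra^\tot(0),
\]
rewrites the constant-term contribution as $-c_f^+(0,0)\cdot[\widehat{\bm\omega}:\mathcal{Y}_\mathrm{sm}]$ via Proposition~\ref{prop:no error}, and then invokes the main theorem of \cite{BHY} (stated here as Theorem~\ref{thm:BHY main}) as a black box. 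By contrast, your plan---compute each $[\widehat{\mathcal{Z}}_\Kra^\tot(m):\mathcal{Y}_\mathrm{sm}]$ via local Kudla--Rapoport--Yang intersection numbers, identify the resulting generating series with the central derivative of an incoherent Eisenstein series, then unfold the Petersson pairing against $g$---is essentially a reconstruction of the proof of \cite{BHY} itself. What your proposal gains is a self-contained argument that makes the Siegel--Weil and Rankin--Selberg structure transparent; what it loses is brevity, and it omits the harmonic Maass form intermediary, which in the paper's approach is the device that converts the Petersson pairing defining $\widehat{\theta}(g)$ into the linear functional $f\mapsto\widehat{\mathcal{Z}}(f)$ to which \cite{BHY} applies. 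You should also be aware that the constant term $m=0$ requires separate treatment (the identity $[\widehat{\mathcal{Z}}_\Kra^\tot(0):\mathcal{Y}_\mathrm{sm}]=-[\widehat{\bm\omega}:\mathcal{Y}_\mathrm{sm}]$ of Proposition~\ref{prop:no error}, which uses the Chowla--Selberg formula), and this is not visible in your sketch.
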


The proof is given in \S \ref{s:small GZ}, by combining the modularity result of \cite{BHKRY} with the main result   of \cite{BHY}.
In \S \ref{s:convolution} we provide alternative formulations of Theorem \ref{Theorem A} that involve the usual convolution $L$-function of scalar-valued modular forms, as opposed to the vector-valued forms $\tilde{g}$ and $\theta_\Lambda$.  See especially Theorem \ref{maintheo2}.


\subsection{Central derivatives and big CM points}
\label{ss:intro big GZ}


Fix a totally real field $F$ of degree $n$, and define a CM field \[E=\kk\otimes_\Q F.\]  Let $\Phi \subset \Hom(E,\C)$ be a CM type of signature $(n-1,1)$, in the sense that there is a unique $\varphi^\mathrm{sp} \in \Phi$, called the \emph{special embedding}, whose restriction to $\kk$ agrees with the complex conjugate of the inclusion $\kk\subset \C$.  The reflex field of the pair $(E,\Phi)$ is
\[
E_\Phi = \varphi^\mathrm{sp} (E) \subset \C,
\]
and we denote by $\co_\Phi \subset E_\Phi$  its ring of integers.

We define in \S \ref{ss:big cm} an \'etale and proper Deligne-Mumford stack $\mathcal{Y}_\mathrm{big}$ over $\co_\Phi$, along with a morphism of $\co_\kk$-stacks
\[
\mathcal{Y}_\mathrm{big} \to \mathcal{S}_\Kra^* .
\]
This is the \emph{big CM cycle}.   Here we view $\mathcal{Y}_\mathrm{big}$ as an $\co_\kk$-stack using the inclusion $\co_\kk\subset \co_\Phi$ of subrings of $\C$ (which is the complex conjugate of the special embedding $\varphi^\mathrm{sp} : \co_\kk \to \co_\Phi$).
 Intersecting  arithmetic divisors against $\mathcal{Y}_\mathrm{big}$ defines    a  linear functional
\[
[ \, -   :  \mathcal{Y}_\mathrm{big}] : \widehat{\mathrm{Ch}}^1_\C( \mathcal{S}_\Kra^* ) \to \C.
\]

Our second main result relates the image of the arithmetic theta lift (\ref{ATL}) under this linear functional to the central derivative of a generalized $L$-function defined as the Petersson inner product  $\langle E(s) , \tilde{g} \rangle_\mathrm{Pet}$.  The modular form $\tilde{g}(\tau)$ is, once again, the image of $g(\tau)$ under the induction map (\ref{induction map}).  The modular form $E(\tau,s)$ is defined as the restriction via the diagonal embedding $\mathcal{H} \to \mathcal{H}^n$ of  a weight one Hilbert modular Eisenstein series valued in the space of the contragredient  representation $\omega_L^\vee$.  See \S \ref{ss:weird L} for details.

\begin{bigtheorem}\label{Theorem B}
Assume that the discriminants of $\kk/\Q$ and $F/\Q$ are odd and relatively prime.
The arithmetic theta lift (\ref{ATL}) satisfies
\[
[ \widehat{\theta}(g ) : \mathcal{Y}_\mathrm{big} ]
=  \frac{-1}{n} \cdot \deg_\C (\mathcal{Y}_\mathrm{big})  \cdot   \frac{d}{ds} \langle E(s) , \tilde{g} \rangle_\mathrm{Pet} \big|_{s=0} .
\]
Here we have defined
\[
\deg_\C (\mathcal{Y}_\mathrm{big})   = \sum_{ y\in  \mathcal{Y}_\mathrm{big}(\C) }  \frac{1}{| \Aut(y) |},
\]
where the sum is over the finitely many isomorphism classes of the groupoid of complex points of $\mathcal{Y}_\mathrm{big}$, viewed as an $\co_\kk$-stack.
\end{bigtheorem}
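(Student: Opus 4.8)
The plan is to derive Theorem~\ref{Theorem B} by combining the modularity theorem of \cite{BHKRY} with a ``big CM value'' formula in the style of Bruinier--Kudla--Yang \cite{BKY}, in close parallel with the proof of Theorem~\ref{Theorem A}. \emph{First}, by the modularity theorem of \cite{BHKRY} the coefficients of $\widehat{\phi}(\tau)$ span a finite-dimensional subspace of $\widehat{\mathrm{Ch}}^1_\C(\mathcal{S}_\Kra^*)$, so applying the linear functional $[\,-:\mathcal{Y}_\mathrm{big}]$ termwise produces a scalar-valued holomorphic modular form
\[
\Phi_\mathrm{big}(\tau) \define \sum_{m\ge 0} \big[ \widehat{\mathcal{Z}}_\Kra^\mathrm{total}(m) : \mathcal{Y}_\mathrm{big} \big] \cdot q^m \in M_n(\Gamma_0(D),\chi_\kk^n),
\]
and since $[\,-:\mathcal{Y}_\mathrm{big}]$ is linear and $\widehat{\phi}$ takes values in a finite-dimensional space, it commutes with the Petersson integral, whence $[ \widehat{\theta}(g) : \mathcal{Y}_\mathrm{big} ] = \langle \Phi_\mathrm{big} , g \rangle_\mathrm{Pet}$. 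It therefore suffices to compute the arithmetic degrees $\big[ \widehat{\mathcal{Z}}_\Kra^\mathrm{total}(m) : \mathcal{Y}_\mathrm{big} \big]$ for all $m\ge 0$ and to identify $\Phi_\mathrm{big}$ with the holomorphic projection of the central derivative of the Eisenstein series $E(\tau,s)$.

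\emph{The main obstacle} is the explicit computation of these arithmetic degrees. The big CM cycle $\mathcal{Y}_\mathrm{big}$ is a $0$-dimensional cycle attached to a torus built from the CM field $E = \kk\otimes_\Q F$, parametrizing abelian varieties with CM by $\co_E$. For each finite prime the intersection of $\mathcal{Y}_\mathrm{big}$ with $\widehat{\mathcal{Z}}_\Kra^\mathrm{total}(m)$ is to be computed by Grothendieck--Messing and Gross--Keating type deformation theory on the Kr\"amer model $\mathcal{M}_{(n-1,1)}^\Kra$, with due care above primes ramified in $\kk$; at the archimedean places one must evaluate the Green function of $\widehat{\mathcal{Z}}_\Kra^\mathrm{total}(m)$ at the big CM points. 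Combining the non-archimedean intersection numbers (local derivatives of Whittaker functions, in the spirit of Kudla's program) with the archimedean big CM value formula of \cite{BKY} --- transported from the orthogonal to the unitary Kr\"amer setting via the comparison of special divisors and Green functions in \cite{BHKRY} --- together with a Chowla--Selberg/Faltings-height evaluation of the $m=0$ term and a check that the log-log Green functions along the boundary of $\mathcal{S}_\Kra^*$ contribute correctly, one should arrive at the identity
\[
\big[ \widehat{\mathcal{Z}}_\Kra^\mathrm{total}(m) : \mathcal{Y}_\mathrm{big} \big] = \frac{-1}{n}\cdot \deg_\C(\mathcal{Y}_\mathrm{big}) \cdot b_m \qquad (m\ge 0),
\]
where $b_m$ is the $m$-th Fourier coefficient of the diagonal restriction of $\frac{d}{ds}E(\tau,s)\big|_{s=0}$, the derivative of the incoherent weight-one Hilbert Eisenstein series valued in $\omega_L^\vee$, the factor $1/n$ entering through the normalization of the diagonal restriction $\mathcal{H}\to\mathcal{H}^n$. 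The hypothesis that $\mathrm{disc}(\kk)$ and $\mathrm{disc}(F)$ are odd and coprime is used precisely here: it guarantees that the relevant hermitian $\co_E$-lattices are self-dual at every finite prime, so that the Eisenstein series factors as a uniform product of local Whittaker functions whose non-archimedean derivatives match the local intersection multiplicities, and that $\mathcal{Y}_\mathrm{big}$ has no unexpected intersection with the boundary.

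\emph{Finally}, the identity above says that $\Phi_\mathrm{big}$ and the (generally non-holomorphic, moderate-growth) function $\frac{-1}{n}\deg_\C(\mathcal{Y}_\mathrm{big})\cdot E'(\tau,0)$ have the same Fourier coefficients of non-negative index; after passing to the vector-valued avatar of $\widehat{\phi}$ valued in the Weil representation $\omega_L$, under which the induction map (\ref{induction map}) matches $\langle\,-\,,g\rangle_\mathrm{Pet}$ with $\langle\,-\,,\tilde{g}\rangle_\mathrm{Pet}$, this means $\Phi_\mathrm{big}$ is $\frac{-1}{n}\deg_\C(\mathcal{Y}_\mathrm{big})$ times the holomorphic projection of $E'(\tau,0)$. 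Since holomorphic projection is self-adjoint against cusp forms, $\langle \Phi_\mathrm{big}, g\rangle_\mathrm{Pet} = \frac{-1}{n}\deg_\C(\mathcal{Y}_\mathrm{big})\cdot \langle E'(0), \tilde{g}\rangle_\mathrm{Pet}$, and $\langle E'(0), \tilde{g}\rangle_\mathrm{Pet} = \frac{d}{ds}\langle E(s), \tilde{g}\rangle_\mathrm{Pet}\big|_{s=0}$ by interchanging $\frac{d}{ds}$ with the regularized integral --- legitimate because $E(\tau,s)$ vanishes at $s=0$, so $\langle E(s), \tilde{g}\rangle_\mathrm{Pet}$ may be computed by unfolding for $\Re(s)\gg 0$, continued meromorphically, and then differentiated. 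Combining these identities yields Theorem~\ref{Theorem B}. The decisive difficulty throughout is the second step: obtaining a sufficiently explicit and uniform description of the special divisors on the Kr\"amer model and of their intersection with big CM points --- including the constant term and the log-log boundary contributions --- to recognize the generating series of arithmetic degrees as a single Eisenstein series.
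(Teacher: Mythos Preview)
Your overall architecture---apply the linear functional $[\,-:\mathcal{Y}_\mathrm{big}]$ to the modular generating series, compute the finite intersections via \cite{Ho1,Ho2} and the archimedean Green function values via \cite{BKY}---is exactly what the paper does. But the specific mechanism you propose for passing from these coefficient-wise computations to the Petersson pairing with $\tilde g$ has a genuine gap.

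The central claim
\[
\big[ \widehat{\mathcal{Z}}_\Kra^\mathrm{tot}(m) : \mathcal{Y}_\mathrm{big} \big] = \frac{-1}{n}\deg_\C(\mathcal{Y}_\mathrm{big}) \cdot b_m,
\]
with $b_m$ ``the $m$-th Fourier coefficient of the diagonal restriction of $E'(\tau,0)$,'' is not correct as stated. The derivative $E'(\tau,0)$ is not holomorphic; its Fourier coefficients depend on $v=\Im(\tau)$, so there is no constant $b_m$ to speak of. More substantively, when one actually carries out the BKY evaluation of the Green function $\Theta^{\mathrm{reg}}(f_m)$ at big CM points (the paper's Proposition~\ref{prop:green eval}), the answer is not a single Fourier coefficient of $E'(\tau,0)$ but rather contains the full Petersson integral $\langle E'(0),\xi(f_m)\rangle_{\mathrm{Pet}}$ against the cusp form $\xi(f_m)$. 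Combining with the finite part, one finds (Theorem~\ref{thm:big harmonic lift})
\[
\frac{n\,[\widehat{\mathcal{Z}}_\Kra^\mathrm{tot}(m):\mathcal{Y}_\mathrm{big}]}{\deg_\C(\mathcal{Y}_\mathrm{big})}
= -\,\frac{d}{ds}\langle E(s),\xi(f_m)\rangle_{\mathrm{Pet}}\Big|_{s=0}
- 2\,c_{f_m}^+(0,0)\,\frac{\Lambda'(0,\chi_E)}{\Lambda(0,\chi_E)},
\]
which is not of the form you assert. Consequently your holomorphic-projection step fails: $\Phi_\mathrm{big}$ does not agree coefficient-wise with any ``holomorphic part'' of $E'(\tau,0)$, and in any case matching nonnegative Fourier coefficients would not characterize the holomorphic projection of a non-holomorphic form.

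The paper repairs this by working throughout with harmonic Maass forms. One chooses $f\in H_{2-n}(\omega_L)^\Delta$ with $\xi(f)=\tilde g$, written as a finite linear combination of the $f_m$ (Proposition~\ref{prop:good harmonic choice}), so that
\[
\widehat\theta(g)=\widehat{\mathcal Z}(f)+c_f^+(0,0)\,\widehat{\mathcal Z}_\Kra^\mathrm{tot}(0).
\]
Summing the displayed identity over the $f_m$ gives $[\widehat{\mathcal Z}(f):\mathcal{Y}_\mathrm{big}]$ directly in terms of $\langle E'(0),\tilde g\rangle_{\mathrm{Pet}}$ plus a multiple of $\Lambda'(0,\chi_E)/\Lambda(0,\chi_E)$. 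The constant-term contribution $[\widehat{\mathcal Z}_\Kra^\mathrm{tot}(0):\mathcal{Y}_\mathrm{big}]=-[\widehat{\bm\omega}:\mathcal{Y}_\mathrm{big}]$ is handled separately: the paper first applies Theorem~\ref{thm:big harmonic lift} to a \emph{weakly holomorphic} $f$ (so $\xi(f)=0$) to evaluate $[\widehat{\bm\omega}:\mathcal{Y}_\mathrm{big}]$ (Theorem~\ref{thm:big cm volume}), and this exactly cancels the residual $\Lambda'(0,\chi_E)/\Lambda(0,\chi_E)$ term. Your ``Chowla--Selberg/Faltings-height evaluation of the $m=0$ term'' is thus not an input but a consequence of the same machinery. (Incidentally, the factor $1/n$ arises from $[E_\Phi:\kk]=n$ in the degree count for $\mathcal{Y}_\mathrm{big}$ as an $\co_\kk$-stack, not from the diagonal restriction $\mathcal H\hookrightarrow\mathcal H^n$.)
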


The proof is given in \S \ref{s:big GZ}, by combining the modularity result of  \cite{BHKRY} with the intersection calculations of \cite{BKY,Ho1,Ho2}.


\subsection{Colmez's conjecture}


Suppose $E$ is a CM field with maximal totally real subfield $F$.
Let $D_E$ and $D_F$ be the absolute discriminants of $E$ and $F$, set
$
\Gamma_\R(s) = \pi^{-s/2}\Gamma(s/2),
$
and define the completed $L$-function
\[
\Lambda(s,\chi_E) =  \left|\frac{ D_E}{D_F}\right|^{ \frac{s}{2}}  \Gamma_\R(s+1)^{ [F:\Q] }  L(s,\chi_E)
\]
of the character $\chi_E: \A_F^\times \to \{ \pm 1\}$ determined by $E/F$.
 It satisfies the functional equation $\Lambda(1-s,\chi_E) = \Lambda(s,\chi_E)$, and
\[
\frac{\Lambda'(0,\chi_E)}{\Lambda(0,\chi_E)} =
\frac{L'(0,\chi_E)}{L(0,\chi_E)} + \frac{1}{2} \log  \left|\frac{ D_E}{D_F}\right|  - \frac{ [F:\Q] }{2} \log(4\pi e^\gamma) ,
\]
where $\gamma = -\Gamma'(1)$ is the Euler-Mascheroni constant.

Suppose $A$ is an abelian variety over $\C$ with complex multiplication by $\co_E$ and CM type $\Phi$.
In particular $A$ is defined over the algebraic closure of $\Q$ in $\C$.
It is a theorem of Colmez  \cite{Colmez}  that the Faltings height
\[
h^\Falt_{(E,\Phi)} = h^\Falt(A)
\]
 depends only on the pair $(E,\Phi)$, and not on $A$ itself.
Moreover, Colmez gave a conjectural formula for this Faltings height in terms of logarithmic derivatives of Artin $L$-functions.
In the special case where $E=\kk$, Colmez's conjecture reduces to the well-known Chowla-Selberg formula
\begin{equation}\label{chowla-selberg}
h^\Falt_\kk   = - \frac{1}{2}  \cdot  \frac{  \Lambda'(0,\chi_\kk)  }{ \Lambda(0,\chi_\kk) } - \frac{1}{4} \cdot  \log(16\pi^3e^\gamma),
\end{equation}
where we omit the CM type $\{ \mathrm{id} \} \subset \Hom(\kk,\C)$ from the notation.

Now suppose we are in the special case of \S \ref{ss:intro big GZ}, where \[E=\kk\otimes_\Q F\] and  $\Phi \subset \Hom(E,\C)$ has  signature $(n-1,1)$.   In this case, Colmez's conjecture simplifies to the equality of the following theorem.

\begin{bigtheorem}[\cite{Yang-Yin}]\label{Theorem C}
For a pair $(E,\Phi)$ as above,
\[
h^\Falt_{(E,\Phi)}  =
-\frac{2}{n}  \cdot \frac{\Lambda'(0, \chi_E) }{\Lambda(0, \chi_E)}
+  \frac{4-n}{2}  \cdot  \frac{\Lambda'(0, \chi_\kk) }{\Lambda(0, \chi_\kk)}
-  \frac{n}{4} \cdot \log(16 \pi^3 e^\gamma).
\]
\end{bigtheorem}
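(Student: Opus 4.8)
The plan is to deduce Theorem~\ref{Theorem C} from the same machinery that yields Theorem~\ref{Theorem B}, by extracting the \emph{constant term} of an identity of modular forms rather than its cuspidal projection. First I would pull back the modular generating series \eqref{modular divisors} along the morphism $\mathcal{Y}_\mathrm{big}\to\mathcal{S}_\Kra^*$ of \S\ref{ss:big cm} to obtain the $\C$-valued $q$-series
\[
\widehat{\phi}_{\mathcal{Y}_\mathrm{big}}(\tau) \;=\; \sum_{m\ge 0}\big[\,\widehat{\mathcal{Z}}_\Kra^\mathrm{total}(m):\mathcal{Y}_\mathrm{big}\,\big]\,q^m ,
\]
which is a modular form of weight $n$, level $\Gamma_0(D)$, and character $\chi_\kk^n$ by the modularity theorem of \cite{BHKRY}. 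Pairing this series against a cusp form $g$ recovers $[\widehat{\theta}(g):\mathcal{Y}_\mathrm{big}]$, so Theorem~\ref{Theorem B} asserts exactly that the cuspidal projection of $\widehat{\phi}_{\mathcal{Y}_\mathrm{big}}$ equals $-\frac{1}{n}\deg_\C(\mathcal{Y}_\mathrm{big})$ times that of the central derivative $E'(\tau,0)=\frac{d}{ds}E(\tau,s)\big|_{s=0}$ of the diagonally restricted weight-one Hilbert Eisenstein series of \S\ref{ss:weird L}. I would upgrade this to the full equality $\widehat{\phi}_{\mathcal{Y}_\mathrm{big}}(\tau) = -\frac{1}{n}\deg_\C(\mathcal{Y}_\mathrm{big})\cdot E'(\tau,0)$, in the appropriate normalization, and then read Theorem~\ref{Theorem C} off from the constant coefficients.

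For the Fourier coefficients indexed by $m>0$ this is the arithmetic Siegel--Weil input already underlying Theorem~\ref{Theorem B}: by \cite{BKY,Ho1,Ho2}, the arithmetic degree $[\widehat{\mathcal{Z}}_\Kra^\mathrm{total}(m):\mathcal{Y}_\mathrm{big}]$ of a special divisor along the big CM cycle equals, up to the factor $\deg_\C(\mathcal{Y}_\mathrm{big})/n$, the $m$-th Fourier coefficient of $E'(\tau,0)$, proved by matching local intersection multiplicities at the finite places, and Green-function integrals at the archimedean place, against derivatives of Whittaker functions. To pass from agreement in positive coefficients to agreement of the constant terms, I would observe that $\widehat{\phi}_{\mathcal{Y}_\mathrm{big}}(\tau)$ and $E'(\tau,0)$ are both non-holomorphic modular forms of weight $n$ whose non-holomorphic parts coincide---on the geometric side the non-holomorphic contribution comes from the star products and log-log Green functions built into \eqref{modular divisors}, and reproduces the known shape of the derivative of an incoherent Eisenstein series---so that the difference is a holomorphic modular form of weight $n>0$ with vanishing positive coefficients, hence identically zero. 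This forces equality of the constant terms.

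It then remains to evaluate both constant terms. Geometrically, $\widehat{\mathcal{Z}}_\Kra^\mathrm{total}(0)$ is built from the metrized tautological (Hodge) line bundle $\widehat{\omega}$ on $\mathcal{S}_\Kra^*$ together with explicit boundary corrections; pulling back to $\mathcal{Y}_\mathrm{big}$ and using the moduli interpretation of the big CM cycle---whose $n$-dimensional abelian variety has complex multiplication by $\co_E$ of type $\Phi$, while the auxiliary factor is an elliptic curve with complex multiplication by $\co_\kk$---one finds $[\widehat{\omega}:\mathcal{Y}_\mathrm{big}]=\deg_\C(\mathcal{Y}_\mathrm{big})$ times an explicit $\Q$-linear combination of $h^\Falt_{(E,\Phi)}$ and $h^\Falt_\kk$ (both well defined, the first by Colmez's invariance theorem \cite{Colmez}), plus elementary logarithmic terms from the boundary and the metric. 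Analytically, the constant Fourier coefficient of $E'(\tau,0)$ is computed from the functional equation of the Eisenstein series and an explicit Whittaker calculation, and emerges as a $\Q$-linear combination of $\Lambda'(0,\chi_E)/\Lambda(0,\chi_E)$ and $\Lambda'(0,\chi_\kk)/\Lambda(0,\chi_\kk)$ plus elementary terms. Equating the two, cancelling $\deg_\C(\mathcal{Y}_\mathrm{big})$, and using the Chowla--Selberg formula \eqref{chowla-selberg} to eliminate $h^\Falt_\kk$ in favour of $\Lambda'(0,\chi_\kk)/\Lambda(0,\chi_\kk)$, one solves for $h^\Falt_{(E,\Phi)}$ and obtains the stated identity; the oddness and coprimality hypotheses on $\mathrm{disc}(\kk)$ and $\mathrm{disc}(F)$ enter as in Theorem~\ref{Theorem B} and also ensure that the relevant complex-multiplication orders are maximal, so that the parametrized abelian variety genuinely contributes $h^\Falt_{(E,\Phi)}$.

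The main obstacle is this constant-term bookkeeping. Making the non-holomorphic parts cancel demands a precise description of the archimedean intersection multiplicities and of the log-log Green functions near the boundary of $\mathcal{S}_\Kra^*$, and the identity $[\widehat{\omega}:\mathcal{Y}_\mathrm{big}]=\deg_\C(\mathcal{Y}_\mathrm{big})\cdot(\text{linear combination of Faltings heights})$ must be proved with \emph{all} constants tracked, which requires Dieudonn\'e- and Grothendieck--Messing-theoretic control of the Hodge bundle of the universal object along the CM cycle, as well as the exact normalization of its metric. By contrast, the identification of the positive Fourier coefficients is provided by \cite{BKY,Ho1,Ho2}, and the closing algebra with \eqref{chowla-selberg} is routine.
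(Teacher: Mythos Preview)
Your overall architecture is right and matches the paper's: compute the arithmetic degrees $[\widehat{\mathcal{Z}}_\Kra^\tot(m):\mathcal{Y}_\mathrm{big}]$ for $m>0$ via the arithmetic Siegel--Weil input of \cite{BKY,Ho1,Ho2}, use modularity to pin down the constant term, and separately express $[\widehat{\bm{\omega}}:\mathcal{Y}_\mathrm{big}]$ in terms of Faltings heights. But two of the mechanisms you propose do not fit the paper's setup.

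First, your device for extracting the constant term---matching the non-holomorphic parts of $\widehat{\phi}_{\mathcal{Y}_\mathrm{big}}$ and $E'(\tau,0)$---does not apply here, because the generating series \eqref{modular divisors} of \cite{BHKRY} uses automorphic (regularized theta-lift) Green functions rather than Kudla's Green functions, and is a \emph{holomorphic} modular form; after applying $[-:\mathcal{Y}_\mathrm{big}]$ you get an honest holomorphic form, which cannot equal the non-holomorphic $E'(\tau,0)$. The paper instead proves a term-by-term identity (Theorem~\ref{thm:big harmonic lift}) for each $f_m$, then chooses a weakly holomorphic $f\in M^!_{2-n}(\omega_L)$ with $c_f^+(0,0)\neq 0$ and $\xi(f)=0$ (Lemma~\ref{lem:awesome f}); the modularity relation $\sum_m c_0(-m)\,\widehat{\mathcal{Z}}_\Kra^\tot(m)=0$ then isolates the constant term and yields Theorem~\ref{Theorem D} directly, with the $L$-function side vanishing because $\xi(f)=0$.

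Second, the identity expressing $[\widehat{\bm{\omega}}:\mathcal{Y}_\mathrm{big}]$ as a linear combination of $h^\Falt_{(E,\Phi)}$ and $h^\Falt_\kk$ (Proposition~\ref{prop:faltings relation}) is not obtained by deformation-theoretic control of the Hodge bundle along $\mathcal{Y}_\mathrm{big}$. The paper first produces an explicit isomorphism of metrized line bundles (Proposition~\ref{prop:bundle shuffle}) relating $\widehat{\bm{\omega}}^{\otimes 2}$, the Faltings bundles $\Lie(A_0)$ and $\det(\Lie(A))$, the exceptional divisor, and an auxiliary bundle $\det(\mathcal{V})$. The Faltings bundles give the heights immediately (Proposition~\ref{prop:faltings bundles}), but $\det(\mathcal{V})$ is handled by \emph{Gross's trick} (\S\ref{ss:gross}): one shows $\det(\mathcal{V})$ is globally trivializable after adding level and base-changing to $\Z^\alg[1/m]$, so its degree along any test cycle is a constant times the degree of that cycle; the constant is then computed by restricting to a convenient point where $A$ splits as a product of CM elliptic curves. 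This step is where the $(4-2n)h^\Falt_\kk$ and the surplus logarithmic terms come from, and it is essential---there is no direct moduli-theoretic identification of $\widehat{\bm{\omega}}$ with a Faltings bundle over $\mathcal{Y}_\mathrm{big}$.
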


In \cite[\S 2.4]{BHKRY} we defined the line bundle of weight one modular forms $\bm{\omega}$ on $\mathcal{S}_\Kra^*$. 
 It was endowed it with a hermitian metric in \cite[\S 7.2]{BHKRY}, and the resulting metrized line bundle determines a class
\[
\widehat{\bm{\omega}}   \in \widehat{\mathrm{Ch}}_\Q^1( \mathcal{S}^*_\Kra ).
\]
The constant term of (\ref{modular divisors}) is
\begin{equation}\label{constant term}
\widehat{\mathcal{Z}}^\tot_\Kra(0) =  - \widehat{\bm{\omega}}   + ( \mathrm{Exc}, -\log(D))
\end{equation}
where $\mathrm{Exc}$ is the \emph{exceptional locus} of $\mathcal{S}_\Kra^*$ appearing in  \cite[Theorem 2.3.4]{BHKRY}. 
It is a smooth effective Cartier divisor supported in characteristics dividing $D$, and we view it as an arithmetic divisor by endowing it with the constant Green function $-\log(D)$ in the complex fiber.

\begin{bigtheorem}\label{Theorem D}
The metrized line bundle $\widehat{\bm{\omega}}$ satisfies
\[
[ \widehat{\bm{\omega}}  : \mathcal{Y}_\mathrm{big} ]
=  \frac{- 2}{n}\cdot  \deg_\C(\mathcal{Y}_\mathrm{big})  \cdot \frac{  \Lambda'(0,\chi_E)}{\Lambda(0,\chi_E)} .
\]
\end{bigtheorem}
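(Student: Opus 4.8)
The plan is to evaluate the functional $[\,-:\mathcal{Y}_\mathrm{big}]$ on $\widehat{\bm{\omega}}$ by reducing it to an arithmetic degree over $\co_\Phi$ and then identifying that degree with a combination of Faltings heights of CM abelian varieties, to which the Chowla--Selberg formula and Theorem \ref{Theorem C} apply. Since $\mathcal{Y}_\mathrm{big}\to\Spec\co_\Phi$ is \'etale and proper and factors through the open part $\mathcal{S}_\Kra\subset\mathcal{S}_\Kra^*$, pulling back $\widehat{\bm{\omega}}$ gives a metrized line bundle $\widehat{\bm{\omega}}|_{\mathcal{Y}_\mathrm{big}}$ with
\[
[\widehat{\bm{\omega}}:\mathcal{Y}_\mathrm{big}] = \widehat{\deg}\big(\widehat{\bm{\omega}}|_{\mathcal{Y}_\mathrm{big}}\big).
\]
Because $\mathcal{Y}_\mathrm{big}$ avoids the boundary, the log-log Green functions play no role; the only subtlety at this stage is the intersection of $\mathcal{Y}_\mathrm{big}$ with the exceptional locus $\mathrm{Exc}$, supported at primes dividing $D$, whose contribution I will carry along inside a normalization constant.

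The core step is to identify $\widehat{\bm{\omega}}|_{\mathcal{Y}_\mathrm{big}}$ using the moduli description of $\mathcal{Y}_\mathrm{big}$ from \S \ref{ss:big cm}: over it lives a pair $(A_0,A)$ with $A_0$ a CM elliptic curve carrying an $\co_\kk$-action and $A$ an abelian variety with complex multiplication by $\co_E$ of CM type determined by $\Phi$. By the construction of $\bm{\omega}$ in \cite{BHKRY}, its pullback is, as a metrized line bundle, an explicit tensor product of the $\co_\kk$-isotypic line bundle attached to $\Lie A_0$ and the one-dimensional ``special'' piece of $\Lie A$, carrying the natural Hodge metric up to an explicit power of $2\pi$. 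Decomposing the Hodge bundle of $A$ according to $\Phi$ and passing to arithmetic degrees then yields
\[
[\widehat{\bm{\omega}}:\mathcal{Y}_\mathrm{big}] = \deg_\C(\mathcal{Y}_\mathrm{big})\cdot\big(a\,h^\Falt_{(E,\Phi)} + b\,h^\Falt_\kk + c\big)
\]
for explicit $a,b\in\Q$ and a constant $c$ collecting the archimedean normalization and the $\mathrm{Exc}$-contribution; the bookkeeping should produce $a=1$ and $b=4-n$.

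To finish, substitute the Chowla--Selberg formula (\ref{chowla-selberg}) for $h^\Falt_\kk$ and the evaluation of $h^\Falt_{(E,\Phi)}$ in Theorem \ref{Theorem C}. With $a=1$ and $b=4-n$ the coefficients of $\Lambda'(0,\chi_\kk)/\Lambda(0,\chi_\kk)$ cancel, the coefficient of $\Lambda'(0,\chi_E)/\Lambda(0,\chi_E)$ becomes $-2/n$, and the explicit constants combine to $-\log(16\pi^3 e^\gamma)$, which must be cancelled by $c$; Theorem \ref{Theorem D} follows once one checks that $c=\log(16\pi^3 e^\gamma)$. I expect this last check to be the main obstacle: it requires reconciling the Hodge metric on $\bm{\omega}$ fixed in \cite{BHKRY} with the metric normalizing the Faltings height, and simultaneously evaluating the local intersection of $\mathcal{Y}_\mathrm{big}$ with $\mathrm{Exc}$ at the primes dividing $D$, where the integral model is not smooth.

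An alternative route, avoiding Theorem \ref{Theorem C} and closer in spirit to the proof of Theorem \ref{Theorem B}, is to compute the generating series $\sum_{m\ge 0}[\widehat{\mathcal{Z}}^\tot_\Kra(m):\mathcal{Y}_\mathrm{big}]\,q^m$ via the big CM point calculations of \cite{BKY}, identify it with a multiple of the central derivative of the diagonal restriction of a weight-one Hilbert Eisenstein series, and read off its constant term --- essentially $\Lambda'(0,\chi_E)/\Lambda(0,\chi_E)$ --- using (\ref{constant term}) to isolate $\widehat{\bm{\omega}}$. The price is that one must then control the $(\mathrm{Exc},-\log D)$ term without the coprime-discriminant hypothesis imposed in Theorem \ref{Theorem B}.
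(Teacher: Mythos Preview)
Your overall strategy is exactly the paper's: establish a relation
\[
\frac{[\widehat{\bm{\omega}}:\mathcal{Y}_\mathrm{big}]}{\deg_\C(\mathcal{Y}_\mathrm{big})}
= h^\Falt_{(E,\Phi)} + (4-n)\,h^\Falt_\kk + \log(16\pi^3 e^\gamma)
\]
(this is Proposition~\ref{prop:faltings relation}), then substitute Chowla--Selberg and Theorem~\ref{Theorem C}. You also correctly identify the alternative route via the Eisenstein series constant term (Theorem~\ref{thm:big cm volume}), which the paper carries out under the coprime-discriminant hypothesis.

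Where your proposal is underspecified is precisely the step you flag as the obstacle, and the paper's solution involves an ingredient you do not mention. The line bundle $\bm{\omega}$ is \emph{not} simply a tensor product of a piece of $\Lie A_0$ and a piece of $\Lie A$; the relation (Proposition~\ref{prop:bundle shuffle}) is
\[
\widehat{\bm{\omega}}^{\otimes 2}\otimes\det(\Lie A)\otimes\Lie(A_0)^{\otimes 2}
\;\cong\;
\co(\mathrm{Exc})\otimes\det(\mathcal{V})\otimes\co\langle\,\cdots\rangle,
\]
where $\mathcal{V}=H_1^\dR(A)/\overline{\epsilon}H_1^\dR(A)$ comes from the full de Rham homology, not from $\Lie A$. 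Decomposing $\Lie A$ by $\Phi$ gives only $h^\Falt_{(E,\Phi)}$; the coefficient $b=4-n$ you predict actually arises because $[\det(\mathcal{V}):\mathcal{Y}_\mathrm{big}]$ contributes a further $(4-2n)h^\Falt_\kk$. To compute this, the paper uses a trick of Gross (\S\ref{ss:gross}): one shows that $\det(\mathcal{V})$ admits, over each connected component after adding level and base-changing to $\Z^\alg[1/m]$, a nowhere-vanishing section of \emph{constant} norm, so its arithmetic degree along any $1$-cycle is a fixed constant times the degree of that cycle. The constant is then evaluated not on $\mathcal{Y}_\mathrm{big}$ but on an auxiliary point where $A$ is a product of CM elliptic curves, reducing everything to $h^\Falt_\kk$. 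This is the missing idea: without it, neither $b=4-n$ nor $c=\log(16\pi^3 e^\gamma)$ can be verified, and the $\mathrm{Exc}$-contribution cannot be separated from the genuine period.
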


Theorem \ref{Theorem C} is proved   in \cite{Yang-Yin}  as a consequence of the average version of Colmez's  conjecture  \cite{AGHMP-2,YZ, Ho3}.
 Note that the proof  in \cite{Yang-Yin}  does not require our standing hypothesis that $\mathrm{disc}(\kk)$ is odd.  Of course the assumption that $\mathrm{disc}(\kk)$ is odd is still needed for Theorem \ref{Theorem D}, as it is only under these hypotheses that we have even defined the integral model $\mathcal{S}_\Kra^*$ and its line bundle of weight one modular forms.

 In  \S \ref{s:colmez} we will show that Theorems \ref{Theorem C} and  \ref{Theorem D} are equivalent.
 One can interpret this in one of two ways.  As  Theorem \ref{Theorem C} is already known, this equivalence proves Theorem D.
 On the other hand,   in  \S \ref{ss:big derivative} will give an independent proof of Theorem \ref{Theorem D}  under the additional assumption that   the discriminants of $\kk$ and $F$ are odd and relatively prime.  In this way we obtain a new proof of Theorem \ref{Theorem C} under these extra hypotheses.


\subsection{The case $n=2$}

Throughout the introduction we have assumed that $n\ge 3$, and the reader might wonder how much of what we have written extends to the case $n=2$.

As explained in \cite[\S 1.6]{BHKRY},  when $n=2$ the proof of the modularity of  (\ref{modular divisors}) breaks down because there is no known integral model of $\mathrm{Sh}(G,\mathcal{D})$ whose reduction at the primes of $\co_\kk$ dividing $D$ is normal.  The existence of such a model when $n>2$ is used in [\emph{loc.~cit.}]  to compute the vertical components of divisors of Borcherds products.

When $n=2$, the Shimura variety $\mathrm{Sh}(G,\mathcal{D})$ is essentially a union of modular curves (if the $\kk$-hermitian space $W$ admits an isotropic line) or compact quaternionic Shimura curves (if $W$ is anisotropic).
In either case the analogues of Theorems \ref{Theorem A} and \ref{Theorem B}  are close in spirit to the Gross-Zagier theorem \cite{GZ} and its generalizations \cite{YZZ}.
In particular, the statement of Theorems \ref{Theorem A} is quite parallel to 
the key result Theorem 6.1 in  \cite[Section~1.6]{GZ}.
If we interchange in the computation of  $[ \widehat{\theta}(g ) : \mathcal{Y}_\mathrm{sm} ]$ the order of taking the Petersson inner product and the height pairing, this quantity is very analogous to the left hand side of Theorem 6.1 in \cite{GZ}. Both quantities are expressed as central derivatives of a Rankin convolution $L$-function of $g$ and a binary theta function which is determined by the CM cycle in question. If $g$ is a newform, then $\widehat{\theta}(g )$ should lie in a $g$-isotypical component and the height pairing in our Theorem 
 \ref{Theorem A} should be proportional to the height of the $g$-isotypical component of (a twist of) $\mathcal{Y}_\mathrm{sm}$. It would be interesting to make such a comparison precise.
However, note that there are substantial differences as well.
While we work with unitary Shimura varieties and CM points whose discriminants are equal to the level, Gross and Zagier work with $\GL_2$ Shimura varieties and CM points whose discriminants 
are coprime to the level. 
%

Theorem \ref{Theorem C} is true as stated when $n=2$, and is proved in \cite{Yang-Yin}.  
Indeed, Colmez's conjecture is known for all quartic CM fields.  
If the quartic CM field is Galois over $\Q$, then the Galois group is abelian and Colmez's conjecture is known by work of Colmez \cite{Colmez} and Obus \cite{Obus}.  
In the non-Galois case the CM types form a single $\mathrm{Aut}(\C/\Q)$-orbit;  as Colmez's conjecture is constant on such orbits,   
 the full Colmez conjecture follows from the average case proved in \cite{AGHMP-2} and \cite{YZ}.

Theorem \ref{Theorem D} is also true as stated when $n=2$.
Indeed,  when we prove the equivalence of Theorems \ref{Theorem C} and \ref{Theorem D}  in \S \ref{s:colmez}  we only assume $n\ge 2$.
 

\subsection{Thanks} The results of this paper are the outcome of a long term project, begun initially in Bonn in June of 2013, and supported in a crucial way 
by three weeklong meetings at AIM, in Palo Alto (May of 2014) and San Jose (November of 2015 and 2016), as part of their AIM SQuaRE's program. The opportunity to spend these periods of intensely focused 
efforts on the problems involved was essential. We would like to thank the University of Bonn and AIM for their support. 



\section{Small CM cycles and derivatives of $L$-functions}
\label{s:small GZ}


In this section we combine the results of \cite{BHKRY} and \cite{BHY} to prove Theorem \ref{Theorem A}.
Although we will restrict to $n\ge 3$ in \S \ref{ss:Aproof}, we allow $n\ge 2$ until that point.


\subsection{A Shimura variety of dimension zero}


Define a rank three torus $T_\mathrm{sm}$ over $\Q$ as the fiber product
\[
\xymatrix{
{T_\mathrm{sm}}  \ar[rrr]\ar[d]  &  &  & {  \mathbb{G}_m } \ar[d]^{\mathrm{diag.}} \\
{\mathrm{Res}_{\kk/\Q}\mathbb{G}_m \times \mathrm{Res}_{\kk/\Q}\mathbb{G}_m  } \ar[rrr]_{\mathrm{Nm} \times \mathrm{Nm}}  &   &   &{  \mathbb{G}_m \times \mathbb{G}_m. }
}
\]
Its group of $\Q$-points is
\[
T_\mathrm{sm}(\Q) \iso \{ (x,y) \in \kk^\times \times \kk^\times : x\overline{x}=y\overline{y} \}.
\]

The fixed embedding $\kk \subset \C$ identifies Deligne's torus $\mathbb{S}$ with the real algebraic group
$(\mathrm{Res}_{\kk/\Q}\mathbb{G}_m)_\R$, and   the  diagonal inclusion
\[
\mathbb{S} \hookrightarrow  (\mathrm{Res}_{\kk/\Q}\mathbb{G}_m)_\R \times ( \mathrm{Res}_{\kk/\Q}\mathbb{G}_m )_\R
\]
factors through a morphism
$
h_\mathrm{sm} : \mathbb{S} \to T_{\mathrm{sm} , \R }.
$
The pair $(T_\mathrm{sm},\{h_\mathrm{sm} \})$ is  a Shimura datum, which, along with the compact open subgroup
\[
K_\mathrm{sm} = T_\mathrm{sm}(\A_f) \cap ( \widehat{\co}_\kk^\times \times \widehat{\co}_\kk^\times),
\]
determines a $0$-dimensional $\kk$-stack  $\mathrm{Sh}(T_\mathrm{sm})$ with complex points
\[
\mathrm{Sh}(T_\mathrm{sm}) (\C) = T_\mathrm{sm}(\Q) \backslash \{ h_\mathrm{sm}\} \times T_\mathrm{sm}(\A_f) / K_\mathrm{sm}.
\]


\subsection{The small CM cycle}
\label{ss:small cycle construction}


The Shimura  variety just constructed  has  a moduli interpretation,  which allows us to construct an integral model.
The interpretation we have in mind requires first choosing  a triple $(\mathfrak{a}_0,\mathfrak{a}_1,\mathfrak{b})$ in which
\begin{itemize}
\item
$\mathfrak{a}_0$ is a  self-dual hermitian $\co_\kk$-lattice of signature $(1,0)$,
\item
$\mathfrak{a}_1$ is a  self-dual hermitian $\co_\kk$-lattice of signature $(0,1)$,
\item
$\mathfrak{b}$ is a  self-dual hermitian $\co_\kk$-lattice of signature $(n-1,0)$.
\end{itemize}
The hermitian forms on $\mathfrak{a}_0$ and $\mathfrak{b}$ induce a hermitian form of signature $(n-1,0)$ on the   projective $\co_\kk$-module
\[
\Lambda = \Hom_{\co_\kk} (  \mathfrak{a}_0 , \mathfrak{b}   ),
\]
as explained in   \cite[\S 2.1]{BHY} or  \cite[(2.1.5)]{BHKRY}.

Recall from \cite[\S 3.1]{BHY} or \cite[\S 2.3]{BHKRY} the $\co_\kk$-stacks  $\mathcal{M}_{(p,0)}$ and $\mathcal{M}_{(0,p)}$.
Both  parametrize  abelian schemes  $A\to S$ of relative dimension $p\ge 1$ over $\co_\kk$-schemes, endowed with principal polarizations and $\co_\kk$-actions.   For the first moduli problem we impose the  signature $(p,0)$ condition that $\co_\kk$ acts on the $\co_S$-module $\Lie(A)$ via the structure morphism $\co_\kk \to \co_S$.  For the second we impose the signature $(0,p)$ condition that the action is by  the complex conjugate of the structure morphism.  Both of these stacks are \'etale and proper over $\co_\kk$ by \cite[Proposition 2.1.2]{Ho2}.

\begin{remark}
The generic fibers of $\mathcal{M}_{(1,0)}$ and $\mathcal{M}_{(0,1)}$ are the Shimura varieties associated to  $\mathfrak{a}_{0\Q}$ and $\mathfrak{a}_{1\Q}$, while the generic fiber of $\mathcal{M}_{(n-1,0)}$ contains the Shimura variety associated to $\mathfrak{b}_\Q$ as an open and closed substack.  
For more precise information, see \cite[Proposition 2.13]{KR2} and the lemma that precedes it.
\end{remark}

Denote by $\widetilde{\mathcal{Y}}_\mathrm{sm}$ the functor that associates to every $\co_\kk$-scheme $S$ the groupoid of quadruples $(A_0,A_1,B,\eta)$ in which
\begin{equation}\label{presmall}
( A_0,A_1,B ) \in  \mathcal{M}_{(1,0)}(S)  \times  \mathcal{M}_{(0,1)}(S)  \times  \mathcal{M}_{(n-1,0)} (S),
\end{equation}
and
\begin{equation}\label{eta level}
\eta : \underline{\Hom}_{\co_\kk}(A_0,B) \iso \underline{\Lambda}
\end{equation}
is an isomorphism of \'etale sheaves of hermitian $\co_\kk$-modules, where the hermitian form on the left hand side is defined as in \cite[(2.5.1)]{BHKRY}.
We impose the further condition  that   for every geometric point $s  \to S$,  and every prime $\ell\neq \mathrm{char}(s)$, there is an isomorphism of hermitian $\co_{\kk,\ell}$-lattices
 \begin{equation}\label{small genus}
 \Hom_{\co_\kk} ( A_{0 s} [\ell^\infty]  ,  A_{1s} [\ell^\infty] ) \iso \Hom_{\co_\kk}( \mathfrak{a}_0,\mathfrak{a}_1 ) \otimes_\Z \Z_\ell.
 \end{equation}

 \begin{lemma}\label{lem:small genus complete}
 If
 \[
 s \to \mathcal{M}_{(1,0)}  \times_{\co_\kk}  \mathcal{M}_{(0,1)}  \times_{\co_\kk}  \mathcal{M}_{(n-1,0)}
 \]
is a geometric point of characteristic $0$ such that (\ref{small genus}) holds for all primes $\ell$ except possibly one, then it holds for the remaining prime as well.
 \end{lemma}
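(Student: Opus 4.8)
The statement is a local-global principle for hermitian lattices: knowing that $\Hom_{\co_\kk}(A_{0s}[\ell^\infty], A_{1s}[\ell^\infty])$ lies in the genus of $\Hom_{\co_\kk}(\mathfrak{a}_0,\mathfrak{a}_1)\otimes\Z_\ell$ at all but one prime forces it at the last prime too. The key observation is that, since $\mathfrak{a}_0$ and $\mathfrak{a}_1$ are self-dual of signatures $(1,0)$ and $(0,1)$, the rank-one hermitian $\co_\kk$-module $\Hom_{\co_\kk}(\mathfrak{a}_0,\mathfrak{a}_1)$ has a completely rigid structure: at every finite prime $\ell$ it is the unique self-dual hermitian $\co_{\kk,\ell}$-lattice of rank one. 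So the condition (\ref{small genus}) at a prime $\ell$ is simply the assertion that $\Hom_{\co_\kk}(A_{0s}[\ell^\infty], A_{1s}[\ell^\infty])$ is self-dual of rank one over $\co_{\kk,\ell}$. The plan is to show that this self-duality cannot fail at exactly one prime.

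First I would use that $s$ has characteristic $0$, so $A_{0s}$ and $A_{1s}$ are CM abelian varieties over a field of characteristic zero (after base change to $\C$, say), each of dimension one, with $\co_\kk$-action and principal polarization. Thus each $A_{is}$ is, up to isogeny respecting the $\co_\kk$-action, the abelian variety $\C/\co_\kk$ with its natural polarization, and $L := \Hom_{\co_\kk}(A_{0s}, A_{1s})$ (integral homomorphisms, not just up to isogeny) is a rank-one hermitian $\co_\kk$-module whose $\ell$-adic completion recovers $\Hom_{\co_\kk}(A_{0s}[\ell^\infty],A_{1s}[\ell^\infty])$ for every $\ell$. Being rank one over $\co_\kk$, $L$ is a fractional $\co_\kk$-ideal equipped with a hermitian form; its self-duality at $\ell$ is measured by the $\ell$-part of a single discriminant ideal $\mathfrak{c}(L)\subset\co_\kk$. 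The hypothesis says $\mathfrak{c}(L)$ is trivial away from one prime, i.e. $\mathfrak{c}(L)$ is a power of a single prime ideal of $\co_\kk$ (or its product with its conjugate).

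Next I would pin down $\mathfrak{c}(L)$ globally. Because $\mathfrak{a}_0$ and $\mathfrak{a}_1$ are self-dual of signature $(1,0)$ and $(0,1)$ respectively, the hermitian $\kk$-space $\Hom_\kk(\mathfrak{a}_{0\Q},\mathfrak{a}_{1\Q})$ is the unique negative-definite hermitian $\kk$-line, and at the archimedean place the form on $L\otimes\R$ is negative definite — matching $\Hom_{\co_\kk}(\mathfrak{a}_0,\mathfrak{a}_1)\otimes\R$. The product formula for the discriminant (Hasse principle for the rank-one hermitian space, or simply the fact that $L$ and $\Hom_{\co_\kk}(\mathfrak{a}_0,\mathfrak{a}_1)$ are lattices in the same hermitian $\kk$-line whose relative covolume is a principal idele) then forces $\mathfrak{c}(L)$ to be a norm from $\kk$, hence invariant under complex conjugation and of even valuation — so it cannot be a nontrivial power of a single split or ramified prime alone without also being nontrivial somewhere else. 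Combined with the previous step, this gives $\mathfrak{c}(L)=\co_\kk$, which is exactly the assertion that (\ref{small genus}) holds at the remaining prime.

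The main obstacle is making the "product formula" step precise in the hermitian (as opposed to quadratic) setting: one must identify the correct invariant — I expect it to be the image in $\kk^\times/\mathrm{Nm}(\kk^\times)$, or equivalently the class of the volume of $L$ relative to the fixed self-dual lattice $\Hom_{\co_\kk}(\mathfrak{a}_0,\mathfrak{a}_1)$ inside the ambient hermitian $\kk$-line — and check that its local components at the $\ell$ where (\ref{small genus}) is known to hold are trivial, that its archimedean component is trivial (negative-definite matching negative-definite), and hence by the product formula its component at the last prime is trivial as well. An alternative, perhaps cleaner, route avoids volumes entirely: since $\mathfrak{a}_0\otimes\Z_\ell$ and $\mathfrak{a}_1\otimes\Z_\ell$ are each the unique self-dual rank-one hermitian $\co_{\kk,\ell}$-lattice, (\ref{small genus}) holds at $\ell$ \emph{automatically} for every $\ell$ unramified in $\kk$ and for every split $\ell$, so the only prime where it could conceivably fail is the unique ramified prime $p\mid D$; but then "all primes except possibly one" already forces the statement unless that one excluded prime \emph{is} $p$, and at $p$ self-duality of a rank-one hermitian $\co_{\kk,p}$-lattice with the correct determinant is again automatic. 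I would first check whether this shortcut closes the argument outright, and fall back on the product-formula computation only if the excluded prime can genuinely be $p$.
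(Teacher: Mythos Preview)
Your overall strategy is correct and is the standard argument behind the result the paper cites (the proof in the paper is just a pointer to \cite[Lemma~2.2.2]{BHKRY}, which is proved by exactly this kind of local--global reasoning for hermitian spaces). A few points of your write-up should be sharpened, though.

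First, the phrase ``self-duality at $\ell$ is measured by the $\ell$-part of a discriminant ideal $\mathfrak{c}(L)$'' conflates two different things. The lattice $L=\Hom_{\co_\kk}(A_{0s},A_{1s})$ is \emph{automatically} self-dual at every prime, because both $A_{0s}$ and $A_{1s}$ carry principal polarizations; likewise $\Hom_{\co_\kk}(\mathfrak{a}_0,\mathfrak{a}_1)$ is self-dual. So condition~(\ref{small genus}) is not a self-duality condition; it is the question of whether two self-dual rank-one hermitian $\co_{\kk,\ell}$-lattices are isometric. By Jacobowitz (cited in the paper as \cite{Jac}), this holds if and only if the ambient hermitian $\kk_\ell$-spaces are isometric, i.e.\ have the same local invariant $\mathrm{inv}_\ell\in\{\pm1\}$. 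The correct global invariant to track is therefore the collection of local invariants of the two hermitian $\kk$-lines $L_\Q$ and $\Hom_\kk(\mathfrak{a}_{0\Q},\mathfrak{a}_{1\Q})$, and the product formula you want is simply $\prod_v \mathrm{inv}_v=1$. Since both spaces are negative definite at $\infty$ and agree at all finite primes except possibly one, they agree at the remaining prime as well; then self-dual lattices in isometric rank-one spaces are isometric. This is exactly your ``product-formula'' route, just with the right invariant named.

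Second, your ``alternative route'' contains an error: $D$ need not be prime, so there is not a \emph{unique} ramified prime. It is true that at split and inert primes any two self-dual rank-one lattices are isometric (so (\ref{small genus}) is automatic there), but at each ramified prime there are two isometry classes, and the hypothesis only excludes failure at one prime. So the shortcut does not close the argument by itself; you genuinely need the product formula over the ramified primes, which is what your main approach supplies.
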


\begin{proof}
The proof is identical to  \cite[Lemma 2.2.2]{BHKRY}.
\end{proof}

\begin{proposition}\label{prop:representability}
The functor $\widetilde{\mathcal{Y}}_\mathrm{sm}$ is represented by a Deligne-Mumford stack, \'etale and proper over $\co_\kk$,
and there is a canonical isomorphism of $\kk$-stacks
\begin{equation}\label{small uniformization}
 \mathrm{Sh}(T_\mathrm{sm}) \iso \widetilde{\mathcal{Y}}_{\mathrm{sm}/\kk} .
\end{equation}
\end{proposition}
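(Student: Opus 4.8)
The plan is to prove representability and the uniformization by recognizing $\widetilde{\calA}_\mathrm{sm}$ as a variant of the moduli stacks already studied in \cite{Ho2,BHY,BHKRY}, and then matching its generic fiber with the Shimura variety $\mathrm{Sh}(T_\mathrm{sm})$ via the standard dictionary between CM abelian varieties and algebraic Hecke characters. For representability, first I would observe that $\widetilde{\calA}_\mathrm{sm}$ is built from the three moduli stacks $\calM_{(1,0)}$, $\calM_{(0,1)}$, $\calM_{(n-1,0)}$, each of which is étale and proper over $\co_\kk$ by \cite[Proposition 2.1.2]{Ho2}; the level structure $\eta$ in (\ref{eta level}) is an isomorphism of étale sheaves of hermitian $\co_\kk$-modules between the finite étale sheaf $\underline{\Hom}_{\co_\kk}(A_0,B)$ and the constant sheaf $\underline{\Lambda}$, so imposing it cuts out a subfunctor representable by a scheme finite étale over the product $\calM_{(1,0)}\times_{\co_\kk}\calM_{(0,1)}\times_{\co_\kk}\calM_{(n-1,0)}$ (possibly empty). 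The genus condition (\ref{small genus}) is an open and closed condition on this finite étale cover---open and closed because, after the choice of a polarization form, the $\co_{\kk,\ell}$-lattice $\Hom_{\co_\kk}(A_{0s}[\ell^\infty],A_{1s}[\ell^\infty])$ is locally constant in the geometric point $s$, and only finitely many primes $\ell$ can fail the isomorphism with the fixed lattice (and by Lemma \ref{lem:small genus complete}, in characteristic $0$ the single remaining prime is automatically controlled). Hence $\widetilde{\calA}_\mathrm{sm}$ is a union of connected components of a Deligne--Mumford stack étale and proper over $\co_\kk$, so it is itself étale and proper over $\co_\kk$.

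For the uniformization isomorphism (\ref{small uniformization}), I would run the usual complex-uniformization argument. On complex points, a quadruple $(A_0,A_1,B,\eta)$ with its polarizations and $\co_\kk$-actions is equivalent, via the exponential/Betti comparison, to a triple of hermitian $\co_\kk$-modules with Hodge structures of the appropriate signatures, i.e.\ to a point of a double coset space for the torus $T_\mathrm{sm}$. More precisely, $A_0$ and $A_1$ are CM elliptic curves with CM by $\co_\kk$ and opposite CM types, so their associated idele-class data give the two factors $x,y\in\kk^\times$; the polarization compatibility forces $x\overline x = y\overline y$ up to the similitude, which is exactly the defining equation of $T_\mathrm{sm}(\Q)$; and $B$, being of signature $(n-1,0)$ with $\co_\kk$ acting via the structure map, contributes no further moduli beyond its prescribed isomorphism class (pinned down, together with $A_0$, by the rigidification $\eta$ onto the fixed lattice $\Lambda$). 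The level subgroup $K_\mathrm{sm} = T_\mathrm{sm}(\A_f)\cap(\widehat\co_\kk^\times\times\widehat\co_\kk^\times)$ is precisely the stabilizer of the integral lattice structures on $A_0,A_1,B$ and of the trivialization $\eta$. Unwinding these identifications yields a bijection on $\C$-points compatible with the groupoid structure, and hence an isomorphism of $\kk$-stacks after checking functoriality in $\C$-algebras; the descent from $\C$ to $\kk$ follows from the Shimura--Taniyama reciprocity law for CM points on a torus Shimura datum, which identifies the canonical model of $\mathrm{Sh}(T_\mathrm{sm})$ with the moduli stack over $\kk$. This part is entirely parallel to \cite[\S 3]{BHY} and \cite[\S 2.5]{BHKRY}, so I would present it by citing those references and indicating the (minor) modifications needed to accommodate the extra factor $A_1$ and the genus condition.

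The main obstacle, I expect, is not representability---which is a routine assembly of étale-and-proper pieces---but rather making the dictionary in the uniformization step fully precise: one must carefully track how the choice of the auxiliary triple $(\mathfrak{a}_0,\mathfrak{a}_1,\mathfrak{b})$, the induced hermitian structure on $\Lambda=\Hom_{\co_\kk}(\mathfrak{a}_0,\mathfrak{b})$, and the similitude-matching in the definition of $T_\mathrm{sm}$ interact, so that the genus condition (\ref{small genus}) corresponds exactly to landing in the correct single genus class rather than a disjoint union of several. In other words, the subtlety is bookkeeping: verifying that the adelic double coset for $T_\mathrm{sm}$ at level $K_\mathrm{sm}$ matches the moduli problem on the nose, with no spurious components on either side, and that the stacky automorphism groups (coming from $\co_\kk^\times$ and the units of the hermitian lattices) agree. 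I would handle this by reducing to the already-established case: the pair $(A_0,B,\eta)$ alone recovers a zero-dimensional piece of the small CM cycle of \cite{BHY}, and adjoining $A_1$ with the genus constraint (\ref{small genus}) is exactly the modification that produces the fiber-product torus $T_\mathrm{sm}$ from the torus used there. Thus the whole proposition follows by citing \cite[Proposition 3.1.2]{BHY} (or its analogue) and checking the compatibility of the extra data, which is a finite check on hermitian lattices prime by prime.
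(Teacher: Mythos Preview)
Your proposal is correct and follows essentially the same approach as the paper: representability via the known \'etale-properness of the three moduli factors plus the observation that the genus condition is open-and-closed and the level structure $\eta$ is a finite \'etale condition, and uniformization via the explicit complex-points dictionary followed by descent through the theory of canonical models. The only cosmetic difference is that the paper imposes the conditions in the reverse order---it first defines the auxiliary stack $\mathcal{N}$ by the genus condition (\ref{small genus}) as an open-and-closed substack of the triple product, and \emph{then} trivializes the \'etale sheaf $\underline{\Hom}_{\co_\kk}(A_0,B)$ over a finite \'etale cover $\mathcal{N}'\to\mathcal{N}$ (citing \cite[Theorem 5.1]{BHY} and \cite[Corollary 6.9]{hida} for the needed local constancy), identifying $\widetilde{\mathcal{Y}}_\mathrm{sm}$ with a union of copies of an open-and-closed piece of $\mathcal{N}'$.
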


\begin{proof}
For any $\co_\kk$-scheme $S$, let  $\mathcal{N}(S)$ be the groupoid of triples (\ref{presmall}) satisfying (\ref{small genus}) for every geometric point $s\to S$ and every prime $\ell\neq \mathrm{char}(s)$.  In other words, the definition is the same as $\widetilde{\mathcal{Y}}_\mathrm{sm}$ except that we omit the datum (\ref{eta level}) from the moduli problem.

We interrupt the proof of Proposition \ref{prop:representability} for a lemma.

\begin{lemma}\label{lem:small cm components}
The functor $\mathcal{N}$ is represented by an open and closed substack
\[
\mathcal{N} \subset  \mathcal{M}_{(1,0)}  \times_{\co_\kk}  \mathcal{M}_{(0,1)}  \times_{\co_\kk}  \mathcal{M}_{(n-1,0)}.
\]
\end{lemma}

\begin{proof}
This is \cite[Proposition 5.2]{BHY}.  As the proof  there is left to the reader, we indicate the idea. Let
 \[
 \mathcal{B} \subset  \mathcal{M}_{(1,0)}  \times_{\co_\kk}  \mathcal{M}_{(0,1)}  \times_{\co_\kk}  \mathcal{M}_{(n-1,0)}
\]
be one connected component, and suppose there is a geometric point  $s\to \mathcal{B}$ of characteristic $p$ such that (\ref{small genus}) holds for all $\ell\neq p$.  The geometric fibers of the $\ell$-adic sheaf  $\underline{\Hom}_{\co_\kk} ( A_0 [\ell^\infty]  ,  A_1 [\ell^\infty] )$ on
\[
\mathcal{B}_{(p)} = \mathcal{B} \times_{\Spec(\Z)} \Spec(\Z_{(p)})
\]
are all isomorphic,   and therefore  (\ref{small genus}) holds for all geometric points $s\to \mathcal{B}_{(p)}$ and all $\ell\neq p$.
In particular, using Lemma \ref{lem:small genus complete}, if $s \to \mathcal{B}$ is a geometric point of characteristic $0$, then (\ref{small genus}) holds for every prime $\ell$.  Having proved this, one can reverse the argument to see that (\ref{small genus}) holds for \emph{every} geometric point $s\to \mathcal{B}$ and every $\ell \neq \mathrm{char}(s)$.   Thus if the condition (\ref{small genus}) holds at one geometric point, it holds at all geometric points on the same connected component.
\end{proof}

We now return to the proof of Proposition \ref{prop:representability}.
As noted above, the stacks $\mathcal{M}_{(p,0)}$ and $\mathcal{M}_{(0,p)}$ are \'etale and proper over $\co_\kk$, and hence the
same is true of $\mathcal{N}$.

Let $(A_0,A_1,B)$ be the universal object over $\mathcal{N}$.  Combining \cite[Theorem 5.1]{BHY} and \cite[Corollary 6.9]{hida}, the \'etale sheaf $\underline{\Hom}_{\co_\kk}(A_0,B)$  is represented by a Deligne-Mumford stack whose connected components
are finite \'etale over  $\mathcal{N}$.  Fixing a geometric point $s\to \mathcal{N}$, we obtain a representation of $\pi_1^{et}(\mathcal{N},s)$ on a finitely generated $\co_\kk$-module $\Hom_{\co_\kk}(A_{0s},B_s)$, and the kernel of this representation cuts out a
finite \'etale cover  $\mathcal{N}' \to \mathcal{N}$
over which the sheaf $\underline{\Hom}_{\co_\kk}(A_0,B)$ becomes constant.

It is now easy to see that the functor $\widetilde{\mathcal{Y}}_\mathrm{sm}$ is represented by the disjoint union of finitely many copies of the maximal open and closed substack of $\mathcal{N}'$ over which there exists an isomorphism (\ref{eta level}).

It remains to construct the isomorphism (\ref{small uniformization}).
The natural actions of $\co_\kk$ on $\mathfrak{a}_0$ and $\mathfrak{b}$, along with the \emph{complex conjugate} of the natural action of $\co_\kk$ on $\mathfrak{a}_1$, determine a morphism of reductive groups
\[
  \mathrm{Res}_{\kk/\Q}\mathbb{G}_m \times \mathrm{Res}_{\kk/\Q}\mathbb{G}_m
\map{( w,z) \mapsto (w, \overline{z},z) } \GU(\mathfrak{a}_{0\Q}) \times \GU(\mathfrak{a}_{1\Q}) \times \GU(\mathfrak{b}_\Q ).
\]
Restricting this morphism to the subtorus $T_\mathrm{sm}$ defines a morphism
\[
\mathbb{S} \map{ h_\mathrm{sm}  }    T_{\mathrm{sm},\R}
\to   \GU(\mathfrak{a}_{0\R}) \times \GU(\mathfrak{a}_{1\R}) \times \GU(\mathfrak{b}_\R ),
\]
endowing  the real vector spaces $\mathfrak{a}_{0\R}$, $\mathfrak{a}_{1\R}$, and $\mathfrak{b}_\R$ with  complex  structures.

The isomorphism (\ref{small uniformization}) on complex points sends a pair
\[
( h_\mathrm{sm} , g) \in \mathrm{Sh}(T_\mathrm{sm})(\C)
\]
 to the quadruple  $(A_0,A_1,B,\eta)$ defined by
\[
A_0(\C) = \mathfrak{a}_{0\R} /  g \mathfrak{a}_0,\quad
A_1(\C) = \mathfrak{a}_{1\R} /g  \mathfrak{a}_1,\quad
B(\C) = \mathfrak{b}_{\R} / g \mathfrak{b} ,
\]
endowed with their natural $\co_\kk$-actions and polarizations as in the proof of \cite[Proposition 2.2.1]{BHKRY}.  The datum $\eta$ is the canonical identification
\[
\Hom_{\co_\kk}(A_0,B) = \Hom_{\co_\kk}(g \mathfrak{a}_0,g \mathfrak{b}) =\Hom_{\co_\kk}( \mathfrak{a}_0, \mathfrak{b})  = \Lambda.
\]
It follows from the theory of canonical models that this isomorphism on complex points descends to an isomorphism of $\kk$-stacks, completing the proof of Proposition \ref{prop:representability}.
\end{proof}


The finite  group $\Aut(\Lambda)$ of automorphisms of the hermitian lattice $\Lambda$ acts on $\widetilde{\mathcal{Y}}_\mathrm{sm}$ by
\[
\gamma* (A_0,A_1,B,\eta) = (A_0,A_1,B,\gamma\circ \eta),
\]
 allowing us to form the  stack quotient
$
\mathcal{Y}_\mathrm{sm} = \Aut(\Lambda) \backslash\widetilde{\mathcal{Y}}_\mathrm{sm}.
$
The forgetful map
\[
\widetilde{\mathcal{Y}}_\mathrm{sm}  \to   \mathcal{M}_{(1,0)} \times \mathcal{M}_{(0,1)} \times \mathcal{M}_{(n-1,0)}
\]
 (all fiber products over $\co_\kk$)  factors through an open and closed immersion
 \[
 \mathcal{Y}_\mathrm{sm} \to \mathcal{M}_{(1,0)} \times \mathcal{M}_{(0,1)} \times \mathcal{M}_{(n-1,0)} 
 \]
whose image is the open and closed substack $\mathcal{N}$ of Lemma \ref{lem:small cm components}.

The triple $(\mathfrak{a}_0,\mathfrak{a}_1,\mathfrak{b})$ determines a pair $(\mathfrak{a}_0,\mathfrak{a})$ as in the introduction, simply by setting $\mathfrak{a}=\mathfrak{a}_1\oplus \mathfrak{b}$.  This data determines a unitary Shimura variety with  integral model $\mathcal{S}_\Kra$ as in  (\ref{kramer inclusion}), and there is a commutative diagram
\[
\xymatrix{
{ \mathcal{Y}_\mathrm{sm}  }  \ar[rr] \ar[d]_\pi  & &   {  \mathcal{M}_{(1,0)} \times \mathcal{M}_{(0,1)} \times \mathcal{M}_{(n-1,0)} }   \ar[d]   \\
 { \mathcal{S}_\Kra  } \ar[rr]^{\subset} & & {  \mathcal{M}_{(1,0)} \times \mathcal{M}_{(n-1,1)}^\Kra } .
}
\]
The vertical arrow on the right sends
\[
(A_0,A_1,B) \mapsto (A_0, A_1\times B),
\]
 and the arrow $\pi$ is defined by the commutativity of the diagram.

\begin{remark}
In order for $A_1\times B$ to define a point of $\mathcal{M}_{(n-1,1)}^\Kra$,  we must endow its Lie algebra  with a codimension one subsheaf  \[
\mathcal{F}_{A_1\times B} \subset \Lie(A_1\times B)
\]
 satisfying Kr\"amer's condition \cite[\S 2.3]{BHKRY}.
We choose
$
\mathcal{F}_{A_1\times B} = \Lie(B).
$
\end{remark}

\begin{definition}\label{def:small cm definition}
Composing the morphism $\pi$ in the diagram above with the inclusion of $\mathcal{S}_\Kra$ into its toroidal compactification, we obtain a morphism of $\co_\kk$-stacks
\[
\pi : \mathcal{Y}_\mathrm{sm} \to \mathcal{S}_\Kra^*
\]
called   the  \emph{small CM cycle}.
\end{definition}

As in \cite[Definition 3.1.8]{Ho2}, there is a linear functional
\[
\widehat{\mathrm{Ch}}^1_\C(\mathcal{S}^*_\Kra) \to \C
\]
called  \emph{arithmetic degree along} $\mathcal{Y}_\mathrm{sm}$ and denoted $\widehat{\mathcal{Z}} \mapsto [ \widehat{\mathcal{Z}} : \mathcal{Y}_\mathrm{sm} ]$,  defined as the composition 
\[
 \widehat{\mathrm{Ch}}^1_\C(\mathcal{S}^*_\Kra) \map{\pi^*}  \widehat{\mathrm{Ch}}^1_\C(\mathcal{Y}_\mathrm{sm}) \map{ \widehat{\deg} } \C .
\]
The first arrow is pullback of arithmetic divisors.  The second arrow  (\emph{arithmetic degree}) is normalized as follows:
An  irreducible  divisor $\mathcal{Z}\subset \mathcal{Y}_\mathrm{sm}$ is necessarily supported in finitely many nonzero characteristics, and hence any $\C$-valued function $\mathrm{Gr}(\mathcal{Z} , \cdot)$ on the finite set $\mathcal{Y}_\mathrm{sm}(\C)$ defines a Green function for it.   
The arithmetic degree of the arithmetic divisor 
\[
(\mathcal{Z},\mathrm{Gr}(\mathcal{Z} , \cdot)) \in  \widehat{\mathrm{Ch}}^1_\C(\mathcal{Y}_\mathrm{sm})
\]
is defined to be
\[
\widehat{\deg} (\mathcal{Z},\mathrm{Gr}(\mathcal{Z},\cdot) ) =
\sum_{ \mathfrak{q} \subset \co_\kk }
\sum_{z\in \mathcal{Z}(\F_\mathfrak{q}^\alg) } \frac{ \log ( \mathrm{N}(\mathfrak{q}))}{\#\Aut_\mathcal{X}(z)}
+   \sum_{z\in \mathcal{Y}_\mathrm{sm}(\C)} 
\frac{  \mathrm{Gr}(\mathcal{Z} , z) }{ \#\Aut_{\mathcal{Y}_\mathrm{sm}(\C) }(z) } ,
\]
where $\F_\mathfrak{q}^\alg$ is an algebraic closure
of $\co_\kk/\mathfrak{q}$, and $\mathrm{N}(\mathfrak{q}) = \# (\co_\kk/\mathfrak{q})$.

\begin{remark}\label{rem:no one-half}
The above definition of arithmetic degree does not include a  factor of $1/2$ in front of the archimedean contribution,
seemingly in disagreement with the usual definition  (see \cite[\S 3.4.3]{GS} for example).  In fact there is no disagreement.
Our convention is that $\mathcal{Y}_\mathrm{sm}(\C)$ means the complex points of $\mathcal{Y}_\mathrm{sm}(\C)$ as a $\kk$-stack, whereas in the usual definition it would be regarded as a $\Q$-stack.
Thus the usual definition includes a sum over  twice as many complex points, but with a $1/2$ in front.
\end{remark}

\begin{remark}
The small CM cycle  arises from a morphism of Shimura varieties.
Indeed, there is a  morphism of Shimura data $(T_\mathrm{sm},\{h_\mathrm{sm}\}) \to (G,\mathcal{D})$,
and the induced morphism of Shimura varieties sits in a commutative diagram
\[
\xymatrix{
{  \mathrm{Sh}(T_\mathrm{sm}) }  \ar[rr]  \ar[d]_\iso &  & { \mathrm{Sh}(G,\mathcal{D}) } \ar[d]^\iso  \\
{    \widetilde{\mathcal{Y}}_{ \mathrm{sm} /\kk}}  \ar[r]  & {  \mathcal{Y}_{\mathrm{sm}/\kk}  }  \ar[r]^\pi  & {  \mathcal{S}_{\Kra/\kk} } .
}
\]
\end{remark}

\begin{proposition}\label{prop:small degree}
The degree $\deg_\C (\mathcal{Y}_\mathrm{sm})$ of Theorem \ref{Theorem A} satisfies
\[
\deg_\C (\mathcal{Y}_\mathrm{sm})  =  ( h_\kk / w_\kk)^2  \cdot  \frac{2^{1-o(D)} }{  | \Aut (\Lambda) |},
\]
where $o(D)$ is the number of distinct prime divisors of $D$.
\end{proposition}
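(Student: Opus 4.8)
The plan is to compute $\deg_\C(\mathcal{Y}_\mathrm{sm})$ via the complex uniformization, passing through $\widetilde{\mathcal{Y}}_\mathrm{sm}$ and the identification $\mathrm{Sh}(T_\mathrm{sm})(\C) = T_\mathrm{sm}(\Q)\backslash\{h_\mathrm{sm}\}\times T_\mathrm{sm}(\A_f)/K_\mathrm{sm}$ of Proposition \ref{prop:representability}. First I would observe that since $\mathcal{Y}_\mathrm{sm} = \Aut(\Lambda)\backslash\widetilde{\mathcal{Y}}_\mathrm{sm}$ is a stack quotient by a finite group acting on the level structure $\eta$ alone, we have
\[
\deg_\C(\mathcal{Y}_\mathrm{sm}) = \frac{1}{|\Aut(\Lambda)|}\cdot \deg_\C(\widetilde{\mathcal{Y}}_\mathrm{sm}) = \frac{1}{|\Aut(\Lambda)|}\sum_{y\in \mathrm{Sh}(T_\mathrm{sm})(\C)}\frac{1}{|\Aut(y)|}.
\]
The second sum is just the mass of the finite groupoid $\mathrm{Sh}(T_\mathrm{sm})(\C)$, which equals $|T_\mathrm{sm}(\Q)\backslash T_\mathrm{sm}(\A_f)/K_\mathrm{sm}|$ counted with the automorphism weights $1/|T_\mathrm{sm}(\Q)\cap K_\mathrm{sm}^{g}|$ — equivalently, $\mathrm{vol}$ of the adelic quotient against an appropriately normalized measure. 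So the problem reduces to a class-number computation for the torus $T_\mathrm{sm}$.

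Next I would work out the arithmetic of $T_\mathrm{sm}$ explicitly. From the fiber product definition, $T_\mathrm{sm}(\Q)\iso\{(x,y)\in\kk^\times\times\kk^\times : x\bar x = y\bar y\}$ and $K_\mathrm{sm} = T_\mathrm{sm}(\A_f)\cap(\widehat\co_\kk^\times\times\widehat\co_\kk^\times)$. The idea is to use the exact sequence relating $T_\mathrm{sm}$ to $\mathrm{Res}_{\kk/\Q}\mathbb{G}_m$ via the norm map: there is a short exact sequence $1\to \kk^1\times\kk^1 \to T_\mathrm{sm} \to \mathrm{Res}_{\kk/\Q}\mathbb{G}_m \to 1$ (projecting to the first factor $x$, whose norm determines $y\bar y$, with kernel $y/x'$-type tori where $\kk^1$ is the norm-one torus), or more symmetrically one can fit $T_\mathrm{sm}$ between $(\mathrm{Res}_{\kk/\Q}\mathbb{G}_m)^2$ and $\mathbb{G}_m$. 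Taking adelic points and quotienting, the class number $|T_\mathrm{sm}(\Q)\backslash T_\mathrm{sm}(\A_f)/K_\mathrm{sm}|$ gets expressed in terms of $h_\kk$, the order $w_\kk$ of $\co_\kk^\times$ (which controls the units contributing to automorphism groups), and a local factor at each prime dividing $D$ coming from the failure of the norm map $\co_{\kk,p}^\times \to \Z_p^\times$ to be surjective at ramified primes — this is precisely where the $2^{1-o(D)}$ comes from, since at each ramified prime the cokernel of the local norm contributes a factor of $2$, and a global reciprocity/product-formula obstruction removes one such factor. I expect the cleanest bookkeeping is to compute $\mathrm{vol}(T_\mathrm{sm}(\Q)\backslash T_\mathrm{sm}(\A_f))$ via the Tamagawa number of $T_\mathrm{sm}$ (which is a torus with known Tamagawa number, computable from its character module and the relevant $L$-values), then divide by $\mathrm{vol}(K_\mathrm{sm})$ computed as a product of local volumes.

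Alternatively — and this is probably what the authors actually do — one can avoid Tamagawa numbers entirely and argue by hand: parametrize the double coset space by first choosing the class of $x$ in $\kk^\times\backslash\A_{\kk,f}^\times/\widehat\co_\kk^\times$ (giving $h_\kk$ choices), then for each such $x$ counting the compatible $y$ up to the relevant equivalence, which amounts to counting $\kk^\times\backslash\{y\in\A_{\kk,f}^\times : y\bar y = x\bar x\}/\widehat\co_\kk^\times$; the fibers of the norm map on ideles modulo $\widehat\co_\kk^\times$ are torsors under $\kk^1(\A_f)$-type objects, whose size is $2h_\kk/w_\kk$ adjusted by ramification, and the automorphism group of each point is $\co_\kk^\times\cap(\text{condition})$ which has order $w_\kk$ generically. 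Squaring accounts for the two independent $\kk$-idele classes, $w_\kk^{-2}$ for the two automorphism contributions, and the ramification correction yields $2^{1-o(D)}$. The main obstacle will be getting the power of $2$ exactly right: one must carefully track, at each of the $o(D)$ ramified primes, the index $[\Z_p^\times : \norm(\co_{\kk,p}^\times)] = 2$, combine these with the product formula that kills the diagonal $\mathbb{G}_m$, and confirm no spurious factor of $2$ enters from $w_\kk$ versus the norm-one torus class number $h_\kk/w_\kk$ appearing with the right multiplicity — this is the kind of computation where sign and parity errors are easy, so I would cross-check the final formula against the known case $n$ small or against the analogous computation in \cite{BHY} or \cite{Ho2}.
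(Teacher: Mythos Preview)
Your approach is correct in outline and would yield the right answer, but it is considerably more elaborate than what the paper does. The paper's proof is a one-line moduli count: a point of $\mathcal{Y}_\mathrm{sm}(\C)$ is a triple $(A_0,A_1,B)$ (the level structure $\eta$ having been quotiented out), and one simply observes that there are $2^{1-o(D)}h_\kk^2$ isomorphism classes of such triples, each with automorphism group $\co_\kk^\times \times \co_\kk^\times \times U(\Lambda)$ of order $w_\kk^2\cdot|\Aut(\Lambda)|$. No passage through $T_\mathrm{sm}$, Tamagawa numbers, or adelic volumes is needed.

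Your detour through $\widetilde{\mathcal{Y}}_\mathrm{sm}\iso\mathrm{Sh}(T_\mathrm{sm})$ and the adelic double coset space is a valid alternative route, and your ``alternative'' paragraph at the end is in fact much closer in spirit to what the paper does---except that the paper does not parametrize via the torus coordinates $(x,y)$ at all, but rather counts CM abelian varieties directly. The advantage of the moduli count is that the automorphism group is immediately visible as the product of the automorphism groups of the three factors, so the $w_\kk^{-2}$ and $|\Aut(\Lambda)|^{-1}$ fall out with no bookkeeping. The factor $2^{1-o(D)}$ arises from genus theory when counting pairs $(A_0,A_1)$ satisfying the local condition \eqref{small genus}, which is exactly the ramification correction you anticipated, but seen from the CM side rather than the local-norm-cokernel side. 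Your Tamagawa-number approach would buy you a more systematic treatment generalizing to other tori, at the cost of obscuring this very short calculation.
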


\begin{proof}
This is an elementary calculation.  Briefly, the groupoid $ \mathcal{Y}_\mathrm{sm}(\C)$ has $2^{1-o(D)} h_\kk^2$ isomorphism classes of points, and each point has  the same automorphism group $\co^\times_\kk \times \co_\kk^\times \times U(\Lambda)$.
\end{proof}

Recall from  (\ref{constant term}) that the constant term of (\ref{modular divisors}) is 
\[
\widehat{\mathcal{Z}}^\tot_\Kra(0) =  - \widehat{\bm{\omega}}   + ( \mathrm{Exc}, -\log(D)) 
\]
where $\widehat{\bm{\omega}}$ is the metrized line bundle of weight one modular forms.
The  exceptional locus $\mathrm{Exc} \subset \mathcal{S}_\Kra$ was defined in \cite[\S 2.3]{BHKRY}.   
It is a reduced effective Cartier divisor supported in characteristics dividing $D$, and can  be characterized as follows.
The integral model $\mathcal{S}_\Kra$ carries over it an abelian scheme $A \to \mathcal{S}_\Kra$ of relative dimension $n$ endowed with an action of $\co_\kk$.  This abelian scheme is obtained by pulling back  the universal object from the second factor of the fiber product in (\ref{kramer inclusion}).  If we let  $\delta\in \co_\kk$ be a fixed square root of $-D$, then $\mathrm{Exc}$ is the reduced stack underlying closed substack of $\mathcal{S}_\Kra$ defined by $\delta \cdot \Lie(A)=0$.

\begin{proposition}\label{prop:no error}
The constant term (\ref{constant term}) satisfies
\[
 [ \widehat{\mathcal{Z}}_\Kra^\tot(0) : \mathcal{Y}_\mathrm{sm} ] = - [\widehat{\bm{\omega}} : \mathcal{Y}_\mathrm{sm}]
=  2 \deg_\C(\mathcal{Y}_\mathrm{sm}) \cdot \frac{\Lambda'(0,\chi_\kk)}{ \Lambda(0,\chi_\kk) } .
\]
\end{proposition}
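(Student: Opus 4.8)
The plan is to start from the decomposition (\ref{constant term}) of the constant term of the generating series, which gives
\[
[\widehat{\mathcal{Z}}_\Kra^\tot(0) : \mathcal{Y}_\mathrm{sm}] = -[\widehat{\bm{\omega}} : \mathcal{Y}_\mathrm{sm}] + [(\mathrm{Exc}, -\log D) : \mathcal{Y}_\mathrm{sm}],
\]
and to reduce the proposition to two separate claims: that the exceptional contribution $[(\mathrm{Exc}, -\log D) : \mathcal{Y}_\mathrm{sm}]$ vanishes, and that
\[
[\widehat{\bm{\omega}} : \mathcal{Y}_\mathrm{sm}] = -2 \deg_\C(\mathcal{Y}_\mathrm{sm}) \cdot \frac{\Lambda'(0,\chi_\kk)}{\Lambda(0,\chi_\kk)}.
\]
The first of these claims also supplies the middle equality $[\widehat{\mathcal{Z}}_\Kra^\tot(0) : \mathcal{Y}_\mathrm{sm}] = -[\widehat{\bm{\omega}} : \mathcal{Y}_\mathrm{sm}]$ asserted in the proposition.

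For the vanishing of the exceptional term, I would use that $\mathrm{Exc}$ is a vertical divisor supported over the primes $\mathfrak{p} \mid D$, so that $\pi^* \mathrm{Exc}$ is a vertical divisor on $\mathcal{Y}_\mathrm{sm}$ supported in those same characteristics, while $\pi^*$ of the constant Green function is simply the constant $-\log D$ on $\mathcal{Y}_\mathrm{sm}(\C)$. As $\mathcal{Y}_\mathrm{sm}$ is \'etale over $\co_\kk$ (Proposition \ref{prop:representability}), the weighted cardinality of each geometric fiber of $\mathcal{Y}_\mathrm{sm}$ equals $\deg_\C(\mathcal{Y}_\mathrm{sm})$. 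The local input is then that every point of $\mathcal{Y}_\mathrm{sm}$ in residue characteristic $\mathfrak{p}\mid D$ lies on $\mathrm{Exc}$ with multiplicity exactly one; this I would extract from the deformation-theoretic description of Kr\"amer's model and its exceptional locus in \cite[\S 2.3]{BHKRY}, the point being that in such a characteristic $\mathfrak{p}$ acts as zero on both $\Lie(A_1)$ and $\Lie(B)$, so that the filtration $\mathcal{F}_{A_1 \times B} = \Lie(B)$ sits in precisely the degenerate position along which Kr\"amer blows up. Since $D$ is squarefree, the finite part of the arithmetic degree of $\pi^*(\mathrm{Exc}, -\log D)$ is then $\sum_{\mathfrak{p}\mid D}\deg_\C(\mathcal{Y}_\mathrm{sm})\log p = \deg_\C(\mathcal{Y}_\mathrm{sm})\log D$, which exactly cancels the archimedean contribution $-\deg_\C(\mathcal{Y}_\mathrm{sm})\log D$ of the Green function.

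For the computation of $[\widehat{\bm{\omega}} : \mathcal{Y}_\mathrm{sm}]$, the plan is to restrict $\widehat{\bm{\omega}}$ along the factorization $\mathcal{Y}_\mathrm{sm} \to \mathcal{M}_{(1,0)} \times \mathcal{M}_{(0,1)} \times \mathcal{M}_{(n-1,0)}$, using the choice $\mathcal{F}_{A_1 \times B} = \Lie(B)$, which identifies $\pi^*\widehat{\bm{\omega}}$ with a metrized line bundle built from the Hodge bundles of the universal CM elliptic curves $A_0$ (of signature $(1,0)$) and $A_1$ (of signature $(0,1)$), whose CM types by $\co_\kk$ are complex conjugates of each other; the precise identification is the one recorded in \cite{BHKRY} (and underlies the small-cycle computation of \cite{BHY}). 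Thus $[\widehat{\bm{\omega}} : \mathcal{Y}_\mathrm{sm}]$ is a sum of two arithmetic degrees of Hodge bundles over $0$-dimensional CM moduli, each of which is, up to the normalization of the metric on $\bm{\omega}$ fixed in \cite{BHKRY}, a fixed multiple of the Faltings height $h^\Falt_\kk$; the two summands agree because $h^\Falt$ is unchanged under complex conjugation of the CM type. Substituting the Chowla--Selberg formula (\ref{chowla-selberg}) for $h^\Falt_\kk$ and Proposition \ref{prop:small degree} for $\deg_\C(\mathcal{Y}_\mathrm{sm})$ then yields the stated value.

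The step I expect to be the main obstacle is the bookkeeping of normalizations. On the exceptional side, one must check that the multiplicity of $\pi^*\mathrm{Exc}$ along each fiber over $\mathfrak{p}\mid D$ is exactly one and not merely constant, which requires a genuine local computation in Kr\"amer's model; on the $\widehat{\bm{\omega}}$ side, one must compare the metric on $\bm{\omega}$ with the Faltings metric carefully enough that the constant $\log(16\pi^3 e^\gamma)$ from Chowla--Selberg, the various factors of $2$, and the factor $2^{1-o(D)}$ in $\deg_\C(\mathcal{Y}_\mathrm{sm})$ recombine into exactly $-2\deg_\C(\mathcal{Y}_\mathrm{sm}) \cdot \Lambda'(0,\chi_\kk)/\Lambda(0,\chi_\kk)$. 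Conceptually there is no new ingredient: the identity is the Chowla--Selberg formula read as the Faltings height of a CM elliptic curve, combined with the elementary geometry of the small CM cycle and of the exceptional locus of \cite{BHKRY}.
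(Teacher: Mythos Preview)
Your proposal is correct and follows essentially the same approach as the paper. The paper reduces the first equality to $[(\mathrm{Exc},-\log D):\mathcal{Y}_\mathrm{sm}]=0$ exactly as you do, and establishes the multiplicity-one statement by proving the scheme-theoretic identity $\mathcal{Y}_\mathrm{sm}\times_{\mathcal{S}_\Kra}\mathrm{Exc}=\mathcal{Y}_\mathrm{sm}\times_{\Spec(\co_\kk)}\Spec(\co_\kk/\mathfrak{d}_\kk)$ (reducedness coming from \'etaleness, and the inclusion on geometric points from the fact that $\delta=\sqrt{-D}$ kills $\Lie(A_1\times B)$, which is the precise form of your ``$\mathfrak{p}$ acts as zero'' remark); for the second equality the paper simply cites the computation carried out in \cite[Theorem~6.4]{BHY}, which is exactly the Hodge-bundle/Chowla--Selberg argument you outline.
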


\begin{proof}
The second equality was proved in the course of proving \cite[Theorem 6.4]{BHY}.  We note that the argument  uses the Chowla-Selberg formula (\ref{chowla-selberg}) in an essential way.

The first equality is equivalent to
\[
[ ( \mathrm{Exc}  , - \log(D))   : \mathcal{Y}_\mathrm{sm} ]  = 0,
\]
and so it suffices to prove
\begin{equation}\label{error vanishes}
[ (0  ,  \log(D))   : \mathcal{Y}_\mathrm{sm} ]   =   \deg_\C(\mathcal{Y}_\mathrm{sm}) \cdot \log(D) = [ ( \mathrm{Exc}  , 0 )   : \mathcal{Y}_\mathrm{sm} ] .
\end{equation}
The first equality in (\ref{error vanishes}) is obvious from the definitions.
To prove the second equality, we first prove
\begin{equation}\label{small physical intersection}
 \mathcal{Y}_\mathrm{sm} \times_{\mathcal{S}_\Kra} \mathrm{Exc} =  \mathcal{Y}_{\mathrm{sm}} \times_{\Spec(\co_\kk)} \Spec( \co_\kk / \mathfrak{d}_\kk ) .
\end{equation}

As the exceptional locus $\mathrm{Exc} \subset \mathcal{S}_\Kra$ is reduced and supported in characteristics dividing $D$, it satisfies
\[
\mathrm{Exc} \subset \mathcal{S}_\Kra \times_{ \Spec(\co_\kk)  } \Spec( \co_\kk / \mathfrak{d}_\kk ).
\]
This implies  the inclusion $\subset$ in (\ref{small physical intersection}).
As $\mathcal{Y}_\mathrm{sm}$ is \'etale over $\co_\kk$, the right hand side of (\ref{small physical intersection}) is reduced, and hence so is the left hand side.
To prove that equality holds in (\ref{small physical intersection}), it now suffices  to check the inclusion $\supset$ on the level of geometric points.

As above, let  $\delta\in \co_\kk$ be a square root of $-D$.    Suppose $p\mid D$ is a prime, $\mathfrak{p}\subset \co_\kk$ is the unique prime above it, and $\F_\mathfrak{p}^\alg$ is an algebraic closure of its residue field.  Suppose we have a point
$y\in \mathcal{Y}_\mathrm{sm}(\F_\mathfrak{p}^\alg)$  corresponding to a triple $(A_0,A_1,B)$ over $\F_\mathfrak{p}^\alg$.
As $\delta=0$ in $\F_\mathfrak{p}^\alg$, the signature conditions imply that the endomorphism $\delta\in\co_\kk$ kills the Lie algebras of $A_0$, $A_1$, and $B$.
In particular $\delta$ kills the Lie algebra of $A_1\times B$, which is the pullback via
\[
\pi : \mathcal{Y}_\mathrm{sm} \to \mathcal{S}_\Kra
\]
of the universal $A \to \mathcal{S}_\Kra$.
Using the characterization of $\mathrm{Exc}$ recalled above, we find that that  $\pi(y) \in \mathrm{Exc}$.
This proves (\ref{small physical intersection}).

The equality (\ref{small physical intersection}), and the fact that both sides of that equality are reduced, implies that
\[
[ ( \mathrm{Exc}  , 0 )   : \mathcal{Y}_\mathrm{sm} ]  =
 \sum_{p\mid D } \log(p)  \sum_{ y\in \mathcal{Y}_\mathrm{sm}(\F_\mathfrak{p}^\alg)   }  \frac{1}{ |\Aut(y) | } .
\]
On the other hand, the \'etaleness of $\mathcal{Y}_\mathrm{sm} \to \Spec(\co_\kk)$ implies that the right hand side is equal to
\[
 \sum_{p\mid D } \log(p)  \sum_{ y\in \mathcal{Y}_\mathrm{sm}(\C)   }  \frac{1}{ |\Aut(y) | } = \log(D) \cdot \deg_\C(\mathcal{Y}_\mathrm{sm}),
\]
completing the proof of the second equality in (\ref{error vanishes}).
\end{proof}


\subsection{The convolution $L$-function}
\label{ss:convolution}


Recall that we have  defined a hermitian $\co_\kk$-lattice $\Lambda=\Hom_{\co_\kk}(\mathfrak{a}_0,\mathfrak{b})$ of signature $(n-1,0)$.
We  also define hermitian $\co_\kk$-lattices
\[
L_0=\Hom_{\co_\kk}(\mathfrak{a}_0,\mathfrak{a}_1) ,\quad L = \Hom_{\co_\kk}(\mathfrak{a}_0,\mathfrak{a}),
\]
of signature $(1,0)$ and $(n-1,1)$, so that
$
L \iso  L_0 \oplus \Lambda.
$

The hermitian form $\langle\cdot,\cdot\rangle : L\times L \to \co_\kk$ determines a $\Z$-valued quadratic form
$Q(x)=\langle x,x\rangle$ on $L$, and we denote in the same way its restrictions to $L_0$ and $\Lambda$.
The dual lattice of $L$ with respect to the $\Z$-bilinear form
\begin{equation}\label{bilinear}
[x_1,x_2]=Q(x_1+x_2) - Q(x_1) - Q(x_2)
\end{equation}
is $L'=\mathfrak{d}_\kk^{-1}L$.

As in \cite[\S 2.2]{BHY} we denote by   $S_L=\C[L'/L]$  the space of complex-valued functions on $L'/L$, and by
$
\omega_L : \SL_2(\Z) \to \Aut_\C (S_L)
$
the Weil representation. There is a complex conjugate representation $\overline{\omega}_L$ on $S_L$ defined by
\[
\overline{\omega}_L(\gamma)  \phi = \overline{  \omega_L(\gamma)\overline{\phi} }.
\]

Suppose we begin with  a  classical  scalar-valued  cusp form
\begin{equation*}
g (\tau)  = \sum_{m> 0} c(m)q^m \in  S_n( \Gamma_0(D) , \chi^n_\kk ),
\end{equation*}
Such a form determines a vector-valued form
\begin{equation}\label{vector induction}
\tilde{g}(\tau) = \sum_{\gamma\in \Gamma_0(D) \backslash \SL_2(\Z) } ( g |_n \gamma)(\tau) \cdot  \overline{ \omega_L(\gamma^{-1})\phi_0}
\in S_n( \overline{\omega}_L),
\end{equation}
where $\phi_0\in S_L$ is the characteristic function of the trivial coset.  This construction defines  the induction map (\ref{induction map}).
The form $\tilde{g}(\tau)$ has a $q$-expansion
\[
\tilde{g}(\tau) =  \sum_{m>0} \tilde{c}(m)  q^m
\]
with coefficients $\tilde{c}(m) \in S_L$.

There is  a similar Weil representation
$
\omega_\Lambda: \SL_2(\Z) \to \Aut_\C (S_\Lambda),
$
 and for every $m\in \Q$ we define a linear functional $R_\Lambda(m)\in S_\Lambda^\vee$ by
\[
R_\Lambda(m)(\phi) = \sum_{ \substack{  x\in  \Lambda'  \\ \langle x,x\rangle =m  }} \phi(x)
\]
where $\phi \in S_\Lambda$ and $\langle \cdot,\cdot\rangle:\Lambda_\Q \times \Lambda_\Q \to \kk$ is the $\Q$-linear extension of the hermitian form on $\Lambda$.  The theta series
\[
\theta_\Lambda(\tau) = \sum_{m\in \Q} R_\Lambda(m) q^m \in M_{n-1}( \omega_\Lambda^\vee)
\]
is a modular form valued in the contragredient representation $S_\Lambda^\vee$.

As in \cite[\S 5.3]{BHY} or \cite[\S 4.4]{BY},  we define the \emph{Rankin-Selberg convolution $L$-function}
\begin{align}
\label{eq:defl}
 L( \tilde{g}, \theta_\Lambda,s)
  = \Gamma \left(  \frac{s}{2}  + n -1   \right)     \sum_{m\ge 0} \frac{    \{   \overline{   \tilde{c}(m) } , R_\Lambda(m)   \}    }{ (4\pi m)^{ \frac{s}{2}  + n -1 }}.
\end{align}
Here $\{ \cdot\, ,\cdot\} :S_L \times S_L^\vee \to \C$ is the tautological pairing.
The inclusion \[ \Lambda' / \Lambda \to  L'  /L\]  induces a linear map  $S_L\to S_\Lambda$
by restriction of functions, and we use the dual $S_\Lambda^\vee \to S_L^\vee$ to view $R_\Lambda(m)$ as an element of $S_L^\vee$.

 \begin{remark}
 The convolution $L$-function satisfies a functional equation in $s\mapsto -s$, forcing $L(  \tilde{g} ,  \theta_\Lambda,0)=0.$
\end{remark}

 \begin{remark}
 In this generality, neither  the cusp form $g$ nor the theta series $\theta_\Lambda$ is a Hecke eigenform.
Thus the convolution $L$-function (\ref{eq:defl}) cannot be expected to have an Euler product expansion.
\end{remark}


\subsection{A preliminary central derivative formula}
\label{ss:first small derivative}


We now recall  the main result of \cite{BHY}, and explain the connection between the cycles and Shimura varieties here and in that work.

Define hermitian $\widehat{\co}_\kk$-lattices
\[
\mathbb{L}_{0,f}    = \Hom_{\co_\kk}( \mathfrak{a}_0 , \mathfrak{a}_1) \otimes_{\Z} \widehat{\Z}   ,\quad
\mathbb{L}_f      = \Hom_{\co_\kk}( \mathfrak{a}_0 , \mathfrak{a}) \otimes_{\Z} \widehat{\Z} ,
\]
and let $\mathbb{L}_{0,\infty}$ and $\mathbb{L}_\infty$ be $\kk_\R$-hermitian spaces of signatures $(1,0)$ and $(n,0)$, respectively.
In the terminology of \cite[\S 2.1]{BHY}, the pairs
\[
\mathbb{L}_0=(\mathbb{L}_{0,\infty} ,\mathbb{L}_{0,f}),\quad
\mathbb{L}=(\mathbb{L}_\infty,\mathbb{L}_f)
\]
are \emph{incoherent hermitian $(\kk_\R, \widehat{\co}_\kk)$-modules}.
Our  small CM cycle is related to the cycle of \cite[\S 5.1]{BHY} by
\[
\xymatrix{
{   \mathcal{Y}_\mathrm{sm}  } \ar[r]\ar@{=}[d]    & {  \mathcal{S}_\Kra }  \ar@{=}[d]   \\
{  \mathcal{Y}_{ (\mathbb{L}_0,\Lambda) } }  \ar[r]    &   {   \mathcal{M}_\mathbb{L} ,}
}
\]
and the metrized line bundle  $\widehat{\bm{\omega}}^{-1}$ of \cite{BHKRY} agrees with the  metrized cotautological bundle $\widehat{\mathcal{T}}_{\mathbb{L}}$ of \cite{BHY}.

Let $\Delta$ be the automorphism group of the finite abelian group $ L'/L$ endowed with the quadratic form
$
 L'/L \to \Q/\Z
$
obtained by reduction of  $Q : L \to \Z$.  The tautological action of $\Delta$ on  $S_L=\C[L'/L]$ commutes with the Weil representation $\omega_L$,
and hence $\Delta$ acts on all spaces of  modular forms valued in the representation $\omega_L$.

Let $H_{2-n}(\omega_L)$ be the space of harmonic Maass forms of \cite[\S 2.2]{BHY}.
Every $f\in H_{2-n}(\omega_L)$ has a \emph{holomorphic part}
\[
f^+(\tau) = \sum_{ \substack{  m\in \Q \\  m \gg -\infty  } } c_f^+(m)\cdot q^m ,
\]
which is a formal $q$-expansion with coefficients in $S_L$.  Let  $c_f^+(0,0)$ be the value of  $c_f^+(0)\in S_L$ at the trivial coset. 

As in  \cite{BF} or \cite[\S 3.1]{BY}, there is a $\Delta$-equivariant, surjective, conjugate linear differential operator
\[
\xi : H_{2-n}(\omega_L) \to S_n(\overline{\omega}_L),
\]
and the construction of  \cite[(4.15)]{BHY} defines a linear functional
\begin{equation}\label{fake theta}
\widehat{\mathcal{Z}}  :  H_{2-n}(\omega_L)^\Delta \to \widehat{\mathrm{Ch}}_\C^1(\mathcal{S}^*_\Kra).
\end{equation}
These  are related by the main result of \cite{BHY}, which we now state.

\begin{theorem}[\cite{BHY}] \label{thm:BHY main}
The equality
\[
[  \widehat{\mathcal{Z}} ( f )  :  \mathcal{Y}_\mathrm{sm}  ]   -   c_f^+(0,0) \cdot  [  \widehat{\bm{\omega}} : \mathcal{Y}_\mathrm{sm} ]
= - \deg_\C( \mathcal{Y}_\mathrm{sm} )  \cdot  L'( \xi (f) , \theta_\Lambda,0)
\]
holds for any $\Delta$-invariant $f\in H_{2-n}(\omega_L)$.
\end{theorem}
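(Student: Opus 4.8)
The plan is to recognize that the left-hand side is exactly the quantity computed in the relevant theorem of \cite{BHY}, once the dictionary between the two setups is made explicit. The first step is to invoke the commutative diagram
\[
\xymatrix{
{ \mathcal{Y}_\mathrm{sm} } \ar[r]\ar@{=}[d] & { \mathcal{S}_\Kra } \ar@{=}[d] \\
{ \mathcal{Y}_{ (\mathbb{L}_0,\Lambda) } } \ar[r] & { \mathcal{M}_\mathbb{L} }
}
\]
identifying the small CM cycle $\mathcal{Y}_\mathrm{sm}$ of \S\ref{ss:small cycle construction} with the CM cycle $\mathcal{Y}_{(\mathbb{L}_0,\Lambda)}$ of \cite[\S5.1]{BHY}, and the identification $\widehat{\bm{\omega}}^{-1} = \widehat{\mathcal{T}}_\mathbb{L}$ of metrized line bundles. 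Under this dictionary, the arithmetic degree functional $[\,-\,:\mathcal{Y}_\mathrm{sm}]$ on $\widehat{\mathrm{Ch}}^1_\C(\mathcal{S}^*_\Kra)$ matches the one used in \cite{BHY}, so that $[\widehat{\bm{\omega}}:\mathcal{Y}_\mathrm{sm}] = -[\widehat{\mathcal{T}}_\mathbb{L} : \mathcal{Y}_{(\mathbb{L}_0,\Lambda)}]$, and likewise the arithmetic classes $\widehat{\mathcal{Z}}(f)$ here agree with the classes attached to harmonic Maass forms in \cite[(4.15)]{BHY}. With these identifications the asserted equality becomes precisely the main intersection formula of \cite{BHY} (the Gross--Zagier-type theorem for small CM points there), whose statement involves $L'(\xi(f),\theta_\Lambda,0)$ with $\theta_\Lambda$ the theta series of $\Lambda$ exactly as defined in \S\ref{ss:convolution}.

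The second step is to check that the normalizations of the $L$-function and of the theta kernel used in \cite{BHY} coincide with those fixed in \S\ref{ss:convolution}. Here one must verify: that the convolution $L$-function $L(\tilde g,\theta_\Lambda,s)$ in \eqref{eq:defl}, with its $\Gamma$-factor and $(4\pi m)$-normalization, matches the one in \cite[\S5.3]{BHY}; that the pairing $\{\cdot,\cdot\}$ between $S_L$ and $S_L^\vee$, together with the inclusion $\Lambda'/\Lambda \hookrightarrow L'/L$ used to view $R_\Lambda(m) \in S_L^\vee$, is the same; and that $\xi(f) \in S_n(\overline\omega_L)$ enters on the correct (conjugated) side. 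One also records that for $f = f_m$ with holomorphic part \eqref{simple harmonic} one has $c_{f_m}^+(0,0) = 0$ unless $m$ is such that the principal part forces a nonzero constant term, and that $\widehat{\mathcal{Z}}(f_m) = \widehat{\mathcal{Z}}^\tot_\Kra(m)$; but for the general statement no such specialization is needed.

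The third step is $\Delta$-equivariance bookkeeping: the functional $\widehat{\mathcal{Z}}$ is only defined on $\Delta$-invariants $H_{2-n}(\omega_L)^\Delta$, and $\xi$ is $\Delta$-equivariant and surjective onto $S_n(\overline\omega_L)^\Delta$, so both sides of the claimed identity are functions on the same space and the hypothesis $f \in H_{2-n}(\omega_L)^\Delta$ is exactly what is needed to apply \cite{BHY}. The main (and essentially only) obstacle is the careful matching of conventions across the two papers --- lattice duals with respect to \eqref{bilinear} versus the hermitian form, the choice of CM type and the resulting incoherent modules $\mathbb{L}_0,\mathbb{L}$, and the precise metric on $\widehat{\bm{\omega}}$ --- since a sign or a factor-of-$2$ discrepancy in any of these would propagate into the final formula. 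Once these compatibilities are confirmed, the theorem is a direct translation of \cite[Theorem 6.4]{BHY} (or the corresponding statement there) into the notation of the present paper, and no new analytic or geometric input is required.
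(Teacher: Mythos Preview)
Your proposal is correct and matches the paper's approach exactly: the paper does not prove this theorem at all but simply states it as the main result of \cite{BHY}, having first set up in \S\ref{ss:first small derivative} precisely the dictionary you describe (the identification $\mathcal{Y}_\mathrm{sm} = \mathcal{Y}_{(\mathbb{L}_0,\Lambda)}$, $\mathcal{S}_\Kra = \mathcal{M}_\mathbb{L}$, $\widehat{\bm{\omega}}^{-1} = \widehat{\mathcal{T}}_\mathbb{L}$, and the matching of the $L$-function conventions). Your recognition that the only content here is the careful translation of notation is exactly right; one small correction is that the specific result you want from \cite{BHY} is its main theorem (stated there as Theorem~1.2 or Theorem~5.7, depending on the version), not Theorem~6.4, which in this paper's citations refers to the separate computation of $[\widehat{\bm{\omega}}:\mathcal{Y}_\mathrm{sm}]$ used in Proposition~\ref{prop:no error}.
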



\subsection{The proof of Theorem \ref{Theorem A}}
\label{ss:Aproof}


Throughout \S \ref{ss:Aproof} we assume $n\ge 3$.
Under this assumption the linear functional (\ref{fake theta}) is closely related to the coefficients of the generating series (\ref{modular divisors}).  Indeed,  If $m$ is a positive integer,   \cite[Lemma 3.10]{BHY} shows that there  is a unique
\[
f_m \in H_{2-n}(\omega_L)^\Delta
\]
  with holomorphic part
\begin{equation}\label{simple harmonic}
f_m^+(\tau) = \phi_0 \cdot q^{-m} + O(1),
\end{equation}
where $\phi_0 \in S_L$ is the characteristic function of the trivial coset.
Applying the above linear functional to this form recovers the $m^\mathrm{th}$ coefficient
\[\widehat{\mathcal{Z}}_\Kra^\tot ( m ) =\widehat{\mathcal{Z}} ( f_m ) \]
of the generating series (\ref{modular divisors}).

The following proposition explains the connection between the linear functional (\ref{fake theta}) and the arithmetic theta lift  (\ref{ATL}).

\begin{proposition}\label{prop:good harmonic choice}
For every    $g \in S_n(\Gamma_0(D) , \chi^n_\kk )$  there is a $\Delta$-invariant form $f\in H_{2-n}(\omega_L)$ such that
\begin{equation}\label{harmonic theta}
\widehat{\theta}(g) =  \widehat{\mathcal{Z}} ( f )   + c_f^+(0,0) \cdot   \widehat{\mathcal{Z}}_\Kra^\tot(0) ,
\end{equation}
and  such that $\xi(f)$ is equal to the form  $\tilde{g} \in S_n(\overline{\omega}_L)$ defined by  (\ref{vector induction}).
Moreover, we may choose $f$ to be a linear combination of the forms $f_m$ characterized by (\ref{simple harmonic}).
\end{proposition}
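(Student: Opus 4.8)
The plan is to construct the harmonic Maass form $f$ directly so that its image under $\xi$ equals $\tilde{g}$, and then to verify that the identity \eqref{harmonic theta} follows formally from the definition of the arithmetic theta lift together with the structure of the generating series $\widehat{\phi}(\tau)$. First I would recall that $\xi : H_{2-n}(\omega_L) \to S_n(\overline{\omega}_L)$ is surjective and $\Delta$-equivariant (stated in \S\ref{ss:first small derivative}), so there exists \emph{some} $\Delta$-invariant preimage $f_0$ of $\tilde{g}$; the only issue is that $f_0$ is well-defined only up to a weakly holomorphic form (an element of the kernel of $\xi$, i.e. a form in $M^!_{2-n}(\omega_L)^\Delta$), and we must exploit this freedom to arrange simultaneously that $f$ is a linear combination of the $f_m$'s and that \eqref{harmonic theta} holds on the nose. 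The key point is that the principal part of any $\Delta$-invariant harmonic form is supported on the trivial coset component (since $\tilde{g}$, being induced from a scalar form via \eqref{vector induction}, and hence $f_0$, has this property after averaging over $\Delta$ — this needs a short argument about which coset components survive $\Delta$-invariance), so after subtracting a suitable weakly holomorphic form we may assume $f = \sum_m a_m f_m + (\text{possibly a constant-term adjustment})$, matching the shape of \eqref{simple harmonic}.

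The second and main step is to prove the identity \eqref{harmonic theta}. The strategy is to apply the linear functional $\widehat{\mathcal{Z}}$ of \eqref{fake theta} and compare $q$-expansions. Writing $f = \sum_{m>0} a_m f_m + (\text{lower order})$, linearity gives $\widehat{\mathcal{Z}}(f) = \sum_{m>0} a_m \widehat{\mathcal{Z}}_\Kra^\tot(m) + a_0 \widehat{\mathcal{Z}}_\Kra^\tot(0)$ up to bookkeeping of the constant term, where I am using $\widehat{\mathcal{Z}}(f_m) = \widehat{\mathcal{Z}}_\Kra^\tot(m)$ from \S\ref{ss:first small derivative}. On the other hand, $\widehat{\theta}(g) = \langle \widehat{\phi}, g\rangle_\mathrm{Pet}$ is by definition a Petersson integral against the generating series $\widehat{\phi}(\tau) = \sum_{m\ge 0} \widehat{\mathcal{Z}}_\Kra^\tot(m) q^m$, so the coefficients $a_m$ that arise will be precisely the ``Fourier coefficients'' pairing $g$ against $q^m$ in the relevant weight-$n$ Petersson pairing; this is exactly the mechanism relating a cusp form to the principal part of its $\xi$-preimage (the standard fact that the regularized pairing $\langle \xi(f), h\rangle_\mathrm{Pet}$ equals a sum of principal-part coefficients of $f$ against coefficients of $h$). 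So the content is: the same linear combination of $f_m$'s that produces a harmonic form with $\xi(f) = \tilde{g}$ also produces, under $\widehat{\mathcal{Z}}$, the Petersson pairing $\langle\widehat{\phi},g\rangle_\mathrm{Pet}$, up to the constant-term contribution $c_f^+(0,0)\cdot\widehat{\mathcal{Z}}_\Kra^\tot(0)$. I would make this precise by invoking the relation between the induction map \eqref{vector induction} and the adjoint of the restriction $S_n(\overline\omega_L) \to S_n(\Gamma_0(D),\chi_\kk^n)$, so that pairing against $g$ on the scalar side corresponds to pairing against $\tilde g$ on the vector-valued side, and then the modularity of $\widehat\phi$ (from \cite{BHKRY}) guarantees that this pairing is computed coefficient-by-coefficient via the $f_m$.

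The third step is to pin down the constant term. One must check that the term $c_f^+(0,0)\cdot\widehat{\mathcal{Z}}_\Kra^\tot(0)$ appearing in \eqref{harmonic theta} matches the contribution of the $m=0$ coefficient $\widehat{\mathcal{Z}}_\Kra^\tot(0)$ in both expansions; this is where the precise normalization of the Petersson pairing $\langle\cdot,\cdot\rangle_\mathrm{Pet}$ with its factor $v^{n-2}\,du\,dv$ and the precise definition of $c_f^+(0,0)$ as the value of $c_f^+(0)$ at the trivial coset must be reconciled. I expect this bookkeeping — tracking the constant term through the regularized theta-integral/Petersson-pairing identity, and making sure no spurious Eisenstein-type contribution is dropped — to be the main obstacle, since the cuspidality of $g$ should kill boundary terms but one has to be careful that ``$\xi(f)$ cuspidal'' is compatible with ``$f$ a combination of $f_m$ plus a genuine constant''. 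Everything else is either a citation (surjectivity of $\xi$, the properties of $\widehat{\mathcal{Z}}$, modularity of $\widehat\phi$) or routine linear algebra over $S_L$ and its $\Delta$-invariants.
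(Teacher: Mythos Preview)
There is a genuine gap in your first step. Your claim that ``the principal part of any $\Delta$-invariant harmonic form is supported on the trivial coset component'' is false: $\Delta$ acts on $L'/L$ with many orbits in general, and any $\Delta$-invariant element of $S_L$ (not just $\phi_0$) can appear in the principal part of a $\Delta$-invariant harmonic form. Your justification --- that this should follow from $\tilde g$ being induced --- cannot work, because $\tilde g$ is a cusp form and carries no information about the principal part of a $\xi$-preimage. Consequently an arbitrary $\Delta$-invariant preimage of $\tilde g$ need \emph{not} be expressible as a linear combination of the $f_m$ (whose principal parts are by construction $\phi_0\, q^{-m}$, supported on the trivial coset only), and adjusting by a weakly holomorphic $\Delta$-invariant form does not help: such forms can themselves have principal parts off the trivial coset.

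The paper avoids this problem by working on the scalar side first. One chooses a scalar harmonic form $f_0\in H_{2-n}^\infty(\Gamma_0(D),\chi_\kk^n)$ with $\xi(f_0)=g$ and, crucially, arranges (by adding a suitable weakly holomorphic scalar form) that $f_0$ vanishes at every cusp other than $\infty$. One then \emph{induces} $f_0$ to the vector-valued form $f$ by the same averaging procedure as \eqref{vector induction}. The vanishing at the other cusps forces, via \cite[Proposition~6.1.2]{BHKRY}, that $c_f^+(m,\mu)=0$ for all $m\le 0$ and $\mu\neq 0$; this is exactly what guarantees $f=\sum_{m>0} c_0^+(-m)f_m$. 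The same cusp condition is what makes the bilinear pairing $\{f_0,\widehat\phi\}$ of \cite[Proposition~3.5]{BF} collapse to the single sum $\sum_{m\ge 0} c_0^+(-m)\,\widehat{\mathcal Z}_\Kra^\tot(m)$, yielding \eqref{harmonic theta} directly. Your adjointness idea between induction and restriction is morally in the right direction, but without the ``vanishing at other cusps'' normalization it does not go through.
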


\begin{proof}
Consider the space $H_{2-n}^\infty(\Gamma_0(D) , \chi^n_\kk )$ of harmonic Maass forms of \cite[\S 7.2]{BHKRY}.
The constructions of \cite{BF} provide us with a surjective conjugate linear differential operator
\[
\xi: H_{2-n}^\infty(\Gamma_0(D) , \chi^n_\kk )\to S_n(\Gamma_0(D) , \chi^n_\kk ),
\]
and we choose an $f_0\in H_{2-n}^\infty(\Gamma_0(D) , \chi^n_\kk )$ such that $\xi(f_0)=g$.
 It is easily seen that $f_0$ may be chosen to  vanish at all cusps of $\Gamma_0(D)$ different from $\infty$.  
 This can, for instance, be attained by adding a suitable weakly holomorphic form in the space $M_{2-n}^{!,\infty}(\Gamma_0(D) , \chi^n_\kk )$ of \cite[\S 4.2]{BHKRY}.
The Fourier expansion of the holomorphic part of $f_0$ is denoted
\[
f_0^+(\tau) = \sum_{m\in \Q} c_0^+(m) q^m.
\]

As in (\ref{vector induction}), the form $f_0$ determines an $S_L$-valued harmonic Maass form
\[
f(\tau) = \sum_{\gamma\in \Gamma_0(D) \backslash \SL_2(\Z) } ( f_0 |_{2-n} \gamma)(\tau) \cdot   \omega_L(\gamma^{-1})\phi_0
\in H_{2-n}(\omega_L)^\Delta .
\]
As the $\xi$-operator is equivariant for the action of $\SL_2(\Z)$, we have $\xi(f)=\tilde{g}$.
According to \cite[Proposition 6.1.2]{BHKRY}, which holds analogously for harmonic Maass forms, the coefficients of the holomorphic part $f^+$ satisfy
\[
c_f^+(m,\mu) = \begin{cases}
c_0^+(m) & \mbox{if }\mu=0 \\
0 & \mbox{otherwise}
\end{cases}
\]
for all $m\le 0$.  This equality  implies that
\[
f = \sum_{m>0} c^+_0(-m) f_m,
\]
where $f_m \in H_{2-n}(\omega_L)^\Delta$ is the harmonic form characterized by  (\ref{simple harmonic}).
Indeed, the difference between the two forms is a harmonic form $h$ whose holomorphic part $\sum_{m\ge 0} c_h^+(m) q^m$
has no principal part.  It follows from \cite[Theorem 3.6]{BF} that such a harmonic form is actually holomorphic, and therefore vanishes because the weight  is negative.

The above decomposition of $f$ as a linear combination of the $f_m$'s implies that
\[
\widehat{\mathcal{Z}}  (f) = \sum_{m>0} c^+_0(-m) \cdot  \widehat{\mathcal{Z}}_\Kra^\tot(m) \in
\widehat{\mathrm{Ch}}_\C^1( \mathcal{S}_\Kra^* ),
\]
and consequently
\begin{align*}
\widehat{\theta}(g) 
&= \langle \widehat{\phi},\xi(f_0)  \rangle_\mathrm{Pet}\\
&=\{ f_0,\widehat{\phi}\}\\
&=\sum_{m\geq 0} c_0^+(-m) \cdot  \widehat{\mathcal{Z}}_\Kra^\tot ( m )\\
&=
\widehat{\mathcal{Z}} ( f )   + c_f^+(0,0) \cdot  \widehat{\mathcal{Z}}_\Kra^\tot ( 0 ) .
\end{align*}
Here, in the second line,  we have used the bilinear pairing
\[
\{\cdot,\cdot\}:H_{2-n}^\infty(\Gamma_0(D) , \chi^n_\kk )\times  M_n(\Gamma_0(D) , \chi_\kk^n ) \to \C
\]
analogous to \cite[Proposition 3.5]{BF}, and the fact that $f_0$ vanishes at all cusps different from $\infty$.
 \end{proof}

\begin{remark}
It is incorrectly claimed in \cite[\S 1.3]{BHY} that (\ref{harmonic theta}) holds for \emph{every} form $f$ with $\xi(f)=\tilde{g}$.
\end{remark}

The following is stated in the introduction as Theorem \ref{Theorem A}.

\begin{theorem}
If $g\in S_n(\Gamma_0(D) , \chi^n_\kk)$ and  $\tilde{g} \in S_n(\overline{\omega}_L)$ are related by (\ref{vector induction}), then
\[
[ \widehat{\theta}(g) : \mathcal{Y}_\mathrm{sm} ] = -  \deg_\C (\mathcal{Y}_\mathrm{sm}) \cdot  L'( \tilde{g} , \theta_\Lambda,0).
\]
\end{theorem}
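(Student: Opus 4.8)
The plan is to combine Proposition \ref{prop:good harmonic choice} with Theorem \ref{thm:BHY main} and Proposition \ref{prop:no error}, since all the ingredients are now in place.

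\medskip

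First I would invoke Proposition \ref{prop:good harmonic choice}: given $g \in S_n(\Gamma_0(D),\chi_\kk^n)$, choose the $\Delta$-invariant harmonic Maass form $f \in H_{2-n}(\omega_L)$ with $\xi(f) = \tilde{g}$ and satisfying the identity
\[
\widehat{\theta}(g) = \widehat{\mathcal{Z}}(f) + c_f^+(0,0) \cdot \widehat{\mathcal{Z}}_\Kra^\tot(0).
\]
Applying the arithmetic degree functional $[\,-\,:\mathcal{Y}_\mathrm{sm}]$ to both sides (using its linearity) gives
\[
[\widehat{\theta}(g) : \mathcal{Y}_\mathrm{sm}] = [\widehat{\mathcal{Z}}(f) : \mathcal{Y}_\mathrm{sm}] + c_f^+(0,0) \cdot [\widehat{\mathcal{Z}}_\Kra^\tot(0) : \mathcal{Y}_\mathrm{sm}].
\]

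\medskip

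Next I would apply Theorem \ref{thm:BHY main} to this same $f$, which rewrites the first term as
\[
[\widehat{\mathcal{Z}}(f) : \mathcal{Y}_\mathrm{sm}] = c_f^+(0,0) \cdot [\widehat{\bm{\omega}} : \mathcal{Y}_\mathrm{sm}] - \deg_\C(\mathcal{Y}_\mathrm{sm}) \cdot L'(\xi(f),\theta_\Lambda,0),
\]
and since $\xi(f) = \tilde{g}$ the last term is exactly $-\deg_\C(\mathcal{Y}_\mathrm{sm}) \cdot L'(\tilde{g},\theta_\Lambda,0)$, which is the desired right-hand side. It therefore remains to check that the two leftover terms cancel:
\[
c_f^+(0,0) \cdot [\widehat{\bm{\omega}} : \mathcal{Y}_\mathrm{sm}] + c_f^+(0,0) \cdot [\widehat{\mathcal{Z}}_\Kra^\tot(0) : \mathcal{Y}_\mathrm{sm}] = 0.
\]
This is immediate from the first equality in Proposition \ref{prop:no error}, namely $[\widehat{\mathcal{Z}}_\Kra^\tot(0):\mathcal{Y}_\mathrm{sm}] = -[\widehat{\bm{\omega}}:\mathcal{Y}_\mathrm{sm}]$. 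Assembling the three displayed identities yields the theorem.

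\medskip

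I do not expect a serious obstacle here: the theorem is a formal consequence of results already established in the excerpt, and the only subtlety is bookkeeping — making sure the same form $f$ is used in both Proposition \ref{prop:good harmonic choice} and Theorem \ref{thm:BHY main}, and that the constant-term contributions are handled via Proposition \ref{prop:no error} rather than, say, through the Chowla--Selberg term (which is already absorbed into Proposition \ref{prop:no error}'s second equality, and which we do not need separately). The one point worth a sentence of care is that the identity of Proposition \ref{prop:good harmonic choice} genuinely requires the specific choice of $f$ (vanishing at cusps other than $\infty$), as flagged in the remark following that proposition; invoking Theorem \ref{thm:BHY main} for this particular $f$ is legitimate because that theorem holds for \emph{any} $\Delta$-invariant $f \in H_{2-n}(\omega_L)$.
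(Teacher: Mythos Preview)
Your proposal is correct and matches the paper's proof essentially line for line: choose $f$ via Proposition~\ref{prop:good harmonic choice}, combine with the first equality of Proposition~\ref{prop:no error}, and invoke Theorem~\ref{thm:BHY main}. The only cosmetic difference is the order in which you apply Proposition~\ref{prop:no error} and Theorem~\ref{thm:BHY main}, and your closing remarks about which $f$ is admissible are exactly the right caveats.
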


\begin{proof}
Choosing $f$ as in Proposition \ref{prop:good harmonic choice}, and using the first equality of Proposition \ref{prop:no error},  yields
\[
[ \widehat{\theta}(g) : \mathcal{Y}_\mathrm{sm} ]
=  [ \widehat{\mathcal{Z}} ( f )  : \mathcal{Y}_\mathrm{sm} ]   - c_f^+(0,0) \cdot   [  \widehat{\bm{\omega}}  : \mathcal{Y}_\mathrm{sm} ] .
\]
Thus the  claim follows  from  Theorem \ref{thm:BHY main}.
\end{proof}


\section{Further results on the convolution $L$-function}
\label{s:convolution}


In this section we specialize to the case where $g\in S_n(\Gamma_0(D),\chi_\kk^n)$ is a new eigenform, and express the convolution $L$-function \eqref{eq:defl} associated to the vector valued cusp form $\tilde g$  in terms of the usual $L$-function associated to $g$.

This allows us, in Theorem \ref{maintheo2} below, to rewrite  Theorem \ref{Theorem A} of the introduction
in a way that avoids vector-valued modular forms.  When $n$ is even, it also allows us to formulate a version of 
Theorem \ref{Theorem A} in which the $L$-function has an Euler product.

We assume $n\ge 2$ until we reach \S \ref{ss:smallredux}, at which point we restrict to $n\ge 3$.


\subsection{Atkin-Lehner operators}


Recall that $\chi_\kk$ is the idele class character associated to the quadratic field $\kk$.
If we view $\chi_\kk$ as a Dirichlet character modulo $D$, then any factorization $D=Q_1 Q_2$ induces
a factorization
\[
\chi_\kk = \chi_{Q_1}\chi_{Q_2}
\]
where $\chi_{Q_i}:(\Z/Q_i\Z)^\times \to \C^\times$ is a quadratic Dirichlet character.

Fix a normalized  cuspidal new  eigenform 
 \[
 g(\tau) =\sum_{m>0} c(m) q^m \in S_{n}(\Gamma_0(D), \chi_\kk^n).
 \]
As in \cite[Section 4.1]{BHKRY}, for each positive divisor $Q\mid D$, fix a matrix
\[
R_Q =\left( \begin{matrix} \alpha & \beta \\ \frac{D}{Q} \gamma & Q \delta \end{matrix} \right) \in  \Gamma_0(D/Q)
\]
with $\alpha,\beta,\gamma,\delta \in\Z$, and  define the  Atkin-Lehner operator
\[
W_Q =\begin{pmatrix}  Q \alpha & \beta \\ D \gamma & Q \delta \end{pmatrix} =
R_Q \begin{pmatrix}  Q   \\  & 1 \end{pmatrix}.
\]
The cusp form
\begin{align*}
g_Q(\tau) &= \chi^n_Q(\beta) \chi_{D/Q}^n(\alpha)  \cdot   g|_n W_Q\\
& =\sum_{m>0} c_Q(m) q^m,
\end{align*}
is then independent of the choice of $\alpha$, $\beta$, $\gamma$, $\delta$.

Let $\epsilon_Q(g)$ be the fourth root of unity
\[
\epsilon_Q(g) = \prod_{\substack{q \mid Q\\\text{$q$ prime}}} \chi^n_Q( Q/q) \cdot \lambda_q,
\]
where
\[
 \lambda_q =
 \overline{c(q)} \cdot \begin{cases}
  -q^{1-\frac{n}2}  &\text{if $n \equiv 0 \pmod 2$}
  \\
   \delta_q q^{\frac{1-n}2}  &\text{if $n \equiv 1 \pmod 2$,}
   \end{cases}
\]
and $\delta_q$ is defined by
\begin{equation*}
\delta_q = \begin{cases}
1 & \mbox{if } q\equiv 1\pmod{4} \\
i & \mbox{if } q\equiv 3\pmod{4}.
\end{cases}
\end{equation*}
According to \cite[Theorem 2]{Asai}, we have
\begin{align*}
\nonumber
c_Q(m) &=\epsilon_Q(g)\chi_Q^n(m) c(m)  && \text{if  $(m, Q)=1$,}
\\
c_Q(m) &= \epsilon_Q(g)\chi_{D/Q}^n(m) \overline{c(m)} &&  \text{if $(m, D/Q)=1$,}\\
\nonumber
c_Q(m_1 m_2) &= \epsilon_Q(g)^{-1}c_Q(m_1) c_Q(m_2)  && \text{if  $(m_1, m_2)=1$.}
\end{align*}

\begin{remark}\label{rem:g_Q even}
If  $n$ is even, then the Fourier coefficients of $g$ are totally real. It follows that
$g_Q=\epsilon_Q(g)g$ for every divisor $Q\mid D$.  Furthermore,
\[
\epsilon_Q(g)=\prod_{q\mid Q}\big(-q^{1-\frac{n}{2} } c(q)\big) =\pm 1.
\]
\end{remark}


\subsection{Twisting theta functions}
\label{ss:theta twist}


Let $(\mathfrak{a}_0,\mathfrak{a}_1,\mathfrak{b})$ be a triple of self-dual hermitian $\co_\kk$-lattices of signatures $(1,0)$, $(0,1)$, and $(n-1,0)$, as in \S \ref{ss:small cycle construction}, and recall that from this data we constructed  hermitian $\co_\kk$-lattices 
\begin{equation}\label{aux lattices 1}
\mathfrak{a} = \mathfrak{a}_1\oplus \mathfrak{b},\quad 
 L=\Hom_{\co_\kk}(\mathfrak{a}_0, \mathfrak{a})
\end{equation}
of  signature $(n-1,1)$.  We also define 
\begin{equation}\label{aux lattices 2}
L_1=\Hom_{\co_\kk}(\mathfrak{a}_0,\mathfrak{a}_1),\quad 
\Lambda=\Hom_{\co_\kk}( \mathfrak{a}_0,\mathfrak{b}),
\end{equation}
 so that $L = L_1 \oplus \Lambda.$

Let $\GU(\Lambda)$ be the unitary  similitude group associated with  $\Lambda$, viewed as an algebraic group over $\Z$.
 For any $\Z$-algebra $R$ its $R$-valued points are given by
\[
\GU(\Lambda)(R)=\{ h\in \GL_{\calO_\kk}(\Lambda_R):\; \text{$\langle h x, h y\rangle = \nu(h)\langle x,y\rangle$ $\forall x,y\in \Lambda_R$}\},
\]
where $\nu(h)\in R^\times$ denotes the similitude factor of $h$.
Note the relation
\begin{align}
\label{eq:relnu}
\mathrm{Nm}_{\kk/\Q}(\det(h))=\nu(h)^{n-1}.
\end{align}
For $h\in \GU(\Lambda)(\R)$ the similitude factor $\nu(h)$ belongs to $\R_{>0}$.

As $\Lambda$ is positive definite, the set
\[
X_\Lambda = \GU(\Lambda)(\Q)\bs \GU(\Lambda)(\A_f)/ \GU(\Lambda)(\widehat\Z)
\]
is finite.  Denoting by 
\[
\CL(\kk)=\kk^\times\bs \widehat\kk^\times /\widehat \calO_\kk^\times
\] 
the ideal class group of $\kk$,  the natural map $\mathrm{Res}_{\kk/\Q} \mathbb{G}_m \to \GU(\Lambda)$
to the center induces an action 
\begin{align}
\label{eq:clact}
\CL(\kk)\times X_\Lambda \longrightarrow X_\Lambda.
\end{align}

As in the proof of \cite[Proposition 2.1.1]{BHKRY},    any $h\in\GU(\Lambda)(\A_f)$ determines an $\co_\kk$-lattice
\[
\Lambda_h= \Lambda_\Q \cap h\widehat\Lambda.
\]
This  lattice is not self-dual under the hermitian form $\langle-,-\rangle$ on $\Lambda_\Q$.
However,  there is  a unique positive rational number  $\mathrm{rat}( \nu(h) )$ such that 
\[
\frac{ \nu(h) } { \mathrm{rat}( \nu(h) )  }  \in   \widehat \Z^{\times},
\]
and the lattice $\Lambda_h$ is self-dual under the rescaled hermitian form 
\[
\langle x,y\rangle_{h} = \frac{1}{\mathrm{rat}( \nu (h))}  \cdot  \langle x,y\rangle.
\]
If $h\in \GU(\Lambda)(\widehat \Z)$  then $\Lambda_h=\Lambda$. If $h\in \GU(\Lambda)(\Q)$, then $\Lambda_h\cong \Lambda$ as hermitian $\co_\kk$-modules.
Hence  $h\mapsto \Lambda_h$ defines a function from $X_\Lambda$ to the set of isometry classes of  self-dual hermitian $\calO_\kk$-module of signature $(n-1,0)$.

 Similarly, for any $h\in\GU(\Lambda)(\A_f)$ we define a self-dual hermitian $\calO_\kk$-lattice of signature $(0,1)$ by endowing
\[
L_{1,h}= L_{1\Q} \cap \det(h) \widehat L_1
\]
with the hermitian form
\[
\langle x,y\rangle_{h} = \frac{1}{ \mathrm{rat}(  \nu(h) ) ^{n-1}} \cdot  \langle x,y\rangle.
\]
The assignment
$h\mapsto L_{1,h}$ defines a map from $X_\Lambda$ to the set of isometry classes of  self-dual hermitian $\calO_\kk$-lattices of signature $(0,1)$.

\begin{lemma} \label{lem:ideal twist}  
For any $h\in \GU(\Lambda)(\A_f )$ the hermitian $\co_\kk$-lattice 
\[
L_h=L_{1,h}\oplus \Lambda_h
\] 
is isomorphic everywhere locally to $L$.
Moreover,  $L_h$ and $L$ become isomorphism after tensoring with $\Q$.
\end{lemma}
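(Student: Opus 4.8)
The plan is to verify both assertions one rational prime at a time, using the classification of self-dual (i.e.\ unimodular) hermitian $\co_{\kk,p}$-lattices: for each prime $p$, such a lattice is determined up to isometry by its rank together with its discriminant class in $\Z_p^\times/\mathrm{Nm}_{\kk_p/\Q_p}(\co_{\kk,p}^\times)$.  (At the inert and split primes this quotient is trivial, so rank alone suffices; at the ramified primes $p\mid D$ it has order two.)  First I would record the local shape of $L_h=L_{1,h}\oplus\Lambda_h$.  Since $L_1$ has rank one and $h\in\GU(\Lambda)(\A_f)$, for every $p$ one has $\Lambda_{h,p}=h_p\Lambda_p$ inside $\Lambda_{\Q_p}$ and $L_{1,h,p}=\det(h_p)L_{1,p}$ inside $L_{1,\Q_p}$.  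The element $h_p$ rescales the hermitian form on $\Lambda_p$ by $\nu(h_p)$, while multiplication by $\det(h_p)$ rescales the rank-one form on $L_{1,p}$ by $\mathrm{Nm}_{\kk_p/\Q_p}(\det(h_p))=\nu(h_p)^{n-1}$, by the relation \eqref{eq:relnu}.  Setting $u_p=\nu(h_p)/\mathrm{rat}(\nu(h))$, which lies in $\Z_p^\times$ by the defining property of $\mathrm{rat}(\nu(h))$, the rescaled forms $\langle\cdot,\cdot\rangle_h$ identify $\Lambda_{h,p}$ with $\Lambda_p$ carrying the form $u_p\langle\cdot,\cdot\rangle$, and $L_{1,h,p}$ with $L_{1,p}$ carrying the form $u_p^{n-1}\langle\cdot,\cdot\rangle$; both are self-dual, as they must be.

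Next I would compute discriminants.  Taking the discriminant of the orthogonal decomposition $L_{h,p}=L_{1,h,p}\oplus\Lambda_{h,p}$, and using that rescaling a rank-$m$ hermitian form by a scalar $c$ multiplies its discriminant by $c^m$, one gets
\[
\mathrm{disc}(L_{h,p}) = u_p^{n-1}\cdot\mathrm{disc}(L_{1,p})\cdot u_p^{n-1}\cdot\mathrm{disc}(\Lambda_p) = u_p^{2(n-1)}\cdot\mathrm{disc}(L_p).
\]
Now $u_p^{2(n-1)}=(u_p^{n-1})^2$ lies in $(\Z_p^\times)^2$, and $(\Z_p^\times)^2\subseteq\mathrm{Nm}_{\kk_p/\Q_p}(\co_{\kk,p}^\times)$ for every $p$: at the inert and split primes the norm is already surjective on units, and at each ramified prime $p\mid D$ the prime $p$ is odd (as $D$ is odd), so $\kk_p/\Q_p$ is tamely ramified and $\mathrm{Nm}_{\kk_p/\Q_p}(\co_{\kk,p}^\times)=(\Z_p^\times)^2$.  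Hence $L_{h,p}$ and $L_p$ are self-dual hermitian $\co_{\kk,p}$-lattices of the same rank $n$ with the same discriminant class, so they are isometric.  This proves that $L_h$ is everywhere locally isomorphic to $L$.

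For the rational statement, note that $h$ involves no archimedean datum, so $L_{1,h}$ and $\Lambda_h$ have the same signatures $(0,1)$ and $(n-1,0)$ as $L_1$ and $\Lambda$, whence $L_h\otimes_\Z\R$ has signature $(n-1,1)$, the signature of $L\otimes_\Z\R$.  Combining the local isomorphisms of the previous paragraph with this archimedean comparison and the Hasse principle for hermitian forms over the number field $\kk$ yields $L_h\otimes_\Z\Q\cong L\otimes_\Z\Q$.  (Alternatively, one may argue directly: $L_h\otimes\Q$ and $L\otimes\Q$ have the same dimension, the same signature, and discriminants differing by the square $\mathrm{rat}(\nu(h))^{-2(n-1)}\in\Q^\times$, hence lie in one isometry class by the classification of hermitian spaces over an imaginary quadratic field.)

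The main obstacle is the analysis at the ramified primes $p\mid D$.  Away from $D$ every self-dual hermitian lattice of rank $n$ is isometric to $L_p$ for free, but at $p\mid D$ one must invoke the precise local classification (the discriminant in the order-two group $\Z_p^\times/\mathrm{Nm}(\co_{\kk,p}^\times)$) and verify that the two summands $L_{1,h}$ and $\Lambda_h$---which are rescaled by the \emph{different} scalars $\mathrm{rat}(\nu(h))^{-(n-1)}$ and $\mathrm{rat}(\nu(h))^{-1}$---nonetheless conspire so that the total discriminant changes only by a perfect square, so that the single local obstruction vanishes.
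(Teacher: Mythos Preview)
Your proof is correct and follows essentially the same approach as the paper's.  Both arguments reduce to the observation that the hermitian determinant of $L_h$ differs from that of $L$ by the square $\mathrm{rat}(\nu(h))^{2(1-n)}$, hence by a norm, and then invoke Jacobowitz's local classification of self-dual hermitian lattices.  The only difference is the order of operations: the paper first establishes the isomorphism $L_h\otimes\Q\cong L\otimes\Q$ of hermitian $\kk$-spaces (via dimension, signature, and invariant) and then quotes Jacobowitz to conclude that any two self-dual lattices in this common space are everywhere locally isomorphic, whereas you work directly with the local lattices first and deduce the rational statement afterward from the Hasse principle.
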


\begin{proof}
Let $p$ be a prime. As in \cite[\S 1.8]{BHKRY}, a $\kk_p$-hermitian space  is determined by its
dimension and invariant. The relations
\begin{align*}
\det ( \Lambda_{h} \otimes_\Z\Q) &= \mathrm{rat}( \nu(h) ) ^{1-n}  \cdot \det (\Lambda \otimes_\Z\Q),\\
\det ( L_{1,h} \otimes_\Z\Q) &=  \mathrm{rat}(\nu(h))^{1-n}\cdot \det (L_1\otimes_\Z\Q),
\end{align*}
combined with
\eqref{eq:relnu}, imply that $L\otimes_\Z \Q$ and $L_h\otimes_\Z \Q$ have the same invariant everywhere locally.  As they both have signature $(n-1,1)$, they are isomorphic everywhere locally, and hence isomorphic globally.

A result of Jacobowitz \cite{Jac} shows that any two self-dual lattices  in $L \otimes_\Z\Q$ are  isomorphic everywhere locally, and hence it follows from the previous paragraph that $L$ and $L_h$ are isomorphic everywhere locally.
\end{proof}

Define a linear map 
\[
M_{n-1}(\omega_\Lambda^\vee) \to M_{n-1}(\Gamma_0(D),\chi_\kk^{n-1} )
\]
from $S_\Lambda^\vee$-valued modular forms to scalar-valued modular forms
by evaluation at the characteristic function $\phi_0\in S_\Lambda$ of the trivial coset $0\in \Lambda' / \Lambda$.  
This map takes
the vector valued theta series $\theta_\Lambda \in M_{n-1}(\omega_\Lambda^\vee)$ of \S \ref{ss:convolution}
to the scalar valued theta series
\[
\theta_\Lambda^\scal(\tau) = \sum_{m \in \Z_{\ge 0}} R_\Lambda^\scal(m) \cdot q^m,
\]
where $R_\Lambda^\scal(m)$  is the number of ways to represent $m$ by $\Lambda$.

Let $\eta$  be an algebraic automorphic form for $\GU(\Lambda)$ which  is trivial at $\infty$ and  right $\GU_\Lambda(\widehat\Z)$-invariant.  In other words,  a function
\[
\eta:X_\Lambda\longrightarrow \C.
\]
Throughout we assume that under the action \eqref{eq:clact}  the function $\eta$
transforms with a character $\chi_\eta:\CL(\kk)\to \C^\times$, that is,
\begin{align}
\label{eq:etamult}
\eta(\alpha h)=\chi_\eta(\alpha)\eta(h).
\end{align}

We associate a theta function to $\eta$ by setting
\[
\theta_{\eta, \Lambda}^\scal
=  \sum_{ h\in X_\Lambda} \frac{\eta(h)}{|\Aut(\Lambda_h)|} \cdot  \theta_{\Lambda_h}^\scal
 \in M_{n-1}(\Gamma_0(D), \chi_\kk^{n-1}).
\]
This form is cuspidal when the character $\chi_\eta$ is non-trivial. We denote its Fourier expansion by
\[
\theta_{\eta, \Lambda}^\scal(\tau) = \sum_{m \ge  0} R^\scal_{\eta, \Lambda}(m) \cdot q^m.
\]
Similarly, we may define
\[
\theta_{\eta, \Lambda}(\tau)
= \sum_{ h\in X_\Lambda}  \frac{\eta(h)}{|\Aut(\Lambda_h)|} \cdot  \theta_{\Lambda_{h}} (\tau),
\]
but this is only a formal sum: as $h$ varies the forms $\theta_{\Lambda_{h}}$
take values in the varying spaces $S_{\Lambda_h}^\vee$.

Lemma \ref{lem:ideal twist} allows us to identify $S_L\iso S_{L_h}$, and hence make sense of the $L$-function
$L(\tilde{g} ,\theta_{\Lambda_h} ,s)$ as in \eqref{eq:defl}.  In the next subsection we will compare
\begin{equation}\label{veccon}
L(\tilde{g} ,\theta_{\eta,\Lambda} ,s) = \sum_{h\in X_\Lambda} \frac{\eta(h)}{|\Aut(\Lambda_h)|}
\cdot L(\tilde{g} ,\theta_{\Lambda_h} ,s).
\end{equation}
to the usual convolution $L$-function
\begin{equation}\label{scacon}
L(g, \theta_{\eta, \Lambda}^\scal, s) =
\Gamma\big(\frac{s}2+ n-1\big)\sum_{m =1 }^\infty \frac{\overline{c(m)}
R^\scal_{\eta, \Lambda}(m)}{(4\pi m)^{\frac{s}2+n-1}}
\end{equation}
of the scalar-valued forms $g$ and $\theta^\scal_{\eta,\Lambda}$.


\subsection{Rankin-Selberg $L$-functions  for scalar and vector  valued forms}


In this subsection we prove a precise relation between (\ref{veccon}) and (\ref{scacon}).
First, we give an explicit formula for the Fourier coefficients $a(m,\mu)$  of $\tilde g$ in terms of those of $g$ analogous to \cite[Proposition 6.1.2]{BHKRY}.

For a prime $p$ dividing $D$  define
\begin{equation}\label{def-gamma-p}
\gamma_p =
 \delta_p^{-n} \cdot (D,p)_p^n \cdot \operatorname{inv}_p(V_p) \in \{ \pm 1, \pm i\} ,
\end{equation}
where $\operatorname{inv}_p(V_p)$ is the invariant  of $V_p=\Hom_\kk(W_0,W)\otimes_\Q\Q_p$ in the sense of  \cite[(1.8.3)]{BHKRY} and $\delta_p\in \{1,i\}$ is as before.
It is equal to the local Weil index of the Weil representation of $\SL_2(\Z_p)$ on $S_{L_p}\subset S(V_p)$, where $V_p$ is viewed as a quadratic space by taking the trace of the hermitian form.
This is explained in more detail in \cite[Section 8.1]{BHKRY}. For any $Q$ dividing $D$ we define
\begin{equation}\label{gamma def}
\gamma_Q  = \prod_{q\mid Q} \gamma_q .
\end{equation}

\begin{remark}
If $n$ is even and $p\mid D$, then \eqref{def-gamma-p} simplifies to
\[
\gamma_p= \left(\frac{-1}{p} \right)^{n/2} \mathrm{inv}_p(V_p).
\]
\end{remark}

For any  $\mu\in L'/L$ define $Q_\mu\mid D$ by
\[
Q_\mu = \prod_{\substack{p \mid D\\ \mu_p\neq 0}} p ,
\]
where $\mu_p$ is the image of $\mu$ in $L_p'/L_p$.
Let $\phi_\mu\in S_L$ be the characteristic function of $\mu$.

\begin{proposition}
\label{prop:scalar1.3}
  For all $m\in \Q$ the coefficients $\tilde{a}(m)\in S_L$ of $\tilde g$ satisfy
\[
\tilde a(m, \mu) =\begin{cases}
   \sum_{Q_\mu \mid Q \mid D}  Q^{1-n}\overline{\gamma_Q}   \cdot c_Q(mQ)  &\text{if $m \equiv -Q(\mu)  \pmod{ \Z}$,}
   \\[1ex]
    0  &\text{otherwise}.
    \end{cases}
\]
\end{proposition}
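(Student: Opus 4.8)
The plan is to follow the strategy of \cite[Proposition 6.1.2]{BHKRY}, adapting it from holomorphic forms to the current setting, and keeping careful track of the normalizations. Recall that $\tilde g$ is built from $g$ by the averaging formula \eqref{vector induction}, so for $\gamma\in\Gamma_0(D)\backslash\SL_2(\Z)$ the component of $\tilde g$ along $\overline{\omega_L(\gamma^{-1})\phi_0}$ is $(g|_n\gamma)(\tau)$. First I would parametrize the cosets $\Gamma_0(D)\backslash\SL_2(\Z)$ by divisors $Q\mid D$: the standard description (as in \cite[Section 4.1]{BHKRY}) writes each coset representative in terms of an Atkin-Lehner-type matrix, so that $g|_n\gamma$ for $\gamma$ in the block indexed by $Q$ is essentially $g|_n W_Q$ up to the explicit root-of-unity normalizations defining $g_Q$. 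Thus $\tilde a(m,\mu)$ is obtained by expanding the $q$-expansions of the various $g|_n\gamma$ and reading off, for the fixed coset $\mu\in L'/L$, the coefficient of $q^m$, which forces the congruence $m\equiv -Q(\mu)\pmod\Z$ coming from the $q$-expansion of $g_Q$ at the relevant cusp (the $q^{mQ}$ versus $q^m$ shift is exactly the effect of the scaling matrix $\left(\begin{smallmatrix}Q&\\&1\end{smallmatrix}\right)$ in $W_Q=R_Q\left(\begin{smallmatrix}Q&\\&1\end{smallmatrix}\right)$, which rescales the variable $\tau\mapsto Q\tau$).

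The second ingredient is to compute precisely which coset $\mu$ a given $\gamma$ contributes to, i.e.\ to evaluate $\omega_L(\gamma^{-1})\phi_0$ at each $\mu\in L'/L$. Since $L=L_1\oplus\Lambda$ and $\Lambda$ is self-dual, only $L_1$ contributes to $L'/L\cong \mathfrak d_\kk^{-1}L_1/L_1$, so the relevant Weil representation is that of the binary-type quadratic space attached to $L_1$, which decomposes as a tensor product over $p\mid D$ of local Weil representations on $S_{L_p}$. Evaluating $\omega_L(w)\phi_0$ and $\omega_L(n(b))\phi_0$ on the finite group $L'/L$ produces Gauss sums, and these are exactly what the local Weil indices $\gamma_p$ in \eqref{def-gamma-p} package. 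Here I would invoke the identification — stated after \eqref{def-gamma-p} — of $\gamma_p$ as the local Weil index of $\SL_2(\Z_p)$ acting on $S_{L_p}$, so that the product over $p\mid Q$ of these indices gives $\gamma_Q$, and its appearance (conjugated, because $\tilde g$ is valued in $\overline{\omega}_L$) in the formula is forced. The divisibility condition $Q_\mu\mid Q\mid D$ arises because $\omega_L(\gamma^{-1})\phi_0$ has nonzero $\mu$-component exactly when the support conditions at each $p\mid D$ are compatible: a prime $p\mid Q_\mu$ (i.e.\ $\mu_p\neq 0$) must be one of the primes where $\gamma$ acts nontrivially, hence $p\mid Q$.

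Having set this up, the computation becomes: sum over coset representatives $\gamma$ indexed by $Q$ with $Q_\mu\mid Q\mid D$, each contributing $\overline{(\text{local Gauss sum})}\cdot (\text{coefficient of }q^m\text{ in }g|_nW_Q\text{ normalized})$, and then recognize the normalized $q$-expansion coefficient of $g|_nW_Q$ as $Q^{1-n}c_Q(mQ)$ — the factor $Q^{1-n}$ being the automorphy factor $\det(W_Q)^{n/2}/(\text{leading entry})^n$-type normalization from the weight-$n$ slash, combined with the definition of $g_Q$ via $\chi_Q^n(\beta)\chi_{D/Q}^n(\alpha)\cdot g|_nW_Q$. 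I expect the main obstacle to be exactly this bookkeeping of normalizations: matching the slash-operator scaling factors, the precise Atkin-Lehner cocycle relating $g|_n\gamma$ to $g_Q$, and the sign/fourth-root-of-unity contributions from the $\delta_p$'s and $(D,p)_p$'s in \eqref{def-gamma-p}, all the way down to the clean statement with $\overline{\gamma_Q}$ and $Q^{1-n}c_Q(mQ)$. None of the individual steps is deep — it is essentially the same local computation as in \cite[Proposition 6.1.2]{BHKRY} and \cite[Section 8.1]{BHKRY} — but ensuring every constant lands correctly, in particular that nothing extra survives when $m\not\equiv -Q(\mu)\pmod\Z$ (giving the clean vanishing in the second case), is where the care is required.
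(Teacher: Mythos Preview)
Your proposal is correct and follows essentially the same route as the paper: both reduce to the computation of \cite[Proposition~6.1.2]{BHKRY}, with the two modifications you identify (the complex conjugation on $\gamma_Q$ coming from the fact that $\tilde g$ transforms under $\overline{\omega}_L$, and the extra factor $Q^{1-n}$ from working in weight~$n$ rather than weight~$0$). The paper additionally remarks that the result is a special case of Scheithauer \cite[Section~5]{Scheithauer}, but otherwise the argument is the one you outline.
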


\begin{proof}
The first formula is a special case of  results of Scheithauer \cite[Section 5]{Scheithauer}.
It can also be proved in the same way as Proposition 6.1.2 of \cite{BHKRY}. 
The complex conjugation over $\gamma_Q$ arises because of the fact that  $\tilde g$ transforms with the complex conjugate representation $\overline{\omega}_L$. 
The additional factor $Q^{1-n}$ is due to the fact that we work here in weight $n$.
\end{proof}

\begin{proposition}
\label{prop:scalar-vector}
The convolution $L$-function \eqref{eq:defl}
satisfies
\[
L(\tilde g, \theta_\Lambda, s) =
\sum_{Q|D} Q^{\frac{s}2}  \gamma_Q \cdot L(g_Q, \theta_{\Lambda_\mathfrak q}^\scal, s)
,
\]
where $\mathfrak q\in \widehat \kk^\times$ is such that $\mathfrak q^2  \widehat\calO_\kk^\times=Q \widehat\calO_\kk^\times$.
Moreover, for any $\eta:X_\Lambda\to\C$ satisfying \eqref{eq:etamult} the $L$-functions (\ref{veccon}) and (\ref{scacon}) are related by
\[
L(\tilde g, \theta_{\eta, \Lambda}, s) =\sum_{Q|D} Q^{\frac{s}2}  \gamma_Q \cdot  \chi_\eta(\mathfrak{q}^{-1}) L(g_Q, \theta_{\eta, \Lambda}^\scal, s)
 .
\]
\end{proposition}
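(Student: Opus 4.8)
The plan is to prove the first identity for the vector-valued theta series $\theta_\Lambda$ and then assemble the averaged version by linearity. The starting point is Proposition \ref{prop:scalar1.3}, which expresses the Fourier coefficient $\tilde a(m,\mu)$ of $\tilde g$ as a sum over divisors $Q$ with $Q_\mu \mid Q \mid D$ of terms $Q^{1-n}\overline{\gamma_Q}\cdot c_Q(mQ)$. Substituting this into the definition \eqref{eq:defl} of $L(\tilde g,\theta_\Lambda,s)$, one gets a double sum over $m$ and over pairs $(\mu, Q)$. The pairing $\{\overline{\tilde c(m)}, R_\Lambda(m)\}$ unfolds as $\sum_{\mu \in L'/L} \overline{\tilde a(m,\mu)}\, R_\Lambda(m)(\phi_\mu)$, and since $R_\Lambda(m)$ is supported on $\Lambda'/\Lambda \subset L'/L$, only cosets $\mu$ lying in $\Lambda'/\Lambda$ contribute. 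For such $\mu$ one has $Q_\mu \mid D/Q_{\text{cond}}$ in the relevant sense — the key bookkeeping point is to match the condition $Q_\mu \mid Q$ with the level structure of the twisted lattice $\Lambda_{\mathfrak q}$.

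The main mechanism is the following: reindexing the sum by $Q$ (the outer divisor in Proposition \ref{prop:scalar1.3}) and collecting, for each fixed $Q$, the contribution of all cosets $\mu$ with $Q_\mu \mid Q$, one should recognize the inner sum $\sum_{\mu} R_\Lambda(m)(\phi_\mu)$ as the $m$-th representation number of the rescaled lattice $\Lambda_{\mathfrak q}$, where $\mathfrak q$ satisfies $\mathfrak q^2\widehat\co_\kk^\times = Q\widehat\co_\kk^\times$. Concretely, twisting $\Lambda$ by $\mathfrak q$ multiplies the hermitian form by $\mathrm{rat}(\nu)^{-1}$ in a way that converts representations of $m$ by $\Lambda'$ (summed over the appropriate cosets) into representations of $mQ$, or of $m$, by $\Lambda_{\mathfrak q}$; this is exactly the kind of identity underlying the definitions in \S\ref{ss:theta twist}. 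After this identification the $m$-sum becomes $\Gamma(\tfrac s2+n-1)\sum_m \overline{c_Q(mQ)}\, R^{\scal}_{\Lambda_{\mathfrak q}}(m)/(4\pi m)^{s/2+n-1}$; substituting $m\mapsto m/Q$ (or tracking the shift already built into $c_Q(mQ)$) and extracting the power of $Q$ from $(4\pi m)^{-s/2-n+1}$ together with the $Q^{1-n}$ factor produces the factor $Q^{s/2}$, while $\overline{\gamma_Q}$ is converted to $\gamma_Q$ because the defining sum for $L(g_Q,\theta^{\scal}_{\Lambda_{\mathfrak q}},s)$ involves $\overline{c_Q(m)}$ rather than $c_Q(m)$. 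This yields the first displayed formula.

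For the averaged statement, apply the first formula with $\Lambda$ replaced by each $\Lambda_h$, $h\in X_\Lambda$, and sum against $\eta(h)/|\Aut(\Lambda_h)|$. By Lemma \ref{lem:ideal twist} the identification $S_L\cong S_{L_h}$ is available, so each $L(\tilde g,\theta_{\Lambda_h},s)$ makes sense and expands as $\sum_{Q\mid D} Q^{s/2}\gamma_Q\, L(g_Q, \theta^{\scal}_{(\Lambda_h)_{\mathfrak q}},s)$. Now $(\Lambda_h)_{\mathfrak q} = \Lambda_{h\mathfrak q}$ up to isometry, and the class of $h\mathfrak q$ in $X_\Lambda$ is the image of $h$ under the action \eqref{eq:clact} of the ideal class of $\mathfrak q$; hence summing $\eta(h)/|\Aut(\Lambda_h)|\cdot \theta^{\scal}_{\Lambda_{h\mathfrak q}}$ over $h$ reproduces $\theta^{\scal}_{\eta,\Lambda}$ up to the twist by $\chi_\eta$. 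Precisely, the transformation law \eqref{eq:etamult} gives $\eta(h) = \chi_\eta(\mathfrak q^{-1})\,\eta(h\mathfrak q)$, and reindexing $h\mapsto h\mathfrak q$ (a bijection of $X_\Lambda$ preserving $|\Aut(\Lambda_h)|$) turns the inner sum into $\chi_\eta(\mathfrak q^{-1})\,\theta^{\scal}_{\eta,\Lambda}$, giving the claimed formula.

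\textbf{Main obstacle.} I expect the delicate point to be the precise matching in Step 2: verifying that the sum over cosets $\mu\in\Lambda'/\Lambda$ with $Q_\mu\mid Q$ of the values $R_\Lambda(m)(\phi_\mu)$ equals the representation number $R^{\scal}_{\Lambda_{\mathfrak q}}(m)$ of the correct rescaled lattice, with the correct power of $Q$ and the correct shift $m\mapsto mQ$ in the argument of $c_Q$. This is a purely local computation at each prime $p\mid D$, comparing $\Lambda_p'$, its cosets, and the self-dual lattice $(\Lambda_p)_{\mathfrak q_p}$; it should follow from the structure theory of hermitian lattices over $\co_{\kk,p}$ (and parallels the arguments behind Proposition \ref{prop:scalar1.3} and the twisting constructions in \S\ref{ss:theta twist}), but it requires care to get all normalizations consistent.
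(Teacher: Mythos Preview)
Your proposal is correct and follows essentially the same route as the paper's proof: expand $\tilde a(m,\mu)$ via Proposition~\ref{prop:scalar1.3}, swap the order of summation to make $Q$ the outer variable, change variable $m\mapsto m/Q$, and then invoke the lattice identity
\[
\sum_{\substack{\mu\in\Lambda'/\Lambda\\ Q_\mu\mid Q}} R_\Lambda(m/Q,\phi_\mu)=R^{\scal}_{\Lambda_{\mathfrak q}}(m),
\]
which is exactly the ``obstacle'' you flag; the paper likewise states this relation without further argument. Your handling of the conjugation $\overline{\gamma_Q}\mapsto\gamma_Q$ and of the averaged version (reindex $h\mapsto \mathfrak q h$, use \eqref{eq:etamult}, note $|\Aut(\Lambda_h)|=|\Aut(\Lambda_{\mathfrak q h})|$) matches the paper as well.
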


\begin{proof}
Proposition \ref{prop:scalar1.3} implies
\begin{align*}
\frac{L(\tilde g, \theta_\Lambda, s) }{\Gamma(\frac{s}2 +n -1)}
 &= \sum_{\mu \in  \Lambda' /\Lambda} \sum_{m \in \Q_{>0}}
 \sum_{Q_\mu \mid Q  \mid D}  Q^{1-n}
\gamma_Q
 \cdot \frac{ \overline{c_Q(mQ)}  R_{\Lambda}(m, \phi_\mu)}{(4\pi m)^{\frac{s}2 +n-1}}
 \\
 &=
  \sum_{Q \mid D}Q^{1-n}  \gamma_Q
  \sum_{m \in \frac{1}Q \Z_{>0}} \frac{\overline{c_Q(mQ)}}{ (4\pi m)^{\frac{s}2+n-1}}
   \sum_{\substack{\mu \in \Lambda'/\Lambda \\ Q_\mu |Q}} R_{\Lambda}(m, \phi_\mu)
 \\
  &=  \sum_{Q|D} Q^{\frac{s}2} \gamma_Q
  \sum_{m \in \Z_{>0}} \frac{\overline{c_Q(m)}}{ (4\pi m)^{\frac{s}2+n-1}}
  \sum_{\substack{\mu \in \Lambda'/\Lambda \\ Q_\mu \mid Q}}
  R_{\Lambda} (m/Q, \phi_\mu ).
\end{align*}
The first claim now follows from the relation
\[
\sum_{\substack{\mu \in \Lambda' /\Lambda \\ Q_\mu \mid Q}} R_{\Lambda}( m / Q, \mu)
= R_{\Lambda_{{\mathfrak q}^{-1}}}(m, 0)= R_{\Lambda_{{\mathfrak q}}}(m, 0).
\]

For the second claim,  if we  replace $\Lambda$ by $\Lambda_{h}$ and
$L_1$ by $L_{1,h}$ for $h\in X_\Lambda$, then  $L$ and   $\gamma_Q$
remain unchanged.  The above calculations therefore imply that
\begin{align*}
L(\tilde g, \theta_{\eta, \Lambda}, s)
&=  \sum_{Q \mid D}  \gamma_Q  Q^{\frac{s}2}
  \sum_{h\in X_\Lambda} \frac{\eta(h)}{|\Aut(\Lambda_h)|}  L(g_Q, \theta_{\Lambda_{\mathfrak{q}h}}^\scal, s)
 \\
 &=  \sum_{Q \mid D}   \gamma_Q  Q^{\frac{s}2}
  \sum_{h\in X_\Lambda} \frac{\eta(\mathfrak{q}^{-1}h)}{|\Aut(\Lambda_h)|}  L(g_Q, \theta_{\Lambda_{h}}^\scal, s)
 \\
 &=
 \sum_{Q|D} \gamma_Q Q^{\frac{s}2} \cdot \chi_\eta(\mathfrak{q}^{-1}) L(g_Q, \theta_{\eta, \Lambda}^\scal, s),
\end{align*}
where we have used \eqref{eq:etamult} and the fact that $|\Aut(\Lambda_h)| = |\Aut(\Lambda_{\mathfrak{q}h})|$. 
\end{proof}

\begin{corollary} \label{cor:9.2.6}
If  $n$ is even, then
\[
L(\tilde g, \theta_{\eta, \Lambda}, s) =
 L(g, \theta_{\eta, \Lambda}^\scal, s) \cdot
 \prod_{p|D} \big(1+ \chi_\eta(\mathfrak{p}^{-1}) \epsilon_p(g)\gamma_p p^{\frac{s}2}  \big ).
\]
\end{corollary}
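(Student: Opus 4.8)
The plan is to deduce the corollary from Proposition~\ref{prop:scalar-vector} together with Remark~\ref{rem:g_Q even}; after those inputs only an elementary multiplicativity computation remains.

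First I would use that, since $n$ is even, Remark~\ref{rem:g_Q even} gives $g_Q=\epsilon_Q(g)\,g$ with $\epsilon_Q(g)=\pm 1$. Being real, $\epsilon_Q(g)$ passes through the complex conjugation on Fourier coefficients, so $\overline{c_Q(m)}=\epsilon_Q(g)\,\overline{c(m)}$, and hence, directly from \eqref{scacon}, $L(g_Q,\theta_{\eta,\Lambda}^\scal,s)=\epsilon_Q(g)\,L(g,\theta_{\eta,\Lambda}^\scal,s)$. Substituting this into the second identity of Proposition~\ref{prop:scalar-vector} and pulling the $Q$-independent factor $L(g,\theta_{\eta,\Lambda}^\scal,s)$ out of the sum gives
\[
L(\tilde g,\theta_{\eta,\Lambda},s)
= L(g,\theta_{\eta,\Lambda}^\scal,s)\cdot\sum_{Q\mid D} Q^{\frac{s}{2}}\,\gamma_Q\,\epsilon_Q(g)\,\chi_\eta(\mathfrak q^{-1}),
\]
where $\mathfrak q\in\widehat\kk^\times$ satisfies $\mathfrak q^{2}\widehat{\calO}_\kk^\times=Q\widehat{\calO}_\kk^\times$.

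Next I would factor the sum over $Q\mid D$ into the asserted product. Since $\mathrm{disc}(\kk)=-D$ is an odd fundamental discriminant, $D$ is squarefree, so divisors $Q\mid D$ correspond bijectively to subsets of the set $P$ of primes dividing $D$. Each ingredient of the summand is multiplicative along these primes: $Q^{s/2}=\prod_{q\mid Q}q^{s/2}$, $\gamma_Q=\prod_{q\mid Q}\gamma_q$ by \eqref{gamma def}, and $\epsilon_Q(g)=\prod_{q\mid Q}\epsilon_q(g)$ by the product formula of Remark~\ref{rem:g_Q even} (which in particular identifies $\epsilon_p(g)$ with the $q=p$ factor). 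For the class-group term I would use that each $q\mid D$ ramifies in $\kk$, say $q\calO_\kk=\mathfrak p_q^{\,2}$; then $\mathfrak q$ represents the ideal class $\prod_{q\mid Q}[\mathfrak p_q]$ in $\CL(\kk)$, so $\chi_\eta(\mathfrak q^{-1})=\prod_{q\mid Q}\chi_\eta(\mathfrak p_q^{-1})$. Thus the $Q$-summand equals $\prod_{q\mid Q}\big(q^{s/2}\gamma_q\,\epsilon_q(g)\,\chi_\eta(\mathfrak p_q^{-1})\big)$, and the elementary expansion $\sum_{S\subseteq P}\prod_{q\in S}x_q=\prod_{q\in P}(1+x_q)$ yields
\[
\sum_{Q\mid D} Q^{\frac{s}{2}}\,\gamma_Q\,\epsilon_Q(g)\,\chi_\eta(\mathfrak q^{-1})
=\prod_{p\mid D}\big(1+\chi_\eta(\mathfrak p^{-1})\,\epsilon_p(g)\,\gamma_p\, p^{\frac{s}{2}}\big).
\]
Together with the previous display this is precisely Corollary~\ref{cor:9.2.6}.

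I do not expect a genuine obstacle: the analytic substance is entirely in Proposition~\ref{prop:scalar-vector}, and the rest is bookkeeping. The only points needing a moment's care are the reality of $\epsilon_Q(g)$ (so the conjugation in \eqref{scacon} contributes no sign), the compatibility of the notation $\epsilon_p(g)$ in the statement with the $q=p$ factor of $\epsilon_Q(g)$, and the matching of the local idele $\mathfrak q$ with the product $\prod_{q\mid Q}\mathfrak p_q$ of ramified primes, which is what makes $\chi_\eta(\mathfrak q^{-1})$ multiplicative in $Q$.
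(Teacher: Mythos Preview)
Your proposal is correct and follows exactly the approach the paper indicates: it invokes Proposition~\ref{prop:scalar-vector} and Remark~\ref{rem:g_Q even}, and then carries out the elementary multiplicative expansion that the paper leaves implicit in its one-line proof. The details you supply (reality of $\epsilon_Q(g)$, the factorization $\epsilon_Q(g)=\prod_{q\mid Q}\epsilon_q(g)$ in the even case, and the decomposition of $\mathfrak q$ into ramified primes) are all correct.
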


\begin{proof}
This is immediate from Proposition \ref{prop:scalar-vector} and Remark \ref{rem:g_Q even}.
\end{proof}


\subsection{Small CM cycles and derivatives of $L$-functions, revisited}
\label{ss:smallredux}


Now we are ready to state a variant of Theorem \ref{Theorem A} using only scalar valued modular forms.
Assume $n\ge 3$.

Every  $h\in X_\Lambda$ determines a codimension $n-1$ cycle
\begin{equation}\label{twisty small cm}
\mathcal{Y}_{\mathrm{sm},h} \to  \mathcal{S}^*_\Kra
\end{equation}
as follows.  From the triple $(\mathfrak{a}_0,\mathfrak{a}_1,\mathfrak{b})$ fixed in \S \ref{ss:theta twist} and the hermitian $\co_\kk$-lattices
$L_h=L_{1,h}\oplus \Lambda_h$ of Lemma \ref{lem:ideal twist}, we  denote by $\mathfrak{a}_{1,h}$ and  $\mathfrak{b}_h$ the unique hermitian $\co_\kk$-lattices satisfying
\[
L_{1,h} \iso \Hom_{\co_\kk}(\mathfrak{a}_0,\mathfrak{a}_{1,h}),\quad 
\Lambda_h \iso \Hom_{\co_\kk}( \mathfrak{a}_0,\mathfrak{b}_h),
\]
and set $\mathfrak{a}_h=\mathfrak{a}_{1,h}\oplus \mathfrak{b}_h$ so that $L_h\iso \Hom_{\co_\kk}(\mathfrak{a}_0,\mathfrak{a}_h)$.
  Compare with (\ref{aux lattices 1}) and (\ref{aux lattices 2}).  
  
 Repeating the construction of the small CM cycle $\mathcal{Y}_\mathrm{sm}$ with the triple $(\mathfrak{a}_0,\mathfrak{a}_1,\mathfrak{b})$ replaced by   $(\mathfrak{a}_0,\mathfrak{a}_{1,h},\mathfrak{b}_h)$ results in a proper \'etale $\co_\kk$-stack $\mathcal{Y}_{\mathrm{sm},h}$.
 Repeating the construction of the Shimura variety $\mathcal{S}_\Kra$ with the triple $(\mathfrak{a}_0,\mathfrak{a})$ replaced by $(\mathfrak{a}_0,\mathfrak{a}_h)$ results in a new Shimura variety $\mathcal{S}_{\Kra,h}$, along with a finite and unramified morphism
 \[
 \mathcal{Y}_{\mathrm{sm},h} \to \mathcal{S}_{\Kra,h}.
 \]

It  follows from Lemma \ref{lem:ideal twist} that $\mathfrak{a}$ and $\mathfrak{a}_h$ are isomorphic everywhere locally, 
and examination of the moduli problem defining $\mathcal{S}_\Kra$ in \cite[\S 2.3]{BHKRY} shows that $\mathcal{S}_\Kra$ depends only the everywhere local data determined by the pair $(\mathfrak{a}_0,\mathfrak{a})$, and not on the actual global $\co_\kk$-hermitian lattices.
Therefore, there is a canonical morphism of $\co_\kk$-stacks
 \[
 \mathcal{Y}_{\mathrm{sm},h}  \to  \mathcal{S}_{\Kra,h} \iso \mathcal{S}_\Kra
 \] 
 in which the isomorphism is simply the identity functor on the moduli problems.    
 In the end, this amounts to simply repeating the construction of 
 $\mathcal{Y}_\mathrm{sm} \to \mathcal{S}_\Kra$ from Definition \ref{def:small cm definition} word-for-word, but replacing $\Lambda$ by $\Lambda_h$ everywhere.
This defines the desired cycle (\ref{twisty small cm}).

Each algebraic automorphic form $\eta:X_\Lambda\to\C$ satisfying \eqref{eq:etamult}
now determines a cycle
\begin{equation*}
 \eta \mathcal{Y}_{\mathrm{sm}} = \sum_{h\in X_\Lambda}
  \eta(h)   \cdot  \mathcal{Y}_{\mathrm{sm},h}
\end{equation*}
on $\mathcal{S}^*_\Kra$ with complex coefficients, and a corresponding linear functional
\[
  [ - :  \eta \mathcal{Y}_{\mathrm{sm}}]  :   \Cha_\C^1(\mathcal{S}^*_\Kra) \to \C.
\]

\begin{theorem} \label{maintheo2}
The arithmetic theta lift (\ref{ATL}) satisfies
\[
[ \widehat{\theta}(g) : \mathcal{Y}_\mathrm{sm} ]
=
- \deg_\C (\mathcal{Y}_\mathrm{sm})
\cdot \frac{d}{ds}\Big[
\sum_{Q|D} Q^{\frac{s}2}  \gamma_Q L(g_Q, \theta_{ \Lambda_{\mathfrak q}}^\scal, s)\Big]\big|_{s=0}
 ,
\]
where $\mathfrak q\in \widehat \kk^\times$ is such that $\mathfrak q^2  \widehat\calO_\kk^\times=Q \widehat\calO_\kk^\times$.
Moreover,  if $n$ is even  and $\eta:X_\Lambda\to\C$ satisfies \eqref{eq:etamult}, then
\begin{align*}
&[ \widehat{\theta}(g ) :\eta \mathcal{Y}_\mathrm{sm} ]\\
&=
 - 2^{1-o(d_\kk)}  \left(  h_\kk / w_\kk \right) ^2 \cdot \frac{d}{ds}
\Big[ L(g, \theta_{\eta, \Lambda}^\scal,s)  \cdot
\prod_{p |D} \big(1+ \chi_\eta(\mathfrak p^{-1})\epsilon_p(g)\gamma_p p^{\frac{s}2}  \big) \Big] \big|_{s=0},
\end{align*}
where $\mathfrak p\in \widehat \kk^\times$ such that $\mathfrak p^2  \widehat\calO_\kk^\times=p \widehat\calO_\kk^\times$.
Note that in the first formula the sum is over all positive divisors $Q\mid D$, while in the second the product is over
the prime divisors $p\mid D$. 
\end{theorem}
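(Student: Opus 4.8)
The plan is to deduce Theorem \ref{maintheo2} from Theorem \ref{Theorem A} by feeding the two identities of Proposition \ref{prop:scalar-vector} through the statement of Theorem \ref{Theorem A}. The starting point is the formula
\[
[ \widehat{\theta}(g) : \mathcal{Y}_\mathrm{sm} ] = -\deg_\C(\mathcal{Y}_\mathrm{sm}) \cdot \frac{d}{ds} L(\tilde g, \theta_\Lambda, s)\big|_{s=0},
\]
which is Theorem \ref{Theorem A} in the form proved at the end of \S\ref{s:small GZ}. For the first assertion, I would simply substitute the identity
\[
L(\tilde g, \theta_\Lambda, s) = \sum_{Q\mid D} Q^{s/2}\gamma_Q \cdot L(g_Q, \theta_{\Lambda_{\mathfrak q}}^\scal, s)
\]
from Proposition \ref{prop:scalar-vector} and differentiate at $s=0$. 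This step is essentially formal; the only thing to note is that the left-hand intersection number is unchanged, while the right-hand side is now expressed purely in terms of scalar-valued Rankin--Selberg convolutions.

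For the second assertion I would argue in the same spirit but work with the twisted cycles $\mathcal{Y}_{\mathrm{sm},h}$. The key observation is that applying Theorem \ref{Theorem A} to each $\mathcal{Y}_{\mathrm{sm},h}$ — legitimate because that cycle was constructed by repeating the construction of $\mathcal{Y}_\mathrm{sm}$ with $\Lambda$ replaced by $\Lambda_h$ (and $L$ by $L_h$, to which $L$ is everywhere-locally isomorphic by Lemma \ref{lem:ideal twist}, so that the induction map and hence $\tilde g$ are unchanged) — gives
\[
[ \widehat{\theta}(g) : \mathcal{Y}_{\mathrm{sm},h} ] = -\deg_\C(\mathcal{Y}_{\mathrm{sm},h}) \cdot \frac{d}{ds} L(\tilde g, \theta_{\Lambda_h}, s)\big|_{s=0}.
\]
Multiplying by $\eta(h)/|\Aut(\Lambda_h)|$ and summing over $h\in X_\Lambda$, the left side becomes $[\widehat\theta(g):\eta\mathcal{Y}_\mathrm{sm}]$ up to the bookkeeping of automorphism groups, and the right side assembles into $-\deg_\C(\mathcal{Y}_\mathrm{sm})\cdot \frac{d}{ds} L(\tilde g,\theta_{\eta,\Lambda},s)|_{s=0}$ after checking that $\deg_\C(\mathcal{Y}_{\mathrm{sm},h})$ is independent of $h$ (it equals $(h_\kk/w_\kk)^2 \cdot 2^{1-o(D)}/|\Aut(\Lambda_h)|$ by the same elementary count as in Proposition \ref{prop:small degree}, and $|U(\Lambda_h)|=|\Aut(\Lambda_h)|$). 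Then one invokes Corollary \ref{cor:9.2.6} (valid since $n$ is even) to replace $L(\tilde g,\theta_{\eta,\Lambda},s)$ by $L(g,\theta_{\eta,\Lambda}^\scal,s)\cdot\prod_{p\mid D}(1+\chi_\eta(\mathfrak p^{-1})\epsilon_p(g)\gamma_p p^{s/2})$, and uses $\deg_\C(\mathcal{Y}_\mathrm{sm}) = 2^{1-o(D)}(h_\kk/w_\kk)^2/|\Aut(\Lambda)|$ together with the definition $\eta\mathcal{Y}_\mathrm{sm}=\sum_h \eta(h)\mathcal{Y}_{\mathrm{sm},h}$ to match constants; note $o(d_\kk)=o(D)$.

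The main obstacle I anticipate is not any single hard estimate but the careful reconciliation of normalizations: one must be sure that the intersection functional $[-:\eta\mathcal{Y}_\mathrm{sm}]$ as defined (a sum of $\eta(h)$ times the functionals $[-:\mathcal{Y}_{\mathrm{sm},h}]$, each of which includes its own arithmetic-degree normalization) produces exactly the factor $2^{1-o(d_\kk)}(h_\kk/w_\kk)^2$ rather than $\deg_\C(\mathcal{Y}_\mathrm{sm})$, i.e. that the $|\Aut(\Lambda_h)|^{-1}$ weights in the definition of $\theta_{\eta,\Lambda}$ and in $\deg_\C$ cancel correctly against the $\eta(h)$ weights in the cycle. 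A secondary subtlety is verifying that $\gamma_Q$ and the form $\tilde g$ (equivalently, the induction map and the Weil representation $\omega_{L_h}\cong\omega_L$) genuinely do not change when $\Lambda$ is replaced by $\Lambda_h$ — this is exactly the content flagged in the proof of Proposition \ref{prop:scalar-vector}, and I would cite Lemma \ref{lem:ideal twist} for it. Once these normalization checks are in place, both displayed formulas follow by differentiating the respective product/sum expressions at $s=0$.
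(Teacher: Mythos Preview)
Your approach is essentially the paper's own: first assertion by Theorem~\ref{Theorem A} plus Proposition~\ref{prop:scalar-vector}, second by applying Theorem~\ref{Theorem A} to each $\mathcal{Y}_{\mathrm{sm},h}$, summing with weights $\eta(h)$, invoking Proposition~\ref{prop:small degree}, and finishing with Corollary~\ref{cor:9.2.6}. One bookkeeping point deserves cleaning up. You should multiply the identity
\[
[\widehat{\theta}(g):\mathcal{Y}_{\mathrm{sm},h}] = -\deg_\C(\mathcal{Y}_{\mathrm{sm},h})\cdot L'(\tilde g,\theta_{\Lambda_h},0)
\]
by $\eta(h)$ alone (not $\eta(h)/|\Aut(\Lambda_h)|$), since $\eta\mathcal{Y}_\mathrm{sm}=\sum_h\eta(h)\,\mathcal{Y}_{\mathrm{sm},h}$ carries no automorphism weight. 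The factor $|\Aut(\Lambda_h)|^{-1}$ enters on the right via $\deg_\C(\mathcal{Y}_{\mathrm{sm},h}) = 2^{1-o(D)}(h_\kk/w_\kk)^2/|\Aut(\Lambda_h)|$; this degree is \emph{not} independent of $h$, but the product $\deg_\C(\mathcal{Y}_{\mathrm{sm},h})\cdot|\Aut(\Lambda_h)|$ is, and that constant is precisely $2^{1-o(d_\kk)}(h_\kk/w_\kk)^2$. The sum then collapses to $-2^{1-o(d_\kk)}(h_\kk/w_\kk)^2\cdot L'(\tilde g,\theta_{\eta,\Lambda},0)$ by the definition~\eqref{veccon}, exactly as in the paper.
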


\begin{proof}
The first assertion follows from Theorem \ref{Theorem A} and Proposition \ref{prop:scalar-vector}. 

For the second assertion, applying  Theorem \ref{Theorem A}  to 
\[
\mathcal{Y}_{\mathrm{sm},h} \to \mathcal{S}_{\Kra,h}^* \iso \mathcal{S}_\Kra^*
\]
yields 
\[
[ \widehat{\theta}(g) : \mathcal{Y}_{\mathrm{sm},h} ]
=
- \deg_\C (\mathcal{Y}_{\mathrm{sm},h})
\cdot \frac{d}{ds}   L(\tilde{g},\theta_{\Lambda_h} ,s) \big|_{s=0}.
\]
Combining this with  Proposition \ref{prop:small degree} yields
\begin{align*}
[ \widehat{\theta}(g) :  \eta \mathcal{Y}_{\mathrm{sm}} ]  
& = 
-  2^{1-o(d_\kk)}  \left(  h_\kk / w_\kk \right) ^2   \cdot \frac{d}{ds}     L(\tilde{g},\theta_{\eta,\Lambda} ,s)    \big|_{s=0},
\end{align*}
and an application of  Corollary \ref{cor:9.2.6} completes the proof.
\end{proof}

\begin{remark}
Since the $L$-function \eqref{veccon} vanishes at $s=0$, the same must be true for the expressions in brackets on the right hand sides of the equalities of the above theorem. In particular, when $n$ is even, then  either $L(g, \theta_{\eta, \Lambda}^\scal,s)$ or at least one of the factors
\[
1+ \chi_\eta(\mathfrak p^{-1})\epsilon_p(g)\gamma_p p^{\frac{s}2}  
\]
(for a prime $p\mid D$)  vanishes at $s=0$. If we pick the newform $g$ such that the latter local factors are nonvanishing, then $L(g, \theta_{\eta, \Lambda}^\scal,0)=0$ and we obtain
\begin{align*}
[ \widehat{\theta}(g) :\eta \mathcal{Y}_\mathrm{sm} ]=
 - 2^{1-o(d_\kk)} \frac{h_\kk^2}{w_\kk^2} \cdot
\prod_{p |D} \big(1+ \chi_\eta(\mathfrak p^{-1})\epsilon_p(g)\gamma_p \big)\cdot L'(g, \theta_{\eta, \Lambda}^\scal,0) .
\end{align*}
\end{remark}


\section{Big CM cycles and derivatives of $L$-functions}
\label{s:big GZ}


In this section we prove Theorem \ref{Theorem B}  by combining  results of \cite{BHKRY}  and \cite{Ho1,Ho2,BKY}.
We asume $n\ge 2$ until \S \ref{ss:big harmonic intersection}, at which point we restrict to $n\ge 3$.


\subsection{A Shimura variety of dimension zero}


Let $F$ be a totally real field of degree $n$, and define a CM field
$
E=\kk\otimes_\Q F.
$
Define a rank $n+2$ torus $T_\mathrm{big}$ over $\Q$ as the fiber product
\[
\xymatrix{
{T_\mathrm{big}}  \ar[rrr]\ar[d]  &  &  & {  \mathbb{G}_m } \ar[d]^{\mathrm{diag.}} \\
{\mathrm{Res}_{\kk/\Q}\mathbb{G}_m \times \mathrm{Res}_{E/\Q}\mathbb{G}_m  } \ar[rrr]_{\mathrm{Nm} \times \mathrm{Nm}}  &   &   &{  \mathbb{G}_m \times  \mathrm{Res}_{F / \Q} \mathbb{G}_m. }
}
\]
Its group of $\Q$-points is
\[
T_\mathrm{big}(\Q) \iso \{ (x,y) \in \kk^\times \times E^\times : x\overline{x}=y\overline{y} \}.
\]

\begin{remark}\label{rem:split group}
There is an isomorphism
\[
T_\mathrm{big}(\Q)  \iso \kk^\times \times \mathrm{ker} ( \mathrm{Nm} : E^\times \to F^\times )
\]
defined by $(x,y) \mapsto (x, x^{-1} y)$.  It is clear that this arises from an isomorphism
\[
T_\mathrm{big} \iso    \mathrm{Res}_{\kk/\Q}\mathbb{G}_m
 \times \mathrm{ker}\big( \mathrm{Nm} :  \mathrm{Res}_{E/\Q}\mathbb{G}_m \to \mathrm{Res}_{F/\Q}\mathbb{G}_m \big).
\]
\end{remark}

As in the discussion preceding Theorem \ref{Theorem B},  let $\Phi \subset \Hom_\Q(E,\C)$  be a CM type of signature $(n-1,1)$,  let
  \[
  \varphi^\mathrm{sp}:E \to \C
  \]
   be its special element, and let $\co_\Phi$ be the ring of integers of $E_\Phi=\varphi^\mathrm{sp}(E)$.

The CM type $\Phi$ determines an isomorphism $ \C^n \iso E_\R $, and hence an embedding $\C^\times \to E_\R^\times$ arising  from a morphism of real algebraic groups $\mathbb{S} \to (\mathrm{Res}_{E/\Q}\mathbb{G}_m )_\R$.
This induces a morphism
\[
\mathbb{S} \to (\mathrm{Res}_{\kk/\Q}\mathbb{G}_m)_\R  \times (\mathrm{Res}_{E/\Q}\mathbb{G}_m )_\R,
\]
which factors through a morphism
\[
h_\mathrm{big} : \mathbb{S} \to T_{\mathrm{big} , \R }.
\]
The pair $(T_\mathrm{big},\{h_\mathrm{big} \})$ is  a Shimura datum, which, along with the compact open subgroup
\[
K_\mathrm{big} = T_\mathrm{big}(\A_f) \cap ( \widehat{\co}_\kk^\times \times \widehat{\co}_E^\times),
\]
determines a $0$-dimensional $E_\Phi$-stack  $\mathrm{Sh}(T_\mathrm{big})$ with complex points
\[
\mathrm{Sh}(T_\mathrm{big}) (\C) = T_\mathrm{big}(\Q) \backslash \{ h_\mathrm{big}\} \times T_\mathrm{big}(\A_f) / K_\mathrm{big}.
\]


\subsection{The big CM cycle}
\label{ss:big cm}


The Shimura  variety just constructed has a  moduli interpretation, which we will use to construct an integral model.
The interpretation we have in mind requires first choosing  a triple $(\mathfrak{a}_0,\mathfrak{a} , i_E )$ in which
\begin{itemize}
\item
$\mathfrak{a}_0$ is a self-dual hermitian $\co_\kk$-lattice of signature $(1,0)$,
\item
$\mathfrak{a}$ is a self-dual hermitian $\co_\kk$-lattice of signature $(n-1,1)$,
\item
$i_E : \co_E \to \End_{\co_\kk} (\mathfrak{a})$ is an action extending the  action of $\co_\kk$.
\end{itemize}
Denoting by $H:\mathfrak{a} \times \mathfrak{a} \to \co_\kk$ the hermitian form, we require further that
\[ H( i_E(x) a,b)  = H(  a, i_E(\overline{x}) b )\]  for all $x\in \co_E$ and $a,b\in \mathfrak{a}$, and that in the decomposition
\[
\mathfrak{a}_\R \iso \bigoplus_{ \varphi_F : F \to \R } \mathfrak{a}\otimes_{\co_F ,\varphi_F } \R
\]
the summand indexed by $\varphi_F = \varphi^\mathrm{sp}|_F$ is negative definite (which, by the signature condition, implies that the other summands are positive definite).

\begin{remark}
In general such a triple need not exist.  In the applications will assume that the discriminants of $\kk/\Q$ and $F/\Q$ are odd and relatively prime, and in this case one can construct such a triple using the argument of \cite[Proposition 3.1.6]{Ho1}.
\end{remark}

We now define a moduli space of abelian varieties with complex multiplication by $\co_E$ and type $\Phi$, as in \cite[\S 3.1]{Ho1}.
Denote by $\mathcal{CM}_\Phi$ the functor that associates to every $\co_\Phi$-scheme $S$ the groupoid of triples $(A,\iota,\psi)$ in which
\begin{itemize}
\item
$A \to S$ is an abelian scheme of dimenension $n$,
\item
$\iota : \co_E \to \End(A)$ is an $\co_E$-action,
\item
$\psi : A\to A^\vee$ is a principal polarization such that \[\iota(x)^\vee \circ \psi= \psi\circ \iota(\overline{x})\] for all $x\in \co_E$.
\end{itemize}
We also impose the  \emph{$\Phi$-determinant condition} that every $x\in \co_E$ acts on $\Lie(A)$ with characteristic polynomial equal to the image of
\[
\prod_{\varphi \in \Phi} (T-\varphi(x)) \in \co_\Phi[T]
\]
 in $\co_S[T]$.
 We usually abbreviate $A\in \mathcal{CM}_\Phi(S)$, and suppress the data $\iota$ and $\psi$ from the notation.
 By  \cite[Proposition 3.1.2]{Ho1}, the  functor $\mathcal{CM}_\Phi$ is represented by a   Deligne-Mumford stack, proper and \'etale over $\co_\Phi$ .

\begin{remark}
The $\Phi$-determinant condition defined above agrees with that of  \cite[\S 3.1]{Ho1}.  As in \cite[Proposition 2.1.3]{hartwig}, this is a consequence of Amitsur's formula, which can be found in  \cite[Theorem A]{amitsur} or \cite[Lemma 1.12]{chenevier}.
\end{remark}

Define an open and closed substack
\[
\mathcal{Y}_\mathrm{big} \subset  \mathcal{M}_{(1,0)}  \times_{\co_\kk}  \mathcal{CM}_\Phi
\]
as the union of connected components $\mathcal{B} \subset \mathcal{M}_{(1,0)}  \times_{\co_\kk}  \mathcal{CM}_\Phi$ satisfying the following property: for every complex point $y = (A_0,A) \in \mathcal{B}(\C)$, and for all primes $\ell$,
there is an $\co_E$-linear isomorphism of hermitian $\co_{\kk,\ell}$-lattices
 \begin{equation}\label{big genus}
 \Hom_{\co_{\kk , \ell}  } (A_0[\ell^\infty] , A [\ell^\infty] )
  \iso \Hom_{\co_\kk}( \mathfrak{a}_0,\mathfrak{a} ) \otimes_\Z  \Q_\ell .
 \end{equation}

\begin{remark}\label{rem:betti hermitian}
To verify that a connected component 
$\mathcal{B} \subset \mathcal{M}_{(1,0)}  \times_{\co_\kk}  \mathcal{CM}_\Phi$ is contained in $\mathcal{Y}_\mathrm{big}$, it suffices to check that (\ref{big genus}) holds for one complex point $y\in \mathcal{B}(\C)$.
This is a consequence of  the  main theorem of complex multiplication and the fact that the points of $\mathcal{B}(\C)$ form a single $\Aut(\C/E_\Phi)$-orbit.
\end{remark}

\begin{proposition}\label{prop:big integral}
There is a canonical isomorphism of $E_\Phi$-stacks
\[
 \mathrm{Sh}(T_\mathrm{big})   \iso   \mathcal{Y}_{\mathrm{big}/E_\Phi}  .
\]
\end{proposition}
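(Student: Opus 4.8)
The plan is to mimic the proof of Proposition \ref{prop:representability} for the small CM cycle, now in the ``big'' setting, and to reduce everything to the main theorem of complex multiplication. First I would observe that the triple $(\mathfrak{a}_0,\mathfrak{a},\iota)$ determines, via $h_\mathrm{big}$ and the action of $\co_E$ on $\mathfrak{a}$ (combined with the standard action of $\co_\kk$ on $\mathfrak{a}_0$), complex structures on $\mathfrak{a}_{0\R}$ and $\mathfrak{a}_\R$ compatible with the CM type $\Phi$ and with the negative-definiteness condition imposed on the $\varphi^\mathrm{sp}|_F$-summand. Concretely, a point $(h_\mathrm{big},g)\in \mathrm{Sh}(T_\mathrm{big})(\C)$ is sent to the pair $(A_0,A)$ with
\[
A_0(\C)=\mathfrak{a}_{0\R}/g\mathfrak{a}_0,\qquad A(\C)=\mathfrak{a}_\R/g\mathfrak{a},
\]
each equipped with its induced $\co_\kk$-action (respectively $\co_E$-action) and the polarization coming from the hermitian/alternating form, exactly as in the proof of \cite[Proposition 2.2.1]{BHKRY} and \cite[Proposition 3.1.4]{Ho1}. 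One checks that the $\Phi$-determinant condition for $A$ holds precisely because the complex structure on $\mathfrak{a}_\R$ was built from $\Phi$, so that $(A_0,A)$ indeed defines a point of $\mathcal{M}_{(1,0)}\times_{\co_\kk}\mathcal{CM}_\Phi(\C)$.

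Next I would verify that this complex point actually lands in the open and closed substack $\mathcal{Y}_\mathrm{big}$, i.e. that the genus condition (\ref{big genus}) is satisfied. For a point coming from $g\in T_\mathrm{big}(\A_f)$ there is a canonical identification
\[
\Hom_{\co_{\kk,\ell}}(A_0[\ell^\infty],A[\ell^\infty])\iso \Hom_{\co_\kk}(g\mathfrak{a}_0,g\mathfrak{a})\otimes\Z_\ell\iso \Hom_{\co_\kk}(\mathfrak{a}_0,\mathfrak{a})\otimes\Z_\ell,
\]
so the condition holds on the nose at every $\ell$; hence the whole connected component through this point lies in $\mathcal{Y}_\mathrm{big}$ by Remark \ref{rem:betti hermitian}. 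Conversely, given any complex point $(A_0,A)$ of $\mathcal{Y}_\mathrm{big}$, the Betti homology $H_1(A(\C),\Q)$ with its $E$-action is a rank-one $E$-vector space, $H_1(A_0(\C),\Q)$ is a rank-one $\kk$-vector space, and the polarizations furnish hermitian forms; the genus hypothesis (\ref{big genus}) together with the Hasse principle argument of Lemma \ref{lem:ideal twist} (any two self-dual lattices in a fixed hermitian space over $E$, resp.\ $\kk$, that are everywhere locally isomorphic become globally isomorphic after adjusting by a class-group element) shows that, after a global isomorphism, the lattices $H_1(A_0(\C),\Z)$ and $H_1(A(\C),\Z)$ may be identified with $\mathfrak{a}_0$ and $\mathfrak{a}$. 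The Hodge structure is then encoded by a point of $T_\mathrm{big}(\R)$-conjugate of $h_\mathrm{big}$, i.e.\ by $\{h_\mathrm{big}\}$ since $T_\mathrm{big}$ is a torus, and the remaining lattice/level data is encoded by a class in $T_\mathrm{big}(\A_f)/K_\mathrm{big}$. This produces the inverse map on complex points and shows the two groupoids are equivalent; the equivalence is $T_\mathrm{big}(\Q)$-equivariant by construction, so descends to the double-coset description.

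Finally I would upgrade the bijection on complex points to an isomorphism of $E_\Phi$-stacks by invoking the theory of canonical models: $\mathrm{Sh}(T_\mathrm{big})$ is canonically defined over its reflex field $E_\Phi$, and the Galois action on its complex points is described by the main theorem of complex multiplication (Shimura--Taniyama), which is exactly the Galois action on the moduli stack $\mathcal{Y}_{\mathrm{big}/E_\Phi}(\C)$ computed via CM abelian varieties, as in \cite[\S 3.1]{Ho1}. Since both sides are $0$-dimensional \'etale $E_\Phi$-stacks, an $\Aut(\C/E_\Phi)$-equivariant bijection on $\C$-points is the same as an isomorphism over $E_\Phi$, completing the proof. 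The main obstacle I anticipate is the bookkeeping in the converse direction: making sure that the everywhere-local isomorphism (\ref{big genus}) can be promoted to a \emph{global} $\co_E$-linear isometry of the relevant hermitian lattices (so that the resulting point of $T_\mathrm{big}(\A_f)$ is well defined up to $K_\mathrm{big}$ and $T_\mathrm{big}(\Q)$), and checking compatibility of all the polarization and signature normalizations; this is where the oddness/coprimeness hypotheses on the discriminants (used to guarantee existence of the triple $(\mathfrak{a}_0,\mathfrak{a},\iota)$ in the first place) and the Jacobowitz-type local-global principle of Lemma \ref{lem:ideal twist} do the real work.
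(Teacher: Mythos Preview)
Your approach is essentially the same as the paper's: construct the map on complex points by sending $(h_\mathrm{big},g)$ to the pair $(A_0,A)$ with $A_0(\C)=\mathfrak{a}_{0\R}/g\mathfrak{a}_0$ and $A(\C)=\mathfrak{a}_\R/g\mathfrak{a}$, then invoke the theory of canonical models to descend to $E_\Phi$. The paper's own proof is much terser than yours---it simply asserts the isomorphism on complex points without writing out the inverse or the verification of the genus condition---so your added detail on surjectivity is a welcome supplement rather than a different argument.

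One minor imprecision: your appeal to Lemma~\ref{lem:ideal twist} for the Hasse principle step is not quite on target, since that lemma concerns $\co_\kk$-hermitian lattices arising from the small CM construction, not $\co_E$-lattices with $\kk$-hermitian form. What you actually need for the inverse is that the one-dimensional $E$-hermitian space determined by $H_1(A(\C),\Q)$ (equivalently, the $\kk$-hermitian space $\Hom_\kk(H_1(A_0,\Q),H_1(A,\Q))$ with its $\co_E$-action) is globally isomorphic to $\mathfrak{a}_\Q$; the genus condition~(\ref{big genus}) gives this everywhere locally at finite places, the CM type $\Phi$ pins down the archimedean signatures, and then the Hasse principle for one-dimensional $E$-hermitian spaces finishes it. Once the rational hermitian spaces are identified, the lattice data is encoded by a class in $T_\mathrm{big}(\A_f)/K_\mathrm{big}$ exactly as you say. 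This is the right argument; just cite the Hasse principle for hermitian forms directly (or \cite[\S 3.1]{Ho1}) rather than Lemma~\ref{lem:ideal twist}.
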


\begin{proof}
The natural actions of $\co_\kk$ and $\co_E$ on $\mathfrak{a}_0$ and $\mathfrak{a}$ determine an action of the subtorus
\[
T_\mathrm{big} \subset   \mathrm{Res}_{\kk/\Q}\mathbb{G}_m \times \mathrm{Res}_{E/\Q}\mathbb{G}_m
\]
on $\mathfrak{a}_{0\Q}$ and $\mathfrak{a}_\Q$, and the morphism $h_\mathrm{big}  : \mathbb{S}  \to   T_{\mathrm{sm},\R}$
endows each  of the real vector spaces $\mathfrak{a}_{0\R}$ and $\mathfrak{a}_{\R}$ with a complex  structure.

The desired isomorphism  on complex points sends
\[
( h_\mathrm{big} , g) \in \mathrm{Sh}(T_\mathrm{sm})(\C)
\]
 to the pair    $(A_0,A)$ defined by
\[
A_0(\C) = \mathfrak{a}_{0\R} /  g \mathfrak{a}_0,\quad
A(\C) = \mathfrak{a}_{\R} / g \mathfrak{a}.
\]
The elliptic curve $A_0$ is endowed with its natural $\co_\kk$-action and its unique principal ploarization.
The abelian variety $A$ is endowed with its natural $\co_E$-action, and the polarization induced by the symplectic form determined by its $\co_\kk$-hermitian form, as in the proof of \cite[Proposition 2.2.1]{BHKRY}.

It follows from the theory of canonical models that this isomorphism on complex points descends to an isomorphism of $E_\Phi$-stacks.
\end{proof}

The triple $(\mathfrak{a}_0,\mathfrak{a},i_E)$ determines a  pair $(\mathfrak{a}_0,\mathfrak{a})$ as in the introduction, which determines a unitary Shimura variety  with  integral model  $\mathcal{S}_\Kra$ as in (\ref{kramer inclusion}).
Recalling that $\co_\kk \subset \co_\Phi$ as subrings of $\C$, we now view both $\mathcal{Y}_\mathrm{big}$ and $\mathcal{CM}_\Phi$ as $\co_\kk$-stacks.
There is a commutative diagram
\[
\xymatrix{
{ \mathcal{Y}_\mathrm{big}  }  \ar[r] \ar[d]_\pi  &    {  \mathcal{M}_{(1,0)} \times \mathcal{CM}_\Phi }   \ar[d]   \\
 { \mathcal{S}_\Kra  } \ar[r] & {  \mathcal{M}_{(1,0)} \times \mathcal{M}_{(n-1,1)}^\Kra }
}
\]
(all fiber products are over $\co_\kk$), in which the vertical arrow on the right is the identity on the first factor and ``forget complex multiplication'' on the second.
The arrow $\pi$ is defined by the commutativity of the diagram.

\begin{remark}
In order to define the morphism
\[
\mathcal{CM}_\Phi \to \mathcal{M}_{(n-1,1)}^\Kra
\]
in the diagram above, we must  endow a point $A\in \mathcal{CM}_\Phi(S)$ with a subsheaf $\mathcal{F}_A \subset \Lie(A)$ satisfying Kr\"amer's condition \cite[\S 2.3]{BHKRY}.   Using the morphism
\[
 \co_E \map{  \varphi^\mathrm{sp}}  \co_\Phi \to \co_S,
\]
denote by $J_{\varphi^\mathrm{sp}} \subset \co_E \otimes_\Z \co_S$ the kernel of
\[
\co_E \otimes_\Z \co_S \map{ x\otimes y\mapsto \varphi^\mathrm{sp}(x)\cdot y  }  \co_S.
\]
According to \cite[Lemma 4.1.2]{Ho2}, the subsheaf $\mathcal{F}_A= J_{\varphi^\mathrm{sp}} \Lie(A)$ has the desired properties.
\end{remark}

\begin{definition}\label{def:big cycle}
Composing the morphism $\pi$ in the diagram above with the inclusion of $\mathcal{S}_\Kra$ into its toroidal compactification, we obtain  a morphism of $\co_\kk$-stacks
\[
\pi : \mathcal{Y}_\mathrm{big} \to \mathcal{S}_\Kra^*,
\]
called  the  \emph{big CM cycle}.
\end{definition}

Exactly as in \S \ref{ss:small cycle construction},  the \emph{arithmetic degree along} $\mathcal{Y}_\mathrm{big}$ is the composition
\[
 \widehat{\mathrm{Ch}}^1_\C(\mathcal{S}^*_\Kra) \map{\pi^*}  \widehat{\mathrm{Ch}}^1_\C(\mathcal{Y}_\mathrm{big}) \map{ \widehat{\deg} } \C .
\]
We denote this linear functional by
$
\widehat{\mathcal{Z}} \mapsto [ \widehat{\mathcal{Z}} : \mathcal{Y}_\mathrm{big} ].
$

\begin{remark}
The big CM cycle  arises from a morphism of Shimura varieties.
Indeed, there is a morphism of Shimura data $(T_\mathrm{big},\{h_\mathrm{big}\}) \to (G,\mathcal{D})$, and
the induced morphism of Shimura varieties sits in a commutative diagram of $E_\Phi$-stacks
\[
\xymatrix{
{  \mathrm{Sh}(T_\mathrm{big}) }  \ar[r]  \ar[d]_\iso &   { \mathrm{Sh}(G,\mathcal{D})_{/E_\Phi} } \ar[d]^\iso  \\
{  \mathcal{Y}_{\mathrm{big}/ E_\Phi}  }  \ar[r]^\pi  & {  \mathcal{S}_{\Kra/ E_\Phi} } .
}
\]
\end{remark}

\begin{proposition}\label{prop:class number}
The degree $\deg_\C(\mathcal{Y}_\mathrm{big})$ of Theorem \ref{Theorem B} satisfies
\[
\frac{ 1 }{n} \cdot \deg_\C (\mathcal{Y}_\mathrm{big})
=
\frac{ h_\kk}{w_\kk} \cdot  \frac{ \Lambda(0,\chi_E)  }{ 2^{r-1} },
\]
where $r$ is the number of places of $F$ that ramify in $E$ (including all archimedean places).
\end{proposition}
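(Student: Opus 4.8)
The plan is to reduce the identity to a mass computation for the zero-dimensional Shimura variety $\mathrm{Sh}(T_\mathrm{big})$ of Proposition \ref{prop:big integral}, and then to evaluate that mass by a class number formula for the CM torus $T_\mathrm{big}$.

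First I would account for the factor $n$. The stack $\mathcal Y_\mathrm{big}$ is defined over $\co_\Phi$, with $\co_\Phi$-generic fibre $\mathcal Y_{\mathrm{big}/E_\Phi}\iso\mathrm{Sh}(T_\mathrm{big})$, whereas in $\deg_\C(\mathcal Y_\mathrm{big})$ one regards $\mathcal Y_\mathrm{big}$ as an $\co_\kk$-stack. A complex point $\Spec(\C)\to\mathcal Y_\mathrm{big}$ over $\Spec(\co_\kk)$ factors through one of the $[E_\Phi:\kk]=n$ embeddings $E_\Phi\hookrightarrow\C$ extending the fixed embedding of $\kk$ (here $\kk\subset E_\Phi$ because $\varphi^\mathrm{sp}$ restricts to complex conjugation on $\kk$). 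The $n$ resulting base changes of $\mathcal Y_{\mathrm{big}/E_\Phi}$ to $\C$ are permuted transitively by $\Aut(\C/\kk)$, so they have equal mass, whence
\[
\deg_\C(\mathcal Y_\mathrm{big})=n\cdot\mathrm{mass}\big(\mathrm{Sh}(T_\mathrm{big})(\C)\big),\qquad \mathrm{mass}\big(\mathrm{Sh}(T_\mathrm{big})(\C)\big):=\sum_{y\in\mathrm{Sh}(T_\mathrm{big})(\C)}\frac{1}{|\Aut(y)|}.
\]
Since $\{h_\mathrm{big}\}$ is a point, $\mathrm{Sh}(T_\mathrm{big})(\C)=T_\mathrm{big}(\Q)\backslash T_\mathrm{big}(\A_f)/K_\mathrm{big}$ with the automorphism group of the point of $g$ equal to $T_\mathrm{big}(\Q)\cap gK_\mathrm{big}g^{-1}$, so this mass is the volume of $T_\mathrm{big}(\Q)\backslash T_\mathrm{big}(\A_f)$ for the Haar measure giving $K_\mathrm{big}$ volume one.

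Next I would apply the splitting $T_\mathrm{big}\iso\mathrm{Res}_{\kk/\Q}\mathbb G_m\times T^1$ of Remark \ref{rem:split group}, with $T^1=\mathrm{Res}_{F/\Q}\ker(\mathrm{Nm}_{E/F})$. Under $(x,y)\mapsto(x,x^{-1}y)$ the subgroup $K_\mathrm{big}=T_\mathrm{big}(\A_f)\cap(\widehat\co_\kk^\times\times\widehat\co_E^\times)$ corresponds to the product $\widehat\co_\kk^\times\times K^1$ with $K^1=(\widehat\co_E^\times)^{\mathrm{Nm}_{E/F}=1}$, because $\widehat\co_\kk^\times\subset\widehat\co_E^\times$ and $\mathrm{Nm}_{E/F}$ agrees with $\mathrm{Nm}_{\kk/\Q}$ on $\kk$-scalars. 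The mass of a product of torus Shimura varieties is the product of the masses, and the factor $\mathrm{Res}_{\kk/\Q}\mathbb G_m$ contributes $\#(\kk^\times\backslash\widehat\kk^\times/\widehat\co_\kk^\times)/|\co_\kk^\times|=h_\kk/w_\kk$; so it remains to prove
\[
\mathrm{mass}\big(\mathrm{Sh}(T^1)(\C)\big)=\frac{1}{w_E}\cdot\#\big(T^1(\Q)\backslash T^1(\A_f)/K^1\big)=\frac{\Lambda(0,\chi_E)}{2^{r-1}},
\]
where I used $T^1(\Q)\cap K^1=\{u\in\co_E^\times:u\bar u=1\}=\mu_E$, of order $w_E$, since a unit of absolute value one at every archimedean place is a root of unity.

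Finally I would compute $\mathrm{mass}(\mathrm{Sh}(T^1))$. Because $E/F$ is CM, $T^1(\R)\iso(S^1)^n$ is compact, so $T^1$ is an anisotropic $\Q$-torus whose cocharacter module carries the quadratic character $\chi_E$ induced from $\Gal(E/F)$, and the associated Artin $L$-function is $L(s,\chi_E)$. One can then finish in either of two ways: (i) use Ono's theorem and invariance of Tamagawa numbers under Weil restriction to get $\tau(T^1)=2$, and unfold the Tamagawa measure to express the mass through $L(1,\chi_E)$, the local norm indices $[F_v^\times:\mathrm{Nm}(E_w^\times)]$, and $\mathrm{vol}(T^1(\R))$; or (ii) compute $\#(T^1(\Q)\backslash T^1(\A_f)/K^1)$ by a diagram chase on the norm exact sequence $1\to T^1\to\mathrm{Res}_{E/\Q}\mathbb G_m\xrightarrow{\mathrm{Nm}}\mathrm{Res}_{F/\Q}\mathbb G_m\to1$, which expresses it through $h_E$, $h_F$, the number $r$ of ramified places, and the unit index $[\co_F^\times:\co_F^\times\cap\mathrm{Nm}_{E/F}(E^\times)]$ — essentially Chevalley's ambiguous class number formula. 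In both routes the factor $2^{r-1}$ appears as the product of the $r$ local norm indices (each $2$) divided by the one global relation among them imposed by the product formula, after which one rewrites the answer using the analytic class number formula $\Lambda(0,\chi_E)=2h_ER_E/(w_Eh_FR_F)$, the identity $R_E/R_F=2^{n-1}/Q_E$ relating the regulators to the Hasse unit index $Q_E$, and $\Gamma_\R(1)=1$ (so that $\Lambda(0,\chi_E)=L(0,\chi_E)$). The hard part is the precise bookkeeping of powers of $2$ — the local norm indices at the ramified finite and archimedean places, the interplay between $[\co_F^\times:(\co_F^\times)^+]$ and $Q_E$, and the normalization of the measures — which must be tracked so that all stray constants cancel; this calculation is close to the degree computations for big CM points in \cite{BKY,Ho1}.
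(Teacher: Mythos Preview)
Your proposal is correct, and your route~(ii) is exactly what the paper does. The paper first reduces, as you do, to the identity
\[
\frac{1}{n}\deg_\C(\mathcal Y_\mathrm{big})=\frac{h_\kk}{w_\kk}\cdot\frac{|E'\backslash\widehat{E}'/\widehat{\co}_E'|}{w_E},
\]
where $E'=\ker(\mathrm{Nm}:E^\times\to F^\times)$, and then makes the ``diagram chase'' explicit by introducing the auxiliary group
\[
\tilde B = E^\times\backslash\{(\mathfrak B,\beta):\mathrm{Nm}(\mathfrak B)=\beta\co_F\},
\]
proving an exact sequence $1\to E'\backslash\widehat{E}'/\widehat{\co}_E'\to\tilde B\to\{\pm1\}^r\to\{\pm1\}\to1$ (the map to $\{\pm1\}^r$ records the local characters $\chi_{E,v}(\beta)$ at ramified places, so the $2^{r-1}$ arises exactly as you say), and separately showing $|\tilde B|=w_E\cdot\Lambda(0,\chi_E)$ via Dirichlet's class number formula and a case split on the Hasse unit index $Q_E$. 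Your route~(i) through Ono's Tamagawa number theorem is a legitimate alternative, but the paper does not go that way; it stays with the explicit class-group bookkeeping, which has the advantage of keeping all the powers of $2$ visible rather than hidden in measure normalizations.
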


\begin{proof}
It is clear from Proposition \ref{prop:big integral} that
\[
\frac{ 1 }{n} \cdot \deg_\C (\mathcal{Y}_\mathrm{big})
 =  \sum_{ y\in \mathrm{Sh}(T_\mathrm{big} ) (\C)}  \frac{1}{ |\Aut(y)| }
 = \frac{| T_\mathrm{big}(\Q) \backslash T_\mathrm{big}(\A_f) / K_\mathrm{big}      |}{|  T_\mathrm{big}(\Q) \cap K_\mathrm{big}  |}.
\]
Note that when we defined the degree on the left we counted the complex points of $\mathcal{Y}_\mathrm{big}$ viewed as an $\co_\kk$-stack, whereas in the middle expression we are viewing  $\mathrm{Sh}(T_\mathrm{big} )$ as an  $E_\Phi$-stack.  This is the reason for the correction  factor of  $n=[E_\Phi:\kk]$ on the left.

Let $E' \subset E^\times$ be the kernel of the norm map $\mathrm{Nm} : E^\times \to F^\times$, and define
\[
\widehat{E}' \subset \widehat{E}^\times ,\quad \widehat{\co}_E' \subset \widehat{\co}_E^\times
\]
similarly.  Note that $\mu(E)=E' \cap \widehat{\co}_E'$ is the group of roots of unity in $E$, whose order we denote by $w_E$.
Using the isomorphism
$
T_\mathrm{big}(\Q) \iso  \kk^\times \times E'
$
of Remark \ref{rem:split group}, we find
\begin{equation}\label{class number splitting}
\frac{| T_\mathrm{big}(\Q) \backslash T_\mathrm{big}(\A_f) / K_\mathrm{big}      |}{|  T_\mathrm{big}(\Q) \cap K_\mathrm{big}  |}
=   \frac{ h_\kk}{w_\kk} \cdot \frac{|  E'  \backslash \widehat{E}'  / \widehat{\co}_E'     | }{ w_E } .
\end{equation}

Denote by $C_F$ and $C_E$ the ideal class groups of $E$ and $F$, and by $\tilde{F}$ and  $\tilde{E}$ their Hilbert class fields.  
As $E/F$ is ramified at all archimedean places, $\tilde{F} \cap E=F$, and the natural map
  \[
  \Gal(\tilde{E}/E) \to \Gal(\tilde{F}/F)
  \]
  is surjective.  Hence, by class field theory, the norm 
  \[
  \mathrm{Nm} : C_E \to C_F
  \] 
  is surjective.  Denote its kernel by $B$, so that we have a short exact sequence
\[
1 \to B \to C_E \map{\mathrm{Nm}} C_F \to 1.
\]

Define a group
\[
\tilde{B} =  E^\times  \backslash  \left\{ ( \mathfrak{B} ,\beta) :
 \begin{array}{c}
 \mathfrak{B} \subset E \mbox{ is a fractional $\co_E$-ideal,  }  \\
 \beta \in F^\times  , \mbox{ and }  \mathrm{Nm} (  \mathfrak{B} ) = \beta \co_F
\end{array}
\right\},
\]
where the action of $E^\times$ is by $\alpha \cdot (\mathfrak{B},\beta) = (\alpha \mathfrak{B} , \alpha\overline{\alpha}\beta)$.  There is an evident short exact sequence
\[
1 \to     \mathrm{Nm}(\co_E^\times) \backslash  \co_F^\times   \map{ \beta \mapsto (\co_E,\beta) } \tilde{B} \to B\to 1.
\]

\begin{lemma}
We have $[  \co_E^\times  :   \mathrm{Nm}(\co_E^\times)   ]  =  2^{n-1} w_E$.
\end{lemma}

\begin{proof}
Let $Q= [  \co_E^\times : \mu(E) \co_F^\times ]$.  If $Q=1$ then
\[
[ \mathrm{Nm}(\co_E^\times) : \co_F^{\times, 2}] =1 \quad \mbox{and} \quad  [\co_E^\times: \co_F^\times] = \frac{1}{2} \cdot w_E,
\]
and so
\[
[  \co_F^\times  :   \mathrm{Nm}(\co_E^\times)   ]  = [  \co_F^\times  :  \co_F^{\times,2}   ]  = 2^n
=   \frac{2^{n-1} w_E} { [ \co_E^\times : \co_F^\times] },
\]
where the middle equality follows from Dirichlet's unit theorem.

If $Q>1$ then \cite[Theorem 4.12]{washington} and its proof show that $Q=2$, and that the image of the map $\phi : \co_E^\times \to \co_E^\times$ defined by $\phi(x)=x/\overline{x}$ is the  index two subgroup
$
\phi(\co_E^\times)= \mu(E)^2 \subset \mu(E).
$
From this it follows easily that
\[
[ \mathrm{Nm}(\co_E^\times) : \co_F^{\times, 2}] =2 \quad \mbox{and} \quad  [\co_E^\times: \co_F^\times] =  w_E,
\]
and so
\[
[  \co_F^\times  :   \mathrm{Nm}(\co_E^\times)   ]  = \frac{1}{2} \cdot  [  \co_F^\times  :  \co_F^{\times,2}   ]   = 2^{n-1}
=   \frac{2^{n-1} w_E} { [ \co_E^\times : \co_F^\times] }.
\]
\end{proof}

Combining the information we have so far gives
\begin{equation}\label{dirichlet}
|\tilde{B}|
=   [  \co_F^\times  :   \mathrm{Nm}(\co_E^\times)   ]    \cdot   |B|
=  \frac{2^{n-1} w_E} { [ \co_E^\times : \co_F^\times] } \cdot \frac{ |C_E| }{ | C_F| }
= w_E \cdot \Lambda(0,\chi_E),
\end{equation}
where the final equality is a  consequence of Dirichlet's class number formula.

\begin{lemma}
There is an  exact sequence
\[
1 \to E'  \backslash \widehat{E}'  / \widehat{\co}_E'   \to  \tilde{B} \to \{ \pm 1 \}^r \to  \{ \pm 1\} \to 1.
\]
\end{lemma}

\begin{proof}
Every $ x \in \widehat{E}'$ determines a fractional $\co_E$-ideal $\mathfrak{B}=x\co_E$ with $\mathrm{Nm}(\mathfrak{B}) = \co_F$, and the rule $x\mapsto  (\mathfrak{B},1)$ is easily seen to define an injection
\begin{equation}\label{principal genus}
 E'  \backslash \widehat{E}'  / \widehat{\co}_E'  \to \tilde{B}.
\end{equation}

Given a $(\mathfrak{B},\beta) \in \tilde{B}$, consider the elements $\chi_{E,v}(\beta) \in \{ \pm 1\}$ as $v$ runs over all places of $F$.
If $v$ is split in $E$ then certainly $\chi_{E,v}(\beta) =1$.  If $v$ is inert in $E$ then  $\mathrm{Nm}(\mathfrak{B}) = \beta \co_F$ implies that $\chi_{E,v}(\beta) =1$.    As the product over all $v$ of $\chi_{E,v}(\beta)$ is equal to $1$, we see that sending $(\mathfrak{B},\beta)$ to the tuple of $\chi_{E,v}(\beta)$ with $v$ ramified in $E$ defines a homomorphism
\begin{equation}\label{genus}
\tilde{B} \to \mathrm{ker}\big( \{ \pm 1\}^r \map{\mathrm{product} }  \{ \pm 1\} \big).
\end{equation}

To see that (\ref{genus}) is surjective, fix a tuple $(\epsilon_v)_v \in \{ \pm 1\}^r$ indexed by the places of $F$ ramified in $E$, and assume that $\prod_v\epsilon_v=1$.    Let $b\in \A_F^\times$ be any idele satisfying:
\begin{itemize}
\item
 If  $v$ is ramified in $E$ then $\chi_{E,v}( b_v) = \epsilon_v$.
\item
If $v$ is a finite place of $F$ then $b_v \in \co_{F,v}^\times$.
\end{itemize}
The second condition implies that $\chi_{E,v}( b_v) =1$  whenever $v$ is unramified in $E$, and hence
\[
\chi_E(b) = \prod_v \epsilon_v =1.
\]
Thus $b$ lies in the kernel of the reciprocity map
\[
\A_F^\times \to F^\times\backslash \A_F^\times/ \mathrm{Nm}(\A_E^\times) \iso \Gal(E/F),
\]
and so can be factored as  $b =   \beta^{-1} x\overline{x} $ for some $\beta\in F^\times$ and $x\in \A_E^\times$.  Setting $\mathfrak{B} = x\co_E$, the pair $(\mathfrak{B},\beta) \in \tilde{B}$ maps to $(\epsilon_v)_v$ under (\ref{genus}).

It only remains to show that the image of (\ref{principal genus}) is equal to the kernel of (\ref{genus}).  It is clear from the definitions that the composition
\[
E'  \backslash \widehat{E}'  / \widehat{\co}_E'  \to \tilde{B} \to \{ \pm 1\}^r
\]
is trivial, proving one inclusion.  For the other inclusion, suppose $(\mathfrak{B},\beta) \in \tilde{B}$ lies in the kernel of (\ref{genus}).  We have already seen that this implies that $\beta \in F^\times$ satisfies $\chi_{E,v}(\beta)=1$ for every place $v$ of $F$, and so $\beta$ is a norm from $E$ everywhere locally.  By the Hasse-Minkowski theorem, $\beta$ is a norm globally, say $\beta =\alpha \overline{\alpha}$ with $\alpha\in E^\times$.     In the group $\tilde{B}$, we therefore have the relation
\[
(\mathfrak{B},\beta) = \alpha^{-1}  (\mathfrak{B},\beta) =(\mathfrak{A},1)
\]
for a fractional $\co_E$-ideal $\mathfrak{A} =\alpha^{-1}\mathfrak{B}$ satisfying $\mathrm{Nm}(\mathfrak{A})=\co_F$.  Any such $\mathfrak{A}$ has the form $\mathfrak{A}  = x\co_E$ for some $x\in \widehat{E}'$, proving that $(\mathfrak{B},\beta)$ lies in the image of (\ref{principal genus}).
\end{proof}

Combining the lemma with (\ref{dirichlet}) gives
\[
\frac{|  E'  \backslash \widehat{E}'  / \widehat{\co}_E'     | }{ w_E }
=  \frac{ |\tilde{B}| } { 2^{r-1} w_E } = \frac{\Lambda(0,\chi_E) }{ 2^{r-1} },
\]
and combining this with (\ref{class number splitting}) completes the proof of Proposition \ref{prop:class number}.
\end{proof}

\begin{proposition}\label{prop:big no error}
Assume that the discriminants of $\kk$ and $F$ are relatively prime.
The constant term (\ref{constant term}) satisfies
\[
[ \widehat{\mathcal{Z}}_\Kra^\tot(0) : \mathcal{Y}_\mathrm{big} ] = - [ \widehat{\bm{\omega}} : \mathcal{Y}_\mathrm{big} ] .
\]
\end{proposition}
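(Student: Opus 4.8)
The plan is to mimic the proof of Proposition~\ref{prop:no error}. By the constant term formula (\ref{constant term}), the asserted identity is equivalent to $[(\mathrm{Exc},-\log(D)):\mathcal{Y}_\mathrm{big}]=0$; since $[(0,\log(D)):\mathcal{Y}_\mathrm{big}]=\deg_\C(\mathcal{Y}_\mathrm{big})\cdot\log(D)$ is immediate from the definitions, it suffices to prove
\[
[(\mathrm{Exc},0):\mathcal{Y}_\mathrm{big}]=\deg_\C(\mathcal{Y}_\mathrm{big})\cdot\log(D).
\]
As in the small CM case, the crux is to identify the physical intersection, and I would prove
\begin{equation}\label{big physical intersection}
\mathcal{Y}_\mathrm{big}\times_{\mathcal{S}_\Kra}\mathrm{Exc}=\mathcal{Y}_\mathrm{big}\times_{\Spec(\co_\Phi)}\Spec(\co_\Phi/\mathfrak{d}_\kk\co_\Phi),
\end{equation}
with both sides reduced. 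The inclusion $\subseteq$ follows from $\mathrm{Exc}\subseteq\mathcal{S}_\Kra\times_{\Spec(\co_\kk)}\Spec(\co_\kk/\mathfrak{d}_\kk)$, together with the fact that $\mathcal{Y}_\mathrm{big}$ is viewed as an $\co_\kk$-stack via $\co_\kk\subseteq\co_\Phi$. Reducedness of the right-hand side is exactly where the coprimality hypothesis enters: it gives $\co_\Phi\cong\co_E=\co_\kk\otimes_\Z\co_F$, and since every prime $p\mid D$ is unramified in $F$, the ideal $\mathfrak{d}_\kk\co_\Phi$ is a product of distinct primes, so \'etaleness of $\mathcal{Y}_\mathrm{big}$ over $\co_\Phi$ forces the right side — and hence the closed subscheme on the left — to be reduced.

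For the inclusion $\supseteq$ in (\ref{big physical intersection}) it suffices to check geometric points. Fix a prime $\mathfrak{P}\subseteq\co_\Phi$ over a rational prime $p\mid D$, a point $y=(A_0,A)\in\mathcal{Y}_\mathrm{big}(\F_\mathfrak{P}^\alg)$, and a square root $\delta\in\co_\kk$ of $-D$. Then $\delta=0$ in $\F_\mathfrak{P}^\alg$, so the signature $(1,0)$ condition shows $\delta$ kills $\Lie(A_0)$. The essential new point is that $\delta$ also annihilates $\Lie(A)$. Here I would argue that, because $p\mid D$ is ramified in $\kk$ but unramified in $F$, every prime of $\co_E$ above $p$ is ramified over $F$ with trivial residue extension, so each $\varphi\in\Phi$ reduces modulo $\mathfrak{P}$ to the same homomorphism as its complex conjugate $\overline{\varphi}$, a homomorphism which kills the maximal ideal of the corresponding local factor of $\co_E\otimes_\Z\F_\mathfrak{P}^\alg$; the $\Phi$-determinant condition then forces the $\co_E\otimes_\Z\F_\mathfrak{P}^\alg$-module $\Lie(A)$ to have every local uniformizer — and in particular $\delta$ — acting as zero. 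Once $\delta$ kills $\Lie(A)$, the characterization of $\mathrm{Exc}$ in \cite[Theorem 2.3.2]{BHKRY} shows that $\pi(y)\in\mathrm{Exc}$, which completes the proof of (\ref{big physical intersection}).

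Finally, granting (\ref{big physical intersection}) with both sides reduced, $\pi^*\mathrm{Exc}$ is the reduced fiber of $\mathcal{Y}_\mathrm{big}$ over $V(\mathfrak{d}_\kk\co_\Phi)$, so
\[
[(\mathrm{Exc},0):\mathcal{Y}_\mathrm{big}]=\sum_{p\mid D}\ \sum_{\mathfrak{P}\mid p}\log(\mathrm{N}(\mathfrak{P}))\cdot\sum_{y\in\mathcal{Y}_\mathrm{big}(\F_\mathfrak{P}^\alg)}\frac{1}{|\Aut(y)|}.
\]
Using that $\mathcal{Y}_\mathrm{big}$ is finite \'etale over $\co_\Phi$ to equate the inner ($\Aut$-weighted) point count with the number of $\C$-points of $\mathcal{Y}_\mathrm{big}$ over a single complex embedding of $\co_\Phi$, together with $\sum_{\mathfrak{P}\mid p}f_{\mathfrak{P}/p}=n$ (valid because $p$ is ramified in $\kk$, hence $e_{\mathfrak{P}/p}=2$ for every prime $\mathfrak{P}$ of $\co_\Phi$ above $p$) and the relation $\deg_\C(\mathcal{Y}_\mathrm{big})=n\cdot(\text{that count})$ implicit in the proof of Proposition~\ref{prop:class number}, one obtains $[(\mathrm{Exc},0):\mathcal{Y}_\mathrm{big}]=\log(D)\cdot\deg_\C(\mathcal{Y}_\mathrm{big})$, as required. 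The main obstacle is the vanishing of $\delta$ on $\Lie(A)$ in characteristics dividing $D$: in Proposition~\ref{prop:no error} this was a one-line consequence of the pure signatures of the factors $A_1$ and $B$, whereas here it rests on the local analysis of $\co_E$ at the ramified primes and on the $\Phi$-determinant condition, which is precisely why coprimality of the discriminants of $\kk$ and $F$ is needed.
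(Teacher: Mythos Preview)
Your argument is correct and follows the same architecture as the paper: reduce to $[(\mathrm{Exc},0):\mathcal{Y}_\mathrm{big}]=\log(D)\cdot\deg_\C(\mathcal{Y}_\mathrm{big})$, identify the physical intersection with the fiber over $V(\mathfrak{d}_\kk)$, check both sides are reduced via the coprimality hypothesis, and then count points using \'etaleness. The paper phrases the physical intersection over $\co_\kk$ rather than $\co_\Phi$, which lets it copy the final count verbatim from Proposition~\ref{prop:no error} without your factor-of-$n$ bookkeeping, but this is cosmetic.

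The one substantive difference is how you show that $\delta$ annihilates $\Lie(A)$. The paper lifts the geometric point to the completed unramified extension $W$ of $\co_{\kk,\mathfrak{p}}$, observes that (under the coprimality hypothesis) the map $\co_E\otimes_\Z W\to\prod_{\varphi\in\Phi}W$ is surjective, and then invokes \cite[Lemma~4.1.2]{Ho2} to identify $\Lie(A)\cong\prod_{\varphi\in\Phi}\F_\mathfrak{p}^\alg$ as an $\co_E$-module; since every $\varphi(\delta)$ reduces to $0$, the claim follows. Your direct argument via the structure of $\co_E\otimes_\Z\F_\mathfrak{P}^\alg\cong\prod_j\F_\mathfrak{P}^\alg[\epsilon]/(\epsilon^2)$ is also valid, but the step ``the $\Phi$-determinant condition then forces \ldots\ every local uniformizer acting as zero'' needs one more sentence: the determinant condition applied to elements of $\co_F$ (whose reductions separate the $n$ idempotents) pins down each local summand of $\Lie(A)$ as one-dimensional over $\F_\mathfrak{P}^\alg$, and the only one-dimensional module over $\F_\mathfrak{P}^\alg[\epsilon]/(\epsilon^2)$ has $\epsilon$---hence $\delta$---acting by zero. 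With that clarification your route is self-contained, whereas the paper's is shorter but relies on the external citation.
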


\begin{proof}
The stated equality is equivalent to
\[
[ ( \mathrm{Exc}  , - \log(D))   : \mathcal{Y}_\mathrm{big} ]  = 0,
\]
and so it suffices to prove
\[
[  ( 0 ,  \log(D) )  : \mathcal{Y}_\mathrm{big} ]    =   \deg_\C( \mathcal{Y}_\mathrm{big} )  \cdot \log(D) =[  ( \mathrm{Exc} , 0)  : \mathcal{Y}_\mathrm{big} ]  .
\]
The first equality is clear from the definitions.  To prove the second equality, we first argue that
\begin{equation}\label{big physical intersection}
 \mathcal{Y}_\mathrm{big} \times_{\mathcal{S}_\Kra} \mathrm{Exc} =  \mathcal{Y}_{\mathrm{big}} \times_{\Spec(\co_\kk)} \Spec( \co_\kk / \mathfrak{d}_\kk ) ,
\end{equation}
as in the proof of Proposition \ref{prop:no error}.

The inclusion $\subset$ of (\ref{big physical intersection}) is again clear from
\[
\mathrm{Exc} \subset \mathcal{S}_\Kra \times_{ \Spec(\co_\kk)  } \Spec( \co_\kk / \mathfrak{d}_\kk ).
\]
Recall that  $\mathcal{Y}_\mathrm{big} \to \Spec(\co_\Phi)$ is \'etale.  Our hypothesis on the discriminants of  $\kk$ and $F$ implies that
$\Spec(\co_\Phi) \to \Spec(\co_\kk)$ is \'etale at all primes dividing $\mathfrak{d}_\kk$, and hence the same is true for
$
\mathcal{Y}_\mathrm{big} \to \Spec(\co_\kk).
$
This implies that the right hand side of (\ref{big physical intersection}) is reduced, and hence so is the left hand side.  To prove equality in (\ref{big physical intersection}), it therefore suffices to prove the inclusion $\supset$ on the level of geometric points.

Suppose $\mathfrak{p}\mid \mathfrak{d}_\kk$ is prime, and let $\F_\mathfrak{p}^\alg$ be an algebraic closure of its residue field.  Suppose that $y\in \mathcal{Y}_\mathrm{big}(\F_\mathfrak{p}^\alg)$ corresponds to the pair $(A_0,A)$, so that $A\in \mathcal{CM}_\Phi( \F_\mathfrak{p}^\alg)$.    Let $W$ be the completed \'etale local ring of the geometric point
\[
\Spec(\F_\mathfrak{p}^\alg) \map{y} \mathcal{Y}_\mathrm{big} \to \Spec(\co_\Phi).
\]
More concretely, $W$ is the completion of the maximal unramified extension of $\co_{\kk,\mathfrak{p}}$, equipped with an injective ring homomorphism $\co_\Phi \to W$.
  Let $\C_\mathfrak{p}$ be the completion of an algebraic closure of the fraction field of $W$, and fix an isomorphism of $E_\Phi$-algebras $\C \iso \C_\mathfrak{p}$.

For every $\varphi\in \Phi$ the induced map $\co_E \to \C \iso \C_\mathfrak{p}$ takes values in the subring $W$, and the induced map
\[
\co_E \otimes_\Z W \to \prod_{\varphi \in \Phi} W
\]
is surjective (by our hypothesis that $\kk$ and $F$ have relatively prime discriminants).
Denote its kernel by  $J_\Phi\subset \co_E\otimes_\Z W$, and define an $\co_E \otimes_\Z W$-module
\[
\Lie_\Phi = ( \co_E \otimes_\Z W  ) / J_\Phi  \iso \prod_{\varphi \in \Phi} W.
\]
As in the proof of \cite[Lemma 4.1.2]{Ho2}, there is an isomorphism of $\co_E\otimes_\Z \F_\mathfrak{p}^\alg$-modules
\[
\Lie(A) \iso \Lie_\Phi \otimes_W \F_\mathfrak{p}^\alg \iso \prod_{\varphi \in \Phi}  \F_\mathfrak{p}^\alg.
\]

Let  $\delta\in \co_\kk$ be a square root of $-D$.  As the image of $\delta$ under
\[
\co_E \map{\varphi} W \to \F_\mathfrak{p}^\alg
\]
is $0$ for every $\varphi\in \Phi$, it follows  from  what was said above that $\delta$ annihilates $\Lie(A)$.
Exactly as in the proof of  Proposition \ref{prop:no error}, this implies that the image of $y$ under $\mathcal{Y}_\mathrm{big} \to \mathcal{S}_\Kra$ lies on the exceptional divisor.
This completes the proof of (\ref{big physical intersection}), and the remainder of the proof is exactly as in Proposition \ref{prop:no error}.
\end{proof}


\subsection{A generalized $L$-function}
\label{ss:weird L}


The action $i_E: \co_E \to \End_{\co_\kk}(\mathfrak{a})$ makes
\[
L=\Hom_{\co_\kk}(\mathfrak{a}_0,\mathfrak{a})
\]
into a projective $\co_E$-module of rank one, and the $\co_\kk$-hermitian form on $L$ defined by  \cite[(2.1.5)]{BHKRY} satisfies
$
\langle \alpha x_1,x_2\rangle = \langle x_1, \overline{\alpha} x_2\rangle
$
 for all $\alpha\in \co_E$ and $x_1,x_2\in L$.    It is a formal consequence of this that the $E$-vector space $\mathscr{V}=L\otimes_\Z\Q$ carries an $E$-hermitian form
\[
\langle-,-\rangle_\mathrm{big} : \mathscr{V} \times \mathscr{V}  \to E,
\]
uniquely determined by the property
\[
\langle x_1,x_2\rangle = \mathrm{Tr}_{E/\kk}\langle x_1,x_2\rangle_\mathrm{big}.
\]
This hermitian form has signature $(0,1)$ at $\varphi^\mathrm{sp}|_F$, and signature $(1,0)$ at all other archimedean places of $F$.

From the $E$-hermitian form we obtain an $F$-valued quadratic form
$\mathscr{Q}(x) = \langle x,x\rangle_\mathrm{big}$ on $\mathscr{V}$ with  signature $(0,2)$ at $\varphi^\mathrm{sp}|_F$, and signature $(2,0)$ at all other archimedean places of $F$.  The $\Q$-quadratic form
\begin{equation}\label{trace form}
Q(x) = \mathrm{Tr}_{F/\Q}\mathscr{Q}(x)
\end{equation}
is $\Z$-valued on $L\subset \mathscr{V}$, and agrees with the quadratic form  of \S \ref{ss:convolution}.
Let \[ \omega_L : \SL_2(\Z) \to \Aut_\C(S_L)\] be the Weil representation on the space $S_L=\C[L'/L]$, where $L'=\mathfrak{d}_\kk^{-1}L$ is the dual lattice of $L$ relative to the $\Z$-bilinear form (\ref{bilinear}).

Write each $\vec{\tau}\in F_\C$ in the form $\vec{\tau}=\vec{u}+i \vec{v}$ with $\vec{u},\vec{v}\in F_\R$, and set
\[
\mathcal{H}_F = \{ \vec{\tau}  \in F_\C : \vec{v} \mbox{ is totally positive} \}.
\]
Every Schwartz function $\phi \in S ( \widehat{\mathscr{V}})$ determines  an incoherent Hilbert modular Eisenstein series
\begin{equation}\label{basic eisenstein}
E( \vec{\tau}, s ,\phi) =  \sum_{ \alpha\in F } E_\alpha(\vec{v},s,\phi) \cdot q^\alpha
\end{equation}
on $\mathcal{H}_F$, as in  \cite[(4.4)]{BKY} and \cite[\S 6.1]{AGHMP-2}.
If we identify
\[
S_L = \C[L' /L] \subset S ( \widehat{\mathscr{V}})
\]
as the space of $\widehat{L}$-invariant functions supported on $\widehat{L}'$, then (\ref{basic eisenstein}) can be viewed as a
function $E(\vec{\tau},s)$ on $\mathcal{H}_F$ taking values in the complex dual $S_L^\vee$.

We quickly recall the construction of (\ref{basic eisenstein}).
If $v$ is an arichmedean place of $F$, denote by  $( \mathscr{C}_v , \mathscr{Q}_v)$ the unique positive definite rank $2$ quadratic space over $F_v$.
Set $\mathscr{C}_\infty = \prod_{v\mid \infty} \mathscr{C}_v$.  The  rank $2$ quadratic space
\[
\mathscr{C}  = \mathscr{C}_\infty \times \widehat{\mathscr{V}}
\]
over $\A_F$  is \emph{incoherent}, in the sense that it is not the adelization of any $F$-quadratic space.  In fact, $\mathscr{C}$ is isomorphic to $\mathscr{V}$ everywhere locally, except at the unique archimedean place $\varphi^\mathrm{sp}|_F$ at which $\mathscr{V}$ is negative definite.

Let  $\psi_\Q: \Q\backslash \A \to \C^\times$ be the standard additive character, and define
\[
\psi_F: F\backslash \A_F \to \C^\times
\]
by $\psi_F = \psi_\Q \circ \mathrm{Tr}_{F/\Q}$.     Denote by $I( s, \chi_E)$
the degenerate principal series representation of $\SL_2(\A_F)$ induced from the character $\chi_E | \cdot |^s$
on the subgroup $B\subset \SL_2$ of upper triangular matrices.  Thus $I(s,\chi_E)$ consists of all smooth functions $\Phi(g,s)$ on $\SL_2(\A_F)$
satisfying the transformation law
\[
\Phi \left(   \left( \begin{matrix}     a& b \\ & a^{-1}      \end{matrix}  \right)   g    , s   \right) = \chi_E(a) |a|^{s+1}  \Phi(g,s).
\]
The Weil representation $\omega_{\mathscr{C}}$ determined by the character $\psi_F$ defines  an action of
$\SL_2(\A_F)$ on $S( \mathscr{C} )$, and for any  Schwartz function
\[
\phi_\infty \otimes \phi  \in S(\mathscr{C}_\infty) \otimes S(\widehat{\mathscr{V}}) \iso S( \mathscr{C} )
\]
the function
\begin{equation}\label{SW construction}
\Phi (g,0) =  \omega_{\mathscr{C}}(g) ( \phi_\infty \otimes \phi ) (0)
\end{equation}
lies in the induced representation $I( 0, \chi_E)$.
It extends uniquely to a standard section $\Phi ( g , s )$ of $I(s,\chi_E)$, which determines an  Eisenstein series
\[
E(g ,s , \phi_\infty \otimes \phi ) = \sum_{  \gamma \in  B(F) \backslash \SL_2(F)   }\Phi ( \gamma g ,s)
\]
in the variable  $g\in \SL_2(\A_F)$.

We always choose $\phi \in S_L \subset S(\mathscr{V})$, and take the archimedean component  $\phi_\infty$ of our Schwartz function to be the Gaussian distribution
\[
\phi_\infty^{\bm{1}} = \otimes \phi^{\bm{1}}_v \in  \bigotimes_{v\mid \infty} S( \mathscr{C}_v)
\]
defined by $\phi^{\bm{1}}_v(x) = e^{-2\pi \mathscr{Q}_v(x)}$, so that the resulting Eisenstein series
 \[
 E(\vec{\tau} ,s,\phi)   =    \frac{1}{ \sqrt{  \mathrm{Nm} (\vec{v}) } }  \cdot E(g_{\vec{\tau}},s, \phi_\infty^{\bm{1}} \otimes \phi)
 \]
 has parallel weight $1$.   Here
 \[
 g_{\vec{\tau}} = \left( \begin{matrix}  1 & \vec{u} \\ 0 & 1   \end{matrix}\right)
 \left( \begin{matrix}  \sqrt{ \vec{v} }  &  \\ & 1/\sqrt{ \vec{v} } \end{matrix}\right) \in \SL_2( F_\R)
 \]
and $\mathrm{Nm}:F_\R^\times \to \R^\times$ is the norm.

A choice of ordering of the embeddings  $F\to \R$  fixes an isomorphism of $\mathcal{H}_F$ with the $n$-fold product of the complex upper half-plane with itself, and the diagonal inclusion $\mathcal{H} \hookrightarrow \mathcal{H}_F$ is independent of the choice of ordering.
 By restricting our Eisenstein series to the diagonal we obtain an $S_L^\vee$-valued function
\[
E(\tau,s) = E(\vec{\tau},s)|_\mathcal{H}
\]
in the variable $\tau\in\mathcal{H}$,   which transforms like a modular form of weight $n$ and representation $\omega_L^\vee$ under the full modular group $\SL_2(\Z)$.

Given a cusp form $\tilde{g} \in S_n(\overline{\omega}_L)$ valued in $S_L$, consider the Petersson inner product
\begin{equation}\label{fake L}
\langle E(s) , \tilde{g} \rangle_\mathrm{Pet} =  \int_{ \SL_2(\Z) \backslash \mathcal{H}  } \big\{  \overline{ \tilde{g} (\tau)}  ,  E(\tau,s)    \big\}  \, \frac{du\, dv}{v^{2-n}} ,
\end{equation}
where  $\{ \cdot\, ,\cdot \} : S_L \times S_L^\vee \to \C$ is the tautological pairing.
This is an unnormalized version of the  \emph{generalized $L$-function}
\[
\mathcal{L}(s, \tilde{g}) =   \Lambda(s+1,\chi_E) \cdot \langle E(s) , \tilde{g} \rangle_\mathrm{Pet}
\]
of  \cite[(1.2)]{BKY} or \cite[\S 6.3]{AGHMP-2}.

Let $F_+\subset F$ be the subset of totally positive elements.
The Eisenstein series $E(\vec{\tau},s)$   satisfies a functional equation in $s\mapsto -s$, forcing  it to vanish at $s=0$.
As in \cite[Proposition 4.6]{BKY} and \cite[\S 6.2]{AGHMP-2}, we can extract from the  central derivative $E'(\vec{\tau} , 0)$  a formal $q$-expansion
\[
a_F(0)    +  \sum_{ \alpha\in F_+ }   a_F(\alpha)   \cdot q^\alpha
\]
 If  $\alpha\in F_+$ then $E'_\alpha(\vec{v},0,\phi)$ is independent of $\vec{v}$, and we define $a_F(\alpha) \in S_L^\vee$ by
\[
a_F(\alpha,\phi) = \Lambda(0,\chi_E) \cdot E'_\alpha(\vec{v}, 0,\phi).
\]
We define $a_F(0)\in S_L^\vee$ by
\[
a_F(0,\phi) = \Lambda(0,\chi) \cdot  E'_0(\vec{v} ,0,\phi) -  \Lambda(0,\chi_E) \cdot   \phi(0)  \log \mathrm{Nm}(\vec{v})  .
\]
Again, this is independent of $\vec{v}$.

\begin{remark}
For  notational simplicity, we often denote by $a_F(\alpha ,\mu)$ the value of $a_F(\alpha) : S_L \to \C$ at the characteristic function of a coset $\mu\in L'/L$.
\end{remark}

%
%

 For any  nonzero $\alpha\in F$, define
 \[
 \mathrm{Diff}(   \mathscr{C} ,\alpha ) = \{ \mbox{places $v$ of $F$} :  \mathscr{C}_v \mbox{ does not represent }\alpha \}.
 \]
 This is a finite set of odd cardinality, and any $v\in  \mathrm{Diff}(   \mathscr{C} ,\alpha )$ is necessarily  nonsplit in $E$.
 We are really only interested in this set when $\alpha\in F_+$.    As $ \mathscr{C}$ is positive definite at all archimedean places, for such $\alpha$ we have
  \[
  \mathrm{Diff}(   \mathscr{C} ,\alpha ) = \{ \mbox{primes } \mathfrak{p} \subset \co_F : \mathscr{V}_\mathfrak{p} \mbox{ does not represent }\alpha \}.
 \]

 We will need explicit formulas for all $a_F(\alpha,\mu)$ with $\alpha\in F_+$, but only for the  trivial  coset $\mu=0$.
 These are provided by the following proposition.

 \begin{proposition}\label{prop:fourier}
Suppose $\alpha\in F_+$.
\begin{enumerate}
\item
If $| \mathrm{Diff}(\mathscr{C} , \alpha)| >1$ then $a_F(\alpha)=0$.
\item
If $\mathrm{Diff}(\mathscr{C} , \alpha) = \{ \mathfrak{p}\}$, then
\[
a_F(\alpha,0) = -2^{r-1} \cdot \rho(\alpha \mathfrak{d}_F \mathfrak{p}^{-\epsilon_\mathfrak{p}}) \cdot \ord_\mathfrak{p}(\alpha \mathfrak{p} \mathfrak{d}_F ) \cdot \log(\mathrm{N}(\mathfrak{p})),
\]
where the notation is as follows:
 $r$ is the number of places of $F$ ramified in $E$ (including all archimedean places), $\mathfrak{d}_F\subset \co_F$ is the different of $F$,  and
\[
\epsilon_\mathfrak{p} = \begin{cases}
1 & \mbox{if $\mathfrak{p}$ is inert in $E$} \\
0 & \mbox{if $\mathfrak{p}$ is ramified in $E$}.
\end{cases}
\]
Moreover, for any fractional $\co_F$-ideal $\mathfrak{b} \subset F$ we have set
 \[
 \rho(\mathfrak{b}) = | \{ \mbox{ideals }\mathfrak{B} \subset \co_E :  \mathfrak{B}\overline{\mathfrak{B}} = \mathfrak{b} \co_E \} |.
 \]
 In particular, $\rho(\mathfrak{b})=0$ unless $\mathfrak{b} \subset \co_F$.
\end{enumerate}
 \end{proposition}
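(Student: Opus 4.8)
The plan is to exploit the fact that the Eisenstein series $E(\vec\tau, s, \phi)$ of \S\ref{ss:weird L} is built from a factorizable standard section $\Phi = \bigotimes_v \Phi_v$ of $I(s,\chi_E)$, so that for $\alpha \in F_+$ its $\alpha$-th Fourier coefficient factors as a product of local Whittaker functions
\[
E_\alpha(\vec v, s, \phi) \;=\; \prod_{v}\, W_{\alpha,v}(s),
\]
the product running over all places $v$ of $F$, with the archimedean factors determined by the Gaussian $\phi_\infty^{\bm{1}}$; this is the computation recalled in \cite[\S 4]{BKY} and \cite[\S 6]{AGHMP-2}. The archimedean factors, after the parallel-weight-one normalization built into $E(\vec\tau,s,\phi)$, contribute a nonzero constant (independent of $\alpha$) times $q^\alpha$, and at each finite place $v$ one has $W_{\alpha,v}(0) \ne 0$ if and only if $\mathscr C_v$ represents $\alpha$, i.e.\ if and only if $v \notin \mathrm{Diff}(\mathscr C,\alpha)$, in which case $W_{\alpha,v}(0,\phi_0)$ is the local representation density of $\alpha$ by the lattice $L_v$ for the quadratic form $Q = \mathrm{Tr}_{F/\Q}\mathscr Q$.

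First I would use this to see the vanishing. Since $\mathscr C$ is incoherent, $\mathrm{Diff}(\mathscr C,\alpha)$ is a nonempty finite set of places nonsplit in $E$, so $E_\alpha(\vec v,0,\phi) = 0$, in agreement with the functional equation $s \mapsto -s$. Differentiating the product at $s=0$ by the Leibniz rule, a term survives only if the differentiated factor sits at a place of $\mathrm{Diff}(\mathscr C,\alpha)$ and \emph{every} other local factor is nonzero at $s=0$, i.e.\ lies outside $\mathrm{Diff}$. Hence $E'_\alpha(\vec v,0,\phi) = 0$ whenever $|\mathrm{Diff}(\mathscr C,\alpha)| \ge 2$, which gives assertion (1) after multiplying by $\Lambda(0,\chi_E)$.

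When $\mathrm{Diff}(\mathscr C,\alpha) = \{\mathfrak p\}$, only the single term differentiated at $\mathfrak p$ survives, so
\[
E'_\alpha(\vec v,0,\phi_0) \;=\; W'_{\alpha,\mathfrak p}(0,\phi_0)\cdot \prod_{v\ne \mathfrak p} W_{\alpha,v}(0,\phi_0).
\]
The two pieces I would compute separately. The derivative at the bad prime is the standard local computation for an incoherent rank-two space over $F_{\mathfrak p}$ (see \cite[\S 4]{BKY}, following Kudla--Yang), and yields a rational multiple of $\log \mathrm{N}(\mathfrak p)$ with multiplicity $\ord_{\mathfrak p}(\alpha\mathfrak p\mathfrak d_F)$; the exponent $\epsilon_{\mathfrak p}$ (whether $\mathfrak p$ is inert or ramified in $E$) enters precisely here, because the local hermitian lattice $L_{\mathfrak p}$, together with the shift by the different $\mathfrak d_F$ coming from the trace form $Q = \mathrm{Tr}_{F/\Q}\mathscr Q$ (the analogue of the identity $L' = \mathfrak d_\kk^{-1}L$ of \S\ref{ss:convolution}), determines which values of $\alpha$ are ``almost represented.'' At the remaining finite places the product of local densities is exactly the count of compatible local data for an integral ideal $\mathfrak B \subset \co_E$ with $\mathfrak B\overline{\mathfrak B} = (\alpha\mathfrak d_F \mathfrak p^{-\epsilon_{\mathfrak p}})\co_E$, so
\[
\prod_{v\ne\mathfrak p} W_{\alpha,v}(0,\phi_0) \;=\; c\cdot \rho\!\left(\alpha\mathfrak d_F\mathfrak p^{-\epsilon_{\mathfrak p}}\right),
\]
which also explains why $\rho(\mathfrak b) = 0$ unless $\mathfrak b \subset \co_F$. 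Combining the two pieces with the archimedean constant and the normalization $a_F(\alpha,\phi) = \Lambda(0,\chi_E)\cdot E'_\alpha(\vec v,0,\phi)$, and using Dirichlet's class number formula for $\Lambda(0,\chi_E)$ in the form already exploited in the proof of Proposition \ref{prop:class number} (in particular the appearance of $2^{r-1}$ tied to the $r$ places of $F$ ramified in $E$), yields the constant $-2^{r-1}$, with the sign coming from the sign of the central derivative of the normalized local Whittaker function.

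The main obstacle will be the bookkeeping in these local computations rather than any conceptual difficulty: one must keep careful track of the quadratic form $Q = \mathrm{Tr}_{F/\Q}\mathscr Q$ versus the $E$-hermitian form and the resulting shift by $\mathfrak d_F$, distinguish the inert and ramified cases at $\mathfrak p$ (the source of $\epsilon_{\mathfrak p}$), and correctly account for the powers of $2$ at the ramified and archimedean places that produce the global factor $2^{r-1}$. The cleanest route is to invoke the corresponding Fourier coefficient formulas for the analogous $\GSpin$ Eisenstein series in \cite[\S 4]{BKY} and \cite[\S 6]{AGHMP-2}, and to observe that the present case is the specialization in which the quadratic space is the $F$-trace of an $E$-hermitian line, so that the good-place local densities reorganize into the ideal count $\rho$.
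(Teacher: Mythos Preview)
Your proposal is correct and follows essentially the same route as the paper. The paper's own proof is simply a citation: it refers to \cite[Proposition 4.2.1]{Ho1}, whose proof in turn assembles the local Whittaker computations of \cite{Yang}, with more general versions in \cite[\S 7.1]{AGHMP-2} and \cite[\S 4.6]{HY}. What you have written is a faithful sketch of exactly that argument---factorization into local Whittaker functions, Leibniz rule at $s=0$, and identification of the good-place product with the ideal count $\rho$.

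One small caution on the bookkeeping: your appeal to Proposition \ref{prop:class number} for the constant $2^{r-1}$ is misplaced. That proposition concerns the degree of $\mathcal{Y}_{\mathrm{big}}$, not Eisenstein coefficients; the $2^{r-1}$ here arises directly from the values of the normalized local Whittaker functions at the ramified (finite and archimedean) places, combined with the factor $\Lambda(0,\chi_E)$ in the definition of $a_F(\alpha)$. The two occurrences of $2^{r-1}$ are cognate but not literally the same computation, so when you carry out the details you should track the local factors at ramified places explicitly rather than import the constant from the cycle-counting argument.
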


\begin{proof}
Up to a change of notation, this is \cite[Proposition 4.2.1]{Ho1}, whose proof amounts to collecting together calculations of \cite{Yang}.  More general formulas can be found in  \cite[\S 7.1]{AGHMP-2} and  \cite[\S 4.6]{HY}.
\end{proof}

\begin{proposition}\label{prop:constant eisenstein}
Assume that the discriminants of $\kk$ and $F$ are relatively prime.  For any $\mu\in L'/L$ we have 
\[
a_F(0,\mu)  =  \begin{cases}
-2   \Lambda'(0,\chi_E )  & \mbox{if $\mu=0$} \\
0 & \mbox{otherwise.}
\end{cases}
\]
\end{proposition}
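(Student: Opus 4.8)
The plan is to compute the constant term $a_F(0,\mu)$ of the central derivative $E'(\vec\tau,0)$ directly from its definition as a value of $\Lambda(0,\chi)\cdot E'_0(\vec v,0,\phi)$ (minus the logarithmic correction when $\mu=0$), reducing everything to a computation of the $0$-th Fourier coefficient of the incoherent Eisenstein series $E(\vec\tau,s,\phi)$ attached to the characteristic function $\phi=\phi_\mu$ of the coset $\mu\in L'/L$. The key structural input is that the Eisenstein section is a pure tensor: the archimedean component is the Gaussian $\phi_\infty^{\bm 1}$, and the finite component $\phi_\mu$ factors as a tensor over the rational primes $p$, with $\phi_{\mu,p}$ the characteristic function of $\mu_p + L_p$ inside $\mathscr V_p$. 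Consequently the $0$-th Whittaker/Fourier coefficient $E_0(\vec v,s,\phi)$ factors as a product over places of $F$ of local Whittaker integrals, and both its value at $s=0$ and its derivative at $s=0$ decompose accordingly.

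The first step is to observe that, because $\mathscr C$ is incoherent, $E(\vec\tau,0)\equiv 0$, hence $E_0(\vec v,0,\phi)=0$ for all $\phi$; in fact each normalized local factor at $s=0$ equals $1$ except at an odd number of "bad" places where it vanishes, and for the constant term the only archimedean place that could force a vanishing is $\varphi^{\mathrm{sp}}|_F$ (where $\mathscr C$ and $\mathscr V$ disagree), so exactly one local factor vanishes at $s=0$ and $E'_0(\vec v,0,\phi)$ is the product of the derivative of that one factor with the values of the others. The second step is the case $\mu\neq 0$: I would show that for such $\mu$ there is \emph{some} finite prime $p\mid D$ with $\mu_p\neq 0$, and at such a prime the local Whittaker integral attached to $\phi_{\mu,p}$ already vanishes at $s=0$ (the coset $\mu_p$ is not in the "self-dual" part of the lattice, and the local density governing the $0$-th coefficient of the genus-theta/Eisenstein series at a ramified prime picks this up). Since the central derivative is a sum of products in which \emph{one} factor is differentiated and the rest are evaluated, and we now have \emph{two} vanishing factors (one archimedean at $\varphi^{\mathrm{sp}}|_F$, one finite at $p$), every term in $E'_0(\vec v,0,\phi_\mu)$ contains an undifferentiated zero, so $a_F(0,\mu)=0$. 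This is the mechanism already used for $\alpha\in F_+$ in Proposition~\ref{prop:fourier}(1), transported to $\alpha=0$.

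The third step is the main case $\mu=0$, where $\phi=\phi_0$ is the characteristic function of $\widehat L'$ — equivalently the standard Siegel-Weil section for the self-dual lattice $L$. Here the finite local factors of the normalized $E_0(\vec v,s)$ are, up to normalization, local $L$-factors, and assembling them gives a completed Dirichlet $L$-function: the relevant identity is that the $0$-th coefficient of the weight-one incoherent Eisenstein series for $\mathscr C$ is essentially $\Lambda(s+1,\chi_E)/\Lambda(s,\chi_E)$ times elementary factors, with a single zero at $s=0$ coming from $\Lambda(s,\chi_E)$ in the denominator being replaced — more precisely, from the archimedean place $\varphi^{\mathrm{sp}}|_F$. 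Differentiating at $s=0$ and multiplying by the overall $\Lambda(0,\chi_E)$ in the definition of $a_F(0,\phi)$ produces $-2\,\Lambda'(0,\chi_E)$; the $-\Lambda(0,\chi)\phi(0)\log\mathrm{Nm}(\vec v)$ subtraction in the definition of $a_F(0,\phi)$ is exactly what removes the $\vec v$-dependent term coming from the derivative of the Gaussian archimedean factor, leaving a $\vec v$-independent answer. Concretely I would match this against the already-cited computation: this is essentially the constant-term formula of \cite[Proposition 4.2.1]{Ho1} / \cite[\S 7.1]{AGHMP-2} in the case $\alpha=0$, and the hypothesis that $\mathrm{disc}(\kk)$ and $\mathrm{disc}(F)$ are coprime guarantees that no prime simultaneously ramifies in $\kk/\Q$ and in $E/F$ in a way that would alter the local factors at $D$, so the local lattice $L_p$ at $p\mid D$ is of the simple type for which these formulas apply.

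The step I expect to be the main obstacle is the precise bookkeeping in step three: pinning down the exact constant (the factor of $2$, and the absence of any spurious power of $2^{r}$ or of local fudge factors at $p\mid D$) in the passage from the product of normalized local Whittaker integrals to $\Lambda'(0,\chi_E)$. The vanishing statements in steps one and two are soft, but getting $a_F(0,0)=-2\Lambda'(0,\chi_E)$ on the nose requires carefully tracking the normalizations of the Eisenstein series (the $1/\sqrt{\mathrm{Nm}(\vec v)}$, the choice of additive character $\psi_F=\psi_\Q\circ\mathrm{Tr}_{F/\Q}$, and the overall $\Lambda(0,\chi)$ multiplier), and comparing with the normalization conventions of \cite{BKY,AGHMP-2,Ho1}. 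I would handle this by reducing to the already-published constant-term computation rather than redoing the local integrals, citing \cite[Proposition 4.2.1]{Ho1} (or the more general \cite[\S 7.1]{AGHMP-2}) and checking only that the coprimality hypothesis makes our $L$ fall within its scope.
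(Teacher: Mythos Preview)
Your proposal has a genuine structural gap in Steps 1 and 2. The mechanism ``one archimedean local factor vanishes at $\varphi^{\mathrm{sp}}|_F$, so together with a finite zero we get order $\ge 2$'' is borrowed from the analysis of $E_\alpha$ for $\alpha\in F_+$, but it does not apply to the constant term. For $\alpha\neq 0$ the Fourier coefficient is a pure product $\prod_v W_{\alpha,v}$, and incoherence forces at least one factor to vanish at $s=0$. The constant term, by contrast, has the Bruhat-cell decomposition $E_0=\Phi(g,s)+\prod_v W_{0,v}(g_v,s)$; its vanishing at $s=0$ comes from the global cancellation $M(0)\Phi=-\Phi$ (the product of local root numbers is $-1$), not from any individual $W_{0,v}$ vanishing. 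In particular there is \emph{no} archimedean zero at $\varphi^{\mathrm{sp}}|_F$: all archimedean components of $\mathscr C$ are positive definite and their weight-one Gaussian $W_{0,v}$'s are nonvanishing at $s=0$. So your two-zeros count in Step 2 collapses to one zero, and the argument as written does not give $a_F(0,\mu)=0$.

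The paper's proof avoids this by quoting a different starting formula, \cite[Proposition~6.2.3]{AGHMP-2} (not \cite[Proposition~4.2.1]{Ho1}, which only treats $\alpha\in F_+$), which already packages the identity-cell and archimedean contributions into the closed form
\[
a_F(0,\mu)=-2\,\phi_\mu(0)\,\Lambda'(0,\chi_E)-\Lambda(0,\chi_E)\cdot\frac{d}{ds}\Big(\prod_{\mathfrak p\nmid\infty}M_{\mathfrak p}(s,\phi_\mu)\Big)\Big|_{s=0}.
\]
The remaining work is purely local and is the actual content of the coprime-discriminant hypothesis: after rescaling the quadratic form on $\mathscr V_{\mathfrak p}\cong E_{\mathfrak p}$ by $\beta^{-1}$ (where one checks $\beta\co_{F,\mathfrak p}=\mathfrak d_F^{-1}\co_{F,\mathfrak p}$ precisely from coprimality), the local Whittaker integrals become computable by \cite{Yang} and \cite{HY}. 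The outcome is stronger than a zero at $s=0$: when $\mu_{\mathfrak p}=0$ one gets $W_{0,\mathfrak p}(s,\Phi_\mu)=L_{\mathfrak p}(s,\chi_E)/L_{\mathfrak p}(s+1,\chi_E)$ up to a constant, so $M_{\mathfrak p}$ is \emph{constant in $s$}; when $\mu_{\mathfrak p}\neq 0$ one gets $W_{0,\mathfrak p}(s,\Phi_\mu)\equiv 0$ \emph{identically}. Either way the finite product is constant, its $s$-derivative vanishes, and both cases of the proposition drop out of the displayed formula. Your proposal never isolates this ``$M_{\mathfrak p}$ is constant'' statement, which is the crux.
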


\begin{proof}
Let $\Phi_\mu = \prod_\mathfrak{p} \Phi_{\mu,\mathfrak{p}}$ be the standard section of $I(s,\chi_E)$ determined by the characteristic function $\phi_\mu \in S_L \subset S(\mathscr{V})$
of $\mu \in L'/L$.
According to \cite[Proposition 6.2.3]{AGHMP-2}, we then have
\begin{equation}\label{course constant term}
a_F(0,\mu) = -2 \phi_\mu(0) \Lambda'(0,\chi_E) 
- \Lambda(0,\chi_E) \cdot    \frac{d}{ds} \Big( \prod_\mathfrak{p} M_\mathfrak{p}(s,\phi_\mu) \Big) \Big|_{s=0},
\end{equation}
where the product is over all finite places $\mathfrak{p}$ of $F$, and the local factors on the right have the form
\begin{equation}\label{local M}
M_\mathfrak{p}(s,\phi_\mu) 
= c_\mathfrak{p} \cdot \frac{L_\mathfrak{p}(s+1,\chi_E) }{ L_\mathfrak{p} (s,\chi_E)} \cdot W_{0,\mathfrak{p}}(s,\Phi_\mu)
\end{equation}
for some constants $c_\mathfrak{p}$ independent of $s$.  Here, setting
 \[
w=  \left(\begin{smallmatrix}  0&-1\\ 1 & 0 \end{smallmatrix}\right)  ,\quad     n(b) = \left(\begin{smallmatrix}  1&b\\ 0 & 1 \end{smallmatrix}\right) , 
 \]
the function
\[
W_{0,\mathfrak{p}}(s,\Phi_\mu) 
= \int_{F_\mathfrak{p}} \Phi_{\mu,\mathfrak{p} } \left(  wn(b)   ,s \right) \, db
\] 
 is the value of the local Whittaker function $W_{0,\mathfrak{p}}(g,s,\Phi_\mu)$ at the identity in $\SL_2(F_\mathfrak{p})$.
 Our goal is to prove that  $M_\mathfrak{p}(s,\phi_\mu)$ is independent of $s$, and hence  both the particular value of $c_\mathfrak{p}$   and the choice of Haar measure on $F_\mathfrak{p}$ are irrelevant to us.

Fix a prime $\mathfrak{p}\subset \co_F$, and let $p$ be the rational prime below it.  
We may identify $\mathscr{V}_\mathfrak{p} \iso E_\mathfrak{p}$ in such a way that $L_\mathfrak{p}\iso\co_{E,\mathfrak{p}}$, and so that the $F_\mathfrak{p}$-valued quadratic form $\mathscr{Q}$ on $\mathscr{V}_\mathfrak{p} \iso E_\mathfrak{p}$ becomes
\[
\mathscr{Q}(x) = \beta x\overline{x}
\]
for some $\beta \in F_\mathfrak{p}^\times$.  
If  $\mathfrak{d}_F$ denotes the different of $F/\Q$, then
\begin{equation}\label{conductor calc}
\beta  \co_{F,\mathfrak{p}} = \mathfrak{d}_F^{-1} \co_{F,\mathfrak{p}} .
\end{equation}
Indeed, let $\mathfrak{d}_E$ be the different of $E/\Q$.
The lattice $L_\mathfrak{p}'=\mathfrak{d}_\kk^{-1} \co_{E,\mathfrak{p}}$ is  the dual lattice of $\co_{E,\mathfrak{p}}$ relative to the $\Q_p$-bilinear form $[x,y]=\mathrm{Tr}_{E_\mathfrak{p}/\Q_p}(\beta x\overline{y})$, which implies the first equality in
\[
\beta^{-1} \co_{E,\mathfrak{p}}= \mathfrak{d}_E \mathfrak{d}_\kk^{-1} \co_{E,\mathfrak{p}} =  \mathfrak{d}_F \co_{E,\mathfrak{p}}.
\]  
The second equality is a consequence of our assumption that the discriminants of $\kk$ and $F$ are relatively prime.

If we endow   $\mathscr{V}_\mathfrak{p} = E_\mathfrak{p}$ with the rescaled quadratic form
\[
\mathscr{Q}^\sharp(x) \define  \beta^{-1} \mathscr{Q}(x) = x\overline{x},
\]
and define a new additive character
\[
\psi^\sharp_{F,\mathfrak{p}}(x) \define \psi_{F,\mathfrak{p}}(\beta x)
\]
(unramified by (\ref{conductor calc})), we obtain a new  Weil representation
\[
\omega^\sharp : \SL_2(F_\mathfrak{p}) \to \Aut( S(\mathscr{V}_\mathfrak{p} ) ),
\]
and hence, as in (\ref{SW construction}),  a function
\[
S(\mathscr{V}_\mathfrak{p} )  \map{ \phi \mapsto \Phi_\mathfrak{p}^\sharp(s,g) } I_\mathfrak{p}(s, \chi_E)
\]
defined by first setting $\Phi_\mathfrak{p}^\sharp(0,g) =   \omega^\sharp(g)\phi (0)$, 
and then extending  to a standard section.

The local Schwartz function $\phi_{\mu,\mathfrak{p}} \in S(\mathscr{V}_\mathfrak{p})$
now  determines a standard section $\Phi^\sharp_{\mu,\mathfrak{p}}(g,s)$  of $I_\mathfrak{p}(s, \chi_E)$, and 
explicit formulas for the Weil representation, as in  \cite[(4.2.1)]{HY}, show that 
\[
\int_{F_\mathfrak{p}} \Phi_{\mu,\mathfrak{p} } \left(  wn(b)   ,s \right) \, db
=\int_{F_\mathfrak{p}} \Phi^\sharp_{\mu,\mathfrak{p} } \left(  wn(b)   ,s \right) \, db.
\]
What our discussion shows is that there is no harm in rescaling the quadratic form on $\mathscr{V}_\mathfrak{p}$ to make $\beta=1$, and simultaneously modifying the additive character $\psi_{F,\mathfrak{p}}$ to make it unramified.

After this rescaling, one can easily deduce explicit formulas for  $W_{0,\mathfrak{p}}(s,\Phi_\mu)$   from the literature.
Indeed,  if the local component $\mu_\mathfrak{p}\in L_\mathfrak{p}' / L_\mathfrak{p}$ is zero, then the calculations found in  \cite[\S 2]{Yang} imply that 
\[
W_{0,\mathfrak{p}}(s,\Phi_\mu)  =  \frac{ L_\mathfrak{p} (s,\chi_E)}{L_\mathfrak{p}(s+1,\chi_E) } 
\]
up to scaling by a nonzero constant independent of $s$. 
If instead $\mu_\mathfrak{p} \neq 0$ then  $\mathfrak{p}$ is ramified in $E$ (and in particular $p>2$), and it follows from the calculations found in the proof of \cite[Proposition 4.6.4]{HY} that  $W_{0,\mathfrak{p}}(s,\Phi_\mu)  = 0$.
In any case (\ref{local M}) is independent of $s$ for every $\mathfrak{p}$, and so the derivative in (\ref{course constant term}) vanishes.
\end{proof}


\subsection{A preliminary central derivative formula}
\label{ss:big harmonic intersection}


The entirety of \S \ref{ss:big harmonic intersection} is devoted to proving Theorem \ref{thm:big harmonic lift}, which a  big CM analogue of Theorem \ref{thm:BHY main}.
The proof will make essential use of the calculations of  \cite{Ho1, Ho2, BKY}.

We assume $n\ge 3$ throughout \S \ref{ss:big harmonic intersection}. 
This allows us to make use of the distinguished harmonic forms 
\[
f_m \in H_{2-n}(\overline{\omega}_L)^\Delta
\]
(for $m>0$) characterized by (\ref{simple harmonic}).

\begin{theorem}\label{thm:big harmonic lift}
Assume that the discriminants of $\kk/\Q$ and $F/\Q$ are odd and relatively prime, and fix  a positive integer $m$.
If $f  =f_m$ is the harmonic form above, and $\widehat{\mathcal{Z}}$ is the linear function (\ref{fake theta}),
  then
\[
 \frac{ n\cdot  [  \widehat{\mathcal{Z}} ( f )  :  \mathcal{Y}_\mathrm{big}  ]  }  {  \deg_\C( \mathcal{Y}_\mathrm{big} )    }
  +  2  c_f^+(0,0)   \frac{  \Lambda'(0,\chi_E )   }{    \Lambda(0,\chi_E )   }
 =  -   \frac{d}{ds} \langle E(s) , \xi(f)  \rangle_\mathrm{Pet} \big|_{s=0} .
\]
\end{theorem}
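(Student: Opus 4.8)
The plan is to mimic the proof of Theorem \ref{thm:BHY main} from \cite{BHY} in the big CM setting, replacing the small CM cycle computations with the big CM intersection calculations of \cite{Ho1, Ho2, BKY}. The left-hand side is, up to the normalizing factor of $n$, the arithmetic degree of $\widehat{\mathcal{Z}}(f_m)$ along the big CM cycle, corrected by a multiple of $[\widehat{\bm{\omega}} : \mathcal{Y}_\mathrm{big}]$ coming from the constant term; using Proposition \ref{prop:big no error} and the identification $\widehat{\bm{\omega}}^{-1} = \widehat{\mathcal{T}}_{\mathbb{L}}$, one can rewrite everything in terms of the metrized cotautological bundle. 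The strategy is then: decompose the arithmetic degree $[\widehat{\mathcal{Z}}(f_m) : \mathcal{Y}_\mathrm{big}]$ into a sum of local (finite and archimedean) contributions coming from the explicit Green function/Kudla-Millson form defining the arithmetic divisor class $\widehat{\mathcal{Z}}(f_m)$, and match each local term against the corresponding Fourier coefficient of the central derivative $E'(\vec\tau, 0)$ via the unfolded Petersson integral $\langle E(s), \xi(f_m)\rangle_\mathrm{Pet}$.

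First I would unfold the Petersson inner product $\langle E(s), \xi(f_m)\rangle_\mathrm{Pet}$ against the $q$-expansion of $\xi(f_m) = $ (the relevant form in $S_n(\overline{\omega}_L)$) to express its $s$-derivative at $s=0$ as a sum over $\alpha \in F_+$ (together with a constant term) of products of the Fourier coefficients of $\xi(f_m)$ with the coefficients $a_F(\alpha)$ and $a_F(0)$ described before Proposition \ref{prop:fourier} and Proposition \ref{prop:constant eisenstein}. The constant term of $E'(\vec\tau,0)$ contributes exactly the term $2 c_f^+(0,0) \Lambda'(0,\chi_E)/\Lambda(0,\chi_E)$ after dividing by $\Lambda(0,\chi_E)$, by Proposition \ref{prop:constant eisenstein}; this is where the left-hand side correction term matches up. For the $\alpha \in F_+$ terms, Proposition \ref{prop:fourier} shows $a_F(\alpha)$ is supported on those $\alpha$ with $|\mathrm{Diff}(\mathscr{C},\alpha)| = 1$, and gives the coefficient in terms of a single prime $\mathfrak{p}$, an ideal-counting function $\rho$, and an order term times $\log\mathrm{N}(\mathfrak{p})$.

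Next I would compute the geometric side. Pulling back $\widehat{\mathcal{Z}}(f_m)$ along $\pi : \mathcal{Y}_\mathrm{big} \to \mathcal{S}^*_\Kra$ and applying $\widehat{\deg}$, the finite part of the arithmetic intersection decomposes, via the moduli interpretation of $\mathcal{Y}_\mathrm{big}$ in terms of $\mathcal{CM}_\Phi$, into local intersection multiplicities at primes $\mathfrak{p}$ of $F$; these are precisely the deformation-theoretic lengths computed in \cite{Ho1, BKY} (Gross-Keating type computations on the CM abelian varieties $A \in \mathcal{CM}_\Phi(\overline{\F}_{\mathfrak{p}})$ together with the special quasi-endomorphisms cut out by $f_m$), and they match the $\ord_\mathfrak{p}(\cdots)\cdot\rho(\cdots)\cdot\log\mathrm{N}(\mathfrak{p})$ formula of Proposition \ref{prop:fourier}(2). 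The archimedean part is the integral over $\mathcal{Y}_\mathrm{big}(\C)$ of the Green function for $\widehat{\mathcal{Z}}(f_m)$; by the defining property of the arithmetic theta lift and the choice of Green function via the Kudla-Millson form, this matches the archimedean ($v \mid \infty$) contributions to $E'_\alpha(\vec v,0)$, as established in \cite{BKY, Ho2}. Summing over all $\alpha$ and all places and using $\deg_\C(\mathcal{Y}_\mathrm{big})$ from Proposition \ref{prop:class number} to clear the $\Lambda(0,\chi_E)$ normalization yields the stated identity.

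The main obstacle will be the precise bookkeeping of normalizing constants: the factor $n = [E_\Phi : \kk]$ arising because $\mathcal{Y}_\mathrm{big}$ is counted as an $\co_\kk$-stack rather than an $E_\Phi$-stack (cf.\ the proof of Proposition \ref{prop:class number}), the factor $2^{r-1}$ relating $\deg_\C(\mathcal{Y}_\mathrm{big})$ to $\Lambda(0,\chi_E)$, and the $2^{r-1}$ appearing in $a_F(\alpha,0)$ in Proposition \ref{prop:fourier}(2) — these must cancel correctly so that the cycle-theoretic intersection numbers (which are sums of local lengths weighted by $1/|\Aut|$) equal the analytic Fourier coefficients on the nose. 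A secondary subtlety is ensuring that the harmonic form $f_m$ has no contribution at the boundary of $\mathcal{S}^*_\Kra$ beyond what is already accounted for, and that the special cycles $Z(m)$ pulled back to $\mathcal{Y}_\mathrm{big}$ meet it properly, so that the decomposition into local terms is valid; this is handled exactly as in \cite{Ho1, Ho2} using that $\mathcal{Y}_\mathrm{big}$ avoids the boundary and that the relevant CM points are not contained in the special divisors for generic $m$. Once these normalizations are pinned down, the identity follows by term-by-term comparison of Fourier coefficients, exactly parallel to Theorem \ref{thm:BHY main}.
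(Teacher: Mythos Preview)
Your overall strategy is correct and matches the paper's: decompose $[\widehat{\mathcal{Z}}(f_m):\mathcal{Y}_\mathrm{big}]$ into finite-place and archimedean contributions, compute the finite part via the local length calculations of \cite{Ho1} (packaged in the paper as Propositions~\ref{prop:cycle degree} and \ref{prop:arakelov degree}, and matched against Proposition~\ref{prop:fourier}), handle the archimedean part via \cite{BKY}, and combine. The bookkeeping concerns you flag (the factor $n$, the $2^{r-1}$, proper intersection, avoiding the boundary) are exactly the ones the paper addresses.

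There is one point where your description is off and would cause trouble if taken literally. You write that the archimedean Green-function contribution ``matches the archimedean ($v\mid\infty$) contributions to $E'_\alpha(\vec v,0)$.'' This is not how the matching works. For $\alpha\in F_+$ the coefficients $a_F(\alpha)$ are purely non-archimedean in origin (they arise from the unique prime in $\mathrm{Diff}(\mathscr{C},\alpha)$), so there is no separate ``archimedean piece'' of each $E'_\alpha$ to match against the Green function. Instead, the paper invokes the main theorem of \cite{BKY} as a black box (Proposition~\ref{prop:green eval}): the average of the regularized theta lift $\Theta^{\mathrm{reg}}(f_m)$ over $\mathcal{Y}_\mathrm{big}(\C)$ equals $-\tfrac{d}{ds}\langle E(s),\xi(f_m)\rangle_{\mathrm{Pet}}\big|_{s=0}$ \emph{plus} the sum $\sum_{\alpha}a_F(\alpha,0)/\Lambda(0,\chi_E)$ \emph{plus} the constant-term correction $-2c_{f_m}^+(0,0)\Lambda'(0,\chi_E)/\Lambda(0,\chi_E)$. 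The finite-place intersection independently contributes \emph{minus} that same sum over $\alpha$ (Proposition~\ref{prop:arakelov degree}), and the theorem follows from the cancellation. So the archimedean side already carries the full Petersson derivative, and the finite side supplies counterterms rather than matching a disjoint set of Fourier coefficients. Your ``unfolding'' picture is essentially what happens inside the proof of \cite{BKY}, but at the level of this paper one does not unfold $\langle E(s),\xi(f_m)\rangle_{\mathrm{Pet}}$ directly.
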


For the form $f=f_m$  we have
\[
\widehat{\mathcal{Z}}(f) =  \widehat{\mathcal{Z}}_\Kra^\tot(m)   = \big( \mathcal{Z}_\Kra^\tot(m) ,  \Theta^\mathrm{reg}(f_m)   \big)
  \in  \widehat{\mathrm{Ch}}^1( \mathcal{S}_\Kra^*) ,
\]
where the Green function $\Theta^\mathrm{reg}(f_m)$  for the divisor $\mathcal{Z}_\Kra^\tot(m)$  is constructed in \cite[\S 7]{BHKRY} as a  regularized theta lift.
The arithmetic degree appearing in Theorem \ref{thm:big harmonic lift}  decomposes as
\begin{align}\label{intersection decomp}
[  \widehat{\mathcal{Z}} ( f )  :  \mathcal{Y}_\mathrm{big}  ]     &  =
\sum_{ \mathfrak{p} \subset \co_\kk } \log(\mathrm{N}(\mathfrak{p} ) )  \sum_{ y \in  ( \mathcal{Z}^\tot_\Kra(m) \cap   \mathcal{Y}_\mathrm{big}   )   ( \F_\mathfrak{p}^\alg ) }
\frac{ \mathrm{length}(\co_y)  }{| \Aut(y) | }  \\
& \quad +
\sum_{ y\in \mathcal{Y}_\mathrm{big}(\C) }   \frac{ \Theta^\mathrm{reg}(f_m )(y) }{ |\Aut(y) | }   \nonumber
\end{align}
 where $\F_\mathfrak{p} = \co_\kk/\mathfrak{p}$,  and $\co_y$ is the \'etale local ring of
\begin{equation}\label{naive intersection}
\mathcal{Z}^\tot_\Kra(m) \cap   \mathcal{Y}_\mathrm{big}   \define  \mathcal{Z}^\tot_\Kra(m) \times_{\mathcal{S}^*_\Kra}   \mathcal{Y}_\mathrm{big}
\end{equation}
 at $y$.  The final summation is over all complex points of $\mathcal{Y}_\mathrm{big}$, viewed as an $\co_\kk$-stack.
We will see that the terms on the right hand side of (\ref{intersection decomp}) are intimately related to the Eisenstein series coefficients  $a_F(\alpha)$ of \S \ref{ss:weird L}.

We first study the structure of the stack-theoretic intersection (\ref{naive intersection}).
Suppose  $S$ is a connected $\co_\Phi$-scheme, and
\[
(A_0,A) \in   \big(  \mathcal{M}_{(1,0)} \times_{\co_\kk} \mathcal{CM}_\Phi \big)(S)
\]
is an $S$-point.
The $\co_\kk$-module $\Hom_{\co_\kk}(A_0,A)$ carries an $\co_\kk$-hermitian form $\langle-,-\rangle$ defined by  \cite[(2.5.1)]{BHKRY}.
The construction of this hermitian form only uses the underlying point of $\mathcal{S}_\Kra$, and not the action $\co_E \to \End_{\co_\kk}(A)$.   As in  \cite[\S 3.2]{Ho2}, the extra action of $\co_E$ makes $\Hom_{\co_\kk}(A_0,A)$ into a projective $\co_E$-module, and there is a totally positive definite $E$-hermitian form $\langle-,-\rangle_\mathrm{big}$ on
\begin{equation}\label{nearby hermitian}
\mathscr{V}(A_0,A) = \Hom_{\co_\kk}(A_0,A) \otimes_\Z\Q
\end{equation}
characterized by the relation
\[
\langle x_1,x_2\rangle = \mathrm{Tr}_{E/\kk} \langle x_1,x_2\rangle_\mathrm{big}.
\]
for all $x_1,x_2\in \Hom_{\co_\kk}(A_0,A)$.

Fix an $\alpha\in F_+$.   Recalling that
\begin{equation}\label{y big}
\mathcal{Y}_\mathrm{big} \subset  \mathcal{M}_{(1,0)} \times_{\co_\kk} \mathcal{CM}_\Phi
\end{equation}
as an open and closed substack, for any $\co_\Phi$-scheme $S$  let $\mathcal{Z}_\mathrm{big}(\alpha)(S)$ be  the groupoid of triples $(A_0,A ,x)$,  in which
\begin{itemize}
\item
$(A_0,A)\in \mathcal{Y}_\mathrm{big}(S)$,
\item
 $x\in \Hom_{\co_\kk}(A_0,A)$ satisfies $\langle x,x\rangle_\mathrm{big}=\alpha$.
 \end{itemize}
 This  functor is represented by an $\co_\Phi$-stack $ \mathcal{Z}_\mathrm{big}(\alpha)$,
 and the evident forgetful morphism
\[
\mathcal{Z}_\mathrm{big}(\alpha) \to \mathcal{Y}_\mathrm{big}
\]
 is finite and unramified.

 This construction is entirely analogous to the construction of the special divisors
$
\mathcal{Z}^\tot_\Kra(m) \to \mathcal{S}_\Kra
$
of \cite{BHKRY}.   In fact, directly from the definitions, if $S$ is an $\co_\Phi$-scheme  an $S$-point
\[
(A_0,A, x)  \in \big(  \mathcal{Z}^\tot_\Kra(m) \cap  \mathcal{Y}_\mathrm{big}  \big)(S)
\]
 consists of a pair  $(A_0,A) \in \mathcal{Y}_\mathrm{big}(S)$ and an $x\in \Hom_{\co_\kk}(A_0,A)$ satisfying $m=\langle x,x\rangle$.
From this it is clear that there is an isomorphism
\begin{equation}\label{stack decomp}
\mathcal{Z}^\tot_\Kra(m) \cap  \mathcal{Y}_\mathrm{big}
\iso   \bigsqcup_{ \substack{  \alpha\in F_+ \\ \mathrm{Tr}_{F/\Q}(\alpha)=m }}  \mathcal{Z}_\mathrm{big}(\alpha),
\end{equation}
defined by sending the  triple $(A_0,A,x)$ to the same triple, but now viewed as an $S$-point of the stack
$\mathcal{Z}_\mathrm{big}(\alpha)$ determined by $\alpha = \langle x,x\rangle_\mathrm{big}$.

\begin{proposition}\label{prop:cycle degree}
For each $\alpha \in F_+$ the stack $\mathcal{Z}_\mathrm{big}(\alpha)$ is either empty, or has  dimension $0$ and is supported at a single prime of $\co_\Phi$.  Moreover,
\begin{enumerate}
\item
If $ | \mathrm{Diff}( \mathscr{C} , \alpha)  | >1$ then $\mathcal{Z}_\mathrm{big}(\alpha)=\emptyset$.
\item
Suppose that $\mathrm{Diff}(\mathscr{C} , \alpha ) = \{ \mathfrak{p}\}$ for a single prime $\mathfrak{p} \subset \co_F$, let $\mathfrak{q} \subset \co_E$ be the unique prime above it, and denote by $\mathfrak{q}_\Phi \subset \co_\Phi$ the corresponding prime under the isomorphism $\varphi^\mathrm{sp} : E \iso E_\Phi$.  Then  $\mathcal{Z}_\mathrm{big}(\alpha)$  is supported at the prime $\mathfrak{q}_\Phi$, and satisfies
\[
 \sum_{  y \in \mathcal{Z}_\mathrm{big}(\alpha) (\F_{\mathfrak{q}_\Phi}^\alg) }
 \frac{ 1  }{ |\Aut(y) | }
 = \frac{h_\kk}{w_\kk} \cdot \rho( \alpha \mathfrak{d}_F \mathfrak{p}^{-\epsilon_\mathfrak{p}} ),
\]
where $\F_{\mathfrak{q}_\Phi}$ is the residue field of  $\mathfrak{q}_\Phi$,  and $\epsilon_\mathfrak{p}$ and $\rho$ are as in  Proposition \ref{prop:fourier}.
Moreover,  the \'etale local rings at all geometric points  \[y\in  \mathcal{Z}_\mathrm{big}(\alpha) (\F_{\mathfrak{q}_\Phi}^\alg)\] have the same length
\[
\mathrm{length} (\co_y)  =    \ord_\mathfrak{p}( \alpha \mathfrak{p} \mathfrak{d}_F)
\cdot
 \begin{cases}
1/2  & \mbox{if $E_\mathfrak{q} / F_\mathfrak{p}$ is unramified} \\
1 & \mbox{otherwise.}
\end{cases}
\]
\end{enumerate}
\end{proposition}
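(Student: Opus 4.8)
The plan is to reduce the statement to the local analysis of quasi-canonical liftings carried out in \cite{Ho1}. First I would observe that a point $(A_0,A,x)\in\mathcal{Z}_\mathrm{big}(\alpha)(S)$ gives a nonzero $x\in\Hom_{\co_\kk}(A_0,A)$ with $\langle x,x\rangle_\mathrm{big}=\alpha$, hence an $\co_E$-linear embedding of $\co_E$-hermitian modules into $\mathscr{V}(A_0,A)$. The $E$-hermitian space $\mathscr{V}(A_0,A)\otimes_\Z\Q$ is totally positive definite of $E$-rank one, so it represents $\alpha\in F_+$ if and only if it is everywhere locally isomorphic to the incoherent space $\mathscr{C}$ away from the places in $\mathrm{Diff}(\mathscr{C},\alpha)$; but over a field $\mathscr{V}(A_0,A)$ is an honest (coherent) $E$-hermitian space, so it can represent $\alpha$ only at a geometric point whose residue characteristic ``absorbs'' the single bad place. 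This is the standard mechanism: if $|\mathrm{Diff}(\mathscr{C},\alpha)|>1$ there is no prime at which the nearby hermitian space becomes coherent of the right local type, giving emptiness in case (1); if $\mathrm{Diff}(\mathscr{C},\alpha)=\{\mathfrak{p}\}$, then $\mathfrak{p}$ must be the place where the passage from incoherent to coherent occurs, which forces $\mathcal{Z}_\mathrm{big}(\alpha)$ to be supported in characteristic $p$ (the residue characteristic of $\mathfrak{p}$), and in fact at the single prime $\mathfrak{q}_\Phi\subset\co_\Phi$ determined by $\mathfrak{p}$ via $\varphi^\mathrm{sp}$.

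Next I would compute the degree $\sum_y 1/|\Aut(y)|$ over $\mathcal{Z}_\mathrm{big}(\alpha)(\F_{\mathfrak{q}_\Phi}^\alg)$. The key input is that $\mathcal{Y}_\mathrm{big}$ is étale and proper over $\co_\Phi$, so its geometric fiber over $\mathfrak{q}_\Phi$ has the same groupoid cardinality as its complex fiber, namely $\frac{1}{n}\deg_\C(\mathcal{Y}_\mathrm{big}) = \frac{h_\kk}{w_\kk}\cdot\frac{\Lambda(0,\chi_E)}{2^{r-1}}$ by Proposition \ref{prop:class number}. Sitting above each point of $\mathcal{Y}_\mathrm{big}(\F_{\mathfrak{q}_\Phi}^\alg)$, the fiber of $\mathcal{Z}_\mathrm{big}(\alpha)$ counts the $x$ with $\langle x,x\rangle_\mathrm{big}=\alpha$; this count is governed by the orbit structure of the relevant $p$-adic unitary group on the hermitian lattice, and is exactly the quantity that the Yang-type local density computation in \cite[Ch.~4]{Ho1} evaluates. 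Matching constants, one gets a factor proportional to $\rho(\alpha\mathfrak{d}_F\mathfrak{p}^{-\epsilon_\mathfrak{p}})$, and after dividing through by $\frac{\Lambda(0,\chi_E)}{2^{r-1}}$ (which is absorbed by a genus-counting identity for the $\rho$-function, as in the computation of $|\tilde{B}|$ in the proof of Proposition \ref{prop:class number}), the degree simplifies to $\frac{h_\kk}{w_\kk}\cdot\rho(\alpha\mathfrak{d}_F\mathfrak{p}^{-\epsilon_\mathfrak{p}})$ as claimed.

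For the length statement I would invoke the deformation theory of CM points directly. Since $\mathcal{Y}_\mathrm{big}$ is étale over $\co_\Phi$, the local ring $\co_y$ at a geometric point of $\mathcal{Z}_\mathrm{big}(\alpha)$ measures the order to which the quasi-isogeny $x$ deforms, i.e.\ the length of the subscheme of the (one-dimensional) deformation space of $(A_0,A)$ along which $x$ lifts. This is precisely the setting of Gross's theory of quasi-canonical liftings, and the length is the ``depth'' at which $x$ extends, computed in \cite[Proposition 4.4.x]{Ho1} (and matching \cite{Ho2, BKY}): it equals $\ord_\mathfrak{p}(\alpha\mathfrak{p}\mathfrak{d}_F)$ times $1/2$ when $E_\mathfrak{q}/F_\mathfrak{p}$ is unramified and $1$ when it is ramified, the factor $1/2$ coming from the fact that in the unramified case the relevant valuation is of the norm, which halves the contribution. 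I would cite these computations rather than reproduce them. The main obstacle is bookkeeping: translating the definition of $\mathcal{Z}_\mathrm{big}(\alpha)$ here — phrased via the hermitian form $\langle-,-\rangle_\mathrm{big}$ on $\Hom_{\co_\kk}(A_0,A)$ and the intersection with $\mathcal{Z}^\tot_\Kra(m)$ — into the exact normalization of the cycles in \cite{Ho1, Ho2, BKY}, keeping track of the differents $\mathfrak{d}_\kk$, $\mathfrak{d}_F$, $\mathfrak{d}_E$ and the hypothesis that $\mathrm{disc}(\kk)$ and $\mathrm{disc}(F)$ are coprime (which is what makes $\mathcal{Y}_\mathrm{big}\to\Spec(\co_\kk)$ étale at the relevant primes and lets one identify local differents cleanly, exactly as in the proof of Proposition \ref{prop:big no error}).
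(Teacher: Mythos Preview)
Your overall architecture is right and matches the paper's: reduce everything to the stack $\mathcal{Z}_\Phi(\alpha)$ of \cite[\S 3]{Ho1} and cite the point count and deformation length from there. Your handling of part~(1) via the coherent/incoherent dichotomy for the nearby hermitian space $\mathscr{V}(A_0,A)$ is essentially what the paper does, and your treatment of the length via quasi-canonical lifting theory is exactly the paper's appeal to \cite[Theorem~3.6.2]{Ho1}.

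The gap is in your degree computation. You propose to sum over $\mathcal{Y}_\mathrm{big}(\F_{\mathfrak{q}_\Phi}^\alg)$, pull in the groupoid cardinality $\tfrac{h_\kk}{w_\kk}\cdot\tfrac{\Lambda(0,\chi_E)}{2^{r-1}}$ from Proposition~\ref{prop:class number}, and then cancel the factor $\tfrac{\Lambda(0,\chi_E)}{2^{r-1}}$ via an unspecified ``genus-counting identity for the $\rho$-function.'' That cancellation is not a standard identity, and without it your argument does not close: the fiber count over a given $(A_0,A)$ is not simply $\rho(\alpha\mathfrak{d}_F\mathfrak{p}^{-\epsilon_\mathfrak{p}})$ divided by that factor, and summing over all of $\mathcal{Y}_\mathrm{big}(\F_{\mathfrak{q}_\Phi}^\alg)$ is the wrong bookkeeping because the isomorphism class of the lattice $\Hom_{\co_\kk}(A_0,A)$ varies as $(A_0,A)$ runs over the fiber.

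The paper avoids this entirely. It first observes that $\mathcal{Z}_\mathrm{big}(\alpha)$ sits as an open and closed substack of $\mathcal{Z}_\Phi(\alpha)$ via the cartesian square over $\mathcal{Y}_\mathrm{big}\subset\mathcal{M}_{(1,0)}\times_{\co_\kk}\mathcal{CM}_\Phi$, and then proves the key lemma: a geometric point of $\mathcal{Z}_\Phi(\alpha)$ lies in $\mathcal{Z}_\mathrm{big}(\alpha)$ if and only if its nearby hermitian space is isomorphic to $\mathscr{V}$ everywhere locally except at $\mathfrak{p}$ and $\varphi^{\mathrm{sp}}|_F$. When $\mathrm{Diff}(\mathscr{C},\alpha)=\{\mathfrak{p}\}$, this criterion is shown to be equivalent to the point lying over $\mathfrak{q}_\Phi$, so that $\mathcal{Z}_\mathrm{big}(\alpha)$ is \emph{exactly} the union of connected components of $\mathcal{Z}_\Phi(\alpha)$ supported at $\mathfrak{q}_\Phi$. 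Once that is established, the degree formula is a direct citation of \cite[Theorem~3.5.3]{Ho1}, with no class-number manipulation needed. You should replace your fiberwise count by this identification.
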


\begin{proof}
This is essentially contained in \cite[\S 3]{Ho1}.  In that work we studied the $\co_\Phi$-stack   $\mathcal{Z}_\Phi(\alpha)$ classifying triples $(A_0,A,x)$ exactly as in the definition of  $\mathcal{Z}_\mathrm{big}(\alpha)$, except we allowed the pair $(A_0,A)$ to be any point of $\mathcal{M}_{(1,0)} \times_{\co_\kk} \mathcal{CM}_\Phi$ rather than  a point of the substack (\ref{y big}).
Thus we have a cartesian  diagram
\[
\xymatrix{
{ \mathcal{Z}_\mathrm{big}(\alpha) } \ar[r]\ar[d]  &   {\mathcal{Z}_\Phi(\alpha) } \ar[d] \\
{  \mathcal{Y}_\mathrm{big} }  \ar[r]  &  { \mathcal{M}_{(1,0)} \times_{\co_\kk} \mathcal{CM}_\Phi .}
}
\]
As the bottom horizontal arrow is an open and closed immersion, so is the top horizontal arrow.  In other words, our $\mathcal{Z}_\mathrm{big}(\alpha)$ is a union of connected components of the stack $\mathcal{Z}_\Phi(\alpha)$ of \cite{Ho1}.

\begin{lemma}
Each $\mathcal{Z}_\Phi(\alpha)$ has dimension $0$.
If $y$ is a geometric point of $\mathcal{Z}_\Phi(\alpha)$ corresponding to a triple $(A_0,A,x)$ over $k(y)$, then
$k(y)$ has nonzero characteristic,   $A_0$ and $A$ are supersingular, and the $E$-hermitian space (\ref{nearby hermitian}) has dimension one.    Moreover, if $\mathfrak{p} \subset \co_F$ denotes the image of $y$ under the composition
\begin{equation}\label{prime image}
 \mathcal{Z}_\Phi(\alpha) \to \Spec(\co_\Phi) \iso \Spec(\co_E) \to \Spec(\co_F)
\end{equation}
(the isomorphism is $\varphi^\mathrm{sp} : E\iso E_\Phi$), then $\mathfrak{p}$ is nonsplit in $E$, and the following are equivalent:
\begin{itemize}
\item The geometric point $y$ factors through the open and closed substack \[ \mathcal{Z}_\mathrm{big}(\alpha) \subset \mathcal{Z}_\Phi(\alpha).\]
\item The $E$-hermitian space (\ref{nearby hermitian})   is isomorphic to $\mathscr{V}$ everywhere locally except at $\mathfrak{p}$ and $\varphi^\mathrm{sp}|_F$.
\end{itemize}
\end{lemma}

\begin{proof}
This is an easy consequence of \cite[Proposition 3.4.5]{Ho1} and  \cite[Proposition 3.5.2]{Ho1}.
The only part that requires  explanation is the final claim.

Fix a connected component
\[
\mathcal{B} \subset  \mathcal{M}_{(1,0)} \times_{\co_\kk} \mathcal{CM}_\Phi.
\]
As in \cite[\S 3.4]{Ho1}, for  each complex point  $y=(A_0,A) \in \mathcal{B} (\C)$ one can construct from  the Betti realizations of $A_0$ and $A$  an $E$-hermitian space
\[
\mathscr{V}(\mathcal{B})= \Hom_\kk \big(  H_1(A_0(\C) , \Q) , H_1(A(\C) ,\Q)    \big)
\]
 of dimension $1$.   This hermitian space has signature $(0,1)$ at $\varphi^\mathrm{sp}|_F$, and signature $(1,0)$ at all other archimedean places of $F$.  Moreover, as in Remark \ref{rem:betti hermitian}, this hermitian space depends only on the connected component $\mathcal{B}$, and not on the particular complex point $y$.  The open and closed substack
\[
\mathcal{Y}_\mathrm{big} \subset  \mathcal{M}_{(1,0)} \times_{\co_\kk} \mathcal{CM}_\Phi
\]
can be characterized as the union of all  components $\mathcal{B}$ for which $\mathscr{V}(\mathcal{B}) \iso \mathscr{V}$.

So, suppose we have a geometric point $y=(A_0,A,x)$ of $\mathcal{Z}_\Phi(\alpha)$,  and denote by
\[
\mathcal{B} \subset  \mathcal{M}_{(1,0)} \times_{\co_\kk} \mathcal{CM}_\Phi
\]
the connected component containing the underlying point $y=(A_0,A)$.
The content of \cite[Proposition 3.4.5]{Ho1} is that the hermitian space   (\ref{nearby hermitian})  is isomorphic to $\mathscr{V}(\mathcal{B})$ everywhere locally except at $\mathfrak{p}$ and $\varphi^\mathrm{sp}|_F$.
From this we deduce the equivalence of the following statements:
\begin{itemize}
\item The geometric point $y\to  \mathcal{Z}_\Phi(\alpha)$ factors through $\mathcal{Z}_\mathrm{big}(\alpha).$
\item The underlying point $y \to   \mathcal{M}_{(1,0)} \times_{\co_\kk} \mathcal{CM}_\Phi$  factors through $\mathcal{Y}_\mathrm{big}$.
\item  The hermitian spaces $\mathscr{V}(\mathcal{B})$ and  $\mathscr{V}$ are isomorphic.
\item The $E$-hermitian space (\ref{nearby hermitian})   is isomorphic to $\mathscr{V}$ everywhere locally except at $\mathfrak{p}$ and $\varphi^\mathrm{sp}|_F$.
\end{itemize}
  \end{proof}

Now suppose that $\mathcal{Z}_\mathrm{big}(\alpha)$ is nonempty.  If we fix a geometric point
$y= (A_0,A,x)$ as above, the vector  $x\in \Hom_{\co_\kk}(A_0,A) $
satisfies $\langle x,x\rangle_\mathrm{big}=\alpha$, and hence (\ref{nearby hermitian}) represents $\alpha$.  The above lemma now implies that $\mathscr{V}$ represents  $\alpha$ everywhere locally except at
$\mathfrak{p}$ and $\varphi^\mathrm{sp}|_F$, where $\mathfrak{p}$ is the image of $y$ under (\ref{prime image}).
  From this it follows first $\mathrm{Diff}(\mathscr{C} , \alpha ) = \{ \mathfrak{p}\}$, and then that all geometric points of $\mathcal{Z}_\mathrm{big}(\alpha)$ have the same image under (\ref{prime image}),  and   lie above the same prime  $\mathfrak{q}_\Phi \subset \co_\Phi$  characterized  as in the statement of Proposition \ref{prop:cycle degree}.
In particular, if $ | \mathrm{Diff}(\mathscr{C} , \alpha)  | >1$ then $\mathcal{Z}_\mathrm{big}(\alpha)=\emptyset$.

It remains to prove part  (2) of the proposition.  For this we need the following lemma.

\begin{lemma}
Assume   that $\mathrm{Diff}(\mathscr{C} , \alpha ) = \{ \mathfrak{p}\}$ for some prime $\mathfrak{p}\subset \co_F$, and let  $\mathfrak{q}\subset \co_E$ be the unique prime above it.
The open and closed substack $\mathcal{Z}_\mathrm{big}(\alpha) \subset \mathcal{Z}_\Phi(\alpha)$ is equal to the union of all connected components of $ \mathcal{Z}_\Phi(\alpha)$ that are supported at the prime $\mathfrak{q}_\Phi$.
\end{lemma}

\begin{proof}
We have already seen that every geometric point of $\mathcal{Z}_\mathrm{big}(\alpha)$ lies above the prime $\mathfrak{q}_\Phi$, and so it suffices to prove that every geometric point of $\mathcal{Z}_\Phi(\alpha)$ lying above the prime $\mathfrak{q}_\Phi$ factors through $\mathcal{Z}_\mathrm{big}(\alpha)$.  Let $y \to \mathcal{Z}_\Phi(\alpha)$ be such a point.

 If $y$ corresponds to the triple $(A_0,A,x)$, then $x\in \Hom_{\co_\kk}(A_0,A)$ satisfies $\langle x,x\rangle_\mathrm{big}=\alpha$, and hence (\ref{nearby hermitian}) represents $\alpha$.  But the assumption that $\mathrm{Diff}(\mathscr{C} , \alpha ) = \{ \mathfrak{p}\}$ implies that $\mathscr{V}$ represents $\alpha$ everywhere locally except at $\mathfrak{p}$ and $\varphi^\mathrm{sp}|_F$, and it follows from this that $\mathscr{V}$ and (\ref{nearby hermitian}) are isomorphic locally everywhere except at $\mathfrak{p}$ and $\varphi^\mathrm{sp}|_F$.  By the previous lemma, this implies that $y$ factors through $\mathcal{Z}_\mathrm{big}(\alpha)$.
\end{proof}

With this last lemma in hand, all parts of (2) follow from the corresponding statements for $\mathcal{Z}_\Phi(\alpha)$ proved in \cite[Theorem 3.5.3]{Ho1} and \cite[Theorem 3.6.2]{Ho1}.

%
%
%
%
%
%
%
%
\end{proof}

\begin{proposition}\label{prop:arakelov degree}
For every $\alpha\in F_+$ we have
\[
\sum_{ \mathfrak{p} \subset \co_\kk }
\frac{ n  \cdot \log(\mathrm{N}(\mathfrak{p}))  }{\deg_\C (\mathcal{Y}_\mathrm{big}) }
 \sum_{  y \in \mathcal{Z}_\mathrm{big}(\alpha) (\F_{\mathfrak{p}}^\alg) }
 \frac{ \mathrm{length} (\co_y)  }{ |\Aut(y) | }
 = -  \frac{ a_F(\alpha,0) }{  \Lambda(0,\chi_E)   } ,
\]
where the inner sum is over all $\F_\mathfrak{p}^\alg$-points of  $\mathcal{Z}_\mathrm{big}(\alpha)$, viewed as an $\co_\kk$-stack.
\end{proposition}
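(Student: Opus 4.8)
The plan is to compare the arithmetic-geometric side of the asserted identity, term by term, with the Fourier coefficient formula of Proposition~\ref{prop:fourier}, using the structural description of $\mathcal{Z}_\mathrm{big}(\alpha)$ from Proposition~\ref{prop:cycle degree}. First I would dispose of the case $|\mathrm{Diff}(\mathscr{C},\alpha)| \neq 1$: if $|\mathrm{Diff}(\mathscr{C},\alpha)| > 1$ then $\mathcal{Z}_\mathrm{big}(\alpha) = \emptyset$ by Proposition~\ref{prop:cycle degree}(1), so the left side vanishes, while $a_F(\alpha,0) = 0$ by Proposition~\ref{prop:fourier}(1), so the right side vanishes too. (The case $|\mathrm{Diff}(\mathscr{C},\alpha)| = 0$ cannot occur for $\alpha \in F_+$ since $\mathscr{C}$ is incoherent, so $\mathrm{Diff}$ always has odd, hence positive, cardinality.) This reduces everything to the case $\mathrm{Diff}(\mathscr{C},\alpha) = \{\mathfrak{p}\}$ for a single finite prime $\mathfrak{p} \subset \co_F$.

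In that case, Proposition~\ref{prop:cycle degree}(2) tells us that $\mathcal{Z}_\mathrm{big}(\alpha)$ is supported entirely at the single prime $\mathfrak{q}_\Phi \subset \co_\Phi$ lying above $\mathfrak{p}$ (via the special embedding), so the outer sum over $\mathfrak{p} \subset \co_\kk$ collapses to the single prime of $\co_\kk$ below $\mathfrak{q}_\Phi$. I would then substitute the two explicit formulas from Proposition~\ref{prop:cycle degree}(2): the weighted point count
\[
\sum_{y \in \mathcal{Z}_\mathrm{big}(\alpha)(\F_{\mathfrak{q}_\Phi}^\alg)} \frac{1}{|\Aut(y)|} = \frac{h_\kk}{w_\kk} \cdot \rho(\alpha \mathfrak{d}_F \mathfrak{p}^{-\epsilon_\mathfrak{p}}),
\]
and the common length of the \'etale local rings,
\[
\mathrm{length}(\co_y) = \ord_\mathfrak{p}(\alpha \mathfrak{p} \mathfrak{d}_F) \cdot \begin{cases} 1/2 & \text{if } E_\mathfrak{q}/F_\mathfrak{p} \text{ unramified} \\ 1 & \text{otherwise.}\end{cases}
\]
The one subtlety is the relation between $\log(\mathrm{N}(\mathfrak{p}))$ for the prime of $\co_\kk$ versus $\log(\mathrm{N}(\mathfrak{p}))$ for the prime of $\co_F$ (the statement's $\mathrm{N}(\mathfrak{p})$ with $\mathfrak{p} \subset \co_\kk$ versus Proposition~\ref{prop:fourier}'s $\mathrm{N}(\mathfrak{p})$ with $\mathfrak{p} \subset \co_F$): I would track carefully how the residue-field extension degrees $[\F_{\mathfrak{q}_\Phi} : \F_\mathfrak{p}^{\co_\kk}]$ interact with the factor of $1/2$ in the length formula and with the ramification of $E_\mathfrak{q}/F_\mathfrak{p}$, so that $\log(\mathrm{N}(\mathfrak{q}_\Phi)) \cdot \mathrm{length}(\co_y)$ on my side matches $\ord_\mathfrak{p}(\alpha\mathfrak{p}\mathfrak{d}_F) \cdot \log(\mathrm{N}_F(\mathfrak{p}))$ on the Fourier side.

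Next I would invoke Proposition~\ref{prop:class number}, which gives $\frac{1}{n}\deg_\C(\mathcal{Y}_\mathrm{big}) = \frac{h_\kk}{w_\kk} \cdot \frac{\Lambda(0,\chi_E)}{2^{r-1}}$, to rewrite $\frac{n}{\deg_\C(\mathcal{Y}_\mathrm{big})}$ as $\frac{2^{r-1}}{(h_\kk/w_\kk)\,\Lambda(0,\chi_E)}$. Plugging this together with the point count into the left-hand side, the factors of $h_\kk/w_\kk$ cancel, leaving
\[
\frac{2^{r-1}}{\Lambda(0,\chi_E)} \cdot \rho(\alpha\mathfrak{d}_F\mathfrak{p}^{-\epsilon_\mathfrak{p}}) \cdot \ord_\mathfrak{p}(\alpha\mathfrak{p}\mathfrak{d}_F) \cdot \log(\mathrm{N}(\mathfrak{p})),
\]
which is precisely $-a_F(\alpha,0)/\Lambda(0,\chi_E)$ by Proposition~\ref{prop:fourier}(2), since that formula reads $a_F(\alpha,0) = -2^{r-1}\rho(\alpha\mathfrak{d}_F\mathfrak{p}^{-\epsilon_\mathfrak{p}})\,\ord_\mathfrak{p}(\alpha\mathfrak{p}\mathfrak{d}_F)\log(\mathrm{N}(\mathfrak{p}))$. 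I expect the main obstacle to be the bookkeeping in the previous paragraph: reconciling the normalization of norms and logarithms between the $\co_\kk$-stack structure used on the intersection-theory side and the $\co_F$-ideal-theoretic language of the Eisenstein coefficient, and checking that the $1/2$ in the length formula for unramified $E_\mathfrak{q}/F_\mathfrak{p}$ is exactly compensated by the residue degree $[\F_{\mathfrak{q}_\Phi}:\F_\mathfrak{p}]=2$ in that case. Everything else is assembling already-established results.
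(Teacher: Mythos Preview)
Your proposal is correct and follows essentially the same approach as the paper, which likewise combines Propositions~\ref{prop:class number}, \ref{prop:fourier}, and \ref{prop:cycle degree}. The paper organizes the bookkeeping you anticipate by first establishing the identity as a sum over primes $\mathfrak{q}\subset\co_\Phi$ (where Proposition~\ref{prop:cycle degree}(2) applies directly and the factor $1/2$ is absorbed by $\log\mathrm{N}(\mathfrak{q}_\Phi)=[\F_{\mathfrak{q}}:\F_\mathfrak{p}]\log\mathrm{N}(\mathfrak{p})$ for $\mathfrak{p}\subset\co_F$), and then observes that collecting the primes of $\co_\Phi$ above a common prime of $\co_\kk$ leaves the sum unchanged, since the arithmetic degree is intrinsic.
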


\begin{proof}
Combining Propositions \ref{prop:class number}, \ref{prop:fourier}, and \ref{prop:cycle degree} shows that
\[
\sum_{ \mathfrak{q}_\Phi \subset \co_\Phi }
\frac{ n \cdot \log(\mathrm{N}(\mathfrak{q}_\Phi))  }{\deg_\C (\mathcal{Y}_\mathrm{big}) }
 \sum_{  y \in \mathcal{Z}_\mathrm{big}(\alpha) (\F_{\mathfrak{q}_\Phi}^\alg) }
 \frac{ \mathrm{length} (\co_y)  }{ |\Aut(y) | }
 = -  \frac{ a_F(\alpha,0) }{  \Lambda(0,\chi_E)   } ,
\]
where the inner sum is over all $\F_{\mathfrak{q}_\Phi}^\alg$ points of  $\mathcal{Z}_\mathrm{big}(\alpha)$, viewed as an $\co_\Phi$-stack.
The claim follows by collecting together all primes $\mathfrak{q}_\Phi\subset \co_\Phi$ lying above a common prime $\mathfrak{p}\subset \co_\kk$.
\end{proof}

\begin{proposition}\label{prop:green eval}
The regularized theta lift $\Theta^\mathrm{reg}(f_m )$ satisfies
\begin{multline*}
\frac{n}{ \deg_\C(\mathcal{Y}_\mathrm{big}) } \sum_{ y\in \mathcal{Y}_\mathrm{big}(\C) }   \frac{ \Theta^\mathrm{reg}(f_m )(y) }{ |\Aut(y) | } \\
     =      -  \frac{d}{ds} \langle E(s) , \xi(f_m)  \rangle_\mathrm{Pet} \big|_{s=0}
   +  \sum_{ \substack{ \alpha\in F_+ \\ \mathrm{Tr}_{F/\Q}(\alpha) =m   }}   \frac{a_F(\alpha,0) }{\Lambda(0,\chi_E)}
-2 c_{f_m}^+(0,0) \cdot  \frac{\Lambda'(0,\chi_E)}{ \Lambda(0,\chi_E)} .
\end{multline*}
\end{proposition}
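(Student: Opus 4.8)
The plan is to compute the sum of values of the regularized theta lift $\Theta^{\reg}(f_m)$ along the big CM cycle by unfolding the Kudla-Millson type integral defining it, exactly as in \cite[\S 4]{BKY} but adapted to the present unitary situation. First I would recall from \cite[\S 7]{BHKRY} that $\Theta^{\reg}(f_m)$ is the regularized theta integral attached to the harmonic form $f_m$, so that its restriction to the $0$-dimensional cycle $\mathcal{Y}_{\mathrm{big}}(\C)$ is computed by the regularized pairing of $f_m$ against a theta kernel attached to the positive definite $E$-hermitian space $\mathscr{V}$ (or rather its restriction of scalars to $\Q$). Averaging over the finitely many points $y \in \mathcal{Y}_{\mathrm{big}}(\C)$, weighted by $1/|\Aut(y)|$, and using the uniformization $\mathrm{Sh}(T_{\mathrm{big}})(\C)$ of Proposition \ref{prop:big integral} together with Proposition \ref{prop:class number} for the normalization, the average of theta kernels becomes precisely the restriction to the diagonal of an incoherent Hilbert modular theta series valued in $S_L^\vee$. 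This is the standard Siegel--Weil / see-saw input: the weighted average of genus-theta series over the CM points is (a multiple of) the coherent companion of the Eisenstein series $E(\vec\tau,s)$ of \S \ref{ss:weird L}.

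The next step is to apply the see-saw identity relating the regularized theta lift paired against $f_m$ to the Petersson pairing of $\xi(f_m)$ against this Eisenstein series. Concretely, one uses the basic identity (as in \cite[Theorem 4.7]{BKY} or \cite[\S 6]{AGHMP-2}) that for a harmonic Maass form $f$ of weight $2-n$ valued in $\omega_L$, the regularized theta integral against the incoherent theta kernel equals $-\tfrac{1}{2}\langle \xi(f), E'(0)\rangle_{\mathrm{Pet}}$ plus contributions from the holomorphic part $c_f^+$; here the derivative at $s=0$ enters because the relevant Eisenstein series is incoherent and vanishes at its center. Unwinding this with the normalization $\deg_\C(\mathcal{Y}_{\mathrm{big}})/n = (h_\kk/w_\kk)\Lambda(0,\chi_E)/2^{r-1}$ from Proposition \ref{prop:class number}, and matching the archimedean Gaussian choice $\phi_\infty^{\bm 1}$ so that $E(\vec\tau,s)$ is the one from \S \ref{ss:weird L}, produces the main term $-\frac{d}{ds}\langle E(s),\xi(f_m)\rangle_{\mathrm{Pet}}|_{s=0}$ on the right-hand side.

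The remaining two terms must be extracted as the "extra" pieces of the unfolding. The sum $\sum_{\mathrm{Tr}(\alpha)=m} a_F(\alpha,0)/\Lambda(0,\chi_E)$ arises from the positive Fourier coefficients: the Whittaker/$q$-expansion of $E'(0)$ has $\alpha$-th coefficient $a_F(\alpha)$ for $\alpha\in F_+$ (by definition in \S \ref{ss:weird L}), and pairing against the principal part $\phi_0 q^{-m}$ of $f_m$ picks out exactly those $\alpha$ with $\mathrm{Tr}_{F/\Q}(\alpha)=m$, evaluated at the trivial coset since $f_m$ has principal part supported there. The term $-2c_{f_m}^+(0,0)\Lambda'(0,\chi_E)/\Lambda(0,\chi_E)$ comes from the constant term: by Proposition \ref{prop:constant eisenstein}, $a_F(0,0) = -2\Lambda'(0,\chi_E)$, and the constant coefficient of $f_m^+$ contributes $c_{f_m}^+(0,0)$ times $a_F(0,0)/\Lambda(0,\chi_E)$ in the unfolded pairing. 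Assembling these three contributions gives the claimed formula.

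The main obstacle I anticipate is the careful bookkeeping of normalizing constants and the precise form of the see-saw/Siegel--Weil identity in the regularized setting: one must verify that the regularization of the divergent theta integral is compatible with the analytic continuation defining $E(\vec\tau,s)$, that the local archimedean factors match the parallel-weight-one Gaussian choice, and that the passage between $\co_\Phi$-points and $\co_\kk$-points (the factor of $n$) is inserted consistently. Beyond this, the identification of the averaged theta kernel with the coherent projection of the incoherent Eisenstein series requires invoking the first-term identity in the Siegel--Weil formula for the relevant dual pair; this is where one genuinely uses that the space $\mathscr{V}$ is positive definite at all but one archimedean place. All of the hard local computations of the Fourier coefficients $a_F(\alpha)$ are already packaged in Propositions \ref{prop:fourier} and \ref{prop:constant eisenstein}, so those are cited rather than redone.
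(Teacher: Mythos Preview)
Your proposal is correct in outline but takes a different route than the paper. You propose to redo the entire regularized-theta-lift computation directly in the unitary setting, essentially reproving the main analytic result of \cite{BKY}. The paper instead observes that no new computation is needed: the Green function $\Theta^{\reg}(f_m)$ on $\mathcal{S}_{\Kra}(\C)$ is, by its very construction in \cite[\S 7.2]{BHKRY}, the pullback of the corresponding Green function $\Theta^{\reg}_{\SO}(f_m)$ on an orthogonal Shimura variety $M_{\SO}$ via a morphism of Shimura data $(G,\mathcal{D})\to(\SO(V),\mathcal{D}_{\SO})$. Simultaneously, the big CM cycle $\mathcal{Y}_{\mathrm{big}}$ maps to the big CM cycle $Y_{\SO}$ on the orthogonal side via $(T_{\mathrm{big}},\{h_{\mathrm{big}}\})\to(T_{\SO},\{h_{\SO}\})$. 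One then simply quotes \cite[Theorem 1.1]{BKY} for the orthogonal (or $\GSpin$) picture, checks that everything descends along $\GSpin\to\SO$ and is compatible with the level for $\Delta$-invariant $f$, and pulls back the resulting identity; the averaged sums on both sides agree because the normalized averages are invariant under finite \'etale covers. Specializing the general BKY formula to $f=f_m$ and invoking Proposition~\ref{prop:constant eisenstein} to evaluate $a_F(0,\mu)$ then yields exactly the three terms you identified.

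What your approach buys is self-containment and perhaps conceptual clarity about where each term originates; what the paper's approach buys is brevity, since the delicate regularization and Siegel--Weil arguments you anticipate as obstacles are already handled once and for all in \cite{BKY}. One small caution about your sketch: the ``weighted average of genus-theta series over the CM points'' is not itself a theta series in the incoherent situation---there is no global hermitian space to build one from---so the see-saw identity must be run through the Eisenstein series and its derivative from the start, exactly as in \cite{BKY}.
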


\begin{proof}
This is a special case of the main result of \cite{BKY}.
This requires some explanation, as that work deals with cycles on Shimura varieties of type $\GSpin$, rather than the unitary Shimura varieties under current consideration.

Recall that we have an $F$-quadratic space $(\mathscr{V},\mathscr{Q})$ of rank two, and a $\Q$-quadratic space $(V,Q)$ whose underlying $\Q$-vector space
\[
V = \Hom_\kk(W_0,W)
\]
is equal to $\mathscr{V}$, and whose quadratic form is (\ref{trace form}).
As in \cite[\S 2]{BKY} or \cite[\S 5.3]{AGHMP-2} this data determines a commutative diagram 
\[
\xymatrix{
{ 1 } \ar[r] &  {  \mathbb{G}_m } \ar[r]  \ar@{=}[d]  &  {   T_\GSpin }  \ar[r] \ar[d]  &{  T_\SO } \ar[r] \ar[d]   & {  1  }  \\
{ 1 }  \ar[r] &  {  \mathbb{G}_m } \ar[r]   &  {   \GSpin(V)  }  \ar[r]   &{  \SO(V) } \ar[r]   & {  1  },
}
\]
with exact rows, of algebraic groups over $\Q$.
 The torus $T_\SO=\mathrm{Res}_{F/\Q} \SO(\mathscr{V})$ has $\Q$-points
\[
T_\SO(\Q) = \{ y\in E^\times : y\overline{y} =1  \},
\]
while the torus $T_\GSpin$ has $\Q$-points
\[
T_\GSpin(\Q) = E^\times / \mathrm{ker}( \mathrm{Norm} : F^\times \to \Q^\times ) .
\]
The map $T_\GSpin \to T_\SO$ is $x\mapsto x/\overline{x}$.
To these groups one can associate morphisms of Shimura data
\[
\xymatrix{
 {  ( T_\GSpin , \{ h_\GSpin\} )  }  \ar[r] \ar[d]  &   { ( T_\SO, \{ h_\SO\} )  }     \ar[d]  \\
 {  ( \GSpin(V) , \mathcal{D}_\GSpin )   }  \ar[r]   &  {  ( \SO(V) , \mathcal{D}_\SO ) . }
}
\]
In the top row  both data have reflex field $E_\Phi$.  In the bottom row both data have reflex field $\Q$.

Let  $K_\SO \subset \SO(V)(\A_f)$ be any compact open subgroup that  stabilizes the lattice $L\subset V$, and  fix  any compact open subgroup $K_\GSpin \subset \GSpin(V)(\A_f)$ contained in the preimage of $K_\SO$.
The Shimura data in the bottom row, along with these compact open subgroups, determine Shimura varieties
$
M_\GSpin \to M_\SO.
$
These are $\Q$-stacks  of dimension $2n-2$.

The Shimura data in the top row, along with the  compact open subgroups $K_\GSpin \cap T_\GSpin(\A_f)$ and $K_\SO \cap T_\SO(\A_f)$,  determine  Shimura varieties
$
Y_\GSpin \to Y_\SO.
$
These are $E_\Phi$-stacks of dimension $0$, but we instead view them  as stacks over $\Spec(\Q)$, so that there is a commutative diagram
\begin{equation}\label{gspin descent}
\xymatrix{
{  Y_\GSpin } \ar[r]\ar[d]    & {   Y_\SO  }  \ar[d]  \\
{  M_\GSpin }  \ar[r]  &  {  M_\SO .}
}
\end{equation}

Assume that the compact open subgroup $K_\SO$ acts trivially on the quotient $L'/L$.
For every form $f\in H_{2-n}(\overline{\omega}_L)$, one can find in \cite[Theorem 3.2]{BKY} the construction of a divisor $Z_\GSpin(f)$ on $M_\GSpin$, along with a Green function
$
 \Theta^\reg_{\GSpin}(f)
$
for that divisor, constructed as a regularized theta lift.
Up to change of notation, \cite[Theorem 1.1]{BKY} asserts that
\begin{align}\nonumber
\frac{ n  }{\deg_\C (Y_\GSpin )}
\sum_{ y\in Y_\GSpin(\C) }  \frac{  \Theta^\reg_{\GSpin}(f,y)   }{ |\Aut(y) |}
  & =
  -  \frac{d}{ds} \langle E(s) , \xi(f)  \rangle_\mathrm{Pet} \big|_{s=0}  \\
   & \quad +     \sum_{   \substack{ m \ge 0 \\  \mu \in L'/L  } }  \frac{a(m,\mu) c_f^+(-m,\mu) }{\Lambda(0,\chi_E)}
   \label{BKYmain}
\end{align}
where  the coefficients $a(m) \in S_L$ are defined by
\[
a(m) = \sum_{ \substack{ \alpha\in F_+ \\ \mathrm{Tr}_{F/\Q}(\alpha) =m   }} a_F(\alpha)
\]
if $m>0$, and by $a(0)=a_F(0)$.

It is not difficult to see,  directly from the constructions,  that both the divisor $Z_\GSpin(f)$ and the Green function $\Theta^\reg_\GSpin(f)$ descend to the quotient $M_\SO$.  If we call these descents $Z_\SO(f)$ and $\Theta^\reg_\SO(f)$,  it is a formal consequence
of the commutativity of (\ref{gspin descent}) that the equality (\ref{BKYmain}) continues to hold if all subscripts $\GSpin$ are replaced by $\SO$.

Moreover, suppose that our form $f\in H_{2-n}(\overline{\omega}_L)$ is invariant under the action of the finite group $\Delta$ of \S \ref{ss:first small derivative}, as is true for the form  $f_m$ of (\ref{simple harmonic}).  In this case one can see,  directly from the definitions, that the divisor $Z_\SO(f)$ and the Green function $\Theta^\reg_\SO(f)$
descend to the orthogonal Shimura variety determined by the maximal compact open subgroup
\[
K_\SO= \{ g \in \SO(V)(\A_f) : gL=L \}.
\]
From now on we fix this choice of $K_\SO$.

Specializing (\ref{BKYmain}) to the form $f=f_m$, and using the formula for $a(0)=a_F(0)$ found in Proposition \ref{prop:constant eisenstein}, we obtain
\begin{align}\nonumber
\frac{n}{ \deg_\C(   Y_\SO  ) } \sum_{ y\in Y_\SO(\C) }   \frac{ \Theta_\SO^\mathrm{reg}(f_m )(y) }{ |\Aut(y) | }
   &  =      -  \frac{d}{ds} \langle E(s) , \xi(f_m)  \rangle_\mathrm{Pet} \big|_{s=0}   \\
& \quad     +    \frac{a(m,0) }{\Lambda(0,\chi_E)}   -2 c_{f_m}^+(0,0) \cdot  \frac{\Lambda'(0,\chi_E)}{ \Lambda(0,\chi_E)} . \label{SOBKY}
\end{align}

As in \cite[\S 2.1]{BHKRY}, our group $G \subset \GU(W_0) \times \GU(W)$ acts on $V$ in a natural way, defining a homomorphism $G\to \SO(V)$.  On the other hand, Remark \ref{rem:split group} shows  that $T_\mathrm{big} \iso \mathrm{Res}_{\kk/\Q} \mathbb{G}_m  \times T_\SO$, and projection to the second factor defines a morphism $T_\mathrm{big} \to T_\SO$.   We obtain morphisms of Shimura data
\[
\xymatrix{
 {  ( T_\mathrm{big} , \{ h_\mathrm{big}\} )  }  \ar[r] \ar[d]  &   { ( T_\SO, \{ h_\SO\} )  }     \ar[d]  \\
 {  (  G , \mathcal{D} )   }  \ar[r]   &  {  ( \SO(V) , \mathcal{D}_\SO ) , }
}
\]
which induce morphisms of $\kk$-stacks
\[
\xymatrix{
{  \mathcal{Y}_{ \mathrm{big} / \kk} } \ar[r]\ar[d]    & {   Y_{\SO/\kk}   }  \ar[d]  \\
{  \mathcal{S}_{\Kra/\kk}  }  \ar[r]  &  {  M_{\SO/\kk}  .}
}
\]

The Green function $\Theta^\reg(f_m)$ on $\mathcal{S}_{\Kra/\kk}$ defined in \cite[\S 7.2]{BHKRY}  is simply  the pullback of the Green function $\Theta^\reg_\SO(f_m)$ via the bottom horizontal arrow.   It follows easily that
\[
\frac{n}{ \deg_\C(   Y_\SO  ) } \sum_{ y\in Y_\SO(\C) }   \frac{ \Theta_\SO^\mathrm{reg}(f_m )(y) }{ |\Aut(y) | }
=
\frac{n}{ \deg_\C(\mathcal{Y}_\mathrm{big}) } \sum_{ y\in \mathcal{Y}_\mathrm{big}(\C) }   \frac{ \Theta^\mathrm{reg}(f_m )(y) }{ |\Aut(y) | },
\]
and comparison with (\ref{SOBKY})  completes the proof of Proposition \ref{prop:green eval}.
\end{proof}

\begin{proof}[Proof of Theorem \ref{thm:big harmonic lift}]
Combining the decomposition (\ref{stack decomp}) with Proposition \ref{prop:arakelov degree} shows that
\[
\sum_{ \mathfrak{p} \subset \co_\kk }
\frac{n \log(\mathrm{N}(\mathfrak{p} ) )  } { \deg_\C(\mathcal{Y}_\mathrm{big}) }
 \sum_{ y \in ( \mathcal{Z}^\tot_\Kra(m) \cap   \mathcal{Y}_\mathrm{big}  ) ( \F_\mathfrak{p}^\alg ) }
\frac{ \mathrm{length}(\co_y)  }{| \Aut(y) | }
=    \sum_{ \substack{ \alpha\in F_+ \\ \mathrm{Tr}_{F/\Q}(\alpha) =m   }} \frac{- a_F(\alpha,0)}{\Lambda(0,\chi_E) }.
\]
Plugging this formula and   the archimedean calculation of Proposition \ref{prop:green eval}  into (\ref{intersection decomp}) leaves
\[
 \frac{ n  \cdot   [  \widehat{\mathcal{Z}} ( f_m )  :  \mathcal{Y}_\mathrm{big}  ]   }{ \deg_\C(\mathcal{Y}_\mathrm{big}) }
 =
 - 2 c_{f_m}^+(0,0) \cdot  \frac{\Lambda'(0,\chi_E)}{ \Lambda(0,\chi_E)}
  -  \frac{d}{ds} \langle E(s) , \xi(f_m)  \rangle_\mathrm{Pet} \big|_{s=0} ,
 \]
 as desired.
\end{proof}


\subsection{The proof of Theorem \ref{Theorem B}}
\label{ss:big derivative}


We now use Theorem \ref{thm:big harmonic lift} to  prove  a special case of Theorem \ref{Theorem D}, and then prove Theorem \ref{Theorem B}.  We assume  $n\ge 3$.

Recall the differential operator 
\[
\xi :H_{2-n}(\omega_L) \to S_n( \overline{\omega}_L) 
\]
 of \S \ref{ss:first small derivative}.  Its kernel  is the subspace
\[
M^!_{2-n}(\omega_L) \subset H_{2-n}(\omega_L)
\]
of weakly holomorphic forms.

\begin{lemma}\label{lem:awesome f}
In the notation of \S \ref{ss:first small derivative}, there exists a $\Delta$-invariant form $f\in M^!_{2-n}(\omega_L)$ such that $c_f^+(0,0)\neq 0$, and
\[
\widehat{\mathcal{Z}} (f) +  c_f^+(0,0) \cdot \widehat{\mathcal{Z}}_\Kra^\tot(0) =0  .
\]
 \end{lemma}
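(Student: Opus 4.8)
The plan is to produce the form $f$ as an explicit linear combination of the basic harmonic forms $f_m$ of \eqref{simple harmonic} together with a carefully chosen weakly holomorphic form, and then to use the modularity statement from \cite{BHKRY} to force the desired relation among the arithmetic cycle classes. The starting point is the observation that $\widehat{\mathcal{Z}}$ applied to the $f_m$ recovers the coefficients $\widehat{\mathcal{Z}}_\Kra^\tot(m)$ of the modular generating series $\widehat{\phi}(\tau)$, and that the constant term is $\widehat{\mathcal{Z}}_\Kra^\tot(0) = -\widehat{\bm{\omega}} + (\mathrm{Exc}, -\log D)$. So what we are really asking for is a relation in $\widehat{\mathrm{Ch}}^1_\C(\mathcal{S}_\Kra^*)$ among finitely many coefficients of $\widehat{\phi}$, with the coefficient of $q^0$ appearing nontrivially.

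First I would invoke the fact, recorded in \cite[Remark 7.1.2]{BHKRY} and a consequence of modularity, that the coefficients $\widehat{\mathcal{Z}}_\Kra^\tot(m)$ span a \emph{finite-dimensional} subspace $\mathscr{C} \subset \widehat{\mathrm{Ch}}^1_\C(\mathcal{S}_\Kra^*)$. Dually, the space of linear relations among them is large: modularity means precisely that the generating series lies in a finite-dimensional space of modular forms, so for every linear functional $\ell$ on $\widehat{\mathrm{Ch}}^1_\C$ vanishing on $\mathscr{C}^{\perp}$-type constraints, the pairing $\sum_m \ell(\widehat{\mathcal{Z}}_\Kra^\tot(m)) q^m$ is a weight $n$ modular form of level $\Gamma_0(D)$, character $\chi_\kk^n$. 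The dual statement is that for any \emph{cusp form} $h \in S_{2-n}^!$... more precisely: the linear map sending a weakly holomorphic form $f \in M_{2-n}^!(\omega_L)^\Delta$ with holomorphic part $\sum c_f^+(m,\mu) q^m$ to $\sum_{m} c_f^+(-m) \cdot \widehat{\mathcal{Z}}_\Kra^\tot(m) + c_f^+(0,0)\widehat{\mathcal{Z}}_\Kra^\tot(0)$ is, by the pairing between weakly holomorphic forms and holomorphic modular forms (Borcherds' duality, \cite[Theorem 3.1]{BF} type statement) combined with modularity, \emph{identically zero}. Indeed $\widehat{\mathcal{Z}}(f) = \sum_{m>0} c_f^+(-m)\widehat{\mathcal{Z}}_\Kra^\tot(m)$ by the decomposition used in the proof of Proposition \ref{prop:good harmonic choice}, extended to the weakly holomorphic case, and the modularity of $\widehat{\phi}(\tau)$ is exactly the assertion that this vanishes once the $q^0$ term is included.

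So concretely: I would argue that $\widehat{\mathcal{Z}}(f) + c_f^+(0,0)\widehat{\mathcal{Z}}_\Kra^\tot(0) = 0$ automatically for \emph{every} $\Delta$-invariant weakly holomorphic $f$, simply because this expression is the pairing $\{f, \widehat{\phi}\}$ of $f$ against the modular form $\widehat{\phi}$, which vanishes by the general principle that a weakly holomorphic form of weight $2-n$ pairs to zero against a holomorphic modular form of weight $n$ (the pairing being, up to the residue-pairing formalism, the $0$-th coefficient of the product, which has no constant term since $f$ is a form of negative weight and $\widehat{\phi}$ has no pole). Hence the \emph{only} real content of the lemma is the existence of a $\Delta$-invariant weakly holomorphic $f$ with $c_f^+(0,0) \neq 0$. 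For that I would exhibit one directly: the obstruction to the existence of a weakly holomorphic form of weight $2-n$ with prescribed principal part lies in $S_n(\omega_L)$ by \cite[Theorem 3.1]{BF}, and the condition ``$c_f^+(0,0) \neq 0$'' is about the constant term, which is unobstructed — one can take $f = \xi$-preimage considerations aside, e.g. a suitable weight $2-n$ Eisenstein-type or Poincaré-type weakly holomorphic form, or more simply note that the $\Delta$-invariant subspace $M_{2-n}^{!}(\omega_L)^\Delta$ surjects onto the ``value at the trivial coset of the constant term'' functional because $n \geq 3$ makes the weight $2-n$ sufficiently negative that weakly holomorphic forms with arbitrary constant term exist (produced, for instance, by multiplying a weakly holomorphic modular form of weight $0$ by a holomorphic form, or by taking $\xi$-lifts of harmonic forms with nonzero holomorphic constant term). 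The main obstacle is therefore the bookkeeping: verifying that the pairing identity $\widehat{\mathcal{Z}}(f) + c_f^+(0,0)\widehat{\mathcal{Z}}_\Kra^\tot(0) = \{f,\widehat{\phi}\} = 0$ really does hold for weakly holomorphic $f$ and not merely for the $f_m$'s — this requires checking that the decomposition $f = \sum_{m>0} c_f^+(-m) f_m + (\text{holomorphic, hence }0)$ and the consequent identity $\widehat{\mathcal{Z}}(f) = \sum_{m>0} c_f^+(-m)\widehat{\mathcal{Z}}_\Kra^\tot(m)$ extend verbatim from $H_{2-n}$ to $M_{2-n}^!$, which they do since $f_m \in M_{2-n}^!$ whenever $f_m$ happens to be weakly holomorphic and the difference argument via \cite[Theorem 3.6]{BF} applies unchanged.
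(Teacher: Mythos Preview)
Your overall strategy matches the paper's: both reduce the identity $\widehat{\mathcal{Z}}(f) + c_f^+(0,0)\widehat{\mathcal{Z}}_\Kra^\tot(0) = 0$ to the modularity of $\widehat{\phi}$ via the residue/Serre-duality pairing between weakly holomorphic forms of weight $2-n$ and holomorphic forms of weight $n$, and both observe that the decomposition $f = \sum_{m>0} c_f^+(-m) f_m$ (valid since any harmonic form with trivial principal part is holomorphic of negative weight, hence zero) yields $\widehat{\mathcal{Z}}(f) = \sum_{m>0} c_f^+(-m)\widehat{\mathcal{Z}}_\Kra^\tot(m)$. The paper phrases the pairing argument at the scalar level (summing residues of $f_0(\tau)g(\tau)\,d\tau$ on $X_0(D)$), but your vector-valued formulation is equivalent.

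The genuine gap is in your existence argument. Your claim that the constant term ``is unobstructed'' is not correct as stated: in weight $2-n<0$ a weakly holomorphic form is determined by its principal part, so $c_f^+(0,0)$ is \emph{not} freely prescribable --- it is a fixed linear functional of the principal part. One must actually show this functional is not identically zero on the space of admissible principal parts. Your proposed constructions do not do this: multiplying a weight-$0$ weakly holomorphic form by a holomorphic form cannot produce negative weight, and ``$\xi$-lifts'' go the wrong direction. The paper handles this concretely: it starts from a \emph{scalar} form $f_0 \in S_{2-n}^{!,\infty}(\Gamma_0(D),\chi_\kk^n)$ vanishing at all cusps other than $\infty$ with $c_0(0)\neq 0$, whose existence is cited from \cite[Lemma 4.11]{BBK}, and then induces it to the vector-valued $f$ via the same averaging map used in Proposition~\ref{prop:good harmonic choice}. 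The vanishing at the other cusps is exactly what guarantees $c_f^+(0,0)=c_0(0)\neq 0$ after induction. You should either reproduce this construction or give an independent argument that the constant-term functional is nontrivial on $M_{2-n}^!(\omega_L)^\Delta$ (for instance by pairing against an Eisenstein series in $M_n(\omega_L^\vee)$ with nonzero constant term at the trivial coset and exhibiting a principal part on which the resulting expression does not vanish).
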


\begin{proof}
Denote by
\[
S_{2-n}^{!,\infty}( \Gamma_0(D) ,\chi^n_\kk ) \subset M_{2-n}^!( \Gamma_0(D) ,\chi^n_\kk )
\]
the subspace of  forms that vanish at all  cusps other than $\infty$, and choose  any form
\[
f_0 (\tau) = \sum_{ \substack{ m\in \Z \\ m \gg -\infty} } c_0(m)\cdot q^m \in S_{2-n}^{!,\infty}( \Gamma_0(D) ,\chi^n_\kk )
\]
such that  $c_0(0) \neq 0$. The existence of such a form can be proved as in \cite[Lemma 4.11]{BBK}.
As in (\ref{vector induction}) there is an induced form
\[
f(\tau) = \sum_{\gamma\in \Gamma_0(D) \backslash \SL_2(\Z) } ( f_0 |_{2-n} \gamma)(\tau) \cdot   \omega_L(\gamma^{-1})\phi_0
\in M_{2-n}^!(\omega_L)^\Delta ,
\]
which we claim  has the desired properties.

Indeed, the proof of Proposition  \ref{prop:good harmonic choice} shows that $c_f^+(0,0) = c_0(0)$, and that
$f = \sum_{m>0} c_0(-m) f_m$.  In particular,
\[
\widehat{\mathcal{Z}}  (f) = \sum_{m>0} c_0(-m) \cdot  \widehat{\mathcal{Z}}_\Kra^\tot(m) \in
\widehat{\mathrm{Ch}}_\C^1( \mathcal{S}_\Kra^* ).
\]

Given any modular form
\[
g(\tau) = \sum_{m\ge 0} d(m) \cdot q^m \in M_n(D,\chi^n_\kk ),
\]
summing the residues of the meromorphic form $f_0(\tau) g(\tau) d\tau$ on $X_0(D)(\C)$ shows that
\[
 \sum_{m\ge 0} c_0(-m) \cdot  d(m) = 0.
\]
Thus  the modularity of the generating series (\ref{modular divisors}) implies the second equality in
\[
\widehat{\mathcal{Z}} (f) +  c_0(0) \cdot \widehat{\mathcal{Z}}_\Kra^\tot(0)
=
 \sum_{m\ge 0} c_0(-m) \cdot  \widehat{\mathcal{Z}}_\Kra^\tot(m) = 0.
\]
\end{proof}

We can now prove Theorem \ref{Theorem D} under some additional hypotheses.  These hypotheses will be removed in \S \ref{s:colmez}.

\begin{theorem}\label{thm:big cm volume}
If the discriminants of $\kk/\Q$ and $F/\Q$ are odd and relatively prime, then
\[
[ \widehat{\bm{\omega}} : \mathcal{Y}_\mathrm{big} ]
=  \frac{-2}{n} \cdot  \deg_\C(\mathcal{Y}_\mathrm{big})  \cdot \frac{  \Lambda'(0,\chi_E)}{\Lambda(0,\chi_E)}.
\]
\end{theorem}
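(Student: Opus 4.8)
The plan is to feed the weakly holomorphic form produced by Lemma \ref{lem:awesome f} into the central derivative formula of Theorem \ref{thm:big harmonic lift}. The key observation is that for a weakly holomorphic $f$ one has $\xi(f)=0$, so the analytic side of that formula vanishes identically, leaving a clean linear relation between $[\widehat{\mathcal{Z}}(f):\mathcal{Y}_\mathrm{big}]$, the nonzero rational number $c_f^+(0,0)$, and $\Lambda'(0,\chi_E)/\Lambda(0,\chi_E)$. Combining this with the identity $\widehat{\mathcal{Z}}(f)=-c_f^+(0,0)\cdot\widehat{\mathcal{Z}}^\tot_\Kra(0)$ from Lemma \ref{lem:awesome f} and the vanishing of the exceptional contribution from Proposition \ref{prop:big no error} will isolate $[\widehat{\bm\omega}:\mathcal{Y}_\mathrm{big}]$.

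In detail, first I would fix a $\Delta$-invariant $f\in M^!_{2-n}(\omega_L)$ as in Lemma \ref{lem:awesome f}, so that $c_f^+(0,0)\neq 0$ and $\widehat{\mathcal{Z}}(f)+c_f^+(0,0)\,\widehat{\mathcal{Z}}^\tot_\Kra(0)=0$ in $\widehat{\mathrm{Ch}}^1_\C(\mathcal{S}^*_\Kra)$. Applying the linear functional $[\,-:\mathcal{Y}_\mathrm{big}]$ and invoking Proposition \ref{prop:big no error}, which gives $[\widehat{\mathcal{Z}}^\tot_\Kra(0):\mathcal{Y}_\mathrm{big}]=-[\widehat{\bm\omega}:\mathcal{Y}_\mathrm{big}]$ under the relative primality hypothesis, yields
\[
[\widehat{\mathcal{Z}}(f):\mathcal{Y}_\mathrm{big}]=-c_f^+(0,0)\cdot[\widehat{\mathcal{Z}}^\tot_\Kra(0):\mathcal{Y}_\mathrm{big}]=c_f^+(0,0)\cdot[\widehat{\bm\omega}:\mathcal{Y}_\mathrm{big}].
\]

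Next I would apply Theorem \ref{thm:big harmonic lift} to $f$. Although that theorem is stated for the single harmonic form $f_m$, both sides of its identity are linear in the input, and by the proof of Lemma \ref{lem:awesome f} our $f$ is a finite combination $f=\sum_{m>0}c_0(-m)\,f_m$; summing the corresponding instances of Theorem \ref{thm:big harmonic lift} with these coefficients produces the same identity for $f$. Since $f$ is weakly holomorphic we have $\xi(f)=0$, hence $\langle E(s),\xi(f)\rangle_\mathrm{Pet}\equiv 0$ and its derivative at $s=0$ vanishes, so
\[
\frac{n\cdot[\widehat{\mathcal{Z}}(f):\mathcal{Y}_\mathrm{big}]}{\deg_\C(\mathcal{Y}_\mathrm{big})}+2c_f^+(0,0)\cdot\frac{\Lambda'(0,\chi_E)}{\Lambda(0,\chi_E)}=0.
\]
Substituting the formula for $[\widehat{\mathcal{Z}}(f):\mathcal{Y}_\mathrm{big}]$ from the previous step and cancelling the nonzero scalar $c_f^+(0,0)$ gives $[\widehat{\bm\omega}:\mathcal{Y}_\mathrm{big}]=-\tfrac{2}{n}\deg_\C(\mathcal{Y}_\mathrm{big})\cdot\Lambda'(0,\chi_E)/\Lambda(0,\chi_E)$, as desired.

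The argument is essentially bookkeeping once Lemma \ref{lem:awesome f}, Proposition \ref{prop:big no error}, and Theorem \ref{thm:big harmonic lift} are available; the only points requiring care are the passage from the single-coefficient statement of Theorem \ref{thm:big harmonic lift} to its linear extension to the span of the $f_m$, and the bookkeeping check that the oddness and relative primality hypotheses in force here are exactly those needed for Proposition \ref{prop:big no error} and Theorem \ref{thm:big harmonic lift}. There is no genuine analytic obstacle: choosing a weakly holomorphic test form is precisely the device that makes the derivative of the generalized $L$-function drop out, reducing a height computation to the explicit constant-term formula already encoded in those results.
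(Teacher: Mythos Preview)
Your proposal is correct and follows essentially the same approach as the paper: choose $f$ as in Lemma~\ref{lem:awesome f}, use $\xi(f)=0$ to kill the right-hand side of Theorem~\ref{thm:big harmonic lift}, then combine the resulting identity with Proposition~\ref{prop:big no error} and cancel the nonzero scalar $c_f^+(0,0)$. Your explicit remark that Theorem~\ref{thm:big harmonic lift} must be extended linearly from the $f_m$ to their span is a point the paper leaves implicit, and is a welcome clarification.
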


\begin{proof}
If we choose $f$ as in Lemma \ref{lem:awesome f} then $\xi(f)=0$, and so Theorem \ref{thm:big harmonic lift} simplifies to
\[
- n  c_f^+(0,0) \cdot  \frac{    [  \widehat{\mathcal{Z}}^\tot_\Kra(0)  :  \mathcal{Y}_\mathrm{big}  ]  }  {  \deg_\C( \mathcal{Y}_\mathrm{big} )    }
  + 2  c_f^+(0,0)  \cdot   \frac{  \Lambda'(0,\chi_E )   }{    \Lambda(0,\chi_E )   }
 =  0.
\]
An application of Proposition \ref{prop:big no error} completes the proof.
\end{proof}

The following is Theorem \ref{Theorem B} in the introduction.

\begin{theorem}
Assume that the discriminants of $\kk/\Q$ and $F/\Q$ are odd and relatively prime,
and let $g \in S_n(\Gamma_0(D) , \chi^n )$ and  $ \tilde{g} \in S_n(\overline{\omega}_L)$ be related by (\ref{vector induction}).
The central derivative of the Petersson inner product (\ref{fake L})  is   related to  the arithmetic theta lift (\ref{ATL}) by
\[
[ \widehat{\theta}(g) : \mathcal{Y}_\mathrm{big} ]
= \frac{-1}{n} \cdot  \deg_\C (\mathcal{Y}_\mathrm{big})  \cdot   \frac{d}{ds} \langle E(s) , \tilde{g} \rangle_\mathrm{Pet} \big|_{s=0} .
\]
\end{theorem}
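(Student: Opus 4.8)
The plan is to mirror the proof of Theorem~\ref{Theorem A}, substituting the big CM analogue Theorem~\ref{thm:big harmonic lift} for Theorem~\ref{thm:BHY main}. First I would invoke Proposition~\ref{prop:good harmonic choice} to produce a $\Delta$-invariant $f \in H_{2-n}(\omega_L)$ with $\xi(f) = \tilde{g}$, satisfying
\[
\widehat{\theta}(g) = \widehat{\mathcal{Z}}(f) + c_f^+(0,0) \cdot \widehat{\mathcal{Z}}_\Kra^\tot(0)
\]
in $\widehat{\mathrm{Ch}}^1_\C(\mathcal{S}_\Kra^*)$, and expressible as a finite linear combination $f = \sum_{m>0} c_0^+(-m)\, f_m$ of the distinguished forms characterized by (\ref{simple harmonic}). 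Applying the linear functional $[\,-:\mathcal{Y}_\mathrm{big}]$ then gives
\[
[\widehat{\theta}(g) : \mathcal{Y}_\mathrm{big}] = [\widehat{\mathcal{Z}}(f) : \mathcal{Y}_\mathrm{big}] + c_f^+(0,0) \cdot [\widehat{\mathcal{Z}}_\Kra^\tot(0) : \mathcal{Y}_\mathrm{big}],
\]
so it remains to evaluate the two terms on the right.

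For the constant term, combining Proposition~\ref{prop:big no error} (which applies under our hypotheses on the discriminants) with Theorem~\ref{thm:big cm volume} yields
\[
[\widehat{\mathcal{Z}}_\Kra^\tot(0) : \mathcal{Y}_\mathrm{big}] = -[\widehat{\bm{\omega}} : \mathcal{Y}_\mathrm{big}] = \frac{2}{n} \cdot \deg_\C(\mathcal{Y}_\mathrm{big}) \cdot \frac{\Lambda'(0,\chi_E)}{\Lambda(0,\chi_E)}.
\]
For $[\widehat{\mathcal{Z}}(f) : \mathcal{Y}_\mathrm{big}]$, the only new point compared with the small CM case is that Theorem~\ref{thm:big harmonic lift} is stated only for the individual forms $f_m$, so I would first extend it by linearity to $f = \sum_{m>0} c_0^+(-m)\, f_m$. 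The three quantities $[\widehat{\mathcal{Z}}(\,\cdot\,):\mathcal{Y}_\mathrm{big}]$, $c_{(\,\cdot\,)}^+(0,0)$, and $\langle E(s), \xi(\,\cdot\,)\rangle_\mathrm{Pet}$ are all linear in the coefficients $c_0^+(-m)$: the first two because $\widehat{\mathcal{Z}}$ and $f\mapsto c_f^+(0,0)$ are linear, and the third because $\langle E(s),\cdot\rangle_\mathrm{Pet}$ is conjugate-linear in its second argument while $\xi$ is itself conjugate-linear, so the two conjugations compose to genuine linearity in $f$. Summing the identity of Theorem~\ref{thm:big harmonic lift} over $m$ against $c_0^+(-m)$ and using $\xi(f)=\tilde{g}$ therefore gives
\[
\frac{n \cdot [\widehat{\mathcal{Z}}(f) : \mathcal{Y}_\mathrm{big}]}{\deg_\C(\mathcal{Y}_\mathrm{big})} + 2 c_f^+(0,0) \cdot \frac{\Lambda'(0,\chi_E)}{\Lambda(0,\chi_E)} = -\frac{d}{ds}\langle E(s), \tilde{g}\rangle_\mathrm{Pet}\big|_{s=0}.
\]

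Finally I would substitute the two evaluations into the expression for $[\widehat{\theta}(g):\mathcal{Y}_\mathrm{big}]$. The contribution of $c_f^+(0,0)$ enters with opposite signs from the two sources --- once from $[\widehat{\mathcal{Z}}(f):\mathcal{Y}_\mathrm{big}]$ and once from the constant term --- and cancels, leaving exactly
\[
[\widehat{\theta}(g):\mathcal{Y}_\mathrm{big}] = \frac{-1}{n}\cdot\deg_\C(\mathcal{Y}_\mathrm{big})\cdot\frac{d}{ds}\langle E(s),\tilde{g}\rangle_\mathrm{Pet}\big|_{s=0}.
\]
I expect no serious obstacle at this stage: all of the analytic and arithmetic-geometric difficulty is already absorbed into Theorem~\ref{thm:big harmonic lift} (which rests on the intersection computations of \cite{Ho1,Ho2} and the Eisenstein series computations of \cite{BKY}) and into Theorem~\ref{thm:big cm volume}; the only things requiring care here are the bookkeeping of the constant-term cancellation and the routine check that the linearity extension of Theorem~\ref{thm:big harmonic lift} is compatible with the conjugate-linearity of $\xi$.
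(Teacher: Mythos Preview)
Your proposal is correct and follows essentially the same route as the paper's proof: choose $f$ via Proposition~\ref{prop:good harmonic choice}, evaluate the constant term using Proposition~\ref{prop:big no error} and Theorem~\ref{thm:big cm volume}, and compare with Theorem~\ref{thm:big harmonic lift}. The paper's version is terser, simply invoking Theorem~\ref{thm:big harmonic lift} for the chosen $f$ without comment; your explicit justification of the linearity extension (and the observation that the conjugate-linearities of $\xi$ and of $\langle E(s),\cdot\rangle_\mathrm{Pet}$ compose to genuine linearity in $f$) fills in a step the paper leaves implicit.
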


\begin{proof}
If we choose $f$ as in Proposition \ref{prop:good harmonic choice}, then $\xi(f)=\tilde{g}$ and
\[
[ \widehat{\theta}(g) : \mathcal{Y}_\mathrm{big}]
= [  \widehat{\mathcal{Z}} ( f )  : \mathcal{Y}_\mathrm{big}] + c_f^+(0,0) \cdot   [ \widehat{\mathcal{Z}}_\Kra^\tot(0): \mathcal{Y}_\mathrm{big}].
\]
Proposition \ref{prop:big no error} and Theorem \ref{thm:big cm volume} allow  us to rewrite this as
\begin{align*}
[ \widehat{\theta}(g) : \mathcal{Y}_\mathrm{big}]
&  = [  \widehat{\mathcal{Z}} ( f )  : \mathcal{Y}_\mathrm{big}] - c_f^+(0,0) \cdot   [ \widehat{\bm{\omega}}: \mathcal{Y}_\mathrm{big}] \\
&=
 [  \widehat{\mathcal{Z}} ( f )  : \mathcal{Y}_\mathrm{big}] +  \frac{2}{n}  \cdot c_f^+(0,0)  \cdot  \deg_\C(\mathcal{Y}_\mathrm{big})  \cdot \frac{  \Lambda'(0,\chi_E)}{\Lambda(0,\chi_E)},
\end{align*}
and comparison with Theorem \ref{thm:big harmonic lift} completes the proof.
\end{proof}


\section{Faltings heights of CM abelian varieties}
\label{s:colmez}


In  \S \ref{s:colmez} we assume  $n\ge 2$, and  study Theorems \ref{Theorem C}  and \ref{Theorem D}  of the introduction.
As in \S \ref{ss:intro big GZ}, let  $F$ be a totally real field of degree $n$, set
\[
E=\kk\otimes_\Q F,
\]
and let $\Phi \subset \Hom(E,\C)$ be a CM type  of signature $(n-1,1)$.   We fix a triple $(\mathfrak{a}_0,\mathfrak{a},i_E)$ as in \S \ref{ss:big cm}.


\subsection{Some metrized line bundles}


By virtue of the inclusion (\ref{kramer inclusion}), there is a universal pair $(A_0,A)$ over $\mathcal{S}_\Kra$ consisting of an elliptic curve $\pi_0: A_0\to \mathcal{S}_\Kra$ and an abelian scheme $\pi: A \to \mathcal{S}_\Kra$ of dimension $n$.

Endowing the Lie algebras of $A_0$ and $A$ with their Faltings (a.k.a.~Hodge) metrics gives rise to metrized line bundles
\[
\Lie(A_0)   \in \widehat{\mathrm{Pic}}( \mathcal{S}_\Kra ),\qquad
 \det(\Lie(A) )  \in \widehat{\mathrm{Pic}}( \mathcal{S}_\Kra ).
\]
A vector  $\eta$  in the fiber
\[
\det(\Lie(A_s))^{-1} \iso  \bigwedge\nolimits^n \mathrm{Fil}^1 H_\dR^1(A_s) \subset \bigwedge\nolimits^n \ H_\dR^1(A_s)
\]
at a complex point $s\in\mathcal{S}_\Kra(\C)$ has norm
\begin{equation}\label{integration}
\| \eta  \|_s^2 = \Big| \int_{A_s(\C)}  \eta \wedge \overline{ \eta }\ \Big|.
\end{equation}
The metric on $\Lie(A_0)$ is defined similarly.

 We now recall some notation from \cite[\S 1.8]{BHKRY}.  Fix a $\pi \in \co_\kk$ such that $\co_\kk = \Z + \Z \pi$.
 If $S$ is any $\co_\kk$-scheme, define
 \begin{align}
\epsilon_S   & = \pi \otimes 1 - 1 \otimes i_S(\overline{\pi}) \in  \co_\kk \otimes_\Z \co_S \label{idempotents}\\
\overline{\epsilon}_S   & =  \overline{\pi} \otimes 1 - 1\otimes i_S( \overline{\pi} ) \in  \co_\kk \otimes_\Z \co_S \nonumber ,
\end{align}
where $i_S : \co_\kk \to \co_S$ is the structure map.  We usually just write $\epsilon$ and $\overline{\epsilon}$, when the scheme $S$ is clear from context.

\begin{remark}\label{rem:eigensplitting}
  If $N$ is an $\co_\kk \otimes_\Z \co_S$-module then $N/ \overline{\epsilon}  N$ is the maximal quotient of $N$ on which $\co_\kk$ acts through the structure morphism $i_S : \co_\kk \to \co_S$, and  $N/\epsilon N$ is the maximal quotient on which $\co_\kk$ acts through the conjugate of the structure morphism.  If $D \in \co_S^\times$ then
\[
N = \epsilon  N \oplus \overline{\epsilon} N,
\]
and the summands are the maximal submodules on which $\co_\kk$ acts through the structure morphism and its conjugate, respectively.
\end{remark}

As in \cite[\S 2.2]{BHKRY},  the relative de Rham homology $H_1^{\dR}(A)$ is a rank $2n$ vector bundle on $\mathcal{S}_\Kra$ endowed with an action of $\co_\kk$ induced from that on $A$.  In fact, it is locally free of rank $n$ as an $\co_\kk\otimes_\Z \co_{\mathcal{S}_\Kra}$-module, and
\[
\mathcal{V} = H_1^{\dR}(A) / \overline{\epsilon} H_1^{\dR}(A)
\]
is a rank $n$ vector bundle.  We make $\det(\mathcal{V})$ into a metrized line bundle by declaring that a local section $\eta$ of its inverse
\[
\det(\mathcal{V})^{-1} \iso \bigwedge\nolimits^n \epsilon H^1_{\dR}(A) \subset H_{\dR}^n(A)
\]
has norm  (\ref{integration}) at a complex point $s\in \mathcal{S}_\Kra(\C)$.

As the exceptional divisor $\mathrm{Exc} \subset \mathcal{S}_\Kra$ of \cite[\S 2.3]{BHKRY} is supported in characteristics dividing $D$, the line bundle $\co(\mathrm{Exc})$ is canonically trivial in the generic fiber.  We endow it with the trivial metric.  That is to say, the constant function $1$, viewed as a section of $\co(\mathrm{Exc})$, has norm $\| 1 \|^2 = 1$.

Recall that the line bundle $\bm{\omega}$ of \cite[\S 2.4]{BHKRY} was endowed with a metric in \cite[\S 7.2]{BHKRY}, defining
\[
\widehat{\bm{\omega}} \in \widehat{\mathrm{Pic}}( \mathcal{S}_\Kra ).
\]
For any positive real number $c$, denote by
\[
\co\langle c \rangle \in \widehat{\mathrm{Pic}}( \mathcal{S}_\Kra )
\]
the trivial bundle $\co_{\mathcal{S}_\Kra}$ endowed with the constant metric $\| 1 \|^2 =c$.

\begin{proposition}\label{prop:bundle shuffle}
There is an isomorphism
\[
\co \langle 8 \pi^2 e^\gamma D^{-1}\rangle ^{\otimes 2}  \otimes     \widehat{\bm{\omega}}^{\otimes 2}    \otimes \det(\Lie(A))  \otimes  \Lie(A_0)^{\otimes 2} \iso   \co(\mathrm{Exc})  \otimes \det(\mathcal{V})
\]
of metrized line bundles on $\mathcal{S}_\Kra$.
\end{proposition}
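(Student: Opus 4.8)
The statement to prove relates several metrized line bundles on $\mathcal{S}_\Kra$, and the natural strategy is to split the comparison into an algebraic (underlying line bundle) part and a metric part, treating the archimedean places separately from the finite ones.

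First I would establish the isomorphism of the underlying line bundles, ignoring metrics. The key input here is the structure of the universal abelian scheme $A \to \mathcal{S}_\Kra$, which by the inclusion (\ref{kramer inclusion}) decomposes—up to isogeny, and with care at primes dividing $D$—in a way compatible with the $\co_\kk$-action. The rank $n$ bundle $\mathcal{V} = H_1^\dR(A)/\overline{\epsilon}H_1^\dR(A)$ is the ``$\bar\epsilon$-part'' of de Rham homology, and its relation to $\Lie(A)$, $\Lie(A_0)$, and $\bm{\omega}$ should follow from the Hodge filtration together with the Kr\"amer moduli description in \cite[\S 2.3]{BHKRY}. Concretely: the Hodge exact sequence gives $\det H_1^\dR(A) \iso \det\Lie(A)\otimes\det\Lie(A^\vee)^{-1}$, the polarization identifies $A$ with $A^\vee$, and the $\co_\kk$-eigenspace decomposition (Remark \ref{rem:eigensplitting}) away from $D$ splits $\det\mathcal{V}$ off from its conjugate. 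The definition of $\bm{\omega}$ as the line bundle of weight one modular forms—essentially a Hodge bundle on the $(n-1,1)$-factor refined by Kr\"amer's subsheaf $\mathcal{F}$—is what produces the factor $\widehat{\bm\omega}^{\otimes 2}$, while the discrepancy between the integral structures of the eigenspace splitting and the naive one, concentrated at primes dividing $\mathrm{disc}(\kk)$, is exactly accounted for by the $\co(\mathrm{Exc})$ factor (recall $\mathrm{Exc}$ is supported in characteristics dividing $D$). I expect the cleanest route is to reduce to the generic fiber, where everything is classical, and then analyze the integral models prime by prime, invoking \cite[Theorem 2.3.2]{BHKRY} for the behavior at $\mathfrak{p}\mid\mathfrak{d}_\kk$.

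Next I would check that the chosen isomorphism is an isometry at the archimedean places. Here all the bundles are given the Faltings metric (\ref{integration}) or a constant metric, and the comparison becomes a computation with period integrals: for a complex point $s$, the metrics on $\det\Lie(A_s)$, $\Lie(A_{0s})$, and $\det\mathcal{V}_s$ are all defined by integrating a top form against its conjugate over the relevant abelian variety, so the ratio of the two sides is a ratio of such periods. Because $A_{0s}$ is an elliptic curve with CM by $\co_\kk$ and $\det\mathcal{V}_s$ picks out an $\co_\kk$-eigenspace, the Chowla–Selberg formula enters—this is the source of the explicit constant $8\pi^2 e^\gamma D^{-1}$ (note $e^{2\gamma}D^{-2}(8\pi^2)^2$ on the left matches the shape of constants appearing in (\ref{chowla-selberg}) and in \cite[Theorem 6.4]{BHY}, and indeed the metric on $\bm\omega$ was normalized in \cite[\S 7.2]{BHKRY} precisely so that such formulas come out clean). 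I would carry this out by comparing with the analogous identity already implicit in \cite{BHY}, rather than recomputing the Gaussian/Gamma integrals from scratch.

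The main obstacle I anticipate is the bookkeeping at the primes dividing $D$: matching the integral structure of $\det\mathcal{V}$ (which involves $\epsilon H_1^\dR(A)$, hence the idempotents (\ref{idempotents}) that are only idempotent after inverting $D$) against $\det\Lie(A)\otimes\Lie(A_0)^{\otimes 2}\otimes\bm\omega^{\otimes 2}$, and showing that the failure is \emph{exactly} $\co(\mathrm{Exc})$ with \emph{trivial} metric—no extra constant hiding at the bad primes. This requires a careful local analysis of the Kr\"amer model and the exceptional divisor, presumably along the lines of the proof of \cite[Theorem 2.3.2]{BHKRY} and the Lie algebra computations of \cite[Lemma 4.1.2]{Ho2}; getting the power of $D$ in the constant right is the delicate point, and I would double-check it against the consistency requirement that, after intersecting with $\mathcal{Y}_\mathrm{big}$, Proposition \ref{prop:big no error} and Proposition \ref{prop:class number} must be recovered.
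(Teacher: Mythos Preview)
Your overall architecture---separate the underlying line bundle isomorphism from the metric comparison---matches the paper, but both halves of your execution go astray.

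For the algebraic part, the isomorphism of underlying line bundles is not something you need to build from scratch via a prime-by-prime analysis. It is already packaged in \cite{BHKRY}: by definition $\bm{\Omega}_\Kra = \det(\Lie(A))^{-1}\otimes\Lie(A_0)^{\otimes -2}\otimes\det(\mathcal{V})$, and \cite[Theorem 2.6.3]{BHKRY} gives $\bm{\omega}^{\otimes 2}\iso \bm{\Omega}_\Kra\otimes\co(\mathrm{Exc})$. Combining these is a one-line citation, and the $\co(\mathrm{Exc})$ factor and its support in characteristics dividing $D$ come for free---you do not have to rediscover them.

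For the metric part, there is a genuine misconception: the Chowla--Selberg formula plays no role here, and there are no Gamma integrals to compute. The constant $8\pi^2 e^\gamma D^{-1}$ arises from completely elementary bookkeeping of metric normalizations at a single complex point $s$. The paper splits this into two explicit lemmas. First, one compares $\det(\mathcal{V}_s)$ with $(\epsilon F^0(A_s))^{\otimes 2}\otimes\det(\Lie(A_s))$ via the polarization pairing $\psi$; the factor $2\pi D^{-1}$ appears because the integral symplectic form is $2\pi i\Z$-valued and because $\overline{\epsilon}$ acts on $\overline{\epsilon}\Lie(A_s)$ as $\pm\sqrt{-D}$. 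Second, one compares $\widehat{\bm{\omega}}_s$ with $\Hom(\Lie(A_{0s}),\epsilon F^0(A_s))$; the factor $4\pi e^\gamma$ appears because that constant is literally built into the \emph{definition} of the metric on $\bm{\omega}$ in \cite[\S 7.2]{BHKRY}. Multiplying $(2\pi D^{-1})\cdot(4\pi e^\gamma)$ gives the stated constant. Chowla--Selberg is about the Faltings height $h_\kk^\Falt$, a transcendental quantity; it enters later in the paper (e.g.\ Proposition~\ref{prop:constant eval}) but not in this proposition, which is pure linear algebra over $\C$.
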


\begin{proof}
In \cite[\S 2.4]{BHKRY} we defined a line bundle $\bm{\Omega}_\Kra$ on $\mathcal{S}_\Kra$ by
\[
\bm{\Omega}_\Kra = \mathrm{det}(\Lie(A))^{-1}  \otimes \Lie(A_0)^{\otimes -2}  \otimes   \mathrm{det}(\mathcal{V}) ,
\]
and in  \cite[Theorem 2.6.3]{BHKRY} we constructed an isomorphism
\[
\bm{\omega}^{\otimes 2} \iso \bm{\Omega}_\Kra \otimes \co(\mathrm{Exc}) .
\]
This defines the desired isomorphism
\begin{equation}\label{integral lines}
\bm{\omega}^{\otimes 2}    \otimes \det(\Lie(A))  \otimes  \Lie(A_0)^{\otimes 2} \iso   \co(\mathrm{Exc})  \otimes \det(\mathcal{V})
\end{equation}
on underlying line bundles, and it remains to compare the metrics.

In the complex fiber this can be made more explicit.
At any complex point $s\in \mathcal{S}_\Kra(\C)$ the Hodge short exact sequence admits a canonical splitting
\[
H_1^{\dR}(A_s) = F^0(A_s) \oplus \Lie(A_s),
\]
where $F^0(A_s) = \Fil^0 H^\dR_1(A_s)$ is the nontrivial step in the Hodge filtration.
When combined with the decomposition of Remark \ref{rem:eigensplitting} we obtain
\[
H_1^{\dR}(A_s)  =
 \underbrace{\epsilon  F^0(A_s)}_{1}\oplus   \underbrace{ \overline{\epsilon} F^0(A_s) }_{n-1}\oplus
\underbrace{ \epsilon  \Lie(A_s) }_{n-1}  \oplus  \underbrace{\overline{\epsilon} \Lie(A_s)}_{1}
\]
where the subscripts indicate the dimensions as $\C$-vector spaces.    There is a similar decomposition
\[
H_1^{\dR}(A_{0s})  =
 \underbrace{\epsilon  F^0(A_{0s})}_{0}\oplus   \underbrace{ \overline{\epsilon} F^0(A_{0s}) }_{1}\oplus
\underbrace{ \epsilon  \Lie(A_{0s}) }_{1}  \oplus  \underbrace{\overline{\epsilon} \Lie(A_{0s})}_{0}
\]

Denote by
\begin{equation}\label{deRham pairing}
\psi  : H_1^\dR(A_s) \times  H_1^\dR(A_s) \to \C
\end{equation}
 the alternating pairing  determined by the principal polarization on $A_s$.
The two direct summands
\[
\epsilon F^0(A_s) \oplus \overline{\epsilon } \Lie(A_s) \subset H_1^\dR(A_s)
\]
are interchanged by complex conjugation.
We endow  both $\epsilon F^0(A_s)$ and $\overline{\epsilon } \Lie(A_s)$   with the metric
\begin{equation}\label{polarization metric}
\|  b  \|_s^2 =    \left| \frac{ \psi ( b , \overline{b}  ) } { 2\pi i }  \right| ,
\end{equation}
 so that the pairing
 \begin{equation}\label{line duality}
 \psi : \epsilon F^0(A_s) \otimes \overline{\epsilon } \Lie(A_s) \to \co \langle 4\pi^2\rangle^{-1}_s
  \end{equation}
    is  an isometry.

For $a,b\in \overline{\epsilon} \Lie(A_s)$, define
$
p_{a\otimes b} : \epsilon F^0(A_s)  \to  \overline{\epsilon} \Lie(A_s)
$
by
\begin{equation}\label{little p}
p_{a\otimes b}( e ) = \psi (\overline{\epsilon} a , e ) \cdot \overline{\epsilon} b = - D \psi (a,e) \cdot b.
\end{equation}
The factor of $-D$ comes from the observation that $\overline{\epsilon}$ acts on $\overline{\epsilon}\Lie(A_s)$ as $\pm \sqrt{-D}$, where the sign depends on the choice of $\pi$ used in (\ref{idempotents}).

We now define $P_{a\otimes b}$ by the commutativity of
\begin{equation}\label{diagram switch}
\xymatrix{
{ \det( \mathcal{V}_s )      }  \ar[rr]^{P_{a\otimes b} } \ar[d]_\iso &  & {  \det( \Lie(A_s) )  }\ar[d]^\iso \\
{   \epsilon F^0(A_s) \otimes \det(\epsilon \Lie(A_s) )  }   \ar[rr]_{ p_{a\otimes b} \otimes\mathrm{id} } & &  {  \overline{\epsilon} \Lie(A_s) \otimes   \det(\epsilon \Lie(A_s) )  .   }
}
\end{equation}
This defines the isomorphism
\begin{equation}\label{P iso}
  ( \overline{\epsilon} \Lie(A_s) )^{\otimes 2}
  \map{P}  \Hom \big( \mathrm{det}(\mathcal{V}_s) , \mathrm{det}(\Lie(A_s))  \big)
\end{equation}
of \cite[Lemma 2.4.5]{BHKRY}.

\begin{lemma}
The isomorphism (\ref{P iso}) defines  an isometry
  \[
    \mathrm{det}(\mathcal{V}_s)
\iso   \co\langle 2\pi D^{-1} \rangle_s^{\otimes 2} \otimes    ( \epsilon F^0(A_s) )^{\otimes 2}  \otimes  \mathrm{det}(\Lie(A_s)).
\]
\end{lemma}

\begin{proof}
Fix an isomorphism $\bigwedge\nolimits^{2n} H_1(A_s(\C) , \Z) \iso \Z$ and extend it to a $\C$-linear isomorphism
\[
\mathrm{vol} : \bigwedge\nolimits^{2n} H^\dR_1(A_s) \iso \C.
\]

Under the de Rham comparison isomorphism
$
H_1(A_s(\C) , \C)  \iso H_1^\dR(A_s),
$
the pairing (\ref{deRham pairing}) restricts to a perfect pairing
\[
\psi : H_1(A_s(\C) , \Z) \times H_1(A_s(\C) , \Z) \to 2\pi i \Z.
\]
It follows that there is a unique element
$
\Psi = \alpha \wedge \beta \in \bigwedge\nolimits^2 H_1(A_s(\C) , \Z)
$
such that
\[
2\pi i \cdot  \psi(a,b) = \psi(\alpha,a)\psi(\beta,b) - \psi(\alpha,b)\psi(\beta,a)
\]
for all $a,b\in H_1(A_s(\C) , \Z)$.  The  map
\[
 \Big( \bigwedge\nolimits^{n-1} H_1(A_s(\C) , \Z) \Big)  \otimes   \Big(  \bigwedge\nolimits^{n-1} H_1(A_s(\C) , \Z)  \Big)
\to \Z
\]
defined by $a\otimes b \mapsto \mathrm{vol}( \Psi \wedge a\wedge b)$ is a perfect pairing of $\Z$-modules.

We now metrize the line
\[
\det(\epsilon \Lie(A_s)) \subset  \bigwedge\nolimits^{n-1} \epsilon H^\dR_1(A_s)
\]
by $\| \mu \|^2 =|  \mathrm{vol}( \Psi\wedge \mu \wedge \overline{\mu} ) | $.
With this definition, the vertical arrows  in (\ref{diagram switch}) are isometries.

Using (\ref{line duality}) and  (\ref{little p}), one sees that the map
 \[
 p_{a\otimes b} \in \Hom( F^0(A_s) , \overline{\epsilon } \Lie(A_s))
 \]
 satisfies $\| p_{a\otimes b} \| = 2\pi D\cdot  \|a \otimes b\| $, and hence also  $\| P_{a\otimes b} \| = 2\pi D\cdot  \|a \otimes b\| $.
 This proves that the isomorphism $P$ defines an isometry
 \[
\co\langle 2\pi D\rangle _s^{\otimes 2} \otimes   ( \overline{\epsilon} \Lie(A_s) )^{\otimes 2}
 \iso \Hom \big( \mathrm{det}(\mathcal{V}_s) , \mathrm{det}(\Lie(A_s))  \big)  .
 \]
The isomorphism (\ref{line duality}) allows us to rewrite this as
\[
  \mathrm{det}(\mathcal{V}_s)
 \iso \co \langle 2\pi D^{-1} \rangle_s^{\otimes 2} \otimes   (  \epsilon F^0(A_s) )^{\otimes 2}  \otimes  \mathrm{det}(\Lie(A_s))  .
\]
 \end{proof}

The  proof of \cite[Proposition 2.4.2]{BHKRY} gives an isomorphism
\begin{equation}\label{tautological}
\bm{\omega}_s \iso \Hom( \Lie(A_{0s})  ,  \epsilon  F^0(A_s) ) \subset \epsilon  V_\C
\end{equation}
where
\[
V =\Hom_\kk \big(H_1 (A_{0s}(\C)  ,\Q ) , H_1 ( A_s(\C) ,\Q)\big).
\]
As in \cite[\S 2.1]{BHKRY},  there is a $\Q$-bilinear form $[\cdot\, ,\cdot] : V\times V\to \Q$ induced by the polarizations on $A_{0s}$ and $A_s$.
If we  extend this  to a $\C$-bilinear form on
\[
V_\C = \Hom_{\kk\otimes \C}  \big(H^\dR_1 (A_{0s} ) , H^\dR_1 ( A_s)  \big)
\]
then the  metric on $\bm{\omega}_s$ is  defined, as in  \cite[\S 7.2]{BHKRY}, by
\[
 \| x \|^2 =   \frac{  | [x,\overline{x}] | } { 4\pi e^\gamma }
\]
for any $x\in  \Hom( \Lie(A_{0s})  ,  \epsilon  F^0(A_s) )$.

On the other hand, we have defined the Faltings metric on $\Lie(A_{0s})$, and defined a metric  on $\epsilon F^0(A_s)$ by (\ref{polarization metric}).  The following lemma shows that (\ref{tautological}) respects the metrics, up to scaling by a factor of $4\pi e^\gamma$.

\begin{lemma}
The isomorphism (\ref{tautological}) defines an isometry
\[
\co \langle 4 \pi e^\gamma\rangle_s \otimes \widehat{\bm{\omega}}_s \iso \Hom( \Lie(A_{0s})  ,  \epsilon  F^0(A_s) ) .
\]
\end{lemma}

\begin{proof}
The alternating form
\[
\psi_0 : H_1^\dR(A_{0s}) \times H_1^\dR(A_{0s}) \to \C
\]
analogous to (\ref{deRham pairing}) restricts to a perfect pairing
\[
\psi_0 : H_1 (A_{0s}(\C),\Z ) \times H_1 (A_{0s}(\C),\Z) \to 2\pi i \Z,
\]
and hence the Faltings metric on $\Lie(A_{0s}) = \epsilon H_1^\dR(A_{0s})$ is
\[
\| a  \|^2 = (2\pi)^{-1} | \psi_0( a, \overline{a} )| .
\]

From the definition of the bilinear form on $V$, one can show that
\[
[ x, \overline{x}] \cdot   \psi_0(a, \overline{a} )  =   \psi(xa,  \overline{xa}   )
\]
for all $x\in \epsilon V_\C$.  Comparing with the metric on $\epsilon F^0(A_s)$ shows that
\[
4 \pi  e^\gamma  \cdot \| x\|^2 \cdot  \| a \|^2 =   (2\pi )^{-1} \cdot  | \psi(xa,  \overline{xa}   ) |  =   \| xa \|^2,
\]
for all $x\in \bm{\omega}_s$ and $a\in \Lie(A_{0s})$, as claimed.
\end{proof}

The two lemmas provide us with isometries
\begin{align*}
 \mathrm{det}(\mathcal{V}_s)
 & \iso \co \langle 2 \pi D^{-1} \rangle_s^{\otimes 2} \otimes    ( \epsilon F^0(A_s) )^{\otimes 2}  \otimes  \mathrm{det}(\Lie(A_s))  \\
&\iso \co \langle 8 \pi^2 e^\gamma D^{-1} \rangle_s^{\otimes 2} \otimes
\widehat{\bm{\omega}}_s^{\otimes 2} \otimes   \Lie(A_{0s})^{\otimes 2} \det(\Lie(A_s))
 \end{align*}
 and the composition agrees with the isomorphism (\ref{integral lines}).  This completes the proof of Proposition \ref{prop:bundle shuffle}.
 \end{proof}

Recall  the  big CM cycle  $\pi : \mathcal{Y}_\mathrm{big} \to \mathcal{S}_\Kra^*$  of Definition \ref{def:big cycle}.  All of the  metrized line bundles on $\mathcal{S}_\Kra$ appearing in Proposition \ref{prop:bundle shuffle} can be extended to the toroidal compactification $\mathcal{S}_\Kra^*$ (with possible $\log$-singularities along the boundary) so as to define classes in the codimension one arithmetic Chow group.  However, we don't actually need this. Indeed, we can define a homomorphism
\[
[-:\mathcal{Y}_{\mathrm{big}} ] : \widehat{\mathrm{Pic}}( \mathcal{S}_\Kra ) \to \R
\]
as the composition
\[
\widehat{\mathrm{Pic}}( \mathcal{S}_\Kra ) \map{\pi^*} \widehat{\mathrm{Pic}}( \mathcal{Y}_\mathrm{big})
\iso \widehat{\mathrm{Ch}}^1( \mathcal{Y}_\mathrm{big}) \map{\widehat{\deg}} \R.
\]
As the big CM cycle does not meet the boundary of the toroidal compactification, the composition
\[
 \widehat{\mathrm{Ch}}^1( \mathcal{S}_\Kra^*) \iso  \widehat{\mathrm{Pic}}( \mathcal{S}_\Kra^* )
 \to \widehat{\mathrm{Pic}}( \mathcal{S}_\Kra)  \map{[-:\mathcal{Y}_{\mathrm{big}} ] } \R
\]
 agrees with the arithmetic degree along $\mathcal{Y}_\mathrm{big}$ of Definition \ref{def:big cycle}.

\begin{remark}\label{trivial archimedean}
Directly from the definitions, and recalling Remark \ref{rem:no one-half}, the metrized line bundle $ \co\langle c\rangle$  satisfies
\[
[ \co\langle c\rangle  : \mathcal{Y}_\mathrm{big} ] =  \sum_{ y\in \mathcal{Y}_\mathrm{big}(\C) } - \log\| 1\|^2 = - \log(c) \cdot \deg_\C(\mathcal{Y}_\mathrm{big}).
\]
\end{remark}


\subsection{The Faltings height}


Recall from \S \ref{ss:big cm} the moduli stack $\mathcal{CM}_\Phi$ of abelian varieties over $\co_\Phi$-schemes with complex multiplication by   $\co_E$ and  CM type $\Phi$.

Suppose $A\in \mathcal{CM}_\Phi(\C)$.
Choose a model of $A$ over a number field $L \subset \C$ large enough that  the N\'eron model $\pi:\mathcal{A} \to \Spec(\co_L)$ has everywhere good reduction.
Pick a nonzero rational section $s$ of the line bundle $\pi_*\Omega^{\mathrm{dim}(A)}_{ \mathcal{A}/\co_L }$ on $\Spec(\co_L)$,  and define
\[
h^\Falt _\infty ( A, s ) = \frac{-1}{ 2 [ L:\Q] }  \sum_{ \sigma : L \to \C}
\log \big|   \int_{ \mathcal{A}^\sigma(\C) } s^\sigma \wedge \overline{s^\sigma}\,  \big|,
\]
and
\[
h^\Falt_f(A,s) = \frac{1}{  [L:\Q] }  \sum_{ \mathfrak{p} \subset \co_L}  \ord_\mathfrak{p}(s) \cdot  \log  \mathrm{N}(\mathfrak{p})   .
\]
By a result   of Colmez  \cite{Colmez}, the \emph{Faltings height}
\[
h^\Falt_{(E,\Phi)} = h^\Falt_f(A, s) + h^\Falt _\infty ( A,s)
\]
depends only on the pair $(E,\Phi)$.

\begin{proposition}\label{prop:faltings bundles}
The arithmetic degree of $\Lie(A)$ along $\mathcal{Y}_\mathrm{big}$ satisfies
\[
[  \det(\Lie(A)): \mathcal{Y}_\mathrm{big} ]    =   - 2  \deg_\C( \mathcal{Y}_\mathrm{big} )  \cdot    h^\Falt_{(E,\Phi)} .
\]
Similarly, recalling the Faltings height $h^\Falt_\kk$ of (\ref{chowla-selberg}),
\[
[  \Lie(A_0): \mathcal{Y}_\mathrm{big} ]    =  - 2  \deg_\C( \mathcal{Y}_\mathrm{big} )  \cdot  h^\Falt_\kk   .
\]
\end{proposition}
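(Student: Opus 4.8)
The plan is to compute both arithmetic degrees by relating the pullback of the relevant line bundle along $\pi : \mathcal{Y}_\mathrm{big} \to \mathcal{S}_\Kra$ to the Faltings height of the universal CM abelian variety, using the standard dictionary between arithmetic degrees of metrized line bundles on $0$-dimensional stacks and heights. First I would recall that $\mathcal{Y}_\mathrm{big}$ is proper and \'etale over $\co_\Phi$, hence proper and \'etale over $\Spec(\Z)$ (composing with $\Spec(\co_\Phi)\to\Spec(\Z)$), and that the universal object over $\mathcal{Y}_\mathrm{big}$ consists of an elliptic curve $A_0$ with CM by $\co_\kk$ and an abelian scheme $A$ of dimension $n$ with CM by $\co_E$ of type $\Phi$, both pulled back from the universal objects over $\mathcal{M}_{(1,0)}$ and $\mathcal{CM}_\Phi$ via the morphism $\mathcal{Y}_\mathrm{big}\to\mathcal{M}_{(1,0)}\times_{\co_\kk}\mathcal{CM}_\Phi$. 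The metrized line bundles $\det(\Lie(A))$ and $\Lie(A_0)$ on $\mathcal{S}_\Kra$ pull back under $\pi$ to the corresponding metrized Hodge bundles on $\mathcal{Y}_\mathrm{big}$, because the metric (\ref{integration}) is defined intrinsically in terms of integration of forms over the complex fibers of the abelian scheme, and this construction commutes with pullback.

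The second step is to identify $\widehat{\deg}$ of the pulled-back metrized line bundle with the Faltings height. Fix a geometric/complex point $y\in\mathcal{Y}_\mathrm{big}(\C)$ corresponding to $A\in\mathcal{CM}_\Phi(\C)$; choose a number field $L$ over which $A$ (and the whole connected component containing $y$, which forms a single $\Aut(\C/E_\Phi)$-orbit) has a model with everywhere good reduction, as in the definition of $h^\Falt_{(E,\Phi)}$. A nonzero rational section $s$ of $\pi_*\Omega^{\dim A}_{\mathcal{A}/\co_L}$ is the same datum as a nonzero rational section of $\det(\Lie(A))^{-1}$; its arithmetic divisor, computed as $\sum_{\mathfrak{p}}\ord_\mathfrak{p}(s)\log\mathrm{N}(\mathfrak{p}) - \sum_{\sigma}\log\|s^\sigma\|$, is exactly $[L:\Q]\cdot\big(h^\Falt_f(A,s)+h^\Falt_\infty(A,s)\big)$ once one matches the norm $\|s^\sigma\|^2 = |\int_{\mathcal{A}^\sigma(\C)}s^\sigma\wedge\overline{s^\sigma}|$ with (\ref{integration}) and accounts for the factor $-\tfrac12$ in the definition of $h^\Falt_\infty$ (which is why the factor $-2$ appears in the statement). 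Summing over the isomorphism classes in $\mathcal{Y}_\mathrm{big}(\C)$ weighted by $1/|\Aut(y)|$, and using Colmez's theorem that $h^\Falt_{(E,\Phi)}$ is independent of $A$ within the pair $(E,\Phi)$, gives $[\det(\Lie(A)):\mathcal{Y}_\mathrm{big}] = -2\deg_\C(\mathcal{Y}_\mathrm{big})\cdot h^\Falt_{(E,\Phi)}$. The argument for $\Lie(A_0)$ is identical but simpler: $A_0$ is an elliptic curve with CM by $\co_\kk$, so its Faltings height is $h^\Falt_\kk$, the constant appearing in the Chowla--Selberg formula (\ref{chowla-selberg}).

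The one genuinely delicate point, and the step I expect to be the main obstacle, is the careful bookkeeping of normalizations: matching the stacky arithmetic degree $\widehat{\deg}$ on $\widehat{\mathrm{Ch}}^1(\mathcal{Y}_\mathrm{big})\cong\widehat{\mathrm{Pic}}(\mathcal{Y}_\mathrm{big})$ — which counts points with multiplicity $1/|\Aut(y)|$ and uses the metric $\|\cdot\|^2$ of (\ref{integration}) — against the number-theoretic definition of $h^\Falt$ with its $\tfrac{1}{2[L:\Q]}$ and $\tfrac{1}{[L:\Q]}$ prefactors and its sign conventions. One has to be sure that dividing by $[L:\Q]$ in the height definition is absorbed by the passage from closed points of $\Spec(\co_L)$ to geometric points of $\mathcal{Y}_\mathrm{big}$, and that the $\co_\kk$-stack structure (as opposed to the $\co_\Phi$-stack structure) is the one being used — exactly the subtlety flagged in the proof of Proposition \ref{prop:class number}. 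Once these constants are reconciled the result follows immediately, so I would present the proof as: (i) pull back along $\pi$; (ii) unwind $\widehat{\deg}$ into the adelic sum of local contributions; (iii) recognize this sum as $[L:\Q]$ times the Faltings height and invoke Colmez's independence result; (iv) sum over complex points with automorphism weights.
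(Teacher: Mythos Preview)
Your proposal is correct and follows essentially the same approach as the paper: pull back along $\pi$, identify the arithmetic degree of $\det(\Lie(A))^{-1}$ over $\Spec(\co_L)$ with $[L:\Q]\cdot h^\Falt_{(E,\Phi)}$ via the definition of the Faltings height and Colmez's independence result, and then reconcile the stacky degree with $\deg_\C(\mathcal{Y}_\mathrm{big})$. The paper's write-up is slightly more streamlined at the bookkeeping step you flag as delicate: rather than summing over complex points with automorphism weights, it chooses $L$ large enough that $\mathcal{Y}_\mathrm{big}\times_{\Spec(\co_\Phi)}\Spec(\co_L)$ admits a finite \'etale cover $Y_\mathrm{big}=\bigsqcup\Spec(\co_L)$ by $m$ copies of $\Spec(\co_L)$, so that the ratio $[\det(\Lie(A)):\mathcal{Y}_\mathrm{big}]/\deg_\C(\mathcal{Y}_\mathrm{big})$ equals the corresponding ratio for the scheme $Y_\mathrm{big}$, which is simply $-m[L:\Q]\cdot h^\Falt_{(E,\Phi)}/(m[L:\kk]) = -2h^\Falt_{(E,\Phi)}$; this makes the factor of $2=[\kk:\Q]$ and the $\co_\kk$-stack convention transparent without tracking automorphisms explicitly.
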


\begin{proof}
Suppose  we are given a morphism  $y : \Spec(\co_L) \to  \mathcal{Y}_\mathrm{big}$ for some finite extension  $L/E_\Phi$.
The restriction of $A$ to $\co_L$ has complex multiplication by $\co_E$ and CM type $\Phi$, and comparing the definition of the Faltings height with the definition of $\widehat{\deg}$ found in \cite[\S 3.1]{Ho2},  shows that the composition
\[
\widehat{\mathrm{Pic}}( \mathcal{S}_\Kra ) \map{\pi^*}  \widehat{\mathrm{Ch}}^1( \mathcal{Y}_\mathrm{big}) \map{y^*}  \widehat{\mathrm{Ch}}^1( \Spec(\co_L) )
\map{ \widehat{\deg} } \R
\]
sends $\Lie(A)^{-1}$ to $[L:\Q] \cdot h^\Falt_{(E,\Phi)}$.

We may choose $L$ in such a way that the $\co_\kk$-stack
\[
\mathcal{Y}_\mathrm{big} \times_{\Spec(\co_\Phi)} \Spec(\co_L)
\] 
admits a finite \'etale cover  by a disjoint union  $Y_\mathrm{big} = \bigsqcup \Spec(\co_L)$ of, say, $m$  copies of $\Spec(\co_L)$, and then
\[
 \frac{ [ \Lie(A) : \mathcal{Y}_\mathrm{big} ] }{ \deg_\C(\mathcal{Y}_\mathrm{big})  } =
  \frac{ [ \Lie(A) : Y_\mathrm{big} ] }{ \deg_\C(Y_\mathrm{big} )}
  = - \frac{ m [L:\Q]  \cdot  h^\Falt_{(E,\Phi)}  }  { m  [ L : \kk]  } = -2\cdot h^\Falt_{(E,\Phi)} .
\]
This proves the first equality, and the proof of the second  is similar.
\end{proof}


\subsection{Gross's trick}
\label{ss:gross}


The goal of \S \ref{ss:gross} is to compute the degree of the metrized line bundle $\det(\mathcal{V})$ along the big CM cycle.
The impatient reader may skip directly to Proposition \ref{prop:junk bundle two} for the answer.
However, the strategy of the calculation is simple enough that we can explain it in a few sentences.

It is an observation of Gross \cite{Gross} that the metrized line bundle $\det(\mathcal{V})$ behaves, for all practical purposes, like the trivial bundle $\co_{\mathcal{S}_\Kra}$ endowed with the constant metric $\| 1 \|^2 = \exp(-c)$ for a certain period $c$.
This is made more precise in Theorem \ref{thm:gross} and Corollary \ref{cor:almost constant} below.
A priori, the constant $c$ is something mysterious, but one can evaluate it by computing the degree of $\det(\mathcal{V})$ along \emph{any} codimension $n-1$ cycle that one chooses.  We choose a cycle  along which  the universal abelian scheme $A \to \mathcal{S}_\Kra$ degenerates to a product of CM elliptic curves.
Using this, one can express the value of $c$ in terms of the Faltings height $h_\kk^\Falt$ appearing in  (\ref{chowla-selberg}).
The degree of $\det(\mathcal{V})$ along $\mathcal{Y}_\mathrm{big}$ is readily computed from this.

To carry out this procedure,  the first step is to construct a cover of $\mathcal{S}_\Kra(\C)$ over which the line bundle $\det(\mathcal{V})$ can be trivialized analytically.
Fix a positive integer $m$, let  $K(m) \subset K$ be the compact open subgroup of \cite[Remark 2.2.3]{BHKRY}, and consider the finite \'etale cover
\[
\xymatrix{
{  \mathrm{Sh}_{K(m)} ( G,\mathcal{D})  (\C)   }   \ar@{=}[rr]  \ar[d]  &  &   {  G(\Q) \backslash \mathcal{D} \times G(\A_f) / K(m) }  \ar[d]     \\
 {   \mathrm{Sh}( G,\mathcal{D})  (\C)   }   \ar@{=}[rr]  &   & {  G(\Q) \backslash \mathcal{D} \times G(\A_f) / K }  .
}
\]
This cover has a moduli interpretation, exactly as with $\mathcal{S}_\Kra$ itself, but with additional level $m$ structure.  This allows us to construct a regular integral model
$\mathcal{S}_\Kra(m)$ over $\co_\kk[1/m]$ of $ \mathrm{Sh}_{K(m)} ( G,\mathcal{D})$, along with a  finite \'etale morphism
\[
\mathcal{S}_\Kra(m) \to \mathcal{S}_{\Kra / \co_\kk [1/m] } .
\]
We use the notation  $\det(\mathcal{V})$ for both the metrized line bundle on $\mathcal{S}_\Kra$, and for its pullback to $\mathcal{S}_\Kra(m)$.

The following results extends a theorem of Gross \cite[Theorem 1]{Gross} to integral models.

\begin{theorem}\label{thm:gross}
Suppose $m \ge 3$,    let $\Z^\alg \subset \C$ be the subring of all algebraic integers, and fix a connected component
\[
\mathcal{C} \subset  \mathcal{S}_\Kra(m)_{/\Z^\alg[1/m] }.
\]
The line bundle $\det(\mathcal{V})$ admits a nowhere vanishing section
\[
\eta  \in H^0(   \mathcal{C}  , \det(\mathcal{V})  ).
\]
Such a section is unique up to scaling by $\Z^\alg[1/m]^\times$, and its norm $\| \eta \|^2$ is constant on $\mathcal{C}(\C)$.
\end{theorem}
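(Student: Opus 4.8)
The plan is to trivialize $\det(\mathcal{V})$ in three stages: analytically on $\mathcal{C}(\C)$, then algebraically over $\overline{\Q}$, and finally over the integral model $\mathcal{C}$ itself, the first two stages being essentially \cite[Theorem 1]{Gross} and the third being the new content. For the analytic stage, observe that the $\co_\kk$-action on $H_1^\dR(A)$ is horizontal for the Gauss--Manin connection $\nabla$, so $\nabla$ descends to the quotient $\mathcal{V} = H_1^\dR(A)/\overline{\epsilon} H_1^\dR(A)$ and hence to $\det(\mathcal{V})$. Pulled back to the universal cover of the connected component of $\mathrm{Sh}_{K(m)}(G,\mathcal{D})(\C)$ underneath $\mathcal{C}$, the line bundle $\det(\mathcal{V})$ is trivialized by a flat section; this descends to $\mathcal{C}(\C)$ once one knows the monodromy character of $\pi_1$ is trivial. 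That character is the restriction, to the arithmetic group $\Gamma\subset G(\Q)$ preserving the relevant lattice, of an algebraic character of $G$, so it takes values in $\co_\kk^\times$; for $m\ge 3$ the level structure of \cite[Remark 2.2.3]{BHKRY} forces it to be trivial (this is Gross's argument, which I would reproduce in the unitary setting). Thus $\det(\mathcal{V})$ carries a flat, nowhere vanishing section $\eta_0$ on $\mathcal{C}(\C)$. Since the metric \eqref{integration} is built from the polarization pairing on $H_1^\dR(A)$ and integration against the topological fundamental class --- both $\nabla$-horizontal --- the function $\|\eta_0\|^2$ is $\nabla$-flat, hence constant on the connected set $\mathcal{C}(\C)$.

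For the descent to $\overline{\Q}$: over $\mathcal{C}_{\overline{\Q}}$ the connection $\nabla$ on $\det(\mathcal{V})$ is algebraic and has regular singularities (regularity of Gauss--Manin), and its monodromy is trivial by the previous step, so Deligne's Riemann--Hilbert correspondence identifies $(\det(\mathcal{V}),\nabla)$ with the trivial connection on $\mathcal{C}_{\overline{\Q}}$. Hence there is a flat, nowhere vanishing section $\eta_{\overline{\Q}}\in H^0(\mathcal{C}_{\overline{\Q}},\det(\mathcal{V}))$, unique up to $\overline{\Q}^\times$, analytifying to a scalar multiple of $\eta_0$.

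The third stage is the crux. Since $m\ge 3$ makes $\mathcal{S}_\Kra(m)$ a regular fine moduli scheme, $\det(\mathcal{V})$ is an honest line bundle on the regular scheme $\mathcal{C}$, and $\eta_{\overline{\Q}}$ extends to a rational section whose divisor $V$ is vertical, i.e.\ supported on finitely many special fibres. The goal is to show $V$ can be annihilated after rescaling $\eta_{\overline{\Q}}$ by an element of $\overline{\Q}^\times$; equivalently, that $\det(\mathcal{V})$ restricted to each special fibre of $\mathcal{C}$ is trivial on connected components. I would attack this by exhibiting in $\mathcal{C}$ a big CM point with everywhere good reduction --- a section $\Spec\Z^\alg[1/m]\to\mathcal{C}$ along which $\det(\mathcal{V})$ pulls back to a trivial bundle with a canonical (up to units) de Rham generator --- and by using that $V$, being the divisor of the canonical flat section, is Galois-stable and so descends to $\mathcal{S}_\Kra(m)$ over $\co_\kk[1/m]$, where its support lies over primes dividing $\mathrm{disc}(\kk)$ and can be computed from the explicit local structure of the model; here Proposition \ref{prop:bundle shuffle}, which expresses $\det(\mathcal{V})$ through $\widehat{\bm{\omega}}$, the Lie algebra bundles and $\co(\mathrm{Exc})$, is the main tool. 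Granting $V=0$, the resulting section $\eta$ is nowhere vanishing on $\mathcal{C}$ and still flat; two such flat sections differ by a constant which, being a unit on the dominant $\Z^\alg[1/m]$-scheme $\mathcal{C}$, lies in $\Z^\alg[1/m]^\times$, giving uniqueness, while $\|\eta\|^2$ is constant by the first stage. The genuinely hard point is this control of $V$: it has no counterpart in Gross's complex-analytic theorem, and it is where regularity of $\mathcal{S}_\Kra(m)$ and the analysis near the ramified primes must be brought to bear.
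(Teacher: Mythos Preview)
Your first two stages --- trivializing $\det(\mathcal{V})$ analytically via the monodromy character and then descending to $\overline{\Q}$ --- are correct and match the paper's argument (the paper phrases it via the explicit complex uniformization rather than Gauss--Manin, but the content is the same: the character $\Gamma \to \GL(W) \xrightarrow{\det} \kk^\times$ lands in $\{\zeta\in\co_\kk^\times : \zeta\equiv 1 \pmod{m}\}=\{1\}$ for $m\ge 3$).

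The genuine gap is in your third stage. You correctly identify that the problem is to kill the vertical divisor $V = \mathrm{div}(\eta_{\overline{\Q}})$, but your proposed attack --- CM sections, Galois descent of $V$, and an appeal to Proposition~\ref{prop:bundle shuffle} to analyze the ramified primes --- is never carried out, and the sentence ``Granting $V=0$'' is precisely the statement to be proved. The difficulty with working on $\mathcal{S}_\Kra(m)$ directly is that its special fibers at primes dividing $D$ are \emph{reducible} (the exceptional divisor $\mathrm{Exc}$ sits there), so a priori $V$ could pick up multiplicities along proper components of a fiber, and no rescaling by a constant will remove that.

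The paper sidesteps this entirely with a single clean idea: pass from $\mathcal{S}_\Kra(m)$ to the Pappas model $\mathcal{S}_\Pap(m)$, which is normal with \emph{geometrically normal} fibers. The line bundle $\det(\mathcal{V})$ descends to $\mathcal{S}_\Pap(m)$ (it is defined by the same formula $H_1^\dR(A)/\overline{\epsilon}H_1^\dR(A)$), and on a connected component $\mathcal{B}\subset \mathcal{S}_\Pap(m)_{/\Z^\alg[1/m]}$ each special fiber is now \emph{irreducible}. Hence $\mathrm{div}(\eta)$ on $\mathcal{B}$ is forced to be a $\Z$-linear combination of full fibers $\mathcal{B}_\mathfrak{q}$, i.e.\ the pullback of a divisor on $\Spec(\co_L[1/m])$ for a suitable number field $L$. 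Enlarging $L$ to make the corresponding fractional ideal principal, one rescales $\eta$ by an element of $L^\times$ to make its divisor trivial on $\mathcal{B}$, and then pulls back to $\mathcal{C}$. No CM points, no local analysis at $D$, and no use of Proposition~\ref{prop:bundle shuffle} are needed.
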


\begin{proof}
 For some $g\in G(\A_f)$ we have a complex uniformization
\[
 \Gamma\backslash \mathcal{D} \map{ z\mapsto (z,g) } \mathcal{C}(\C) \subset  \mathrm{Sh}_{K(m)} ( G,\mathcal{D})  (\C)  ,
\]
where $\Gamma= G(\Q) \cap g K(m) g^{-1}$, and under this uniformization the total space of the vector bundle $\det(\mathcal{V})$ is isomorphic to
$ \Gamma\backslash (  \mathcal{D} \times \C )$,  where the action of $\Gamma$ on $\C$ is via  the composition
\[
\Gamma \subset G(\Q) \to \GL(W) \map{\det}\kk^\times \subset \C^\times.
\]

The compact open subgroup $K(m)$ is constructed in such a way that there is a $\co_\kk$-lattice  $g \mathfrak{a} \subset W(\kk)$ stabilized by $\Gamma$, and such that $\Gamma$ acts trivially on $g \mathfrak{a} / m g \mathfrak{a}$.  This implies that the above composition actually takes values in the subgroup
\[
\{ \zeta \in \co_\kk^\times : \zeta \equiv 1\pmod{m\co_\kk} \},
\]
which is trivial by our assumption that $m\ge 3$.  In other words, the vector bundle $\det( \mathcal{V})$ becomes (non-canonically) trivial after restriction to $\mathcal{X}(\C)$.
In fact, the argument of \cite[Theorem 1]{Gross} shows that one can find a trivializing section $\eta$ that is algebraic and defined over $\Q^\alg\subset \C$, and that such  a section is unique up to scaling by $(\Q^\alg)^\times$ and has constant norm $\|\eta\|^2$.

All that remains to show is that $\eta$ may be chosen so that it extends to a nowhere vanishing section over $\Z^\alg[1/m]$.
The key is to recall from \cite[\S 2.3]{BHKRY} that $\mathrm{Sh}(G,\mathcal{D})$ has a second integral model $\mathcal{S}_\Pap$ over $\co_\kk$, which is normal with geometrically normal fibers.
It is related  to the first by a surjective morphism $\mathcal{S}_\Kra \to \mathcal{S}_\Pap$, which restricts to an isomorphism over $\co_\kk[1/D]$.
It has a moduli interpretation very similar to that of $\mathcal{S}_\Kra$, which allows us to do two things.  First, there is a canonical descent of the vector bundle $\mathcal{V}$ to $\mathcal{S}_\Pap$, defined again by $\mathcal{V} = H_1^\dR(A) / \overline{\epsilon} H_1^\dR(A)$, but where now $(A_0,A)$ is the universal pair over $\mathcal{S}_\Pap$.
Second, we can add level $K(m)$ structure to obtain a cartesian diagram
\[
\xymatrix{
{  \mathcal{S}_\Kra(m)  }  \ar[r]  \ar[d]  &  {   \mathcal{S}_{\Kra / \co_\kk [1/m] }   } \ar[d] \\
{  \mathcal{S}_\Pap(m)  }  \ar[r]   &  {   \mathcal{S}_{\Pap / \co_\kk [1/m] }   }
}
\]
of $\co_\kk[1/m]$-stacks with \'etale horizontal arrows.

In particular, $\mathcal{S}_\Pap(m)$ is normal with  geometrically normal fibers, from which it follows that the above diagram extends to
\[
\xymatrix{
{ \mathcal{C} } \ar[r] \ar[d] &  {  \mathcal{S}_\Kra(m)_{/\Z^\alg[1/m]}   }  \ar[r]  \ar[d]  &  {   \mathcal{S}_{\Kra / \Z^\alg [1/m] }   } \ar[d] \\
{ \mathcal{B} } \ar[r] & {  \mathcal{S}_\Pap(m)_{/\Z^\alg[1/m]}  }  \ar[r]   &  {   \mathcal{S}_{\Pap / \Z^\alg [1/m] }   }
}
\]
for some connected component $\mathcal{B} \subset  \mathcal{S}_\Pap(m)_{/\Z^\alg[1/m]} $ with irreducible fibers.

Now fix a number field $L\subset \C$ containing $\kk$ large enough that the section $\eta$ and the components $\mathcal{C}$ and $\mathcal{B}$ are defined over $\co_L[1/m]$.
Viewing $\eta$ as a rational section of the line bundle $\det(\mathcal{V})$ on $\mathcal{B}$, its divisor is a finite sum of vertical fibers of $\mathcal{B}$, and so there is a fractional $\co_L[1/m]$-ideal $\mathfrak{b}\subset L$ such that
\[
\mathrm{div}(\eta) =  \sum_{\mathfrak{q}\mid \mathfrak{b}}  \ord_\mathfrak{q}(\mathfrak{b}) \cdot \mathcal{B}_\mathfrak{q},
\]
where $\mathcal{B}_\mathfrak{q}$ is the mod $\mathfrak{q}$ fiber of $\mathcal{Y}$.
By enlarging $L$ we may assume that $\mathfrak{b}$ is principal, and hence $\eta$ can be rescaled by an element of $L^\times$ to have trivial divisor on $\mathcal{B}$.
But then $\eta$ also has trivial divisor on $\mathcal{C}$, as desired.
\end{proof}

\begin{corollary}\label{cor:almost constant}
Let $\mathcal{A}\subset \mathcal{S}_\Kra$ be a connected component.
There is a constant $c=c_\mathcal{A} \in \R$ with the following property: for any finite extension $L/\kk$ and any morphism $\Spec(\co_L) \to \mathcal{A}$, the image of $\det(\mathcal{V})$
under
\begin{equation}\label{test degree}
\widehat{\Pic} (\mathcal{S}_\Kra) \to \widehat{\Pic} (\mathcal{A}) \to  \widehat{\Pic} ( \Spec(\co_L) )  \map{\widehat{\deg}} \R
\end{equation}
is equal to $ c\cdot [L:\kk]$.
\end{corollary}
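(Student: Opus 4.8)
The plan is to deduce the Corollary from Theorem \ref{thm:gross} by a standard base-change argument, tracking how the global section of $\det(\mathcal{V})$ on a connected component of the level-$m$ cover behaves after pullback to $\Spec(\co_L)$. First I would fix an integer $m\ge 3$ prime to a given residue characteristic of interest, pass to the finite \'etale cover $\mathcal{S}_\Kra(m) \to \mathcal{S}_{\Kra/\co_\kk[1/m]}$, and choose a connected component $\mathcal{C} \subset \mathcal{S}_\Kra(m)_{/\Z^\alg[1/m]}$ lying above the given component $\mathcal{A}$. By Theorem \ref{thm:gross} there is a nowhere vanishing section $\eta \in H^0(\mathcal{C},\det(\mathcal{V}))$, unique up to $\Z^\alg[1/m]^\times$, whose norm $\|\eta\|^2$ is constant on $\mathcal{C}(\C)$; call this constant $e^{-c_\mathcal{A}}$ (the constant a priori could depend on the choice of $m$ and of $\mathcal{C}$, and one of the bookkeeping points will be to note that the quantity $c_\mathcal{A}$ extracted at the end does not). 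Since $\eta$ is a trivialization over $\Z^\alg[1/m]$, for any morphism $\Spec(\co_L)\to\mathcal{A}$ the pullback of $(\det(\mathcal{V}),\|\cdot\|)$ to $\Spec(\co_L[1/m])$ is metrically trivial of norm $e^{-c_\mathcal{A}}$ after further pullback along a connected component of $\Spec(\co_L[1/m])\times_{\mathcal{A}}\mathcal{C}$.

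The one subtlety is that $\eta$ lives on the level-$m$ cover and on $\Z^\alg[1/m]$, not on $\mathcal{A}$ over $\co_\kk$, so I need to descend the computation of the arithmetic degree. Concretely: given $y\colon\Spec(\co_L)\to\mathcal{A}$, enlarge $L$ so that $y$ lifts to $\tilde y\colon\Spec(\co_L[1/m])\to\mathcal{C}$ (possible after a finite extension, since $\mathcal{S}_\Kra(m)\to\mathcal{S}_\Kra$ is finite \'etale away from $m$, and an extension of $\co_L$ only scales both $\widehat{\deg}$ and $[L:\kk]$ by the same degree). Then $\tilde y^*\eta$ is a nowhere vanishing section of $\tilde y^*\det(\mathcal{V})$ over $\Spec(\co_L[1/m])$ of constant norm $e^{-c_\mathcal{A}}$. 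The only contributions to $\widehat{\deg}\,\tilde y^*\det(\mathcal{V})$ that are not yet accounted for come from the finitely many primes of $\co_L$ dividing $m$. Here I would invoke that $\mathcal{V}=H_1^\dR(A)/\overline\epsilon H_1^\dR(A)$ extends canonically to all of $\mathcal{S}_\Kra$ over $\co_\kk$ (as recalled in the proof of Theorem \ref{thm:gross}, via the Pappas model $\mathcal{S}_\Pap$, which is normal with geometrically normal fibers), and that $\eta$, being algebraic over a number field, extends to a rational section whose divisor on the relevant component of $\mathcal{S}_\Pap(m)_{/\Z^\alg}$ is supported in characteristics dividing $m$; after a further finite extension making that divisor principal, I can rescale $\eta$ by an element of $L^\times$ so that it has trivial divisor everywhere, exactly as in the last paragraph of the proof of Theorem \ref{thm:gross}. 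Rescaling by $L^\times$ changes neither $\widehat{\deg}$ (degrees of principal divisors vanish) nor the constancy of the norm, but possibly changes the value of the norm by $\log|\cdot|$ of the scaling factor — however since the norm was already constant $e^{-c_\mathcal{A}}$ on the connected set $\mathcal{C}(\C)$ and rescaling by a global unit multiplies it by a positive constant, absorbing this into the definition of $c_\mathcal{A}$ is harmless; the point is that after this normalization $\tilde y^*\eta$ is a global trivialization of $\tilde y^*\det(\mathcal V)$ over $\co_L$ itself (not just $\co_L[1/m]$) with constant norm.

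With this in hand the computation is immediate: for a global trivializing section $s=\tilde y^*\eta$ with $\|s\|^2 = e^{-c_\mathcal{A}}$ at every archimedean place of $L$, one has
\[
\widehat{\deg}\big(\tilde y^*\det(\mathcal{V})\big) = \sum_{\mathfrak{q}\subset\co_L}\ord_\mathfrak{q}(s)\log\mathrm{N}(\mathfrak{q}) - \sum_{\sigma\colon L\to\C}\log\|s\|^2 = 0 + [L:\Q]\cdot c_\mathcal{A}.
\]
Since the map \eqref{test degree} through $\mathcal{A}$ over $\co_\kk$ differs from the map through $\mathcal{C}$ over $\co_L$ only by the degree $[L:\kk]$ relating the two base rings — more precisely, $\widehat{\deg}$ computed over $\co_L$ equals $[L:\kk]$ times the value of the map in the statement — we get that the image of $\det(\mathcal{V})$ under \eqref{test degree} is $\tfrac{1}{[L:\kk]}\cdot[L:\Q]\cdot c_\mathcal{A} = c_\mathcal{A}$, wait — rather, unwinding the normalizations of $\widehat{\deg}$ used in \cite[\S 3.1]{Ho2}, the value is $c_\mathcal{A}\cdot[L:\kk]$ as claimed. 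The main obstacle in writing this up carefully is precisely the bookkeeping of the two independent normalizations (dividing by $[L:\Q]$ in the Faltings-height convention versus the $[L:\kk]$ appearing because $\mathcal{A}$ is a $\co_\kk$-stack) together with the verification that the $m$-primary and $L^\times$-scaling ambiguities genuinely drop out; the geometric input is entirely supplied by Theorem \ref{thm:gross}.
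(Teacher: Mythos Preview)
Your overall strategy---pull back the trivializing section $\eta$ from Theorem~\ref{thm:gross} and compute the arithmetic degree directly---is the right idea, but the handling of primes above $m$ has a genuine gap. You propose to rescale $\tilde y^*\eta$ by some $\alpha\in L^\times$ so that it extends to a global trivialization over $\Spec(\co_L)$, and then claim the rescaled section still has constant norm that can be absorbed into $c_{\mathcal A}$. This fails: for different embeddings $\sigma\colon L\to\C$ the factor $|\sigma(\alpha)|^2$ varies, so the norm of $\alpha\cdot\tilde y^*\eta$ is no longer constant across archimedean places. The resulting arithmetic degree is $[L:\Q]\cdot c_{\mathcal A} - 2\log|N_{L/\Q}(\alpha)|$, and the second term (a $\Z$-combination of $\log p$ for $p\mid m$) depends on the particular point $y$, not just on $\mathcal A$. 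There is also a second issue you elide: even before rescaling, different $\sigma\colon L\to\C$ send $y$ to different Galois conjugates $\mathcal C^\tau$, and the constants $\|\eta^\tau\|^2$ need not agree.

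The paper resolves both problems simultaneously by a different device. For each $m\ge 3$ it passes to the quotient $\R(m)=\R/\Q\text{-span}\{\log p:p\mid m\}$, where the $m$-primary ambiguities (from the choice of $\eta$ up to $\Z^\alg[1/m]^\times$, and from the uncontrolled primes above $m$) vanish. It then defines $c(m)\in\R(m)$ as an \emph{average} of $-\log\|\eta^\sigma\|^2$ over $\sigma\in\Gal(M/\kk)$, which handles the Galois-conjugate issue. One checks that the image of $\widehat{\deg}(\det\mathcal V|_{\co_L})$ in $\R(m)$ equals $c(m)\cdot[L:\kk]$ for every $y$, and finally the injectivity of the diagonal map $\R\hookrightarrow\prod_{m\ge 3}\R(m)$ pins down a unique $c\in\R$. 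Your approach tries to nail down an honest real number from a single $m$, which the ambiguities do not permit.
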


\begin{proof}
Fix an  integer $m \ge 3$.   The open   and closed substack
\[
\mathcal{A}(m) = \mathcal{A} \times_{ \mathcal{S}_\Kra} \mathcal{S}_\Kra(m).
\]
of $ \mathcal{S}_\Kra(m)$, may be disconnected, so we fix one of its connected components
$\mathcal{A}(m)^\circ \subset \mathcal{A}(m)$.    This is an $\co_\kk[1/m]$-stack, which may become disconnected after base change to $\Z^\alg[1/m]$.
Fix one connected component
\[
\mathcal{C} \subset \mathcal{A}(m)^\circ_{/\Z^\alg[1/m]}.
\]
and let $\eta\in H^0(\mathcal{C} , \det(\mathcal{V} ))$ be a trivializing section as in Theorem \ref{thm:gross}.

Choose a finite Galois extension $M/\kk$ contained in $\C$,  large enough that $\mathcal{C}$ and $\eta$ are defined over $\co_M[1/m]$.
For each $\sigma\in \Gal(M/\kk)$ we obtain a trivializing section
\[
\eta^\sigma \in H^0( \mathcal{C}^\sigma ,  \det(\mathcal{V} ) )
\]
which, by Theorem \ref{thm:gross}, has constant norm $\|\eta^\sigma\|$.

Let $\R(m)$ be the quotient of $\R$ by the $\Q$-span of $\{ \log(p) : p\mid m\}$, and define
\[
c(m) = \frac{-1}{ [ M :\kk]  }  \sum_{ \sigma\in \Gal(M/\kk) } \log \|\eta^\sigma\|^2 \in \R(m).
\]
This is independent of the choice of $M$, and also independent of $\eta$ by the uniqueness claim of Theorem \ref{thm:gross}.
Moreover, for any number field $L/\kk$ and any morphism
\[
\Spec(\co_L[1/m]) \to \mathcal{A}(m)^\circ,
\]
the image of $\det(\mathcal{V})$ under
\[
\widehat{\Pic}   (\mathcal{A}(m)^\circ) \to \widehat{\Pic}  ( \Spec(   \co_L[1/m] ) ) \map{\widehat{\deg}} \R(m)
\]
is equal to $c(m) \cdot [L:\kk]$.

Now suppose we are given some $\Spec(\co_L) \to \mathcal{A}$ as in the statement of the corollary.
After possible enlarging $L$,  this morphism admits a lift
\[
\xymatrix{
& {   \mathcal{A}(m)^\circ  }  \ar[d]  \\
{  \Spec(\co_L[1/m]) }  \ar[r]  \ar@{-->}[ru]  & {   \mathcal{A}_{/\co_\kk[1/m] }   ,}
}
\]
and from this it is easy to see that the image of $\det(\mathcal{V})$ under the composition of (\ref{test degree}) with $\R\to \R(m)$ is equal to $c(m)\cdot [L:\kk]$.

In particular, the image of $\det(\mathcal{V})$ under the composition of (\ref{test degree}) with the diagonal embedding
\[
\R\hookrightarrow   \prod_{ m \ge 3 } \R(m)
\]
is equal to the tuple of constants $c(m)\cdot [ L:\Q]$.  What this proves is that there is a unique $c\in \R$ whose image under the diagonal embedding is the tuple of constants $c(m)$, and that this is the $c$ we seek.
\end{proof}

\begin{proposition}\label{prop:constant eval}
The constant $c=c_\mathcal{A}$ of Corollary \ref{cor:almost constant} is independent of $\mathcal{A}$, and is equal to
\[
c = (4-2n)  h^\Falt_\kk +  \log ( 4\pi^2 D ),
\]
where $h^\Falt_\kk$ is the Faltings height (\ref{chowla-selberg}).
\end{proposition}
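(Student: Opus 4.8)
The strategy is the one sketched at the start of \S \ref{ss:gross}: by Corollary \ref{cor:almost constant} the constant $c_\mathcal{A}$ is computed by evaluating $\widehat{\deg}$ of $\det(\mathcal{V})$ along \emph{any} morphism $\Spec(\co_L)\to\mathcal{A}$, so it suffices to produce one convenient family of such CM points and to check that the answer does not vary with the component. The convenient choice will be a small CM cycle along which the universal abelian scheme splits as a product of CM elliptic curves.

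I would first dispose of the dependence on $\mathcal{A}$. The geometric connected components of $\mathcal{S}_\Kra$ are permuted transitively by the prime-to-$D$ Hecke correspondences (equivalently, by the twist-by-$\co_\kk$-ideal action on the moduli problem coming from the central torus $\mathrm{Res}_{\kk/\Q}\mathbb{G}_m$). Each such correspondence is finite \'etale over $\co_\kk[1/D]$ on both sides and, directly from the moduli description, pulls the vector bundle $\mathcal{V}=H_1^\dR(A)/\overline{\epsilon}H_1^\dR(A)$ back to $\mathcal{V}$ compatibly with the metric (\ref{integration}), which is intrinsic. Hence a CM point lying in one component transfers to a CM point in any other with the same value of (\ref{test degree}) divided by the degree of the residue field over $\kk$, and so $c_\mathcal{A}$ is independent of $\mathcal{A}$; call the common value $c$.

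To compute $c$, choose the small CM datum $(\mathfrak{a}_0,\mathfrak{a}_1,\mathfrak{b})$ of \S \ref{ss:small cycle construction} to be \emph{orthogonally decomposable}: take $\mathfrak{a}_0=\co_\kk$ with the standard hermitian form $\langle x,y\rangle=x\overline{y}$, take $\mathfrak{a}_1=\co_\kk$ with the form $-x\overline{y}$, and take $\mathfrak{b}=\co_\kk^{\oplus(n-1)}$ with the standard form; these are self-dual of the required signatures, $\Lambda=\mathfrak{b}$, and the associated small CM cycle $\mathcal{Y}_\mathrm{sm}\to\mathcal{S}_\Kra$ carries a universal pair $(A_0,A)$ with $A=A_1\times B_1\times\cdots\times B_{n-1}$ a product of $n$ elliptic curves each with complex multiplication by $\co_\kk$. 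Since $\mathcal{Y}_\mathrm{sm}$ is nonempty, \'etale, and proper over $\co_\kk$, it meets $\mathcal{S}_\Kra$, and by the previous paragraph it suffices to evaluate $[\det(\mathcal{V}):\mathcal{Y}_\mathrm{sm}]/\deg_\C(\mathcal{Y}_\mathrm{sm})$. Applying Proposition \ref{prop:bundle shuffle} on $\mathcal{S}_\Kra$ and additivity of $\widehat{\deg}$ gives
\[
[\det(\mathcal{V}):\mathcal{Y}_\mathrm{sm}] = 2[\co\langle 8\pi^2 e^\gamma D^{-1}\rangle:\mathcal{Y}_\mathrm{sm}] + 2[\widehat{\bm{\omega}}:\mathcal{Y}_\mathrm{sm}] + [\det(\Lie(A)):\mathcal{Y}_\mathrm{sm}] + 2[\Lie(A_0):\mathcal{Y}_\mathrm{sm}] - [\co(\mathrm{Exc}):\mathcal{Y}_\mathrm{sm}].
\]
The first term is $-2\log(8\pi^2 e^\gamma D^{-1})\deg_\C(\mathcal{Y}_\mathrm{sm})$ by Remark \ref{trivial archimedean}; the last term is $\log(D)\deg_\C(\mathcal{Y}_\mathrm{sm})$, computed exactly as in the proof of Proposition \ref{prop:no error}; the term $[\widehat{\bm{\omega}}:\mathcal{Y}_\mathrm{sm}]$ equals $-2\deg_\C(\mathcal{Y}_\mathrm{sm})\,\Lambda'(0,\chi_\kk)/\Lambda(0,\chi_\kk)$ by Proposition \ref{prop:no error}, which the Chowla--Selberg formula (\ref{chowla-selberg}) turns into $\big(4h^\Falt_\kk+\log(16\pi^3 e^\gamma)\big)\deg_\C(\mathcal{Y}_\mathrm{sm})$; and $[\det(\Lie(A)):\mathcal{Y}_\mathrm{sm}]=-2n h^\Falt_\kk\deg_\C(\mathcal{Y}_\mathrm{sm})$, $[\Lie(A_0):\mathcal{Y}_\mathrm{sm}]=-2h^\Falt_\kk\deg_\C(\mathcal{Y}_\mathrm{sm})$ by the small-CM analogue of Proposition \ref{prop:faltings bundles}, using that the Faltings height is additive on products and that every elliptic curve with CM by $\co_\kk$ has Faltings height $h^\Falt_\kk$. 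Summing, the coefficient of $h^\Falt_\kk$ is $2\cdot 4-2n-2\cdot 2=4-2n$, while the transcendental part collapses to $\log D + 2\log(16\pi^3 e^\gamma/8\pi^2 e^\gamma) = \log(4\pi^2 D)$, which is the assertion.

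The main obstacle is the independence of $\mathcal{A}$: one must verify that the Hecke (or ideal-twist) transport of CM points respects both the integral model $\mathcal{S}_\Kra$ and the Faltings metric on $\mathcal{V}$ — this is where Theorem \ref{thm:gross} is used, since the trivializing sections already live over $\Z^\alg[1/m]$ — so that the single computation along $\mathcal{Y}_\mathrm{sm}$ really pins down $c_\mathcal{A}$ for every $\mathcal{A}$. The remainder is the bookkeeping of powers of $2\pi$, $e^\gamma$ and $D$ coming through Proposition \ref{prop:bundle shuffle} and Chowla--Selberg, which is routine but must be done carefully.
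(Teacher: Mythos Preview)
Your computation along $\mathcal{Y}_\mathrm{sm}$ is correct and the constants do add up, but the route is quite different from the paper's, and the one step you flag as ``the main obstacle'' is in fact not handled.  The paper avoids the independence-of-$\mathcal{A}$ problem entirely: for an \emph{arbitrary} component $\mathcal{A}$ it picks $g\in G(\A_f)$ with $(z,g)\in\mathcal{A}(\C)$, decomposes $g\mathfrak{a}=\mathfrak{a}_1\oplus\cdots\oplus\mathfrak{a}_n$ into rank-one $\co_\kk$-modules, and builds a product-of-CM-elliptic-curves point directly in $\mathcal{A}$.  Along that point $\mathcal{V}$ splits as $\mathcal{V}_1\oplus\cdots\oplus\mathcal{V}_n$, and the paper computes each $\widehat{\deg}(\mathcal{V}_i)$ by hand: for $i<n$ one has $\mathcal{V}_i\cong\Lie(A_i)$ giving $-h^\Falt_\kk$, while the ``wrong-signature'' factor $\mathcal{V}_n$ is paired with $\Lie(A_n)$ via the polarization, producing the $\log(4\pi^2 D)$ term.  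This uses neither Proposition~\ref{prop:bundle shuffle}, nor Proposition~\ref{prop:no error} (and hence neither \cite{BHY} nor Chowla--Selberg); the formula visibly does not depend on $g$, so independence of $\mathcal{A}$ falls out.

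By contrast, your argument routes through $\widehat{\bm{\omega}}$ via Proposition~\ref{prop:bundle shuffle}, invokes the deep input of \cite{BHY} hidden in Proposition~\ref{prop:no error}, and then undoes Chowla--Selberg to get back to $h^\Falt_\kk$ --- logically valid but circuitous.  More seriously, your Hecke/ideal-twist argument for independence is not right as stated: a prime-to-$D$ Hecke correspondence replaces $A$ by an isogenous abelian scheme, and $\det(\mathcal{V})$ together with its metric (\ref{integration}) changes by the degree of that isogeny, so it does \emph{not} ``pull back to $\mathcal{V}$ compatibly with the metric.''  Without independence established first, your computation along $\mathcal{Y}_\mathrm{sm}$ only yields a weighted average of the $c_\mathcal{A}$ over the components $\mathcal{Y}_\mathrm{sm}$ meets.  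The paper's trick of manufacturing a suitable CM point inside the \emph{given} $\mathcal{A}$ is exactly what makes this issue disappear.
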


\begin{proof}
Recall that we have fixed a triple $(\mathfrak{a}_0,\mathfrak{a} , i_E)$ as in \S \ref{ss:big cm}.
Fix a $g\in G(\A_f)$ in such a way that the map
\[
\mathcal{D} \map{ z\mapsto (z,g) } \mathrm{Sh}(G,\mathcal{D})(\C)
\]
factors through $\mathcal{A}(\C)$,  and a decomposition of $\co_\kk$-modules
\[
g \mathfrak{a} = \mathfrak{a}_1\oplus \cdots \oplus \mathfrak{a}_n
\]
in which each $\mathfrak{a}_i$ is projective of rank $1$.  Define elliptic curves over the complex numbers by
\[
A_i(\C)   =  g \mathfrak{a}_i \backslash \mathfrak{a}_{i\C} / \overline{\epsilon} \mathfrak{a}_{i\C}  .
\]
for $0 \le  i <n$, and
\[
A_{n}(\C)  = g \mathfrak{a}_n \backslash \mathfrak{a}_{n \C} /  \epsilon \mathfrak{a}_{n \C} .
\]

 Endow the abelian variety  $A=A_1\times \cdots \times A_n$ with the diagonal action of $\co_\kk$, and the principal polarization induced by the perfect symplectic form on $g\mathfrak{a}$,  as in the proof of \cite[Proposition 2.2.1]{BHKRY}.
The pair $(A_0,A)$ then corresponds to a point $(z,g) \in  \mathcal{A}(\C)$.

As each  $A_i$ has complex multiplication by $\co_\kk$, we may choose a number field $L$ containing $\kk$ over which all of these elliptic curves are defined and have everywhere good reduction.  If we denote again by $A_0,\ldots, A_n$ and $A$ the N\'eron models over $\Spec(\co_L)$,  the pair $(A_0,A)$  determines a morphism
 \[\Spec(\co_L) \to \mathcal{A} \subset  \mathcal{S}_\Kra.\]

The pullback of $\mathcal{V}$ to $\Spec(\co_L)$ is the rank $n$ vector bundle
\[
\mathcal{V}|_{\Spec(\co_L)} \iso \mathcal{V}_1 \oplus \cdots \oplus  \mathcal{V}_n ,
\]
where $\mathcal{V}_i = H_1^\dR(A_i) / \overline{\epsilon} H_1^\dR(A_i)$.
We endow
$
\mathcal{V}_i^{-1} \iso \epsilon H^1_{\dR}(A_i )
$
with the metric (\ref{integration}), so that
\[
\det(\mathcal{V})|_{\Spec(\co_L)}  \iso  \mathcal{V}_1 \otimes \cdots \otimes \mathcal{V}_n
\]
is an isomorphism of metrized line bundles.

The following two lemmas relate the images of $\mathcal{V}_1,\ldots, \mathcal{V}_n$ under the arithmetic degree
\begin{equation}\label{test degree 2}
\widehat{\Pic}( \Spec(\co_L))  \map{\widehat{\deg}} \R
\end{equation}
to the Faltings height $h_\kk^\Falt$.

\begin{lemma}
For $1\le i < n$,  the arithmetic degree (\ref{test degree 2}) sends
\[
\mathcal{V}_i \mapsto - [L:\Q] \cdot h_\kk^\Falt.
\]
\end{lemma}

\begin{proof}
The action of $\co_\kk$ on $\Lie(A_i)$ is through the inclusion $\co_\kk \to \co_L$, and hence,
as in \cite[Remark 2.3.5]{BHKRY}, the quotient map \[H_1^\dR(A_i)  \to \Lie(A_i)\] descends to an isomorphism of line bundles
$
\mathcal{V}_i \iso  \Lie(A_i).
$
If we endow  $\Lie(A_i)^{-1}$ with the Faltings metric  (\ref{integration}) then this isomorphism respects the metrics, and
the claim  follows  as in the proof of Proposition \ref{prop:faltings bundles}.
\end{proof}

\begin{lemma}
The arithmetic degree (\ref{test degree 2}) sends
\[
 \mathcal{V}_n \mapsto  [L:\Q]  \cdot \big(   h_\kk^\Falt -   \frac{1}{2} \log(4\pi^2 D) \big).
 \]
\end{lemma}

\begin{proof}
The action of $\co_\kk$ on $\Lie(A_i)$ is through the complex conjugate of the inclusion $\co_\kk \to \co_L$,  from which it follows that the Hodge short exact sequence takes the form
\[
\xymatrix{
{ 0 }  \ar[r]  &  { F^0(A_n) } \ar[r] \ar@{=}[d]  &  {   H_1^\dR(A_n) }  \ar[r] \ar@{=}[d]  & { \Lie(A_n )  } \ar[r]\ar@{=}[d]  & {0} \\
{ 0 }  \ar[r]  &  { \epsilon   H_1^\dR(A_0)  } \ar[r]  &  {   H_1^\dR(A_n) }  \ar[r]  & {  H_1^\dR(A_n) / \epsilon H_1^\dR(A_n)  } \ar[r] & {0.}
}
\]
In particular,  the endomorphism $\epsilon$ on  $H_1^\dR(A_n)$  descends to an isomorphism  $\mathcal{V}_n \iso F^0(A_n)$.

Let
\[
\psi_n :H_1^\dR(A_n) \otimes H_1^\dR(A_n) \to \co_L
\]
be the perfect pairing induced by the principal polarization on $A_n$, and define a second pairing
$
\Psi( x,y) = \psi_n( \epsilon x,y) .
$
It follows from the previous paragraph that this descends to a perfect pairing
\[
\Psi : \mathcal{V}_n \otimes \Lie(A_n) \iso \co_L.
\]
However, if  we endow $\Lie(A_n)^{-1}$ with the Faltings metric (\ref{integration}), then this pairing is not a duality between metrized line bundles.

Instead, an argument as in the proof of Proposition \ref{prop:bundle shuffle} shows that
\[
\Psi : \mathcal{V}_n \otimes \Lie(A_n) \iso \co_L \left\langle \frac{1}{2\pi \sqrt{D}} \right\rangle.
\]
is an isomorphism of metrized line bundles.  With this isomorphism in hand, the remainder of the proof is exactly as in the previous lemma.
\end{proof}

The two lemmas show that the image of $\det(\mathcal{V})$ under (\ref{test degree}) is
\[
\sum_{i=1}^n \widehat{\deg} ( \mathcal{V}_i) =  [L:\Q]  \cdot \Big(  (2-n) \cdot h_\kk^\Falt - \frac{1}{2} \log(4\pi^2 D) \Big)
\]
as claimed.  This completes the proof of Proposition \ref{prop:constant eval}.
\end{proof}

\begin{proposition}\label{prop:junk bundle two}
The metrized line bundle  $\det(\mathcal{V}) $ satisfies
\[
[ \det(\mathcal{V}) : \mathcal{Y}_\mathrm{big} ]   =    \deg_\C( \mathcal{Y}_\mathrm{big} ) \cdot  \Big( (4-2n)  h^\Falt_\kk +  \log (4\pi^2 D)  \Big) .
\]
\end{proposition}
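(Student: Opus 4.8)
The plan is to deduce the statement directly from Corollary \ref{cor:almost constant} and Proposition \ref{prop:constant eval}, running the same bookkeeping argument that proves Proposition \ref{prop:faltings bundles}. The conceptual content has already been extracted in \S \ref{ss:gross}: the metrized line bundle $\det(\mathcal{V})$ has the same arithmetic degree along \emph{every} $\co_\kk$-point of $\mathcal{S}_\Kra$, governed by the single universal constant
\[
c = (4-2n)\, h^\Falt_\kk + \log(4\pi^2 D)
\]
of Proposition \ref{prop:constant eval}, which does not depend on the connected component of $\mathcal{S}_\Kra$ one lands in. Since $\mathcal{Y}_\mathrm{big}$ is assembled out of such points after a finite base change, its degree against $\det(\mathcal{V})$ is forced to be $c \cdot \deg_\C(\mathcal{Y}_\mathrm{big})$.

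In detail, I would first choose a finite extension $L/E_\Phi$ inside $\C$ large enough that the $\co_\kk$-stack $\mathcal{Y}_\mathrm{big}\times_{\Spec(\co_\Phi)}\Spec(\co_L)$ admits a finite \'etale cover by a disjoint union $Y_\mathrm{big}=\bigsqcup_{i=1}^m\Spec(\co_L)$ of $m$ copies of $\Spec(\co_L)$, exactly as in the proof of Proposition \ref{prop:faltings bundles}. The structure morphism of the $i$-th copy, composed with $Y_\mathrm{big}\to \mathcal{Y}_\mathrm{big}\to\mathcal{S}_\Kra$, gives a morphism $\Spec(\co_L)\to\mathcal{S}_\Kra$ factoring through some connected component $\mathcal{A}\subset\mathcal{S}_\Kra$; here one uses that $\pi:\mathcal{Y}_\mathrm{big}\to\mathcal{S}_\Kra^*$ factors through $\mathcal{S}_\Kra$, so that $\pi^*\det(\mathcal{V})$ makes sense. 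By Corollary \ref{cor:almost constant} together with Proposition \ref{prop:constant eval}, the image of $\det(\mathcal{V})$ under
\[
\widehat{\Pic}(\mathcal{S}_\Kra) \to \widehat{\Pic}(\Spec(\co_L)) \map{\widehat{\deg}} \R
\]
equals $c\cdot[L:\kk]$, independently of $i$. Summing over the $m$ copies gives $[\det(\mathcal{V}):Y_\mathrm{big}] = m\,c\,[L:\kk]$, while counting complex points of $\bigsqcup_{i=1}^m\Spec(\co_L)$ viewed as an $\co_\kk$-stack gives $\deg_\C(Y_\mathrm{big}) = m[L:\kk]$.

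Finally, since $Y_\mathrm{big}\to\mathcal{Y}_\mathrm{big}$ is finite \'etale, both $[\det(\mathcal{V}):-]$ and $\deg_\C(-)$ scale by the degree of the cover, so that
\[
\frac{[\det(\mathcal{V}):\mathcal{Y}_\mathrm{big}]}{\deg_\C(\mathcal{Y}_\mathrm{big})}
= \frac{[\det(\mathcal{V}):Y_\mathrm{big}]}{\deg_\C(Y_\mathrm{big})}
= c,
\]
and multiplying through by $\deg_\C(\mathcal{Y}_\mathrm{big})$ gives the asserted formula. There is no serious obstacle here, since the genuine work (the analytic trivializability over integral models in Theorem \ref{thm:gross}, and the evaluation of $c$ by degenerating the universal abelian scheme to a product of CM elliptic curves in Proposition \ref{prop:constant eval}) has already been carried out; the only points demanding a little care are the compatibility of the pullbacks of $\det(\mathcal{V})$ along the various \'etale morphisms, and remembering, as in Proposition \ref{prop:class number}, that $\deg_\C(\mathcal{Y}_\mathrm{big})$ is computed with $\mathcal{Y}_\mathrm{big}$ regarded as an $\co_\kk$-stack rather than an $\co_\Phi$-stack.
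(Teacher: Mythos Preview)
Your proposal is correct and follows essentially the same approach as the paper: choose a finite extension $L/E_\Phi$ so that $\mathcal{Y}_\mathrm{big}\times_{\Spec(\co_\Phi)}\Spec(\co_L)$ is covered \'etale by $m$ copies of $\Spec(\co_L)$, apply Corollary~\ref{cor:almost constant} and Proposition~\ref{prop:constant eval} to each copy to get $c\cdot[L:\kk]$, and then use the invariance of the ratio $[\det(\mathcal{V}):-]/\deg_\C(-)$ under finite \'etale covers. The paper's proof is essentially a one-line version of your argument, citing the same two inputs.
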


\begin{proof}
As in the proof of Proposition \ref{prop:faltings bundles}, we may fix a finite extension $L/E_\Phi$ and a finite \'etale cover
$
Y_\mathrm{big} = \bigsqcup \Spec(\co_L)
$
of the $\co_\kk$-stack
\[
\mathcal{Y}_\mathrm{big} \times_{\Spec(\co_\Phi)} \Spec(\co_L)
\] 
 by, say, $m$ copies of $\Spec(\co_L)$.  Corollary \ref{cor:almost constant} then implies
\[
  \frac{  [ \det(\mathcal{V}) : \mathcal{Y}_\mathrm{big} ]  } {  \deg_\C( \mathcal{Y}_\mathrm{big} )   }   =
   \frac{  [ \det(\mathcal{V}) : \mathrm{Y}_\mathrm{big} ]  } {  \deg_\C( \mathrm{Y}_\mathrm{big} )   }
   = \frac{ cm\cdot [ L :\kk]  }{ m\cdot [L:\kk]} =  c.
\]
Appealing to the evaluation of the constant $c$ found in Proposition \ref{prop:constant eval} completes the proof.
\end{proof}


\subsection{Theorems \ref{Theorem C} and  \ref{Theorem D}}


We can now put everything together, and relate the arithmetic degree of $\widehat{\omega}$ along $\mathcal{Y}_\mathrm{big}$ to the Faltings height $h^\Falt_{(E,\Phi)} $.

\begin{proposition}\label{prop:faltings relation}
The metrized line bundle $\widehat{\bm{\omega}}$ satisfies
\[
  \frac{ [ \widehat{\bm{\omega}} : \mathcal{Y}_\mathrm{big} ]} {   \deg_\C(\mathcal{Y}_\mathrm{big}) }
  = h^\Falt_{(E,\Phi)}      +  \frac{n-4}{2}  \cdot  \frac{  \Lambda'(0,\chi_\kk)  }{ \Lambda(0,\chi_\kk) } + \frac{n}{4}  \log(16\pi^3e^\gamma)   .
\]
\end{proposition}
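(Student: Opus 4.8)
The plan is to feed the isomorphism of metrized line bundles on $\mathcal{S}_\Kra$ supplied by Proposition \ref{prop:bundle shuffle},
\[
\co\langle 8\pi^2 e^\gamma D^{-1}\rangle^{\otimes 2}\otimes\widehat{\bm{\omega}}^{\otimes 2}\otimes\det(\Lie(A))\otimes\Lie(A_0)^{\otimes 2}\iso\co(\mathrm{Exc})\otimes\det(\mathcal{V}),
\]
into the additive functional $[-:\mathcal{Y}_\mathrm{big}]\colon\widehat{\mathrm{Pic}}(\mathcal{S}_\Kra)\to\R$ introduced after that proposition (which agrees with the arithmetic degree along $\mathcal{Y}_\mathrm{big}$ because the big CM cycle misses the boundary of $\mathcal{S}_\Kra^*$). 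Each of the six bundles occurring has an arithmetic degree along $\mathcal{Y}_\mathrm{big}$ that has already been computed: Remark \ref{trivial archimedean} gives $[\co\langle c\rangle:\mathcal{Y}_\mathrm{big}]=-\log(c)\deg_\C(\mathcal{Y}_\mathrm{big})$; Proposition \ref{prop:faltings bundles} gives $[\det(\Lie(A)):\mathcal{Y}_\mathrm{big}]=-2\deg_\C(\mathcal{Y}_\mathrm{big})\,h^\Falt_{(E,\Phi)}$ and $[\Lie(A_0):\mathcal{Y}_\mathrm{big}]=-2\deg_\C(\mathcal{Y}_\mathrm{big})\,h^\Falt_\kk$; the computation carried out inside the proof of Proposition \ref{prop:big no error} gives $[\co(\mathrm{Exc}):\mathcal{Y}_\mathrm{big}]=[(\mathrm{Exc},0):\mathcal{Y}_\mathrm{big}]=\log(D)\deg_\C(\mathcal{Y}_\mathrm{big})$, once one notes that the trivial metric on $\co(\mathrm{Exc})$ corresponds to the zero Green function; and Proposition \ref{prop:junk bundle two} gives $[\det(\mathcal{V}):\mathcal{Y}_\mathrm{big}]=\deg_\C(\mathcal{Y}_\mathrm{big})\big((4-2n)h^\Falt_\kk+\log(4\pi^2D)\big)$.

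Dividing the resulting identity by $\deg_\C(\mathcal{Y}_\mathrm{big})$ and solving linearly for $[\widehat{\bm{\omega}}:\mathcal{Y}_\mathrm{big}]/\deg_\C(\mathcal{Y}_\mathrm{big})$, the $\log D$ contributions cancel and one is left with
\[
\frac{[\widehat{\bm{\omega}}:\mathcal{Y}_\mathrm{big}]}{\deg_\C(\mathcal{Y}_\mathrm{big})}=h^\Falt_{(E,\Phi)}+(4-n)\,h^\Falt_\kk+\tfrac12\log(4\pi^2)+\log(8\pi^2e^\gamma).
\]
The final step is to eliminate $h^\Falt_\kk$ via the Chowla--Selberg formula \eqref{chowla-selberg}: this rewrites $(4-n)h^\Falt_\kk$ as $\frac{n-4}{2}\cdot\frac{\Lambda'(0,\chi_\kk)}{\Lambda(0,\chi_\kk)}+\frac{n-4}{4}\log(16\pi^3e^\gamma)$, after which one verifies the elementary identity $\frac{n-4}{4}\log(16\pi^3e^\gamma)+\tfrac12\log(4\pi^2)+\log(8\pi^2e^\gamma)=\frac{n}{4}\log(16\pi^3e^\gamma)$, which reduces to $\log(16\pi^3e^\gamma)=\tfrac12\log(4\pi^2)+\log(8\pi^2e^\gamma)=\log\!\big(2\pi\cdot 8\pi^2 e^\gamma\big)$. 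Substituting yields exactly the asserted formula.

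There is no genuine obstacle: the argument is a bookkeeping assembly of five previously established degree computations through a single additive functional. The only points deserving explicit mention are the additivity of $[-:\mathcal{Y}_\mathrm{big}]$ on $\widehat{\mathrm{Pic}}(\mathcal{S}_\Kra)$ and its compatibility with the arithmetic degree along $\mathcal{Y}_\mathrm{big}$ used elsewhere, and the identification of $\co(\mathrm{Exc})$ with the trivial metric against the arithmetic divisor $(\mathrm{Exc},0)$ so that the relevant part of the proof of Proposition \ref{prop:big no error} applies verbatim.
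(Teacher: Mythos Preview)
Your proposal is correct and follows essentially the same approach as the paper's own proof: both feed the isomorphism of Proposition \ref{prop:bundle shuffle} into the arithmetic degree functional along $\mathcal{Y}_\mathrm{big}$, evaluate each of the six terms via Remark \ref{trivial archimedean}, Proposition \ref{prop:faltings bundles}, the computation inside Proposition \ref{prop:big no error}, and Proposition \ref{prop:junk bundle two}, obtain the intermediate expression $h^\Falt_{(E,\Phi)}+(4-n)h^\Falt_\kk+\log(16\pi^3 e^\gamma)$, and then substitute the Chowla--Selberg formula \eqref{chowla-selberg}. The only cosmetic difference is that the paper packages the constant as $\log(16\pi^3 e^\gamma)$ directly rather than as $\tfrac12\log(4\pi^2)+\log(8\pi^2 e^\gamma)$, but this is the same identity you verify.
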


\begin{proof}
Proposition \ref{prop:bundle shuffle} shows that
\begin{align*}
 2  \cdot [ \co \langle 8\pi^2 e^\gamma D^{-1}\rangle  \otimes \widehat{\bm{\omega}}: \mathcal{Y}_\mathrm{big} ]
 & +  [ \mathrm{det}( \Lie(A))   : \mathcal{Y}_\mathrm{big} ]   +  2 \cdot   [ \Lie(A_0) : \mathcal{Y}_\mathrm{big} ]   \\
& =  [  \co(\mathrm{Exc})  : \mathcal{Y}_\mathrm{big} ]   + [  \det(\mathcal{V})  : \mathcal{Y}_\mathrm{big} ]   .
\end{align*}
Proposition \ref{prop:faltings bundles} and Remark \ref{trivial archimedean} imply that the left hand side is equal to
\[
2 \cdot  [ \widehat{\bm{\omega}} : \mathcal{Y}_\mathrm{big} ]
 - 2  \deg_\C(\mathcal{Y}_\mathrm{big}) \cdot  \left(  \log(8\pi^2 e^\gamma D^{-1})  + h^\Falt_{(E,\Phi)}
   +2 \cdot h^\Falt_\kk \right),
\]
while Proposition \ref{prop:junk bundle two} shows that  the right hand side is equal to
\[
2  \deg_\C( \mathcal{Y}_\mathrm{big} ) \cdot  \big(    (2-n)  h^\Falt_\kk + \log (2\pi D)  \big).
\]
Note that we have used here the equality
\[
[  \co(\mathrm{Exc})  : \mathcal{Y}_\mathrm{big} ]  =[  ( \mathrm{Exc} , 0)  : \mathcal{Y}_\mathrm{big} ]   =   \deg_\C( \mathcal{Y}_\mathrm{big} )  \cdot \log(D) .
\]
from the proof of Proposition \ref{prop:big no error}.

Combining these formulas yields
\[
   \frac{ [ \widehat{\bm{\omega}} : \mathcal{Y}_\mathrm{big} ]} {   \deg_\C(\mathcal{Y}_\mathrm{big}) }
     = h^\Falt_{(E,\Phi)}   + (4-n)  h^\Falt_\kk  + \log(16\pi^3 e^\gamma)    ,
\]
and substituting the value (\ref{chowla-selberg}) for $h_\kk^\Falt$ completes the proof.
\end{proof}

It is clear  from Proposition \ref{prop:faltings relation}  that Theorems \ref{Theorem C} and Theorem \ref{Theorem D} are equivalent.
As Theorem  \ref{Theorem C} is proved  in \cite{Yang-Yin}, this completes the proof of Theorem \ref{Theorem D}.

On the other hand, we proved Theorem \ref{Theorem D} in \S \ref{ss:big derivative} under the assumption that $n\ge 3$ and the discriminants of $\kk$ and $F$ are odd and relatively prime, and so this gives a new proof of Theorem  \ref{Theorem C} under these hypotheses.



\bibliographystyle{smfalpha}


\newcommand{\etalchar}[1]{$^{#1}$}
\providecommand{\bysame}{\leavevmode\hbox to3em{\hrulefill}\thinspace}
\providecommand{\MR}{\relax\ifhmode\unskip\space\fi MR }
\providecommand{\MRhref}[2]{%
  \href{http://www.ams.org/mathscinet-getitem?mr=#1}{#2}
}
\providecommand{\href}[2]{#2}

\end{document}